\newcommand{\N}{\mathbb{N}}
\newcommand{\Z}{\mathbb{Z}}
\newcommand{\R}{\mathbb{R}}
\newcommand{\C}{\mathbb{C}}
\newcommand{\K}{\mathcal{K}}
\newcommand{\epsin}{\in_{\epsilon}}
\newcommand{\Manoa}{M\=anoa}
\newcommand{\Hawaii}{Hawai\kern.05em`\kern.05em\relax i}
\newcommand{\ine}{\in_\epsilon}
\newcommand{\ses}{\subseteq_{\epsilon}}
\newcommand{\ind}{\in_\delta}
\DeclarePairedDelimiter\floor{\lfloor}{\rfloor}
\theoremstyle{plain}
\newtheorem{theorem}{Theorem}[section]
\newtheorem{lemma}[theorem]{Lemma}
\newtheorem{corollary}[theorem]{Corollary}
\newtheorem{proposition}[theorem]{Proposition}
\newtheorem{definition-theorem}[theorem]{Definition / Theorem}
\newtheorem*{conjecture*}{Conjecture}
\newtheorem*{theorem*}{Theorem}
\theoremstyle{definition}
\newtheorem{definition}[theorem]{Definition}
\theoremstyle{remark}
\newtheorem{remark}[theorem]{Remark}
\newtheorem{question}[theorem]{Question}
\newtheorem*{example*}{Example}  
\newtheorem*{remark*}{Remark}
\begin{document}
\title{Complexity rank for $C^*$-algebras}

\author{Arturo Jaime\footnote{University of \Hawaii~at \Manoa, 2565 McCarthy Mall, Keller 401A, Honolulu, HI 96816, USA; ajaime@hawaii.edu.} ~and Rufus Willett\footnote{University of \Hawaii~at \Manoa, 2565 McCarthy Mall, Keller 401A, Honolulu, HI 96816, USA; rufus@math.hawaii.edu.}}


\maketitle

\abstract{Complexity rank for $C^*$-algebras was introduced by the second author and Yu for applications towards the UCT: very roughly, this rank is at most $n$ if you can repeatedly cut the $C^*$-algebra in half at most $n$ times, and end up with something finite dimensional.  In this paper, we study complexity rank, and also a weak complexity rank that we introduce; having weak complexity rank at most one can be thought of as `two-colored local finite-dimensionality'.

We first show that for separable, unital, and simple $C^*$-algebras, weak complexity rank one is equivalent to the conjunction of nuclear dimension one and real rank zero.  In particular, this shows that the UCT for all nuclear $C^*$-algebras is equivalent to equality of the weak complexity rank and the complexity ranks for Kirchberg algebras with zero $K$-theory groups.  However, we also show using a $K$-theoretic obstruction (torsion in $K_1$) that weak complexity rank one and complexity rank one are not the same in general.

We then use the Kirchberg-Phillips classification theorem to compute the complexity rank of all UCT Kirchberg algebras: it is always one or two, with the rank one case occurring if and only if the $K_1$-group is torsion free.}

\pagebreak

\tableofcontents

\section{Introduction}

\subsection*{Background}

In recent work, the second author and Yu \cite{Willett:2021te} introduced the notion of decomposability of a $C^*$-algebra over a class of $C^*$-algebras.  This was motivated by two earlier ideas: the first of these was decomposability in coarse geometry (introduced by Guentner, Tessera, and Yu \cite{Guentner:2009tg,Guentner:2013aa}) and dynamics (introduced by Guentner, the second author, and Yu \cite{Guentner:2014bh}); the second was nuclear dimension (introduced by Winter and Zacharias \cite{Winter:2010eb}).  

Before going on with the general discussion, let us state the formal definition.  For a subset $S$ of a $C^*$-algebra $A$ and $a\in A$, write ``$a\in_\epsilon S$'' to mean that there is $s\in S$ with $\|a-s\|<\epsilon$.

\begin{definition}\label{intro dec}
Let $A$ be a unital $C^*$-algebra, and let $\mathcal{C}$ be a class of unital $C^*$-algebras.  Then $A$ \emph{decomposes} over $\mathcal{C}$ if for every finite subset $X$ of $A$ and every $\epsilon>0$ there exist $C^*$-subalgebras $C$, $D$, and $E$ of $A$ that are in the class $\mathcal{C}$ and contain $1_A$, and a positive contraction $h\in E$ such that:
\begin{enumerate}[(i)]
\item $\|[h,x]\|< \epsilon$ for all $x\in X$;
\item $hx\in_\epsilon C$, $(1_A-h)x\in_\epsilon D$, and $h(1_A-h)x\in_\epsilon E$ for all $x\in X$;
\item for all $e$ in the unit ball of $E$, $e\in_\epsilon C$ and $e\in_\epsilon D$.
\end{enumerate}
\end{definition}

In words, the definition says that one can use an almost central element ($h$ above) to locally cut the $C^*$-algebra $A$ into two pieces ($C$ and $D$ above) with well-behaved approximate intersection ($E$ above).

The main application of this notion is to the Universal Coefficient Theorem (UCT) of Rosenberg and Schochet \cite{Rosenberg:1987bh}.  For this paper we do not need any details about the UCT; suffice to say that the UCT is a $K$-theoretic property that a $C^*$-algebra may or may not have, and that whether or not the UCT holds for all nuclear $C^*$-algebras is an important open question.   The following theorem is the main result of \cite{Willett:2021te}.

\begin{theorem}\label{uct the}
If $A$ is a separable, unital $C^*$-algebra that decomposes over the class of nuclear UCT $C^*$-algebras, then $A$ itself is nuclear and satisfies the UCT.  

Moreover, all nuclear $C^*$-algebras satisfy the UCT if and only if any unital Kirchberg algebra\footnote{A unital $C^*$-algebra $A$ is a \emph{Kirchberg algebra} if it is separable, nuclear, and if for any non-zero $a\in A$ there are $b,c\in A$ with $bac=1_A$.} with zero $K$-theory decomposes over the class of finite-dimensional $C^*$-algebras.  
\end{theorem}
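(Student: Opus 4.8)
The plan is to prove the two assertions in turn, using the first (the permanence statement) as an input to the ``moreover'' equivalence. So suppose first that $A$ is separable, unital, and decomposes over the class of nuclear UCT $C^*$-algebras. Nuclearity of $A$ is the easier half: given a finite set $X$ and $\epsilon>0$, take $C$, $D$, $E$ and $h$ as in Definition~\ref{intro dec}, and assemble the completely positive approximations witnessing nuclearity of $C$ and $D$ together with the cutdowns by $h$ and $1_A-h$ into a completely positive approximation of the identity on $A$ factoring through a finite-dimensional algebra. Condition (i) makes the resulting maps approximately multiplicative on $X$, and condition (ii) ensures they approximately reproduce each $x\in X$. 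The substantial half is the UCT. Here I would set up a controlled Mayer--Vietoris principle: the data of Definition~\ref{intro dec} exhibit $A$ as an approximate pushout of $C$ and $D$ along $E$, and I would show that the UCT class is closed under this operation by running a quantitative, $\epsilon$-controlled six-term exact sequence relating the $K$-theory and $KK$-theory of $A$ to those of $C$, $D$, and $E$. Since $C$, $D$, $E$ lie in the UCT class, a two-out-of-three argument then places $A$ in the UCT class. This controlled Mayer--Vietoris machinery is the technical heart of the permanence statement, and is the step I expect to be the main obstacle.

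For the ``moreover'' equivalence, consider first the direction in which the Kirchberg-algebra decomposition is assumed. Finite-dimensional $C^*$-algebras are nuclear and trivially satisfy the UCT, so a unital Kirchberg algebra $A$ with zero $K$-theory that decomposes over the finite-dimensional class a fortiori decomposes over the class of nuclear UCT $C^*$-algebras. By the permanence statement just proved, every such $A$ is therefore nuclear and satisfies the UCT. It then remains to pass from ``all unital Kirchberg algebras with zero $K$-theory satisfy the UCT'' to ``all nuclear $C^*$-algebras satisfy the UCT.'' For this I would invoke the known reduction of the UCT problem, under which the UCT for all separable nuclear $C^*$-algebras is equivalent to the UCT for all unital Kirchberg algebras with vanishing $K$-theory; this rests on the invariance of the UCT under $KK$-equivalence together with the fact that the UCT-status of an arbitrary separable nuclear $C^*$-algebra can be captured by a zero-$K$-theory Kirchberg algebra. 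Combined with the previous step, this yields the UCT for all nuclear $C^*$-algebras.

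For the converse direction, assume the UCT holds for all nuclear $C^*$-algebras and let $A$ be any unital Kirchberg algebra with zero $K$-theory. Then $A$ satisfies the UCT, so by the Kirchberg--Phillips classification theorem $A$ is determined by its (trivial) $K$-theory and the class of its unit, forcing $A\cong\mathcal{O}_2$. The remaining task is concrete: to exhibit a decomposition of $\mathcal{O}_2$ over the class of finite-dimensional $C^*$-algebras. Here I would build the cutting element $h$ and the subalgebras $C$, $D$, $E$ from the two-colored finite-dimensional approximations supplied by the fact that $\mathcal{O}_2$ has nuclear dimension one, using real rank zero to arrange that the relevant approximating images are genuine unital finite-dimensional $C^*$-subalgebras and that $h$ can be taken as a mediating spectral projection. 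The delicate point is condition (iii) of Definition~\ref{intro dec}, namely that the overlap $E$ embed approximately into both halves $C$ and $D$; this is exactly where a $K$-theoretic obstruction can appear, and its vanishing for $\mathcal{O}_2$ is guaranteed by the triviality, and in particular the torsion-freeness, of $K_1(\mathcal{O}_2)$. Verifying this overlap condition, rather than the mere existence of the two-colored approximation, is the main obstacle in this last direction, and it is the same phenomenon that in general distinguishes the weak and the ordinary complexity ranks.
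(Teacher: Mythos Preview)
The paper does not prove this theorem; it is quoted as the main result of \cite{Willett:2021te}, so there is no in-paper argument to compare against. That said, your sketch of the permanence statement (nuclearity via assembling cp approximations, UCT via a controlled Mayer--Vietoris argument) and of the $(\Leftarrow)$ direction of the ``moreover'' (invoke the permanence statement, then the known reduction of the UCT problem to Kirchberg algebras with zero $K$-theory) is broadly in line with the strategy of \cite{Willett:2021te}.

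The $(\Rightarrow)$ direction, however, has a genuine gap. After reducing to showing that $\mathcal{O}_2$ decomposes over finite-dimensional $C^*$-algebras, you propose to start from the two-coloured approximation coming from nuclear dimension one and real rank zero, and then argue that the obstruction to upgrading this weak decomposition to a genuine one ``vanishes because $K_1(\mathcal{O}_2)$ is torsion-free.'' But nowhere is it established that $K_1$-torsion is the \emph{only} obstruction to such an upgrade. Theorem~\ref{k1tf} says torsion in $K_1$ \emph{prevents} complexity rank one; it does not say its absence \emph{guarantees} it. Indeed, the introduction notes explicitly that equality of weak and ordinary complexity rank for Kirchberg algebras with zero $K$-theory is \emph{equivalent} to the UCT for all nuclear $C^*$-algebras --- so your proposed mechanism is essentially assuming what the theorem is meant to deduce.

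A route that does work, and is in fact carried out in this paper (Theorem~\ref{kirch the 1}, specialised to $\mathcal{O}_2$), is quite different: one uses R\o{}rdam's crossed-product models and Enders' $\iota_n$ maps to build the decomposition directly inside $M_n(A\rtimes\Z)$, and then invokes the Kirchberg--Phillips existence and uniqueness theorems to transport it back. The overlap condition (iii) is obtained constructively from the block structure of the crossed product, not by appealing to the absence of a $K$-theoretic obstruction.
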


Due to the importance of the UCT, it thus becomes interesting to better understand the class of $C^*$-algebras that decompose over finite-dimensional $C^*$-algebras.  Inspired by this and coarse geometry \cite[Definition 2.9]{Guentner:2013aa}, the second author and Yu introduced a `complexity hierarchy' on $C^*$-algebras: we say a $C^*$-algebra has complexity rank zero if it is locally finite-dimensional\footnote{In the separable case, this is the same as being an AF $C^*$-algebra.}, and has complexity rank at most $n+1$ if it decomposes over the class of $C^*$-algebras of complexity rank at most $n$; note that having complexity rank at most one is then the same as decomposing over the class of finite-dimensional $C^*$-algebras.  One of our goals in this paper is to better understand the complexity rank for Kirchberg algebras, partly due to the connections to the UCT, and partly for the intrinsic interest of complexity rank as an invariant in its own right.

\subsection*{Results}

We first aim to make the connection between decomposability over the class of finite-dimensional $C^*$-algebras and nuclear dimension one more precise.  For this purpose, we introduce the notion of weak decomposability: this is the variant of Definition \ref{intro dec} below `with conditions on $E$ dropped'.  There is then a corresponding notion of weak complexity rank.  Let us spell out what this means for the weak complexity rank to be at most one.

\begin{definition}\label{intro dec}
Let $A$ be a unital $C^*$-algebra.  Then $A$ is of \emph{weak complexity rank at most one} if for every finite subset $X$ of $A$ and every $\epsilon>0$ there exist finite-dimensional $C^*$-subalgebras $C$ and $D$ of $A$ that contain $1_A$, and a positive contraction $h\in A$ such that:
\begin{enumerate}[(i)]
\item $\|[h,x]\|< \epsilon$ for all $x\in X$;
\item $hx\in_\epsilon C$ and $(1_A-h)x\in_\epsilon D$ for all $x\in X$.
\end{enumerate}
\end{definition}

We think of the pair $\{h,1_A-h\}$ of approximately central positive contractions in Definition \ref{intro dec} as being a `partition of unity', and we think of having weak complexity rank at most one as being `two-colored locally finite-dimensional'.

This notion turns out to be very closely related to nuclear dimension one.

\begin{theorem}\label{intro wcr}
For a separable, unital, simple\footnote{We establish much of this theorem without the simplicity assumption: see Theorem \ref{wcr1} below for details.}  $C^*$-algebra $A$, the following are equivalent:
\begin{enumerate}[(i)]
\item $A$ has nuclear dimension at most one, and real rank zero.
\item $A$ has weak complexity rank at most one.
\end{enumerate}
\end{theorem}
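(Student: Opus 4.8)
The plan is to prove the two implications separately, since each direction draws on very different machinery. The bulk of the work will be organizing known structure theory for nuclear dimension one and real rank zero into the combinatorial shape demanded by Definition \ref{intro dec}.

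\medskip

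\noindent\textbf{The direction (i) $\Rightarrow$ (ii).}  First I would unpack nuclear dimension at most one: by definition, for every finite set $X\subseteq A$ and $\epsilon>0$ there is a completely positive approximation $A \xrightarrow{\psi} F \xrightarrow{\varphi} A$, with $F$ finite-dimensional, $\varphi\circ\psi$ approximating the identity on $X$ to within $\epsilon$, $\psi$ contractive completely positive, and $F$ split as $F=F^{(0)}\oplus F^{(1)}$ so that $\varphi$ restricted to each $F^{(i)}$ is a contractive completely positive \emph{order-zero} map.  The two summands are what will furnish the two colors, so the goal is to convert the two order-zero maps $\varphi^{(0)},\varphi^{(1)}$ into two genuine finite-dimensional \emph{subalgebras} $C,D$ of $A$ together with a positive contraction $h$ playing the role of the partition of unity.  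The key technical input is the structure theorem for order-zero maps (Winter–Zacharias): an order-zero map $\varphi^{(i)}\colon F^{(i)}\to A$ corresponds to a $*$-homomorphism from the cone $C_0(0,1]\otimes F^{(i)}$ into $A$, equivalently to a commuting pair consisting of a positive contraction $h_i$ (the image of the generator $t$) and a $*$-homomorphism $\pi_i\colon F^{(i)}\to A$ with $\varphi^{(i)}(\,\cdot\,)=h_i\,\pi_i(\cdot)$.  I would take $h:=h_0$ (suitably, a function of it) as the almost-central cutting element; approximate centrality of $h$ on $X$ should follow because $\varphi\circ\psi$ nearly fixes $X$ and the order-zero generators nearly commute with elements in the range.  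The finite-dimensional subalgebras $C,D$ would be built from the images $\pi_0(F^{(0)})$ and $\pi_1(F^{(1)})$; here is exactly where real rank zero enters.  Order-zero images need not be $C^*$-subalgebras (they are only images of cones), so I would use real rank zero to perturb: real rank zero guarantees an abundance of projections, letting me replace the continuous functional calculus data by honestly finite-dimensional subalgebras containing $1_A$ on which $h$ and $hx$, $(1_A-h)x$ are well approximated.  Conditions (i) and (ii) of Definition \ref{intro dec} should then fall out of the approximation estimates, with $hx\in_\epsilon C$ coming from $\varphi^{(0)}\psi$ and $(1_A-h)x\in_\epsilon D$ from $\varphi^{(1)}\psi$ after arranging $h_0+h_1$ to be approximately $1_A$ on $X$.

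\medskip

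\noindent\textbf{The direction (ii) $\Rightarrow$ (i).}  Here I would argue the two conclusions separately.  For nuclear dimension at most one, I would run the previous construction in reverse: given the data $C,D,h$ from weak complexity rank one, set $F=C\oplus D$, let $\psi\colon A\to F$ be built from conditional-expectation-type maps onto $C$ and $D$ (using that $C,D$ are finite-dimensional, hence complemented), and let $\varphi\colon F\to A$ send $(c,d)\mapsto h^{1/2}c\,h^{1/2}+(1_A-h)^{1/2}d\,(1_A-h)^{1/2}$.  The factors $c\mapsto h^{1/2}c h^{1/2}$ and $d\mapsto (1_A-h)^{1/2}d(1_A-h)^{1/2}$ are completely positive contractive and order zero, giving the required two-colored decomposition; approximate centrality of $h$ is exactly what makes $\varphi\circ\psi$ nearly the identity on $X$, so $A$ has nuclear dimension at most one.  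For real rank zero I would use that both $C$ and $D$ are finite-dimensional, hence have real rank zero, to approximate self-adjoint elements of $A$ by self-adjoint elements with finite spectrum: given a self-adjoint $a$, one writes $a\approx ha+(1_A-h)a$ with $ha\in_\epsilon C$ and $(1_A-h)a\in_\epsilon D$, and then uses finite-spectrum approximations inside $C$ and $D$ together with a functional-calculus/perturbation argument to produce a nearby self-adjoint element of $A$ with finite spectrum.  Some care is required because $ha$ and $(1_A-h)a$ need not be self-adjoint, so I would symmetrize and absorb the resulting errors into $\epsilon$ using the approximate commutation of $h$ with $a$.

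\medskip

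\noindent\textbf{Main obstacle.}  I expect the hard part to be the passage from order-zero maps to honest finite-dimensional subalgebras in the direction (i) $\Rightarrow$ (ii): the cone structure of order-zero maps does not directly yield subalgebras containing $1_A$, and it is precisely the interaction between the continuous spectrum of the order-zero generator $h_i$ and the projection-supply from real rank zero that must be managed carefully.  A clean way to handle this would be to first reduce, via real rank zero, to the case where the relevant order-zero generators have finite spectrum, so that their functional calculus produces projections and the order-zero images become genuine finite-dimensional subalgebras on the nose; the estimates in Definition \ref{intro dec} then become bookkeeping.  The non-simple refinements alluded to in the footnote would require replacing simplicity-dependent steps (if any arise, e.g. in invoking classification-adjacent structure) with direct arguments, but the core equivalence should go through as sketched.
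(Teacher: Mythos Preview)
Your plan for (i) $\Rightarrow$ (ii) is essentially the paper's proof: the key step---using real rank zero to replace the order-zero generator $h_i$ by a nearby positive contraction with \emph{finite spectrum} that still commutes with $\pi_i(F^{(i)})$, so that the cone homomorphism factors through a finite-dimensional algebra---is exactly Lemma~\ref{rr0 oz} and Proposition~\ref{main nd1 rr0}. Good.

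For (ii) $\Rightarrow$ nuclear dimension at most one, you have the roles reversed: the map $c\mapsto h^{1/2}ch^{1/2}$ is \emph{not} order zero (if $c_1c_2=0$ there is no reason $c_1hc_2=0$, since $h$ need not commute with $C$). The fix is to put the $h$ into $\psi$ instead: let $\phi$ be the direct sum of the \emph{inclusions} $C\hookrightarrow A$ and $D\hookrightarrow A$ (these are $*$-homomorphisms, hence order zero), and let $\psi(a)=\psi_C(h^{1/2}ah^{1/2})\oplus\psi_D((1-h)^{1/2}a(1-h)^{1/2})$ for conditional expectations $\psi_C,\psi_D$. This is how Proposition~\ref{loc fnd} proceeds.

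The real gap is in (ii) $\Rightarrow$ real rank zero. Your proposed argument produces, for self-adjoint $a$, elements $c\in C$ and $d\in D$ each with finite spectrum and $a\approx c+d$; but $c+d$ has no reason to have finite spectrum, since $C$ and $D$ do not commute and finite-spectrum elements are not closed under addition. No amount of symmetrizing or functional calculus on $c$ and $d$ separately fixes this. The paper does something entirely different and much less elementary: it invokes simplicity and finite nuclear dimension to get $\mathcal{Z}$-stability (Winter), then uses R{\o}rdam's theorem that real rank zero for such algebras is equivalent to the pairing map $K_0(A)\to\mathrm{Aff}(T(A))$ having dense image. Weak complexity rank one enters only through a Cuntz-subequivalence statement (Lemma~\ref{decom trace}): for any positive $a$ and $\epsilon>0$ there is a projection $p$ with $(a-\epsilon)_+\lesssim p\lesssim a\oplus a$. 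This, combined with a surjectivity result for $a\mapsto\widehat a$ onto $\mathrm{Aff}(T(A))$ coming from $\mathcal{Z}$-stability, is iterated to approximate arbitrary strictly positive affine functions by $\widehat{q}$ for projections $q$. This is also where simplicity is genuinely used, which explains the footnote.
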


Having established Theorem \ref{intro wcr}, it is important to determine if weak complexity rank and complexity rank are actually the same: indeed, if they were, Theorem \ref{uct the} (plus the fact that all Kirchberg algebras have nuclear dimension one \cite[Theorem G]{Bosa:2014zr} and real rank zero \cite{Zhang:1990aa}) would imply the UCT for all nuclear $C^*$-algebras.  This question motivates the next theorem.

\begin{theorem}\label{intro tf}
Let $A$ be a unital $C^*$-algebra of complexity rank at most one.  Then $K_1(A)$ is torsion free.
\end{theorem}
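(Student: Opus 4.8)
The plan is to extract from the complexity-rank-one decomposition a $K$-theoretic "two-colored" picture that forces $K_1$ to be a quotient of (or receive a surjection from) torsion-free groups in a way that rules out torsion. Let me think about what complexity rank at most one gives us concretely, and how that constrains $K_1$.

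The plan is to convert the complexity-rank-one hypothesis into a Mayer--Vietoris six-term exact sequence relating $K_*(A)$ to the $K$-theory of the finite-dimensional pieces $C$, $D$, $E$ of Definition \ref{intro dec}, and then to exploit the fact that a decomposition is available at \emph{every} scale to annihilate torsion. Recall that complexity rank at most one means exactly that $A$ decomposes over the class of finite-dimensional $C^*$-algebras, so for each finite $X\subseteq A$ and $\epsilon>0$ we are handed finite-dimensional $C,D,E$ and a positive contraction $h\in E$ satisfying (i)--(iii); after tensoring a decomposition with $M_n$ we may assume the same for $M_n(A)$, which is what we need to see all of $K_1$. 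Since finite-dimensional $C^*$-algebras have $K_1=0$ and free abelian $K_0$, the Mayer--Vietoris sequence attached to the ``$E$-glued'' pair $(C,D)$ should collapse to
\begin{equation*}
0\to K_0(A)\to K_0(C)\oplus K_0(D)\xrightarrow{\ s\ }K_0(E)\xrightarrow{\ \partial\ }K_1(A)\to 0,
\end{equation*}
exhibiting $K_0(A)$ as a subgroup of a free group (hence torsion free) and $K_1(A)\cong\operatorname{coker}(s)$ as a \emph{quotient} of the free group $K_0(E)$.

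The crucial point is that this last presentation does \emph{not} by itself give torsion-freeness: a cokernel of a map of free abelian groups can easily have torsion (already $\mathbb Z\xrightarrow{\times k}\mathbb Z$), and indeed prime dimension-drop algebras show that a single ``one-dimensional'' gluing can put torsion into $K_1$. So the theorem must use decomposability at all scales in an essential way, and I do not expect a single decomposition to suffice. Accordingly, I would work not with honest pullbacks (note that $E$ is only an \emph{approximate} common subalgebra, by condition (iii), rather than a genuine quotient) but with the controlled, quantitative $K$-theory underlying \cite{Willett:2021te}: conditions (i)--(iii) should produce a controlled boundary map and a controlled six-term sequence with explicitly bounded propagation. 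The output is a family of boundary maps $\partial_\lambda\colon K_0(E_\lambda)\to K_1(A)$, indexed by the directed set of pairs $\lambda=(X,\epsilon)$, whose images exhaust $K_1(A)$, so that every class is of the form $\partial_\lambda([p])$ for a projection class $p$ in some finite-dimensional overlap $E_\lambda$.

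The torsion argument then runs as follows. Given $x\in K_1(A)$ with $kx=0$ for some $k\ge 2$, write $x=\partial_\lambda([p])$; the relation $kx=0$ says that $k[p]$ is controlled-trivial, i.e.\ lies (at a definite scale) in the image of the corresponding $s_\lambda$, so it is a difference of classes coming from $C_\lambda$ and $D_\lambda$. The goal is to promote this to $[p]$ itself lying in such an image at a finer scale, forcing $x=0$. Here the finite-dimensionality is used twice: the pieces carry a system of matrix units that a finer decomposition (with $X$ enlarged to contain them and $\epsilon$ shrunk) can split against the new partition of unity $\{h,1_A-h\}$, and the vanishing of $K_1$ of each piece gives enough room to ``divide'' the trivializing homotopy for $k[p]$ by $k$ after passing to a matrix amplification. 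Telescoping these controlled trivializations across a cofinal sequence of scales should then yield an honest trivialization of $x$.

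The main obstacle is precisely this controlled ``divide-by-$k$'' step: showing that the controlled triviality of $k[p]$, combined with a single sufficiently fine decomposition, trivializes $[p]$ itself. This is exactly the place where complexity rank one must do strictly more than one-dimensionality in the NCCW sense, and where the quantitative estimates of \cite{Willett:2021te}---together with an explicit homotopy witnessing $kx=0$ captured inside a large enough finite set $X$---carry the real weight; one must be careful that the step uses the torsion relation genuinely, since a free class such as the generator of $K_1(C(S^1))$ must \emph{not} be trivialized by refinement. A secondary, routine point is the reduction to a countable cofinal system of decompositions, harmless after replacing $A$ by a separable unital subalgebra containing the given unitary and the relevant homotopies, which lets the telescoping be organized as a genuine sequence.
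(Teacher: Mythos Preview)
Your overall framework---controlled Mayer--Vietoris attached to the finite-dimensional triple $(C,D,E)$---is the right one, and it is the framework the paper uses (via \cite{Willett:2019aa}).  However, there is a genuine gap: you have oriented the boundary map the wrong way, and this reversal is exactly what makes your ``divide-by-$k$'' step look like the hard part.

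In the decomposition of Definition~\ref{intro dec} (or better, Proposition~\ref{d equiv}) the arrows go $E\hookrightarrow C,D\hookrightarrow A$, not $A\to C,D\to E$.  So the relevant (controlled) exactness is at $K_1(A)$ in
\[
K_1(C)\oplus K_1(D)\ \xrightarrow{\ \sigma\ }\ K_1(A)\ \xrightarrow{\ \partial\ }\ K_0(C\cap D),
\]
not the pullback sequence you wrote.  Concretely, the paper associates to each unitary $u$ (together with a choice of ``lift'' $v$ built from the partition of unity $h$) a class $\partial_v(u)\in K_0(C\cap D)$; Lemma~\ref{exact result} says that $\partial_v(u)=0$ forces $[u]\in\operatorname{Im}\sigma=0$.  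So $K_1(A)$ is being pushed \emph{into} the torsion-free group $K_0(C\cap D)$, not realized as a quotient of it.  With the correct orientation there is nothing to divide: one only needs to check that $\partial_v$ respects $\oplus$ and homotopy inside a fixed decomposition.

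The paper's argument then runs in one pass with a \emph{single} decomposition.  If $n[u]=0$, pick a homotopy $(u_t)$ from $u^{\oplus n}$ to the identity, enlarge the finite set $X$ to contain the matrix entries of a fine enough sampling of the whole path, and choose $(h,C,D,E)$ for this $X$.  Two controlled facts from \cite{Willett:2019aa}, recorded here as Lemma~\ref{mult lem}, do the work: (a) multiplicativity $\partial_{s_n(v^{\oplus n})s_n^*}(u^{\oplus n})=n\cdot\partial_v(u)$, and (b) homotopy invariance of $\partial$ along the captured path, giving $\partial(u^{\oplus n})=\partial(1)=0$.  Hence $n\cdot\partial_v(u)=0$ in the free abelian group $K_0(C\cap D)$, so $\partial_v(u)=0$, so $[u]=0$.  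No telescoping over scales, no cofinal systems, and no ``divide-by-$k$'' step is needed; your acknowledged main obstacle disappears once the boundary points the right way.
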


As there are Kirchberg algebras with arbitrary countable $K$-theory groups \cite[Section 3]{Rordam:1995aa}, it follows from Theorems \ref{intro wcr} and \ref{intro tf} that complexity rank and weak complexity rank are different in general.  Whether they are equal in special cases is still interesting, however: Theorems \ref{uct the} and \ref{intro wcr} show that the UCT for all nuclear $C^*$-algebras is equivalent to equality of the weak and strong complexity ranks for Kirchberg algebras with zero $K$-theory.  

For general Kirchberg algebras, all we can say about the complexity rank is that it is at least one, and that it is at least two if the $K_1$-group has torsion.  If, however, we assume the UCT, and thus give ourselves access to the Kirchberg-Phillips classification theorem \cite{Kirchberg-ICM,Phillips-documenta}, then we get a complete computation.

\begin{theorem}\label{intro kirch}
All unital UCT Kirchberg algebras have complexity rank one or two.  Moreover, the rank one case occurs if and only if the $K_1$-group of the $C^*$-algebra is torsion free.
\end{theorem}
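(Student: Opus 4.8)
The plan is to prove Theorem \ref{intro kirch} by combining the three earlier results with a positive construction. The theorem has two directions. For the lower bounds, Theorem \ref{intro tf} does the work: if a unital UCT Kirchberg algebra $A$ has complexity rank at most one, then $K_1(A)$ must be torsion free. This immediately shows that any UCT Kirchberg algebra with torsion in $K_1$ has complexity rank at least two. Since every Kirchberg algebra is infinite-dimensional and hence has complexity rank at least one, the content that remains is to produce the matching upper bounds: complexity rank at most one when $K_1$ is torsion free, and complexity rank at most two in general.

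First I would address the upper bound of two for arbitrary UCT Kirchberg algebras. The idea is to exhibit, for each pair of admissible $K$-theory data, a concrete model algebra of complexity rank at most two, and then transport the rank along the Kirchberg-Phillips isomorphism, which requires knowing that complexity rank is an isomorphism invariant (this should be immediate from the definition, as it is phrased purely in terms of the internal approximation structure of $A$). By Theorem \ref{intro wcr} every Kirchberg algebra has \emph{weak} complexity rank at most one, since Kirchberg algebras have nuclear dimension one and real rank zero; the task is to upgrade the weak decomposition to a genuine one, i.e.\ to arrange the conditions on the approximate intersection $E$ in Definition \ref{intro dec}. I would try to build $E$ from the approximately central partition element $h$ together with the finite-dimensional pieces, checking that a single further layer of decomposition suffices so that the pieces $C$ and $D$ may themselves be taken of complexity rank at most one rather than finite-dimensional; this is exactly what pushes the rank from weak-one up to strong-two.

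For the sharp statement, I would show complexity rank exactly one when $K_1(A)$ is torsion free. Here I would again work with explicit models realizing the given $(K_0, K_1)$ data with $K_1$ torsion free, for instance building $A$ as an appropriate inductive limit or crossed-product/tensor construction whose building blocks manifestly decompose over finite-dimensional algebras, and verify directly that the full Definition \ref{intro dec} holds with $C, D, E$ finite dimensional. The key point is that torsion freeness of $K_1$ is precisely the obstruction identified in Theorem \ref{intro tf}, so the model must exploit this hypothesis to close up the approximate intersection condition (iii); I expect the construction to mirror the proof of Theorem \ref{intro tf} in reverse, making the torsion-free $K_1$ the feature that permits a compatible pair of finite-dimensional subalgebras $C$ and $D$.

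The main obstacle will be the construction of the models and the verification of condition (iii) in Definition \ref{intro dec} for the genuine (non-weak) complexity rank, since this is exactly the condition that was dropped to obtain the weak notion and that Theorem \ref{intro wcr} therefore does not control. Concretely, the difficulty is producing subalgebras $C$ and $D$ whose approximate intersection $E$ satisfies $e \in_\epsilon C$ and $e \in_\epsilon D$ for all $e$ in its unit ball, while keeping the $K$-theoretic type of $A$ fixed; the torsion-free hypothesis on $K_1$ must enter precisely at this step to allow rank one, and the general two-step decomposition must be shown to always suffice when torsion is present. Once the models are in hand, Kirchberg-Phillips transports the computed rank to every UCT Kirchberg algebra with the matching invariant, completing the proof.
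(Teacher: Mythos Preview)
Your outline correctly identifies the lower-bound argument (Theorem~\ref{intro tf} plus the observation that Kirchberg algebras are not locally finite-dimensional), and you are right that the real work is in the two upper bounds. However, your plan for those upper bounds has two substantive gaps.

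First, you propose to obtain complexity rank at most two by ``upgrading'' the weak rank-one decomposition of Theorem~\ref{intro wcr}: start from weak complexity rank one and add one further layer so that the pieces $C,D$ have complexity rank at most one rather than being finite-dimensional. There is no mechanism in sight for doing this. The weak decomposition gives you finite-dimensional $C$ and $D$ but no control over their approximate intersection; replacing $C$ and $D$ by larger algebras of complexity rank one does not by itself produce an $E$ satisfying condition~(iii). The paper does not attempt anything of this kind. Instead it first proves the rank-one case and then uses that result as a building block: every unital UCT Kirchberg algebra is an inductive limit of algebras of the form $B_0\oplus(B_1\otimes\mathcal{O}_{1,\infty})$ where $B_0,B_1,\mathcal{O}_{1,\infty}$ are UCT Kirchberg algebras with torsion-free $K_1$ (hence rank one by the first step), and then subadditivity of complexity rank on tensor products, together with closure under direct sums and inductive limits, gives rank at most two. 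So the logical order is the reverse of what you propose, and the rank-two bound genuinely relies on the rank-one case.

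Second, for the rank-one case your proposal is too vague to be a plan: ``explicit models'' and ``mirror Theorem~\ref{intro tf} in reverse'' do not point to a construction. The paper's argument is quite specific. It uses R\o{}rdam's models writing any unital UCT Kirchberg algebra as (a corner of) $A\rtimes\Z$ with $A$ AF, and then adapts a technique of Enders: for suitable embeddings $\iota_n:A\rtimes\Z\to M_n(A\rtimes\Z)$ one writes down explicit diagonal $h$ and block-matrix subalgebras $C,D,E$ of $M_n(A)$ that verify the full decomposition (including the intersection condition) directly, and then uses Kirchberg--Phillips to transport this back to $A\rtimes\Z$. The torsion-free hypothesis on $K_1$ enters not as something one ``exploits to close up condition~(iii)'', but because R\o{}rdam's crossed-product models automatically have torsion-free $K_1$; that is why this construction cannot be pushed further and why the rank-two bound requires the additional tensor-product step.
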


This theorem provides a striking contrast to the case of nuclear dimension / weak complexity rank, which are both always one for Kirchberg algebras.

\subsection*{Outline of the paper}

In Section \ref{dec sec} we discuss the main definitions, and give some reformulations of the main definitions (the version of decomposability used in this introduction is one of the stronger ones).  We also establish some consequences of weak complexity rank for nuclear dimension and existence of projections, and show that the complexity rank is subadditive on tensor products.

In Section \ref{wc1 sec} we study the class of $C^*$-algebras with weak complexity rank one in detail, and in particular establish Theorem \ref{intro wcr}.  Most of the section does not need anything beyond basic facts about nuclear dimension, as established in the seminal paper \cite{Winter:2010eb}.  However, the results going from weak complexity rank one to real rank zero are different: they use deep structure results for simple $C^*$-algebras from \cite{Winter:2012qf,Rordam:2004lw,Elliott:2011vc,Castillejos:2019aa}.  Moreover, some of the arguments used for this implication are due to the anonymous referee: see the acknowledgements below for details.

In Section \ref{cr1 sec} we use techniques from controlled $K$-theory as developed in \cite{Willett:2019aa} to establish Theorem \ref{intro tf}.  This and the results of the previous section allow us to distinguish weak complexity rank and complexity rank.  They will also be used for our results determining the complexity rank of UCT Kirchberg algebras in the next section.

In Section \ref{kirch sec}, we establish Theorem \ref{intro kirch}.  Our argument proceeds by adapting a technique developed by Enders \cite{Enders:2015aa} to estimate the nuclear dimension of Kirchberg algebras.  The Kirchberg-Phillips classification theorem \cite{Phillips-documenta,Kirchberg-ICM} is crucial here; we note that we need the existence and uniqueness theorems for morphisms that come as part of this (see Theorems \ref{kp stab the} and \ref{class k} below for the precise versions we use), not `only' the fact that UCT Kirchberg algebras are classified by $K$-theory.   Following Enders, we also need R\o{}rdam's crossed product models for Kirchberg algebras \cite{Rordam:1995aa}.  

Finally in the short Section \ref{question sec}, we list some natural questions.

\subsection*{Notation and conventions}

The symbol $A$ is reserved throughout for a $C^*$-algebra.  The unit of $A$ will be denoted $1$, or $1_A$ if there is risk of confusion.

Let $\epsilon>0$.  For $a,b\in A$, we write ``$a\approx_{\epsilon} b$'' if $\|a-b\|<\epsilon$.  For a subset $S$ of $A$ and $a\in A$, we write ``$a\epsin S$'' if there exists $s\in S$ such that $\|a-s\|< \epsilon$.  For subspaces $S$ and $T$ of $A$, we write ``$S\subseteq_\epsilon T$'' if for all elements $s$ of the unit ball of $S$, there exists $t$ in the unit ball of $T$ with $\|s-t\|<\epsilon$.  

For a $C^*$-algebra $A$, $A_1:=\{a\in A\mid \|a\|\leq 1\}$ is the closed unit ball, and $A_+:=\{a\in A\mid a\geq 0\}$ is the positive elements.  The multiplier algebra of a $C^*$-algebra $A$ is written $M(A)$. The symbol $\K$ denotes the compact operators on $\ell^2(\N)$.  For $C^*$-algebras $A$ and $B$, $A\otimes B$ is always the spatial (equivalently, minimal) tensor product.  For a unitary $u\in M(A)$, $\text{Ad}_u:A\to A$ denotes the conjugation automorphism defined by $a\mapsto uau^*$.

For a $C^*$-algebra $A$, $K_0(A)$ and $K_1(A)$ are its even and odd (topological) $K$-theory groups, and $K_*(A):=K_0(A)\oplus K_1(A)$ is the corresponding graded group; here `graded' means that the direct sum decomposition is remembered as part of the structure.  Homomorphisms $\alpha:K_*(A)\to K_*(B)$ will always be assumed to be graded, i.e.\ satisfying $\alpha(K_i(A))\subseteq K_i(B)$ for $i\in \{0,1\}$.  If $\phi$ is a $*$-homomorphism from $A$ to $B$, or an element of $KK_0(A,B)$, we write $\phi_*:K_*(A)\to K_*(B)$ for the induced (graded) homomorphism, and we will also use the same notation for the maps $\phi_*:K_i(A)\to K_i(B)$ for $i\in \{0,1\}$ that are defined by restricting (and co-restricting) $\phi_*$.

\subsection*{Acknowledgments}

We are grateful for support from the US NSF under DMS 1901522.  

The second author thanks Dominic Enders, Wilhelm Winter, and Guoliang Yu for conversations (in some cases, occurring some time ago) that influenced the results in this paper.

We also thank to the anonymous referee for a careful reading of the paper, and many useful comments.  In particular, the referee suggested the current proof of Lemma \ref{iota props} part \eqref{iota le} (which is much shorter than our original argument).  The referee also suggested Lemma \ref{ref lem} and its proof, and the proof of Proposition \ref{fmt rr0} that is based on this: in the first version of this paper we were only able to establish Proposition \ref{fmt rr0} under the additional assumption that $A$ has at most finitely many extreme tracial states, so the referee's ideas allowed for really significant improvements here.

\section{Definitions and basic properties}\label{dec sec}

In this section, we introduce the main definitions that we will study in this paper.

\begin{definition}\label{local in}
Let $\mathcal{C}$ be a class of $C^*$-algebras.  A $C^*$-algebra $A$ is \emph{locally} in $\mathcal{C}$ if for any finite subset $X$ of $A$ and any $\epsilon>0$ there is a $C^*$-subalgebra $C$ of $A$ that is in $\mathcal{C}$, and such that $x\in_\epsilon C$ for all $x\in X$.
\end{definition}

The following definition is a priori weaker than Definition \ref{intro dec}, and should be regarded as the `official' definition of what it means to decompose over a a class of $C^*$-algebras.  The two will be shown to be equivalent in Corollary \ref{good h 2} below.

\begin{definition}\label{ais}
Let $\mathcal{C}$ be a class of unital $C^*$-algebras.  A unital $C^*$-algebra $A$ \emph{decomposes over $\mathcal{C}$} if for every finite subset $X$ of $A$ and every $\epsilon>0$ there exist $C^*$-subalgebras $C$, $D$, and $E$ of $A$ that are in the class $\mathcal{C}$, and a positive contraction $h\in A$ such that:
\begin{enumerate}[(i)]
\item \label{ais com} $\|[h,x]\|< \epsilon$ for all $x\in X$;
\item \label{ais in} $hx\in_\epsilon C$, $(1-h)x\in_\epsilon D$, and $h(1-h)x\in_\epsilon E$ for all $x\in X$;
\item \label{ais e in} $E\ses C$ and $E\ses D$;
\item \label{ais mult} for all $e\in E_1$, $he\ine E$.
\end{enumerate}
\end{definition}

We now come to the fundamental definition for this paper.  

\begin{definition}\label{f c}
Let $\alpha$ be an ordinal number.
\begin{enumerate}[(i)]
\item If $\alpha=0$, let $\mathcal{D}_0$ be the class of unital $C^*$-algebras that are locally finite dimensional.
\item If $\alpha>0$, let $\mathcal{D}_\alpha$ be the class of unital $C^*$-algebras that decompose over $C^*$-algebras in $\bigcup_{\beta<\alpha} \mathcal{D}_\beta$.
\end{enumerate}
A unital $C^*$-algebra has \emph{finite complexity} if it is in $\mathcal{D}_\alpha$ for some $\alpha$, in which case its \emph{complexity rank} is the smallest possible $\alpha$.
\end{definition}

\begin{remark}\label{fdc rem}
Definition \ref{f c} is partly motivated by a notion of geometric complexity due to Guentner, Tessera, and Yu \cite[Definition 2.9]{Guentner:2013aa}.  In previous work of the second author and Yu \cite[Appendix A.2]{Willett:2021te}, we showed that if $X$ is a bounded geometry metric space then the geometric complexity of $X$ in the sense of \cite[Definition 2.9]{Guentner:2013aa} is an upper bound for the complexity rank of the uniform Roe algebra $C^*_u(X)$; there are other examples based on groupoid theory coming from \cite{Guentner:2014aa}.  We will not pursue this further here, however.
\end{remark}

We record three basic lemmas for use later in the paper.  The first two follow from straightforward transfinite inductions on $\alpha$ that we leave to the reader. 

\begin{lemma}\label{sums}
Let $A_1,...,A_n$ be unital $C^*$-algebras. Then for any ordinal $\alpha$, $A_1\oplus \cdots \oplus A_n$ is in $\mathcal{D}_\alpha$ if and only if each $A_i$ is in $\mathcal{D}_\alpha$. \qed
\end{lemma}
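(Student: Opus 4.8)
The plan is to prove both implications simultaneously by transfinite induction on $\alpha$, treating general $n$ directly rather than reducing to $n=2$. Write $A=A_1\oplus\cdots\oplus A_n$, let $p_i\in A$ be the central projection with $p_iA\cong A_i$, and for a finite set $X\subseteq A$ let $X_i:=\{p_ix\mid x\in X\}$ denote its set of $i$th components, a finite subset of $A_i$. I will also use the elementary monotonicity $\beta\leq\gamma\Rightarrow\mathcal{D}_\beta\subseteq\mathcal{D}_\gamma$: any $B\in\mathcal{D}_\beta$ decomposes over $\{B\}$ via the trivial data $C=D=E=B$, $h=1_B$, so $B\in\mathcal{D}_\gamma$ for every $\gamma>\beta$.

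The base case $\alpha=0$ reduces to standard facts about finite-dimensional algebras. A direct sum $\bigoplus_i C_i$ of finite-dimensional subalgebras $C_i\subseteq A_i$ is a finite-dimensional subalgebra of $A$, and the image of a finite-dimensional algebra under a $*$-homomorphism is finite-dimensional. Since the norm on a direct sum is the maximum of the coordinate norms, $\epsilon$-approximations pass between $A$ and its summands coordinatewise, and from $A$ to any quotient via the contractive quotient map; hence $A$ is locally finite-dimensional if and only if each $A_i$ is.

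For the forward direction with $\alpha>0$, assume each $A_i\in\mathcal{D}_\alpha$ and fix a finite $X\subseteq A$ and $\epsilon>0$. Decomposing each $A_i$ relative to $(X_i,\epsilon)$ yields positive contractions $h_i\in A_i$ and subalgebras $C_i,D_i,E_i\subseteq A_i$ lying in some $\mathcal{D}_{\beta_i}$ with $\beta_i<\alpha$. Put $\beta:=\max_i\beta_i<\alpha$; by monotonicity and the inductive hypothesis the sums $C:=\bigoplus_iC_i$, $D:=\bigoplus_iD_i$, $E:=\bigoplus_iE_i$ all lie in $\mathcal{D}_\beta\subseteq\bigcup_{\gamma<\alpha}\mathcal{D}_\gamma$. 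Taking $h:=(h_1,\dots,h_n)$, conditions (i)--(iv) of Definition \ref{ais} then hold coordinatewise, again because the ambient norm is the coordinate maximum (for (iii), an element of the unit ball of $E$ has each coordinate in the unit ball of the relevant $E_i$). Thus $A$ decomposes over $\bigcup_{\gamma<\alpha}\mathcal{D}_\gamma$, i.e.\ $A\in\mathcal{D}_\alpha$.

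The reverse direction carries the real content and is, I expect, the main obstacle: each $A_i$ arises only as a \emph{quotient} of $A$, via the coordinate projection $\pi_i\colon A\to A_i$, and not as a unital subalgebra, so I cannot simply restrict a decomposition. The plan is to prove by a parallel transfinite induction that every class $\mathcal{D}_\alpha$ is closed under quotients by surjective $*$-homomorphisms, and then apply this to $\pi_i$. Given $\pi\colon A\twoheadrightarrow B$ with $A\in\mathcal{D}_\alpha$, a finite $Y\subseteq B$, and $\epsilon>0$, one lifts $Y$ to a finite $X\subseteq A$, decomposes $A$ relative to $(X,\epsilon)$, and pushes the data forward: $\pi(C),\pi(D),\pi(E)$ are quotients of $C,D,E$ and so lie in $\bigcup_{\gamma<\alpha}\mathcal{D}_\gamma$ by the inductive hypothesis, while $\pi(h)$ is a positive contraction in $B$. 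As $*$-homomorphisms are contractive and preserve the algebraic operations, conditions (i) and (ii) transfer at once. The only delicate points are conditions (iii) and (iv), which speak of unit balls: here one uses that a surjective $*$-homomorphism of $C^*$-algebras maps the open unit ball onto the open unit ball, so elements of the unit ball of $\pi(E)$ lift to elements of $E$ of norm arbitrarily close to one, and running the original decomposition of $A$ with a slightly smaller tolerance absorbs the loss. This yields quotient-closure, and applying it to $\pi_i$ finishes the reverse direction and hence the induction.
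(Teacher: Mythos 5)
Your proof is correct and is precisely the ``straightforward transfinite induction on $\alpha$'' that the paper leaves to the reader; the only ingredient beyond coordinatewise bookkeeping, namely closure of $\mathcal{D}_\alpha$ under quotients, is exactly the paper's Lemma \ref{quotients} (also stated without proof), and your argument for it---lifting unit balls through surjective $*$-homomorphisms of $C^*$-algebras---is the standard one. No gaps.
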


\begin{lemma}\label{quotients}
For any ordinal $\alpha$, the class $\mathcal{D}_\alpha$ is closed under taking quotient $C^*$-algebras. \qed
\end{lemma}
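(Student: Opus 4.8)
The plan is to prove the statement by transfinite induction on $\alpha$ in its strong form: assume the result for all $\beta<\alpha$ and deduce it for $\alpha$, which treats successor and limit ordinals uniformly and makes the base case $\alpha=0$ the only separate case. Throughout, let $q\colon A\to A/I=:B$ be a quotient map; note that $B$ is again unital with $q(1_A)=1_B$, that $q$ is a contractive surjective $*$-homomorphism, and that the image of any $C^*$-subalgebra of $A$ is a $C^*$-subalgebra of $B$. The one standard external fact I would invoke is that a surjective $*$-homomorphism maps the closed unit ball onto the closed unit ball; I flag below exactly where it is needed.

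For the base case $\alpha=0$, suppose $A$ is locally finite-dimensional and let a finite set $Y\subseteq B$ and $\epsilon>0$ be given. Using surjectivity of $q$, I would lift $Y$ to a finite set $X\subseteq A$ with $q(X)=Y$, choose a finite-dimensional $C^*$-subalgebra $C\subseteq A$ with $x\ine C$ for all $x\in X$, and push forward. Since the image of a finite-dimensional $C^*$-algebra under a $*$-homomorphism is again finite-dimensional, $q(C)$ is a finite-dimensional $C^*$-subalgebra of $B$; and because $q$ is a contraction, $y\ine q(C)$ for every $y\in Y$. Hence $B\in\mathcal{D}_0$.

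For the inductive step ($\alpha>0$), assume each $\mathcal{D}_\beta$ with $\beta<\alpha$ is closed under quotients, and let $A$ decompose over $\mathcal{B}:=\bigcup_{\beta<\alpha}\mathcal{D}_\beta$. Given a finite $Y\subseteq B$ and $\epsilon>0$, I would lift $Y$ to $X\subseteq A$, apply decomposability of $A$ to the pair $(X,\epsilon)$ to obtain $C,D,E\in\mathcal{B}$ and a positive contraction $h\in A$ as in Definition \ref{ais}, and then set $C':=q(C)$, $D':=q(D)$, $E':=q(E)$, $h':=q(h)$. Each of $q(C),q(D),q(E)$ is a quotient of $C,D,E$ (via the corestriction of $q$), so by the inductive hypothesis they lie in $\mathcal{B}$, while $h'$ is a positive contraction. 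Applying $q$ to the estimates $\|[h,x]\|<\epsilon$, $hx\ine C$, $(1-h)x\ine D$, and $h(1-h)x\ine E$, and using $q(1_A)=1_B$ together with contractivity, immediately yields conditions \eqref{ais com} and \eqref{ais in} for $B$ and $Y$.

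The delicate point—and the main obstacle—is verifying conditions \eqref{ais e in} and \eqref{ais mult}, because these quantify over the unit ball $E_1$, and transporting them from $A$ to $B$ requires that every element of the unit ball of $E'=q(E)$ lift to the unit ball of $E$. This is exactly the standard fact above, applied to the surjection $q|_E\colon E\to q(E)$; it can be proved by lifting a self-adjoint element $\left(\begin{smallmatrix} 0 & b \\ b^* & 0\end{smallmatrix}\right)\in M_2(q(E))$ (which has the same norm as $b$) to a self-adjoint contraction in $M_2(E)$ by functional calculus and reading off the off-diagonal corner. Granting it, for $e'\in (E')_1$ I choose $e\in E_1$ with $q(e)=e'$; a unit-ball witness $c\in C_1$ for $e\ine C$ then pushes to $q(c)\in (C')_1$ with $e'\ine q(c)$, giving $E'\ses C'$ (and $E'\ses D'$ by the same argument), while a witness $\widetilde{e}\in E$ for $he\ine E$ pushes to $q(\widetilde{e})\in E'$ with $h'e'\ine q(\widetilde{e})$, giving \eqref{ais mult}. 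Thus $B$ decomposes over $\mathcal{B}$, so $B\in\mathcal{D}_\alpha$, completing the induction.
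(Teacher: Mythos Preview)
Your proof is correct and follows exactly the transfinite induction the paper has in mind; the paper itself leaves the argument to the reader (``straightforward transfinite inductions on $\alpha$''), so you have simply supplied the details, including the one nontrivial point---lifting unit-ball elements through $q|_E$---needed for conditions \eqref{ais e in} and \eqref{ais mult}.
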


\begin{lemma}\label{loc in}
For any ordinal $\alpha$, any unital $C^*$-algebra that is locally in $\mathcal{D}_\alpha$ is in $\mathcal{D}_\alpha$.  Moreover, $\mathcal{D}_\alpha$ is closed under inductive limits with unital connecting maps. 
\end{lemma}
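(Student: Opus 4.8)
The plan is to prove Lemma~\ref{loc in} by transfinite induction on $\alpha$, handling both assertions simultaneously since the inductive-limit claim follows quickly once the ``locally in $\mathcal{D}_\alpha$'' claim is established. The base case $\alpha=0$ is essentially a tautology: being locally finite dimensional is by definition a ``local'' property, so a $C^*$-algebra that is locally in $\mathcal{D}_0$ (i.e.\ locally approximated by subalgebras that are themselves locally finite dimensional) is again locally finite dimensional, hence in $\mathcal{D}_0$. The only mild point here is to unwind that ``locally in (locally finite dimensional)'' collapses to ``locally finite dimensional'' by an $\epsilon/2$ argument: given $X$ and $\epsilon$, first approximate $X$ within $\epsilon/2$ by elements of a subalgebra $B$ that is locally finite dimensional, then approximate those elements within $\epsilon/2$ by a finite-dimensional subalgebra of $B$.

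For the inductive step I would assume the full statement holds for all $\beta<\alpha$ and prove it for $\alpha>0$. Let $A$ be locally in $\mathcal{D}_\alpha$, fix a finite set $X\subseteq A_1$ and $\epsilon>0$. First choose a subalgebra $B\subseteq A$ with $B\in\mathcal{D}_\alpha$ and every $x\in X$ lying $\epsilon/2$-close to some $x'\in B$; collect these approximants into a finite set $X'\subseteq B$. Since $B$ decomposes over $\bigcup_{\beta<\alpha}\mathcal{D}_\beta$, applying Definition~\ref{ais} to $X'\subseteq B$ and a suitably small tolerance produces subalgebras $C,D,E$ of $B$ (hence of $A$) in $\bigcup_{\beta<\alpha}\mathcal{D}_\beta$ and a positive contraction $h\in B\subseteq A$ satisfying conditions (i)--(iv) for $X'$. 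The remaining work is purely quantitative: I would check that, because each $x$ is $\epsilon/2$-close to the corresponding $x'$ and $h$ is a contraction, the commutator and approximate-containment estimates for $X'$ transfer to comparable estimates for $X$ (using $\|[h,x]\|\le\|[h,x']\|+2\|x-x'\|$ and similar bounds for $hx$, $(1-h)x$, $h(1-h)x$). Choosing the tolerance for $X'$ small enough relative to $\epsilon$ then witnesses that $A$ itself decomposes over $\bigcup_{\beta<\alpha}\mathcal{D}_\beta$, so $A\in\mathcal{D}_\alpha$.

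For the inductive-limit statement, suppose $A=\varinjlim A_n$ with unital connecting maps and each $A_n\in\mathcal{D}_\alpha$. The standard fact about inductive limits is that $\bigcup_n \iota_n(A_n)$ is dense in $A$, where $\iota_n:A_n\to A$ are the canonical maps; since the connecting maps are unital, each image $\iota_n(A_n)$ is a unital $C^*$-subalgebra of $A$ containing $1_A$ and isomorphic to a quotient of $A_n$. By Lemma~\ref{quotients} this image is again in $\mathcal{D}_\alpha$. Given finite $X\subseteq A$ and $\epsilon>0$, density lets me find $n$ with $x\in_\epsilon \iota_n(A_n)$ for every $x\in X$, which is exactly the statement that $A$ is locally in $\mathcal{D}_\alpha$. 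Applying the first part of the lemma then gives $A\in\mathcal{D}_\alpha$.

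I expect the only genuine obstacle to be bookkeeping in the inductive step: one must be careful that the subalgebras $C,D,E$ produced inside $B$ are legitimately subalgebras of $A$ (immediate, since $B\subseteq A$) and, more delicately, that dropping from the approximants $X'$ back to the original $X$ does not violate any of the four conditions of Definition~\ref{ais}, particularly condition~(iv) which involves $h$ acting on $E_1$ rather than on $X$ and therefore transfers unchanged. The $\epsilon$-management is routine but should be written carefully so that a single choice of auxiliary tolerance simultaneously controls all of (i)--(iv); I would set that tolerance to be, say, $\epsilon/3$ at the outset and verify each condition in turn.
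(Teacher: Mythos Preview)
Your proposal is correct and follows essentially the same approach as the paper, which simply observes that the definitions are local in nature and then reduces the inductive-limit statement to the first part via Lemma~\ref{quotients}. One small remark: although you frame the argument as a transfinite induction, you never actually invoke the inductive hypothesis in the step for $\alpha>0$ (the $C$, $D$, $E$ you obtain from $B$ already lie in $\bigcup_{\beta<\alpha}\mathcal{D}_\beta$ and are subalgebras of $A$ automatically), so the argument works directly for each $\alpha$ without induction---exactly as the paper's one-line justification suggests.
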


\begin{proof}
As the definitions are all local in nature, the fact that a $C^*$-algebra that is locally in $\mathcal{D}_\alpha$ is in $\mathcal{D}_\alpha$ is straightforward.  To see closure under inductive limits, note that by Lemma \ref{quotients} we may assume that the connecting maps in a given inductive system are injective.  Given this, the part on inductive limits follows from the part on local containment.  
\end{proof}

\subsection{Equivalent formulations}

In this subsection, we show that the definition of decomposability bootstraps up to stronger versions of itself.  We then use techniques of Christensen \cite{Christensen:1980kb} to show that the class of $C^*$-algebras of complexity rank at most one admits a particularly nice characterization.  

We need four very well-known lemmas; we record them for the reader's convenience as we will use them over and over again.

\begin{lemma}\label{spec lem}
Let $a$ and $b$ be bounded operators on a Hilbert space with $b$ normal.  Then the spectrum of $a$ is contained within distance $\|a-b\|$ of the spectrum of $b$.
\end{lemma}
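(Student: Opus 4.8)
The plan is to reduce to a statement about a single point of the plane and prove its contrapositive. Precisely, I will show that if $\lambda\in\C$ satisfies $\operatorname{dist}(\lambda,\operatorname{spec}(b))>\|a-b\|$, then $a-\lambda$ is invertible, i.e.\ $\lambda\notin\operatorname{spec}(a)$. Taking the contrapositive and letting $\lambda$ range over $\operatorname{spec}(a)$ then yields exactly the claim that every point of $\operatorname{spec}(a)$ lies within distance $\|a-b\|$ of $\operatorname{spec}(b)$; note that the contrapositive produces the non-strict inequality $\operatorname{dist}(\lambda,\operatorname{spec}(b))\le\|a-b\|$ directly, so no separate limiting argument is needed at the boundary.

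The one place where normality of $b$ is essential is the exact control it gives on the resolvent. If $\operatorname{dist}(\lambda,\operatorname{spec}(b))>0$ then $\lambda\notin\operatorname{spec}(b)$, so $b-\lambda$ is normal and invertible; applying the continuous functional calculus to $z\mapsto(z-\lambda)^{-1}$ on $\operatorname{spec}(b)$ and using that $\|f(b)\|=\sup_{z\in\operatorname{spec}(b)}|f(z)|$ for normal $b$ gives
\[
\|(b-\lambda)^{-1}\|=\sup_{z\in\operatorname{spec}(b)}\frac{1}{|z-\lambda|}=\frac{1}{\operatorname{dist}(\lambda,\operatorname{spec}(b))}.
\]
Granting this, the rest is a Neumann series argument. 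Under the assumption $\operatorname{dist}(\lambda,\operatorname{spec}(b))>\|a-b\|$, I factor
\[
a-\lambda=(b-\lambda)+(a-b)=(b-\lambda)\bigl(1+(b-\lambda)^{-1}(a-b)\bigr)
\]
and estimate
\[
\bigl\|(b-\lambda)^{-1}(a-b)\bigr\|\le\|(b-\lambda)^{-1}\|\,\|a-b\|=\frac{\|a-b\|}{\operatorname{dist}(\lambda,\operatorname{spec}(b))}<1.
\]
Hence $1+(b-\lambda)^{-1}(a-b)$ is invertible via its Neumann series, and $a-\lambda$ is a product of two invertible operators, so $\lambda\notin\operatorname{spec}(a)$, as desired.

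The only genuine content is the resolvent identity of the second paragraph; everything else is formal manipulation valid in any unital Banach algebra. I therefore expect that identity to be the main point to get right, since for a general non-normal $b$ one has only the inequality $\|(b-\lambda)^{-1}\|\ge\operatorname{dist}(\lambda,\operatorname{spec}(b))^{-1}$, which points the wrong way and would break the estimate above. The trivial case $a=b$ (where $\operatorname{spec}(a)=\operatorname{spec}(b)$) is covered automatically, and the strict inequality in the hypothesis is exactly what the Neumann series requires, so the argument is self-contained.
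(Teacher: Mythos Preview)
Your proof is correct and takes essentially the same approach as the paper: both argue by contrapositive, use normality of $b$ and the functional calculus to bound $\|(b-\lambda)^{-1}\|$ by $\operatorname{dist}(\lambda,\operatorname{spec}(b))^{-1}$, and then invoke a Neumann-series invertibility argument. The only cosmetic difference is that the paper factors on the right as $(a-z)(b-z)^{-1}=1+(a-b)(b-z)^{-1}$ whereas you factor on the left as $(b-\lambda)\bigl(1+(b-\lambda)^{-1}(a-b)\bigr)$.
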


\begin{proof}
We need to show that if $d(z,\text{spectrum}(b))>\|a-b\|$, then $a-z$ is invertible.  Indeed, in this case the continuous functional calculus implies that $\|(b-z)^{-1}\|< \|a-b\|^{-1}$.  Hence 
$$
\|(a-z)(b-z)^{-1}-1\|\leq \|(a-z)-(b-z)\|\|(b-z)^{-1}\|<1,
$$
whence $(a-z)(b-z)^{-1}$ is invertible, and so $a-z$ is invertible too.
\end{proof}  

\begin{lemma}\label{close lem}
Let $a\in A$ be an element in a $C^*$-algebra, let $\epsilon>0$, and let $B$ be a $C^*$-subalgebra of $A$ such that $a\in_\epsilon B$.  
\begin{enumerate}[(i)]
\item \label{pos part} If $a$ is positive, then there is positive $b\in B$ such that $\|b\|\leq \|a\|$ and $a\approx_{2\epsilon} b$.
\item \label{proj part} If $a$ is a projection and $\epsilon<1/2$, there is a projection $p\in B$ such that $a\approx_{2\epsilon} p$.
\end{enumerate}
\end{lemma}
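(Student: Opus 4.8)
The plan is to reduce both parts to a single mechanism: first replace the given approximant by a self-adjoint element of $B$, and then apply continuous functional calculus with a well-chosen cutoff function, using Lemma \ref{spec lem} to locate the spectrum of that self-adjoint element. Concretely, by the hypothesis $a\in_\epsilon B$ there is $b_0\in B$ with $\|a-b_0\|<\epsilon$; since $a$ is self-adjoint, the element $c:=(b_0+b_0^*)/2\in B$ is self-adjoint and satisfies $\|a-c\|\leq \tfrac12(\|a-b_0\|+\|a^*-b_0^*\|)<\epsilon$. In each part I would then produce the desired element as $g(c)$ for a suitable continuous $g$ with $g(0)=0$ (so that $g(c)$ lands in $C^*(c)\subseteq B$ even when $B$ is non-unital), and estimate $\|a-g(c)\|\leq \|a-c\|+\|c-g(c)\|$, where the second term equals $\sup_{t\in \mathrm{spec}(c)}|t-g(t)|$ by the spectral mapping theorem.

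For part \eqref{pos part}, I would take $g$ to be the clip function $g(t)=\max(0,\min(t,\|a\|))$, so that $b:=g(c)$ is positive with $\|b\|\leq \|a\|$ and $g(0)=0$. Since $a\geq 0$ its spectrum lies in $[0,\|a\|]$, where $g$ is the identity, and $g$ is exactly the nearest-point map onto $[0,\|a\|]$, so $|t-g(t)|=\mathrm{dist}(t,[0,\|a\|])$ for every real $t$. Lemma \ref{spec lem} gives $\mathrm{spec}(c)\subseteq \{t:\mathrm{dist}(t,\mathrm{spec}(a))\leq \|a-c\|\}$, whence every $t\in \mathrm{spec}(c)$ satisfies $\mathrm{dist}(t,[0,\|a\|])\leq \|a-c\|<\epsilon$. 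Thus $\|c-g(c)\|<\epsilon$, and therefore $\|a-b\|<2\epsilon$.

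For part \eqref{proj part}, the spectrum of the projection $a$ is contained in $\{0,1\}$, so Lemma \ref{spec lem} confines $\mathrm{spec}(c)$ to within distance $\|a-c\|<\epsilon$ of $\{0,1\}$, that is, inside $(-\epsilon,\epsilon)\cup(1-\epsilon,1+\epsilon)$. Since $\epsilon<1/2$ these two intervals are disjoint and both avoid $1/2$, so the function $g$ equal to $0$ on the left interval and $1$ on the right is continuous on $\mathrm{spec}(c)$, is idempotent and self-adjoint, and has $g(0)=0$; hence $p:=g(c)$ is a projection in $B$. On $\mathrm{spec}(c)$ one has $|t-g(t)|<\epsilon$ (it equals $|t|$ near $0$ and $|t-1|$ near $1$), giving $\|c-p\|<\epsilon$ and therefore $\|a-p\|<2\epsilon$.

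All of the steps are routine; the only genuinely substantive input is the use of Lemma \ref{spec lem} to pin $\mathrm{spec}(c)$ close to $\mathrm{spec}(a)$, which is what guarantees that the chosen cutoff $g$ displaces $c$ by less than $\epsilon$ and which, in part \eqref{proj part}, is what opens the spectral gap around $1/2$ that makes $g$ continuous. The remaining point requiring a little care is ensuring $g(c)\in B$ in the non-unital case, handled uniformly by arranging $g(0)=0$.
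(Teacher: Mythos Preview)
Your proof is correct and follows essentially the same approach as the paper's: pass to the self-adjoint part of an approximant in $B$, invoke Lemma~\ref{spec lem} to control its spectrum, and apply functional calculus with the clip function (part~\eqref{pos part}) or the characteristic function of $(1/2,\infty)$ (part~\eqref{proj part}). Your argument is in fact a touch more careful than the paper's in handling the non-unital case by arranging $g(0)=0$.
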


\begin{proof}
For part \eqref{pos part}, let $b_0\in B$ be such that $a\approx_{\epsilon} b_0$.  Let $b_1=\frac{1}{2}(b_0+b_0^*)$, which is self-adjoint and still satisfies $b_1\approx_{\epsilon} a$.  Then $b_1$ has spectrum contained in $(-\epsilon,\|a\|+\epsilon)$ by Lemma \ref{spec lem}.  Hence if $f:\R\to\R$ is defined by 
$$
f(t):=\left\{\begin{array}{ll} 0 & -\infty<t\leq 0 \\ t & 0<t< \|a\| \\ \|a\| & \|a\|\leq t<\infty \end{array}\right.,
$$ 
then by the functional calculus $b:=f(b_1)$ is a positive contraction such that $b\approx_\epsilon b_0$.  Hence $a\approx_\epsilon b\approx_\epsilon b_0$ and we are done.

Part \eqref{proj part} is similar: this time $b_1$ chosen as above has spectrum contained in $(-\epsilon,\epsilon)\cup (1-\epsilon,1+\epsilon)$, and if $\chi$ is the characteristic function of $(1/2,\infty)$, then $p:=\chi(b_1)$ is a projection in $B$ such that $p\approx_{2\epsilon} a$.
\end{proof}

\begin{lemma}\label{alm proj}
Let $a$ be a self-adjoint element of a $C^*$-algebra $A$ such that $\|a^2-a\|< \epsilon\leq 1/4$.  Then there is a projection $p\in A$ such that $p\approx_{\sqrt{\epsilon}} a$.  
\end{lemma}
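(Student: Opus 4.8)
The plan is to use the continuous functional calculus for the self-adjoint element $a$. Write $\delta := \|a^2-a\|$, so that the hypotheses give the \emph{strict} bound $\delta < \epsilon \le 1/4$. Since $a$ is self-adjoint, its spectrum $\sigma(a)$ is a compact subset of $\R$, and applying the functional calculus to the real polynomial $g(t)=t^2-t$ yields $\sup_{t\in\sigma(a)}|t^2-t| = \|g(a)\| = \|a^2-a\| = \delta$. Thus $\sigma(a)$ is contained in the closed set $S_\delta := \{t\in\R : |t^2-t|\le \delta\}$.

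Next I would analyze $S_\delta$ explicitly. The function $g(t)=t^2-t$ vanishes at $0$ and $1$ and attains its minimum value $-1/4$ at $t=1/2$. Because $\delta<1/4$ we have $g(1/2) = -1/4 < -\delta$, so $1/2\notin S_\delta$; solving $t^2-t=\pm\delta$ shows that $S_\delta$ is the disjoint union of two closed intervals, namely $I_0 = \left[\frac{1-\sqrt{1+4\delta}}{2},\, \frac{1-\sqrt{1-4\delta}}{2}\right]$ containing $0$ and lying to the left of $1/2$, and $I_1 = \left[\frac{1+\sqrt{1-4\delta}}{2},\, \frac{1+\sqrt{1+4\delta}}{2}\right]$ containing $1$ and lying to the right. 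Hence $\sigma(a)\subseteq I_0\cup I_1$ with a genuine gap around $1/2$. Letting $\chi$ be the characteristic function of $(1/2,\infty)$, this gap forces $\chi$ to restrict to a continuous function on $\sigma(a)$ (constantly $0$ on $\sigma(a)\cap I_0$ and constantly $1$ on $\sigma(a)\cap I_1$), so $p := \chi(a)$ is well-defined, and since $\chi^2=\chi=\overline{\chi}$ on $\sigma(a)$ it is a projection in $A$.

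Finally I would estimate $\|p-a\|$. By the functional calculus $\|p-a\| = \sup_{t\in\sigma(a)}|\chi(t)-t|$, which equals $|t|$ on $I_0$ and $|t-1|$ on $I_1$. Each of these attains its maximum at an endpoint, and the relevant endpoint values are $\frac{\sqrt{1+4\delta}-1}{2}$ and $\frac{1-\sqrt{1-4\delta}}{2}$ (the same pair arising on $I_1$ by the symmetry of $g$ about $t=1/2$). An elementary computation bounds both: the inequality $\frac{\sqrt{1+4\delta}-1}{2} < \sqrt\delta$ is equivalent to $0<4\sqrt\delta$, and $\frac{1-\sqrt{1-4\delta}}{2} < \sqrt\delta$ is equivalent, after squaring, to $\delta<1/4$. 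Therefore $|\chi(t)-t| < \sqrt\delta$ throughout $\sigma(a)$, giving $\|p-a\| < \sqrt\delta \le \sqrt\epsilon$, i.e.\ $p\approx_{\sqrt\epsilon}a$, as required.

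The computations are entirely routine; the only point demanding genuine care is the step forcing $\sigma(a)$ to split into two components, which rests on the strict inequality $\delta<1/4$. Without it the function $\chi$ need not be continuous on $\sigma(a)$ and $p$ would fail to be a projection, so I would flag at the outset that the hypotheses $\|a^2-a\|<\epsilon$ and $\epsilon\le 1/4$ are precisely what combine to produce the spectral gap driving the argument.
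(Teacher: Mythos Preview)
Your proof is correct and follows essentially the same approach as the paper: both use the spectral mapping theorem to see that $\sigma(a)$ avoids a neighbourhood of $1/2$, then set $p:=\chi(a)$ for $\chi=\mathbf{1}_{(1/2,\infty)}$ and estimate $\|p-a\|$ via the functional calculus. The paper's write-up is a bit quicker, observing directly that $|t||1-t|<\epsilon$ forces $|t|<\sqrt\epsilon$ or $|1-t|<\sqrt\epsilon$ rather than solving the quadratic exactly, but the substance is identical.
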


\begin{proof}
Let $t$ be in the spectrum of $a$.  Then $t(1-t)$ is in the spectrum of $a^2-a$, so $|t(1-t)|< \epsilon$.  Hence either $|t|< \sqrt{\epsilon}$, or $|1-t|< \sqrt{\epsilon}$, and so the spectrum of $a$ is contained in $(-\sqrt{\epsilon},\sqrt{\epsilon})\cup (1-\sqrt{\epsilon},1+\sqrt{\epsilon})$.  As $\sqrt{\epsilon}\leq 1/2$, the characteristic function $\chi$ of $(1/2,\infty)$ is continuous on the spectrum of $a$, and the functional calculus implies that $p:=\chi(a)$ is a projection that satisfies $p\approx_{\sqrt{\epsilon}}a$.  
\end{proof}

\begin{lemma}\label{proj under}
Let $A$ be a $C^*$-algebra.  Let $p,q$ be projections in $A$, and assume that $\|p-q\|< \epsilon\leq 1/4$.  Then there is a unitary $u$ in the unitization of $A$ (or in $A$ itself if it is already unital) such that $\|u-1\|< 10\epsilon$ and $p=uqu^*$.
\end{lemma}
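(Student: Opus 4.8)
The plan is to build $u$ from the polar decomposition of the standard intertwiner between $p$ and $q$. Set
\[
z := pq + (1-p)(1-q) = 1 + (2pq - p - q),
\]
so that $z$ lies in the unitization $\widetilde{A}$ of $A$ (or in $A$ itself if $A$ is already unital), with $z - 1 \in A$. Using $p^2 = p$ and $q^2 = q$, a direct computation gives $pz = pq = zq$, so $z$ intertwines $q$ and $p$. This element, rather than $q$ or $p$ directly, is the right object to feed into a polar decomposition.

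First I would bound $\|1-z\|$. Writing $1 - z = p(1-q) + (1-p)q$, the identities $p(1-q) = p(p-q)(1-q)$ and $(1-p)q = (1-p)(q-p)q$ show each summand has norm at most $\|p-q\| < \epsilon$. Since the two summands have orthogonal left supports, one computes $(1-z)(1-z)^* = p(1-q)p + (1-p)q(1-p)$ as a sum of two positive elements with orthogonal supports, whence $\|1-z\| \le \|p-q\| < \epsilon$; this sharp bound (not merely the triangle-inequality bound $2\epsilon$) is what will make the final estimate work. In particular $\|1-z\| < 1$, so $z$ is invertible in $\widetilde{A}$. Let $z = u|z|$ with $|z| = (z^*z)^{1/2}$; invertibility makes $|z|$ an invertible positive element and $u = z|z|^{-1}$ a unitary in $\widetilde{A}$, with $u - 1 \in A$. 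Taking adjoints in $pz = zq$ gives $z^*p = qz^*$, hence $z^*zq = z^*pz = qz^*z$, so $z^*z$, and therefore $|z|$ and $|z|^{-1}$, commute with $q$. Consequently $uq = z|z|^{-1}q = zq|z|^{-1} = pz|z|^{-1} = pu$, i.e.\ $uqu^* = p$.

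It remains to bound $\|u - 1\|$, which is the only delicate point. From $u - 1 = (z - |z|)|z|^{-1}$ I get $\|u-1\| \le \big(\|z - 1\| + \|1 - |z|\|\big)\,\||z|^{-1}\|$. Here $\|1 - |z|\|$ is controlled by the spectral inequality $\||z| - 1\| \le \||z|^2 - 1\| = \|z^*z - 1\|$ (valid since $|t-1| \le |t^2-1|$ for $t \ge 0$), and $\|z^*z - 1\| \le \|z^*\|\,\|z - 1\| + \|z^* - 1\|$ is $O(\epsilon)$ because $\|z-1\| < \epsilon$ and $\|z\| < 1 + \epsilon$; finally $\||z|^{-1}\| \le (1 - \||z|-1\|)^{-1}$. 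Substituting $\epsilon \le 1/4$ and tracking the (generous) constants yields $\|u-1\| < 10\epsilon$. The main obstacle is precisely this last chain of estimates: the intertwining properties are formal once $z$ is written down, whereas forcing the polar part genuinely close to $1$ requires both the functional-calculus inequality $\||z|-1\| \le \|z^*z-1\|$ and the sharp bound $\|1-z\| < \epsilon$ from the orthogonal-support computation — the crude bound $\|1-z\|<2\epsilon$ is not good enough to keep $\||z|^{-1}\|$ finite at $\epsilon = 1/4$. The target constant $10$ leaves ample room for these estimates.
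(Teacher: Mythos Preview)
Your argument is correct and follows essentially the same route as the paper: both build the standard intertwiner $z=pq+(1-p)(1-q)$, observe $pz=zq$, and take the unitary part of its polar decomposition. The only real difference is in the bookkeeping: the paper bounds $\|1-z\|<2\epsilon$ by the triangle inequality and then estimates $\|1-(z^*z)^{-1/2}\|$ directly from the spectral inclusion $\text{spec}(z^*z)\subseteq(1-3\epsilon,1+3\epsilon)$, whereas you establish the sharp bound $\|1-z\|<\epsilon$ via the orthogonal-supports computation and then route the final estimate through $\||z|-1\|\le\|z^*z-1\|$. Either set of estimates lands comfortably inside $10\epsilon$.
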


\begin{proof}
Passing to the unitization of $A$ if necessary, we may assume $A$ is unital.  Let $v=(1-p)(1-q)+pq$.  Then one computes that $v-1=p(q-p)+(p-q)q$, so 
\begin{equation}\label{v-1}
\|v-1\|<2\epsilon.
\end{equation}  
As $2\epsilon<1$, $v$ is invertible.  Moreover, one checks that $vp=pq=qv$, so $vpv^{-1}=q$.  Hence also $v^*vp=v^*qv=(qv)^*v=(vp)^*v=pv^*v$ and so in particular $(v^*v)^{-1/2}$ commutes with $p$.  Let now $u:=v(v^*v)^{-1/2}$.  Then $u$ is unitary, and the previous computations show that $up=v(v^*v)^{-1/2}p=vp(v^*v)^{-1/2}=qv(v^*v)^{-1/2}=qu$, so $upu^*=q$.  Note moreover that 
$$
\|v^*v-1\|\leq \|v^*-1\|\|v\|+\|v-1\|<\epsilon(1+1+2\epsilon)= 2\epsilon(1+\epsilon) < 3\epsilon
$$
as $\epsilon\leq 1/4$.  Hence by the functional calculus 
$$
(1+3\epsilon)^{-1/2}\leq (v^*v)^{-1/2}\leq (1-3\epsilon)^{-1/2}
$$
and so by elementary estimates using that $\epsilon\leq 1/4$, $\|1-(v^*v)^{-1/2}\|\leq  4\epsilon$.  It follows from this and line \eqref{v-1} (which also implies that $\|v\|< 1+2\epsilon$) that 
$$
\|1-u\|\leq \|v\|\|1-(v^*v)^{-1/2}\|+\|1-v\|< (1+2\epsilon)4\epsilon+2\epsilon=10\epsilon
$$
as claimed.
\end{proof}

We hope the following lemma clarifies the definition of decomposability.

\begin{lemma}\label{good h}
Let $\mathcal{C}$ be a class of unital $C^*$-algebras.  A unital $C^*$-algebra $A$ decomposes over $\mathcal{C}$ if and only if it satisfies the following condition.  

For every finite subset $X$ of $A$ and every $\epsilon>0$ there exist $C^*$-subalgebras $C$, $D$, and $E$ of $A$ that are in the class $\mathcal{C}$, and a positive contraction $h\in A$ such that:
\begin{enumerate}[(i)]
\item $\|[h,x]\|< \epsilon$ for all $x\in X$;
\item $hx\in_\epsilon C$, $(1_A-h)x\in_\epsilon D$, and $h(1_A-h)x\in_\epsilon E$ for all $x\in X$;
\item $E\ses C$, $E\ses D$, and $1_E\in C\cap D$;
\item \label{long h} $h=h_E+p$ and $1_A-h=(1_E-h_E)+q$, where $h_E$ is a positive contraction in $E$, and $p\in C$ and $q\in D$ are projections that are orthogonal to $1_E$, and satisfy $1_A=1_E+p+q$.  
\end{enumerate}
\end{lemma}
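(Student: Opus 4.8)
The statement claims an equivalence. The ``if'' direction is immediate: the stronger condition listed in the lemma implies all the clauses of Definition \ref{ais}. Indeed, clause (iii) of the lemma ($E \ses C$, $E \ses D$, plus $1_E \in C \cap D$) obviously gives clause \eqref{ais e in} of the definition; and the decomposition \eqref{long h} of $h$ as $h_E + p$ with $p$ a projection orthogonal to $1_E$ forces $he \approx_\epsilon h_E e$ for $e \in E_1$ (since $p \perp 1_E$ means $pe = 0$), and $h_E e \in E$, which delivers the multiplicative clause \eqref{ais mult}. So the real content is the ``only if'' direction: upgrading an arbitrary decomposition to one with the rigid structure in (iii)--(iv).

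The plan is to start from $C$, $D$, $E$, $h$ witnessing decomposability in the sense of Definition \ref{ais} for a slightly enlarged finite set and smaller tolerance. First I would arrange that $1_E$ is a genuine projection sitting inside both $C$ and $D$: the hypothesis $E \ses C$ only gives approximate containment of the unit ball, so I would use Lemma \ref{close lem}\eqref{proj part} to replace $1_E$ by an honest projection close to it inside $C$ (and similarly in $D$), then use Lemma \ref{proj under} to conjugate by unitaries close to $1$ so that these projections literally coincide with $1_E$; absorbing these conjugations into $C$ and $D$ (which are closed under the automorphisms $\mathrm{Ad}_u$) arranges $1_E \in C \cap D$. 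The delicate point throughout is bookkeeping of the error terms: each approximation step degrades $\epsilon$ by a controlled multiplicative factor (a $2\epsilon$ here, a $10\epsilon$ or $\sqrt{\epsilon}$ there), so I would fix at the outset the final tolerance and work backwards to choose the initial $\epsilon$ small enough.

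The heart of the argument is producing the structured form \eqref{long h} of $h$. The idea is that $1_E$ cuts $A$ into the corner $1_E A 1_E$ (approximately $E$) and its complement. I would compress $h$ by $1_E$: set $h_E := 1_E h 1_E$, which is an approximately positive contraction, and then use the multiplicative clause \eqref{ais mult} together with near-centrality of $h$ (clause \eqref{ais com}) to see that $h$ is approximately $h_E$ plus a piece supported away from $1_E$. On the complementary projection $1_A - 1_E$, the element $h$ is approximately a projection: this is where the condition $h(1-h)x \in_\epsilon E$ is used, since it forces $h(1-h)$ to be small ``off $E$,'' so that $(1 - 1_E)h(1-1_E)$ squares to approximately itself, and Lemma \ref{alm proj} then produces a genuine projection close to it. I would split that projection into the part $p$ landing in $C$ and the part $q$ landing in $D$ using $hx \in_\epsilon C$ and $(1-h)x \in_\epsilon D$, clean them up to exact projections orthogonal to $1_E$ via Lemma \ref{close lem}\eqref{proj part} and Lemma \ref{proj under} again, and finally adjust so that $1_E + p + q = 1_A$ exactly. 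The main obstacle I anticipate is this last coherence requirement: the projections $p$, $q$, $1_E$ are each only approximately orthogonal and approximately summing to $1_A$, and forcing them to be \emph{exactly} orthogonal with exact sum $1_A$ simultaneously inside the prescribed subalgebras requires a careful sequence of small unitary conjugations (perturbing mutually near-orthogonal projections to orthogonal ones), again tracking that all errors stay below the target threshold.
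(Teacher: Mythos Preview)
Your plan is correct and follows essentially the same route as the paper's proof. One point worth noting: the ``main obstacle'' you anticipate at the end---forcing $p$, $q$, $1_E$ to be exactly orthogonal with exact sum $1_A$---is sidestepped in the paper by simply \emph{defining} $q := 1_A - 1_E - p$ once $p$ has been constructed as a projection in $(1_A-1_E)A(1_A-1_E)$; then $1_E + p + q = 1_A$ and $p \perp q \perp 1_E$ are automatic, and one only needs to check $q \in_\epsilon D$ and conjugate $D$ accordingly. Also, a small imprecision: $h_E := 1_E h 1_E$ need not lie in $E$, so you must first approximate it by a genuine positive contraction in $E$ via Lemma~\ref{close lem}\eqref{pos part}; and the block-diagonal form of $h$ comes purely from clause~\eqref{ais mult} (which gives $h1_E \in_\epsilon E$, hence $(1_A-1_E)h1_E \approx 0$), not from near-centrality~\eqref{ais com}, since $1_E$ is not in $X$.
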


Schematically, we thus have a spectral decomposition of $h$ that `looks like'\footnote{The picture is maybe slightly misleading in that $E$ need not be contained in the intersection $C\cap D$; however, this can be arranged if $\mathcal{C}$ is the class of finite-dimensional $C^*$-algebras as in Lemma \ref{d equiv} below.} the following.\\
\begin{center}
\includegraphics[width=8cm]{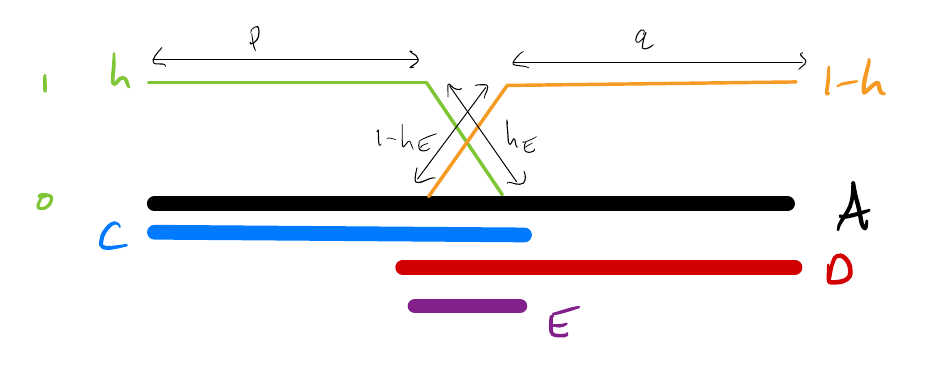}
\end{center}

\begin{proof}[Proof of Lemma \ref{good h}]
Assume first that $A$ satisfies the conditions from Lemma \ref{good h}.  Let $h$, $C$, $D$, and $E$ have the properties in Lemma \ref{good h} for a given $X$ and $\epsilon$; we claim they also satisfy the properties in Definition \ref{ais}.  Indeed, we need only check that for any $e\in E_1$, we have $he\ine E$.  For this, note that if $h=h_E+p$ with $h_E\in E$ and $p1_E=0$, then $he=(h_E+p)1_Ee=h_Ee$, which is (precisely) in $E$.

Conversely, assume $A$ satisfies the conditions from Definition \ref{ais}.  Let $\epsilon>0$, and let $X$ be a finite subset of $A$.  Let $\delta>0$, to be determined in the course of the proof in a way depending only on $\epsilon$.  Let $C$, $D$, and $E$ be $C^*$-algebras in $\mathcal{C}$ and $h$ be a positive contraction that have the properties in Definition \ref{ais} with respect to the finite set $X\cup\{1_A\}$ and $\delta$.  Throughout the proof, the notation ``$\delta_n$'' refers to a quantity that converges to zero as $\delta$ tends to zero, and that depends only on $\delta$.  

Now, as $1_E\in_\delta C$, Lemma \ref{close lem} part \eqref{proj part} gives $\delta_1$ and a projection $p_E\in C$ such that $\|p_E-1_E\|<\delta_1$.  Hence by Lemma \ref{proj under} there is a unitary $u\in A$ and $\delta_2>0$ such that $\|u-1_A\|<\delta_2$, and so that $u1_Eu^*=p_E$.  Similarly, there is a projection $q_E\in D$ and a unitary $v\in A$ such that $\|v-1_A\|<\delta_3$ for some $\delta_3$, and such that $v1_Ev^*=q_E$.  Hence replacing $C$ with $u^*Cu$ and $D$ with $v^*Dv$, we may assume that $C$, $D$, $E$ and $h$ satisfy the conditions in Definition \ref{ais} for $X\cup\{1_A\}$ and some $\delta_4>0$, and moreover that $1_E\in C\cap D$.

As $1_Eh1_E\in_\delta E$, Lemma \ref{close lem} gives a positive contraction $h_E\in E$ with $1_Eh1_E\approx_{2\delta} h_E$.  Moreover, as $h1_E\in_\delta E$, we have $(1_A-1_E)h1_E\approx_\delta 0$ and taking adjoints gives $1_Eh(1_A-1_E)\approx_\delta 0$.  Hence if we write $h_{E^\perp}:=(1_A-1_E)h(1_A-1_E)$ then 
$$
h\approx_{2\delta} 1_Eh1_E+(1_A-1_E)h(1_A-1_E)\approx_{2\delta} h_E+h_{E^{\perp}}.
$$
Replacing $h$ with $h_E+h_{E^\perp}$, we may assume $h$ is a sum of two positive contractions, one of which is in $E$, and one of which is orthogonal to $E$; in particular, $h$ multiplies $E$ into itself.  Note then that
$$
h(1_A-h)=h_E(1_E-h_E)-h_{E^\perp}^2+h_{E^\perp}
$$
and so $h_E(1_E-h_E)-h_{E^\perp}^2+h_{E^\perp}\in_{\delta_4} E$.  As $h_E(1_E-h_E)$ is in $E$, this implies that $h_{E^\perp}^2-h_{E^\perp}\in_{\delta_4} E$; however, $h_{E^\perp}^2-h_{E^\perp}$ is in $(1_A-1_E)A(1_A-1_E)$, so we get $h_{E^\perp}^2-h_{E^\perp}\approx_{\delta_4} 0$.  Assuming $\delta$ is small enough to ensure that $\delta_4<1/4$, Lemma \ref{alm proj} implies there is $\delta_5$ and a projection $p\in (1_A-1_E)A(1_A-1_E)$ such that $p\approx_{\delta_5}h_{E^\perp}$ 	  Now, as $h=h\cdot 1_A\in_{\delta_5} C$ and as $h_E\in E\subseteq_{\delta_4} C$, we have that there is $\delta_6$ such that $p\in_{\delta_6} C$.  As $1_E\in C$ and as $p$ is orthogonal to $1_E$, Lemma \ref{close lem} part \eqref{proj part} gives a projection $p_C\in (1_C-1_E)C(1_C-1_E)$ and $\delta_7>0$ such that $p_C\approx_{\delta_7} p$.  Hence Lemma \ref{proj under} gives a unitary $u\in (1_A-1_E)A(1_A-1_E)$ and $\delta_8$ such that $\|(1_A-1_E)-u\|<\delta_8$, and such that $up_Cu^*=p$.  Replacing $C$ by $(1_E+u)C(1_E+u^*)$, we may assume that $C$ contains $p$.  

On the other hand, we have $1_A-h=1_E-h_E+(1_A-1_E-p)$.  Write $q=(1_A-1_E-p)$.  Arguing analogously to the above, we also see that $q\in_{\delta_9} D$ for some $\delta_9$, and so that there exists a unitary $v\in (1_A-1_E)A(1_A-1_E)$ such that $\|(1_A-1_E)-v\|<\delta_{10}$ for some $\delta_{10}$ and such that $v^*qv\in D$.  Replacing $D$ by $(1_E+v)D(1_E+v^*)$ and taking the original $\delta$ small enough, we are done.
\end{proof}

We are now able to deduce that Definition \ref{ais} is equivalent to the definition of decomposability (Definition \ref{intro dec}) that we used in the introduction.

\begin{corollary}\label{good h 2}
Let $\mathcal{C}$ be a class of unital $C^*$-algebras that contains $\C$ and is closed under finite direct sums and under taking $*$-isomorphic $C^*$-algebras.  A unital $C^*$-algebra $A$ decomposes over $\mathcal{C}$ if and only if it satisfies the following condition.  

For every finite subset $X$ of $A$ and every $\epsilon>0$ there exist $C^*$-subalgebras $C$, $D$, and $E$ of $A$ that are in the class $\mathcal{C}$ and contain $1_A$, and a positive contraction $h\in E$ such that:
\begin{enumerate}[(i)]
\item $\|[h,x]\|< \epsilon$ for all $x\in X$;
\item $hx\in_\epsilon C$, $(1-h)x\in_\epsilon D$, and $h(1-h)x\in_\epsilon E$ for all $x\in X$;
\item $E\ses C$ and $E\ses D$.
\end{enumerate}
\end{corollary}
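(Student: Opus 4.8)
The plan is to prove the two implications separately, using Lemma \ref{good h} for the substantive one. The direction ``displayed condition $\Rightarrow$ decomposes over $\mathcal{C}$'' is immediate: given $C$, $D$, $E$ and $h\in E$ as in the displayed condition, properties \eqref{ais com}, \eqref{ais in}, and \eqref{ais e in} of Definition \ref{ais} hold verbatim, while \eqref{ais mult} holds because $h\in E$ forces $he\in E$, hence $he\ine E$, for every $e\in E_1$.

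For the converse, suppose $A$ decomposes over $\mathcal{C}$, and fix a finite set $X$ and $\epsilon>0$. By Lemma \ref{good h} I may choose $C$, $D$, $E$ in $\mathcal{C}$ and a positive contraction $h$ satisfying the conclusions of that lemma for $X$ and $\epsilon$; in particular $1_E\in C\cap D$, there are mutually orthogonal projections $1_E,p,q$ with $1_A=1_E+p+q$, $p\in C$ and $q\in D$, and there is a positive contraction $h_E\in E$ with $h=h_E+p$ (orthogonality of $p$ and $q$ follows by multiplying $1_A=1_E+p+q$ on the left by $p$). The idea is to enlarge the three algebras so that each contains $1_A$ and so that $h$ lands inside the new $E$, while only adjoining one- or two-dimensional summands so as to remain in $\mathcal{C}$. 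Concretely I set
$$
E':=E\oplus\C p\oplus\C q,\qquad C':=C\oplus\C(1_A-1_C),\qquad D':=D\oplus\C(1_A-1_D),
$$
each read as the $C^*$-subalgebra of $A$ generated by the listed pieces. Since $E\subseteq 1_EA1_E$ while $p$ and $q$ are orthogonal to $1_E$ and to each other, $E'$ is an internal orthogonal direct sum isomorphic to $E\oplus\C\oplus\C$; likewise $1_A-1_C$ is a projection orthogonal to $C$ (as $C\subseteq 1_CA1_C$), and similarly for $D$. Thus $E'$, $C'$, $D'$ all lie in $\mathcal{C}$ by the assumptions that $\mathcal{C}$ contains $\C$ and is closed under finite direct sums and $*$-isomorphism, each contains $1_A$, and $h=h_E+p$ is a positive contraction in $E'$.

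It remains to verify conditions (i)--(iii) of the displayed statement for $C'$, $D'$, $E'$, and $h$. Condition (i) is unchanged since $h$ is unchanged, and condition (ii) is inherited from Lemma \ref{good h} because $C\subseteq C'$, $D\subseteq D'$, and $E\subseteq E'$. The substantive point is condition (iii), that $E'\ses C'$ and $E'\ses D'$. The key observation is that, although $C$ need not contain $q$, the enlarged algebra $C'$ does: the identity $q=(1_C-1_E-p)+(1_A-1_C)$ writes $q$ as the sum of a projection in $C$ and the adjoined projection $1_A-1_C$. Given a unit-ball element $e=e_0+\lambda p+\mu q$ of $E'$ with $e_0\in E_1$ and $|\lambda|,|\mu|\le 1$, I use $E\ses C$ to find $c_0\in C_1$ with $e_0\approx_\epsilon c_0$ and then compress to $1_Ec_01_E$, which remains in the unit ball of $C$, still satisfies $e_0\approx_\epsilon 1_Ec_01_E$, and is orthogonal to $p$ and $q$; then $c:=1_Ec_01_E+\lambda p+\mu q$ is a unit-ball element of $C'$ with $e\approx_\epsilon c$. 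This gives $E'\ses C'$, and the argument for $E'\ses D'$ is symmetric (using that $p\in D'$). I expect this last verification to be the only real obstacle, the delicate point being the compression of the approximant back under $1_E$ so that it stays orthogonal to the adjoined projections and the norm estimate survives; the rest is bookkeeping.
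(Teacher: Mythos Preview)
Your proof is correct and follows essentially the same approach as the paper: both directions are handled identically, and your enlarged algebras $E'$, $C'$, $D'$ coincide with the paper's (where the paper writes $\text{span}\{C,1_A\}$ you write $C\oplus\C(1_A-1_C)$, which is the same thing). The paper simply says ``direct checks'' for condition (iii), whereas you carry them out explicitly via the identity $q=(1_C-1_E-p)+(1_A-1_C)$ and the compression trick; the only minor omission is that you should note (as the paper does) that $p$ or $q$ may be zero, in which case $E'$ is isomorphic to $E$ or $E\oplus\C$ rather than $E\oplus\C\oplus\C$, but this is harmless since $\mathcal{C}$ still contains it.
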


\begin{proof}
It is immediate that the condition in Corollary \ref{good h 2} implies the condition in Definition \ref{ais}. 

For the converse, given $X$ and $\epsilon$, let $h$, $C$, $D$, $E$ satisfy the conditions in Lemma \ref{good h}.  Define $E':=\C p\oplus E\oplus \C q$, $C':=\text{span}\{C,1_A\}$, and $D':=\text{span}\{D,1_A\}$.  Note that $E'$ is $*$-isomorphic to $E$ (if $p=q=0$), $E\oplus \C$ (if one of $p$ or $q$ is zero), or $E\oplus \C\oplus \C$ (if both $p$ and $q$ are non-zero).  Similarly, $C'$ (respectively $D'$) is isomorphic to $C$ or $C\oplus \C$ (respectively, $D$ or $D\oplus \C$); in all cases, $E'$, $C'$ and $D'$ are therefore still in $\mathcal{C}$.  Direct checks then show that $E'$, $C'$, $D'$ and $h$ satisfy the conditions in Corollary \ref{good h 2}. 
\end{proof}

In the remainder of this section, we show that complexity rank at most one bootstraps up to a stronger version of itself.  This will be useful for the results of Section \ref{cr1 sec} on torsion in $K_1$-groups.  For this, we need to recall a theorem of Christensen \cite[Theorem 5.3]{Christensen:1980kb} about perturbing almost inclusions of finite dimensional $C^*$-algebras to honest inclusions.

\begin{theorem}[Christensen]\label{chr the}
Let $A$ be a $C^*$-algebra, and let $E$ and $C$ be $C^*$-subalgebras of $A$ with $E$ finite-dimensional.  If $0< \epsilon \leq 10^{-4}$ and $E\ses  C$, then there exists a partial isometry $v\in A$ such that $\|v-1_E\|< 120\sqrt{\epsilon}$ and $vEv^*\subseteq C$. \qed
 \end{theorem}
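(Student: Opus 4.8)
The plan is to reduce the statement to a matrix-unit perturbation argument. Since $E$ is finite dimensional, it is generated by a full system of matrix units, and the near-inclusion $E\ses C$ lets me perturb these into \emph{honest} matrix units lying inside $C$, out of which the conjugating partial isometry $v$ is assembled. Concretely, write $E\cong\bigoplus_{k=1}^m M_{n_k}$ and fix a full system of matrix units $\{e^{(k)}_{ij}\}$ with $1_E=\sum_{k,i}e^{(k)}_{ii}$. Each $e^{(k)}_{ij}$ is in the unit ball of $E$, so $E\ses C$ provides $a^{(k)}_{ij}$ in the unit ball of $C$ with $\|a^{(k)}_{ij}-e^{(k)}_{ij}\|<\epsilon$; these approximate matrix units then satisfy the matrix-unit relations up to an $O(\epsilon)$ error.

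First I would clean up the diagonal. Replacing $a^{(k)}_{ii}$ by its self-adjoint part keeps it within $\epsilon$ of the projection $e^{(k)}_{ii}$, and a short estimate via Lemma \ref{spec lem} gives $\|(a^{(k)}_{ii})^2-a^{(k)}_{ii}\|=O(\epsilon)$, so Lemma \ref{alm proj} produces a genuine projection within $O(\sqrt{\epsilon})$. Because the $e^{(k)}_{ii}$ are mutually orthogonal, these approximate projections are almost orthogonal, and I would orthogonalize them one at a time---cutting each new approximate projection down by the complement of those already chosen and reapplying Lemma \ref{alm proj}---to obtain a genuinely orthogonal family $\{p^{(k)}_i\}\subseteq C$ with $\|p^{(k)}_i-e^{(k)}_{ii}\|=O(\sqrt{\epsilon})$. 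Next I cut the off-diagonal approximants down, setting $b^{(k)}_i:=p^{(k)}_i a^{(k)}_{i1}p^{(k)}_1\in C$, an approximate partial isometry from $p^{(k)}_1$ to $p^{(k)}_i$ still $O(\sqrt{\epsilon})$-close to $e^{(k)}_{i1}$. As $\epsilon$ is small, $(b^{(k)}_i)^*b^{(k)}_i$ is invertible in the corner $p^{(k)}_1 C p^{(k)}_1$, and its polar part $w^{(k)}_i:=b^{(k)}_i\big((b^{(k)}_i)^*b^{(k)}_i\big)^{-1/2}$ is an honest partial isometry in $C$ with $\|w^{(k)}_i-e^{(k)}_{i1}\|=O(\sqrt{\epsilon})$. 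Putting $w^{(k)}_1:=p^{(k)}_1$ and $v^{(k)}_{ij}:=w^{(k)}_i (w^{(k)}_j)^*$ yields a genuine full system of matrix units inside $C$, each entry $O(\sqrt{\epsilon})$-close to $e^{(k)}_{ij}$.

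Finally I would produce $v$ by the standard intertwiner trick. Set $x:=\sum_{k,i}w^{(k)}_i e^{(k)}_{1i}$. A direct computation with the matrix-unit relations in $E$ and in $F:=C^*(\{v^{(k)}_{ij}\})$ shows $x\,e^{(k)}_{jl}=v^{(k)}_{jl}\,x$, that $x^*x=\sum_{k,i}e^{(k)}_{i1}p^{(k)}_1 e^{(k)}_{1i}$ commutes with $E$, and that $x^*x$ is $O(\sqrt{\epsilon})$-close to $1_E$. Crucially, the summands of $x-1_E=\sum_{k,i}\big(w^{(k)}_i-e^{(k)}_{i1}\big)e^{(k)}_{1i}$ have mutually orthogonal right supports, so no dimension factor appears and $\|x-1_E\|=O(\sqrt{\epsilon})$. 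Then $v:=x(x^*x)^{-1/2}$ (functional calculus in the corner $1_EA1_E$) is a partial isometry with $v^*v=1_E$, and since $(x^*x)^{-1/2}$ commutes with $E$ we get $v\,e^{(k)}_{ij}\,v^*=v^{(k)}_{ij}\in C$; hence $vEv^*\subseteq C$ and $\|v-1_E\|=O(\sqrt{\epsilon})$.

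The main obstacle is not the construction but the propagation and accumulation of errors: each of the three cleanup steps introduces a fresh $O(\sqrt{\epsilon})$ perturbation, and the successive orthogonalization of the diagonal must be arranged so that cutting down by $1-\sum p^{(k)}_i$ does not degrade the constants as the block sizes grow. Tracking all of this carefully---together with the $\sqrt{\epsilon}$ coming from Lemma \ref{alm proj} and the inverse square roots in the polar decompositions---so as to reach the single explicit bound $\|v-1_E\|<120\sqrt{\epsilon}$ uniformly once $\epsilon\le 10^{-4}$ is precisely the delicate part, and is the reason one prefers to quote Christensen's optimized estimate rather than rederive it.
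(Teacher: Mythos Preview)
The paper does not give its own proof of this statement: the theorem is quoted verbatim from Christensen \cite[Theorem 5.3]{Christensen:1980kb} and marked with \qed, so there is nothing to compare against. Your sketch is a faithful outline of the standard matrix-unit perturbation argument that underlies Christensen's proof, and you correctly isolate the genuine difficulty, namely keeping the constant in $\|v-1_E\|<120\sqrt{\epsilon}$ independent of $\dim E$.

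One comment on that difficulty: your proposed sequential orthogonalization of \emph{all} the diagonal projections $p^{(k)}_i$ would, as written, pick up a factor of $\sum_k n_k$, since each cut-down by $1-\sum p$ shifts the next approximant by an amount comparable to the accumulated error. The usual remedy is not to orthogonalize all of them. One only needs a single projection $p^{(k)}_1$ per block, and the remaining $p^{(k)}_i:=w^{(k)}_i(w^{(k)}_i)^*$ are then automatically orthogonal once the $w^{(k)}_i$ are built; orthogonality \emph{between} blocks can be arranged by a single spectral cut applied to the self-adjoint element $\sum_k a^{(k)}_{11}$ (which is $O(\epsilon)$-close to the projection $\sum_k e^{(k)}_{11}$) rather than block by block. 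With that modification the dimension dependence disappears and your outline goes through; the remaining bookkeeping to reach the explicit constant $120$ is exactly what Christensen carries out.
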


\begin{proposition}\label{d equiv}
A unital $C^*$-algebra $A$ has complexity rank at most one if and only if it has the following property.  

For any finite subset $X$ of the unit ball of $A$ and any $\epsilon>0$ there exist finite-dimensional $C^*$-subalgebras $C$, $D$ and $E$ of $A$ that contain the unit and a positive contraction $h\in E$ such that:
\begin{enumerate}[(i)]
\item $\|[h,x]\|< \epsilon$ for all $x\in X$;
\item $hx\epsin C$, $(1_A-h)x\epsin D$ and $(1_A-h)hx\epsin E$ for all $x\in X$;
\item $E$ is contained in both $C$ and $D$.
\end{enumerate}
\end{proposition}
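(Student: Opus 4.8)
The plan is to prove the two implications separately, with essentially all of the work in the ``only if'' direction. The ``if'' direction is nearly immediate: given the stated finite-dimensional property, the algebras $C,D,E$ are finite dimensional and hence lie in $\mathcal{D}_0$, the condition $h\in E$ makes $h$ a positive contraction, and an honest inclusion $E\subseteq C$ trivially gives $E\ses C$ for every $\epsilon$. Thus the hypotheses of Corollary \ref{good h 2} (with $\mathcal{C}=\mathcal{D}_0$) hold, so $A$ decomposes over $\mathcal{D}_0$ and therefore has complexity rank at most one. A routine rescaling argument removes the restriction that $X$ lie in the unit ball: for a general finite set one normalizes by $M=\max(1,\max_{x\in X}\|x\|)$ and applies the unit-ball version to $\{x/M\}$ with tolerance $\epsilon/M$.

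For the ``only if'' direction, suppose $A$ has complexity rank at most one, fix $X$ in the unit ball and $\epsilon>0$, and choose a tolerance $\delta>0$ (depending only on $\epsilon$) to be specified at the end. Applying Corollary \ref{good h 2} with $\mathcal{C}=\mathcal{D}_0$ produces unital, locally finite-dimensional subalgebras $C_0,D_0,E_0\ni 1_A$ and a positive contraction $h_0\in E_0$ satisfying the conditions of that corollary for $X$ and $\delta$. The task is to replace these by honestly finite-dimensional algebras with an honest inclusion $E\subseteq C\cap D$. I first build $E$: applying local finite dimensionality of $E_0$ to $1_A$, $h_0$, and approximants in $E_0$ of the elements $h_0(1_A-h_0)x$, I obtain a finite-dimensional $E\subseteq E_0$ that $\delta'$-contains all of these. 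Since any projection in $E$ within distance $<1$ of $1_A$ must equal $1_A$, Lemma \ref{close lem}\eqref{proj part} forces $1_A\in E$; and Lemma \ref{close lem}\eqref{pos part} yields a positive contraction $h\in E$ with $h\approx_{2\delta'}h_0$.

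Next I build $C$ (and symmetrically $D$). Because $E\subseteq E_0\ses C_0$, the finite-dimensional algebra $E$ satisfies $E\ses C_0$. Applying local finite dimensionality of $C_0$ to a finite set comprising a system of matrix units for $E$, together with $1_A$ and approximants of $h_0x$, produces a finite-dimensional $C'\subseteq C_0$ with $1_A\in C'$, with $E\ses C'$, and with $hx\in_\eta C'$ for a small $\eta$. Since $E$ is finite dimensional and $E\ses C'$, Christensen's Theorem \ref{chr the} supplies a partial isometry $v$ with $\|v-1_A\|<120\sqrt{\delta}$ and $vEv^*\subseteq C'$; being a partial isometry within distance $<1$ of $1_A$, this $v$ is in fact a unitary. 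Setting $C:=v^*C'v$ then gives $E=v^*(vEv^*)v\subseteq C$, while $1_A\in C$ and $hx\in_{\epsilon}C$ follow because conjugation by the near-identity unitary $v$ moves elements of $C'$ by at most $2\|v-1_A\|$ times their norm. Running the same construction on $D_0$ with its own near-identity unitary gives finite-dimensional $D\ni 1_A$ with $E\subseteq D$ and $(1_A-h)x\in_\epsilon D$. Finally, since $h\approx_{2\delta'}h_0$ one has $\|[h,x]\|<\epsilon$ and $(1_A-h)hx\approx h_0(1_A-h_0)x\in_\epsilon E$, so all three conditions hold once $\delta$ (hence $\delta'$ and $\eta$) is taken small enough.

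The main obstacle is the coordination in the previous paragraph: upgrading the approximate containment $E\ses C_0$ to an honest inclusion $E\subseteq C$ while simultaneously preserving $hx\in_\epsilon C$ and $1_A\in C$. The key device is to apply Christensen's theorem to move $E$ inside a finite-dimensional $C'\subseteq C_0$ by a unitary $v$ close to $1_A$, and then to conjugate $C'$ rather than $E$, so that $E$ itself (and the fixed element $h\in E$) is left untouched while the ambient algebra is adjusted to contain it. Tracking the finitely many tolerances so that every perturbation stays below $\epsilon$ is the routine but essential bookkeeping; the only genuinely nontrivial inputs are Corollary \ref{good h 2} and Christensen's perturbation theorem.
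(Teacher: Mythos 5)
Your argument is correct in outline and uses the same two essential ingredients as the paper's proof --- the reformulation of decomposability and Christensen's perturbation theorem (Theorem \ref{chr the}), applied so that the ambient algebras $C'$, $D'$ are conjugated onto $E$ rather than the other way around --- but your reduction is genuinely cleaner than the paper's. The paper starts from Lemma \ref{good h}, where $C_0$, $D_0$, $E_0$ carry their own (possibly proper) units and $h=h_{E_0}+p$ with $p,q$ projections; it then spends most of the proof on projection surgery (several rounds of Lemma \ref{close lem}, Lemma \ref{proj under}, and unitary conjugation to force $1_{E_0}$, $p$, $q$ into the right subalgebras, and the construction of $E$ as the span of $E_1$, $p$, $q$). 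You instead invoke Corollary \ref{good h 2} with $\mathcal{C}=\mathcal{D}_0$, so that $C_0,D_0,E_0$ already contain $1_A$ and $h_0\in E_0$, and all of that bookkeeping evaporates; the observation that a projection within distance $<1$ of $1_A$ equals $1_A$ then puts the unit into each finite-dimensional subalgebra for free. This is a legitimate and worthwhile simplification: Corollary \ref{good h 2} applies since $\mathcal{D}_0$ contains $\C$ and is closed under finite sums and isomorphism. Your rescaling remark for the ``if'' direction is also a point the paper leaves implicit.

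One step needs repair. To feed Christensen's theorem you must produce a finite-dimensional $C'\subseteq C_0$ with $E\subseteq_{\epsilon'} C'$ for $\epsilon'\leq 10^{-4}$, and you propose to do this by approximating a system of matrix units of $E$ inside $C_0$. But $E\subseteq_\delta C_0$ only gives each matrix unit to within $\delta$, and passing from control on matrix units to control on the whole unit ball of $E$ costs a factor comparable to $\dim E$ --- a quantity determined only \emph{after} $\delta$ is fixed, so you cannot guarantee $\delta\cdot O(\dim E)\leq 10^{-4}$ by choosing $\delta$ first. The fix is standard and dimension-free: cover the (compact) unit ball of $E$ by a finite $\eta$-net, use $E\subseteq_\delta C_0$ to pick a $\delta$-approximant in the unit ball of $C_0$ for each net point, and let $C'$ be a finite-dimensional subalgebra of $C_0$ approximating those finitely many elements to within $\eta$; this yields $E\subseteq_{\delta+O(\eta)} C'$, which suffices. (The paper asserts the analogous containment $E_1\subseteq_{2\delta}C_1$ without comment; this is the argument behind it.) With that substitution your proof is complete.
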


\begin{proof} 
Using Corollary \ref{good h 2}, a $C^*$-algebra $A$ with the property in the statement implies that has complexity rank at most one.  Assume then that $A$ has complexity rank at most one, and let $X$ be a finite subset of the unit ball of $A$, and let $\epsilon>0$.  Fix $\delta>0$, to be chosen by the rest of the proof in a way depending only on $\epsilon$.  Throughout the proof, anything called ``$\delta_n$'' for some $n$ is a positive constant that depends only on the original $\delta$, and tends to zero as $\delta$ tends to zero.  

Let $h_0$, $C_0$, $D_0$, and $E_0$ satisfy the conclusion of Lemma \ref{good h} for $X$ and $\delta$; in particular, then, each of $C_0$, $D_0$, and $E_0$ are unital and locally finite-dimensional $C^*$-subalgebras of $A$ (although not necessarily with the same unit as $A$) and we can write $h=h_{E_0}+p$, where $h_{E_0}\in E_0$ is a positive contraction, $p\in C_0$ is a projection that is orthogonal to $1_{E_0}$, and $q:=1_A-1_{E_0}-p$ is a projection in $D_0$.  

Choose a finite-dimensional $C^*$-subalgebra $E_1$ of $E_0$ that contains the unit $1_{E_0}$ of $E_0$ (whence $1_{E_0}$ is also the unit of $E_1$), and is such that $h(1_A-h)x\in_{2\delta}E_1$ for all $x\in X$, and such that $h_{E_0}\in_{2\delta} E_1$.  Choose a finite-dimensional $C^*$-subalgebra $C_1$ of $C_0$ such that $E_1\subseteq_{2\delta} C_1$, $hx\in_{2\delta} C_1$ for all $x\in X$, so that $p\in_{2\delta} C_1$ and so that $1_{E_0}\in_{2\delta} C_1$.  As $1_{E_0}\in _{2\delta} C_1$, Lemma \ref{close lem} gives a projection $p_{CE}\in C_1$ such that $\|1_{E_0}-p_{CE}\|<4\delta$.  As long as $\delta$ is suitably small, Lemma \ref{proj under} gives $\delta_1>0$ and a unitary $u\in A$ such that $\|u-1_A\|<\delta_1$ and so that $up_{CE}u^*=1_{E_0}$.  Define $C_2:=uC_1u^*$.  Then $1_{E_0}\in C_2$, and for some $\delta_2>0$, we have that $E_1\subseteq_{\delta_2} C_2$, $hx\in_{\delta_2} C_2$ for all $x\in X$, and that $p\in_{\delta_2} C_2$.  As $p\in_{\delta_2} (1_A-1_{E_0})C_2(1_A-1_{E_0})$, we similarly find a projection $p_C\in (1_A-1_{E_0})C_2(1_A-1_{E_0})$ and a unitary $v\in (1_A-1_{E_0})A(1_A-1_{E_0})$ such that for some $\delta_3>0$, $\|v-(1_A-1_{E_0})\|<\delta_3$, and such that $vp_Cv^*=p$.  Define $C_3:=(1_{E_0}+v)C_2(1_{E_0}+v^*)$.  Then $C_3$ is a finite-dimensional $C^*$-subalgebra of $A$ that contains $p$ and $1_{E_0}$, and such that there is $\delta_4>0$ such that $E_1\subseteq_{\delta_4} C_3$ and  $hx\in_{\delta_4} C_3$ for all $x\in X$.   Analogously, find a finite-dimensional $C^*$-subalgebra $D_3$ of $D_0$ that contains $q$ and $1_{E_0}$, and such that $E_1\subseteq_{\delta_4} D_3$ and $(1-h)x\in_{\delta_4} D_3$ for all $x\in X$.

Now, let $E_2$ be the (finite-dimensional) $C^*$-subalgebra of $A$ spanned by $E_1$ and $p$ and $q$, and let $C_4$ (respectively, $D_4$) be the (finite-dimensional) $C^*$-subalgebra of $A$ spanned by $C_3$ (respectively $D_3$) and $1_A$.  These $C^*$-algebras $E_2$, $C_4$, and $D_4$ satisfy the following conditions: all contain $1_{E_0}$, $p$ and $q$ (and therefore $1_A$); $E_2\subseteq_{\delta_4} D_4$ and $(1-h)x\in_{\delta_4} D_4$ for all $x\in X$; $E_2\subseteq_{\delta_4} C_4$ and $hx\in_{\delta_4} C_4$ for all $x\in X$; $h_{E_0}\in_{2\delta} E_2$.  Define $E:=E_2$ and use Lemma \ref{close lem} part \eqref{pos part} to choose a positive contraction $h_E$ in $1_{E_0}E1_{E_0}=E_1$ such that $h_E\approx_{4\delta} h_{E_0}$.

Now, using Theorem \ref{chr the}, if $\delta_4\leq 10^{-4}$ there exists a partial isometry $w_C\in A$ such that $w_CEw_C^*\subseteq C_4$, $w_C^*w_C=1_{E}$, and so that $\|w_C-1_{E}\|\leq 120 \sqrt{\delta_4}=:\delta_5$.  As $1_{E}=1_A$, $w_C$ must be unitary as long as $\delta$ is small enough that $120\sqrt{\delta_4}<1$.  Assuming this, define $C_5:=w_C^*C_4w_C$, so $C$ contains $E$, and satisfies $hx\in_{\delta_6} C$ for some $\delta_6>0$ and all $x\in X$.  Similarly, there is a unitary $w_D\in A$ such that $\|w_D-1_{E}\|\leq \delta_5$, and so that $w_DEw_D^*\subseteq D_4$.  Define $D:=w_D^*D_4w_D$.  At this point, the reader can check that the $C^*$-subalgebras $C$, $D$, and $E$ together with $h:=h_E+p$ satisfy the conditions in the statement of this proposition with respect to some $\delta_7>0$.  Taking the original $\delta$ suitably small, we are done.
\end{proof}

\subsection{Weak finite complexity}\label{wfc con sec}

The main motivation for introducing finite complexity is that it gives a sufficient condition for a $C^*$-algebra to satisfy the UCT.  In contrast, the weaker version that we introduce here does not obviously have any $K$-theoretic consequences.  Instead, we introduce it as it seems of some interest as a structural property in its own right, and as it serves as a bridge between complexity rank and some more established dimension notions for $C^*$-algebras like nuclear dimension and real rank; these relations will be explored in the rest of this subsection, and in Section \ref{wc1 sec} below.

\begin{definition}\label{d3}
Let $\mathcal{C}$ be a class of unital $C^*$-algebras.  A unital $C^*$-algebra $A$ \emph{weakly decomposes over $\mathcal{C}$} if for every finite subset $X$ of $A$ and every $\epsilon>0$ there exist $C^*$-subalgebras $C$ and $D$ of $A$ that are in the class $\mathcal{C}$, and a positive contraction $h\in A$ such that:
\begin{enumerate}[(i)]
\item $\|[h,x]\|< \epsilon$ for all $x\in X$;
\item $hx\in_\epsilon C$ and  $(1-h)x\in_\epsilon D$ for all $x\in X$.
\end{enumerate}
\end{definition}

In other words, weak decomposability is like decomposability, but with the conditions on the `intersection' $E$ dropped.

\begin{definition}\label{f c 2}
Let $\alpha$ be an ordinal number.
\begin{enumerate}[(i)]
\item If $\alpha=0$, let $\mathcal{WD}_0$ be the class of unital $C^*$-algebras that are locally finite-dimensional.
\item If $\alpha>0$, let $\mathcal{WD}_\alpha$ be the class of unital $C^*$-algebras that weakly decompose over $C^*$-algebras in $\bigcup_{\beta<\alpha} \mathcal{WD}_\beta$.
\end{enumerate}
A $C^*$-algebra $D$ has \emph{weak finite complexity} if it is in $\mathcal{WD}_\alpha$ for some $\alpha$, in which case its \emph{weak complexity rank} is the smallest possible $\alpha$.
\end{definition}

Clearly the weak complexity rank of a $C^*$-algebra is bounded above by its complexity rank.  We will see later in the paper (see Corollary \ref{wcr1 is not cr1}) that the two are different in general.  

In the remainder of this subsection, we discuss two basic consequences of weak finite complexity: the first gives a weak existence of projections property (see \cite{Blackadar:1994tx} for background), and the second gives bounds on nuclear dimension (see \cite{Winter:2010eb} for background).   

Here is the weak existence of projections property; see Subsection \ref{rr0 ss} below for a stronger conclusion under stronger hypotheses.    

\begin{lemma}\label{wd implies lp}
If $A$ is a unital $C^*$-algebra with finite weak complexity, then the span of the projections in $A$ is dense.
\end{lemma}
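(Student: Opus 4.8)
The plan is to prove, by transfinite induction on the ordinal $\alpha$, the formally stronger statement that \emph{every} unital $C^*$-algebra in $\mathcal{WD}_\alpha$ has dense span of projections; the lemma then follows, since a $C^*$-algebra of finite weak complexity lies in $\mathcal{WD}_\alpha$ for some $\alpha$. Throughout I will write $P(A)$ for the complex-linear span of the projections of $A$, and use the trivial but crucial observation that if $B\subseteq A$ is a $C^*$-subalgebra then every projection of $B$ is a projection of $A$, so that $P(B)\subseteq P(A)$.

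For the base case $\alpha=0$, the algebra $A$ is locally finite-dimensional. Given $a\in A$ and $\epsilon>0$, Definition \ref{local in} supplies a finite-dimensional $C^*$-subalgebra $C$ with $a\in_\epsilon C$, say $\|a-c\|<\epsilon$ for some $c\in C$. Writing $c=\tfrac{1}{2}(c+c^*)+i\cdot\tfrac{1}{2i}(c-c^*)$ expresses $c$ as a combination of two self-adjoint elements of $C$, each of which has finite spectrum; the spectral theorem in the finite-dimensional algebra $C$ then writes each as a finite linear combination of its spectral projections, so $c\in P(C)\subseteq P(A)$. Hence $a$ lies within $\epsilon$ of $P(A)$.

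For the inductive step, suppose the claim holds for every $\beta<\alpha$ and let $A\in\mathcal{WD}_\alpha$, so that $A$ weakly decomposes over $\bigcup_{\beta<\alpha}\mathcal{WD}_\beta$. Fix $a\in A$ and $\epsilon>0$, and apply Definition \ref{d3} to the finite set $X=\{a\}$ with tolerance $\delta:=\epsilon/4$: this produces $C^*$-subalgebras $C,D\in\bigcup_{\beta<\alpha}\mathcal{WD}_\beta$ and a positive contraction $h\in A$ with $ha\in_\delta C$ and $(1-h)a\in_\delta D$. The key point is the exact decomposition $a=ha+(1-h)a$. Choose $c\in C$ and $d\in D$ with $\|ha-c\|<\delta$ and $\|(1-h)a-d\|<\delta$; then, since each of $C$ and $D$ lies in some $\mathcal{WD}_\beta$ with $\beta<\alpha$, the inductive hypothesis gives $c'\in P(C)$ and $d'\in P(D)$ with $\|c-c'\|<\delta$ and $\|d-d'\|<\delta$. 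Combining these estimates yields
$$
\|a-(c'+d')\|\leq \|ha-c'\|+\|(1-h)a-d'\|<2\delta+2\delta=\epsilon,
$$
and $c'+d'\in P(C)+P(D)\subseteq P(A)$, completing the induction.

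I expect no genuine obstacle here: note in particular that the almost-centrality condition (i) of Definition \ref{d3} is never used, and that the argument applies verbatim to arbitrary (not necessarily self-adjoint) $a$ precisely because the splitting $a=ha+(1-h)a$ is exact rather than approximate. The only steps needing minor care are the constant bookkeeping in the final estimate and the containment $P(C)+P(D)\subseteq P(A)$, both of which are routine.
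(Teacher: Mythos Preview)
Your proof is correct and follows essentially the same approach as the paper: transfinite induction on the weak complexity rank, splitting $a=ha+(1-h)a$, and applying the inductive hypothesis to $C$ and $D$. The only differences are cosmetic (you use $\delta=\epsilon/4$ where the paper uses $\epsilon/3$ and $\epsilon/6$), and your observation that the almost-centrality condition is never used is accurate.
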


\begin{proof}
We proceed by transfinite induction on the weak complexity rank.  The base case is clear, so let $\alpha>0$ be an ordinal number and assume the result holds for all ordinals $\beta<\alpha$.  Let $a\in A$ be arbitrary, let $\epsilon>0$, and let $h$, $C$ and $D$ be as in the definition of weak decomposability with respect to $X=\{a\}$ and $\epsilon/3$.  Choose $c\in C$ and $d\in D$ with $\|ha-c\|< \epsilon/3$ and $\|(1-h)a-d\|< \epsilon/3$.  The inductive hypothesis implies that the span of the projections in $C$ and $D$ are dense, so each of $c$ and $d$ can be approximated within $\epsilon/6$ by a linear combination of projections. Hence $c+d$ can be approximated within $\epsilon/3$ by a linear combination of projections.  Putting this together with the fact that $\|a-(c+d)\|<2\epsilon/3$, we are done.
\end{proof}

It is shown in \cite[Lemma 7.3]{Willett:2021te} that $C^*$-algebras of finite complexity are always nuclear.  Here we give a variant of this result.  First we need to recall the definition of nuclear dimension from \cite[Definition 2.1]{Winter:2010eb}.  

\begin{definition}\label{fnd def}
A completely positive map $\phi:A\to B$ between $C^*$-algebras has \emph{order zero} if whenever $a,b\in A$ are positive elements such that $ab=0$, we have that $\phi(a)\phi(b)=0$. 

A $C^*$-algebra $A$ has \emph{nuclear dimension at most $n$} if for any finite subset $X$ of $A$ and any $\epsilon>0$ there exists a finite dimensional $C^*$-algebra $F$ and completely positive maps 
$$
\xymatrix{ A \ar[dr]_-\psi & & A \\ & F \ar[ur]_-\phi & }
$$
such that:
\begin{enumerate}[(i)]
\item $\phi(\psi(x))\approx_\epsilon x$ for all $x\in X$;
\item $\psi$ is contractive;
\item $F$ splits as a direct sum $F=F_0\oplus \cdots \oplus F_n$ and each restriction $\phi|_{F_i}$ is contractive and order zero. 
\end{enumerate}
\end{definition}

We recall a useful estimate of Pedersen, which is (a special case of) the main result of \cite{Pedersen:1993vh}.

\begin{lemma}\label{pci}
Let $a$ and $b$ be elements of a $C^*$-algebra with $b\geq 0$.  Then 
$$
\|[a,b^{1/2}]\|\leq \frac{5}{4}\|a\|^{1/2}\|[a,b]\|^{1/2}.\eqno\qed
$$
\end{lemma}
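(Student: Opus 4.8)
The plan is to reduce everything to a resolvent (integral) representation of the square-root function and then estimate the resulting commutator termwise. Since the inequality is homogeneous---both sides scale like $\lambda^{1/2}$ under $b\mapsto \lambda b$, and both scale linearly under rescaling of $a$---I may normalize $\|a\|\le 1$ and set $\delta:=\|[a,b]\|$, so the goal becomes $\|[a,b^{1/2}]\|\le \tfrac54\delta^{1/2}$. The starting point is the elementary scalar identity $t^{1/2}=\tfrac1\pi\int_0^\infty \tfrac{t}{t+s}\,s^{-1/2}\,ds$ for $t\ge 0$ (check it by the substitution $s=tu^2$), which by the continuous functional calculus upgrades to the norm-convergent Bochner integral $b^{1/2}=\tfrac1\pi\int_0^\infty b(b+s)^{-1}s^{-1/2}\,ds$; convergence at $0$ and at $\infty$ is clear, since $b(b+s)^{-1}$ is a contraction and is $O(s^{-1})$ as $s\to\infty$.

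First I would commute $a$ through the integral. Using $b(b+s)^{-1}=1-s(b+s)^{-1}$ together with the standard resolvent identity $[a,(b+s)^{-1}]=-(b+s)^{-1}[a,b](b+s)^{-1}$, a short computation gives the key formula
\[
[a,b^{1/2}]=\frac1\pi\int_0^\infty s^{1/2}(b+s)^{-1}[a,b](b+s)^{-1}\,ds .
\]
The integrand is then estimated in two complementary ways. For large $s$ I bound $\|(b+s)^{-1}\|\le s^{-1}$ on both resolvent factors, obtaining $\|s^{1/2}(b+s)^{-1}[a,b](b+s)^{-1}\|\le \delta\,s^{-3/2}$, which is integrable near $\infty$. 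For small $s$ I instead use $\|b(b+s)^{-1}\|\le 1$ to absorb one power of $b$ into the commutator: writing $[a,b]=ab-ba$ gives $(b+s)^{-1}[a,b](b+s)^{-1}=(b+s)^{-1}a\,b(b+s)^{-1}-(b+s)^{-1}b\,a(b+s)^{-1}$, so the integrand is $\le 2\|a\|\,s^{-1/2}\le 2s^{-1/2}$, which is integrable near $0$. Splitting the integral at the crossover point $s=\delta/2$ and integrating the two explicit bounds yields a clean estimate of the form $\|[a,b^{1/2}]\|\le \tfrac{4\sqrt2}{\pi}\,\delta^{1/2}$, with the correct exponents $\|a\|^{1/2}\|[a,b]\|^{1/2}$.

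The main obstacle is that this crude splitting produces the constant $4\sqrt2/\pi\approx 1.80$, which is larger than the sharp value $5/4$ recorded in the statement. Closing this gap is exactly the content of Pedersen's theorem and is the delicate part: rather than bounding the integrand by the pointwise minimum of the two naive resolvent estimates, one must exploit the structure of $[a,(b+s)^{-1}]$ more carefully---for instance through the anticommutator identity $b^{1/2}[a,b^{1/2}]+[a,b^{1/2}]b^{1/2}=[a,b]$ (which one verifies directly by expanding both sides), allowing one to analyze $[a,b^{1/2}]$ via the spectral resolution of $b^{1/2}$ and to interpolate optimally between the ``$\|a\|$-controlled'' and ``$\|[a,b]\|$-controlled'' regimes. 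Since this optimization is precisely what is carried out in \cite{Pedersen:1993vh}, I would invoke that result for the stated constant $5/4$; the resolvent computation above is what makes the inequality transparent and pins down the exponents.
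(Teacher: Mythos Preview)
The paper gives no proof of this lemma at all: it is stated with a trailing \qed and the surrounding text simply says it is ``(a special case of) the main result of \cite{Pedersen:1993vh}''. So the paper's ``approach'' is a bare citation, and your proposal ultimately takes the same route, since after your resolvent computation you also invoke \cite{Pedersen:1993vh} for the constant $5/4$.

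Your added exposition is correct and worth keeping as motivation: the integral representation $b^{1/2}=\tfrac{1}{\pi}\int_0^\infty s^{-1/2}b(b+s)^{-1}\,ds$, the resolvent identity giving $[a,b^{1/2}]=\tfrac{1}{\pi}\int_0^\infty s^{1/2}(b+s)^{-1}[a,b](b+s)^{-1}\,ds$, and the split at $s=\delta/2$ all check out and yield $\|[a,b^{1/2}]\|\le \tfrac{4\sqrt{2}}{\pi}\|a\|^{1/2}\|[a,b]\|^{1/2}$. That already suffices for every application in the paper (only the exponents matter there, never the constant). But as you yourself note, this does not reach $5/4$, and your final paragraph is an honest acknowledgment that the sharp constant is Pedersen's theorem rather than something you have proved. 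So as a proof of the lemma \emph{as stated}, your proposal is not self-contained---it is a citation with commentary, which is exactly what the paper does, minus the commentary.
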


\begin{proposition}\label{loc fnd}
Let $\alpha$ be an ordinal number.
\begin{enumerate}[(i)]
\item \label{finite alpha} If $\alpha=n\in \N\cup\{0\}$, then any $C^*$-algebra in $\mathcal{WD}_n$ has nuclear dimension at most $2^n-1$;
\item \label{infinite alpha} In general, any $C^*$-algebra in $\mathcal{WD}_\alpha$ is locally in the class of $C^*$-algebras that are both in $\mathcal{WD}_\alpha$ and have finite nuclear dimension.
\end{enumerate}
\end{proposition}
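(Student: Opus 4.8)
The plan is to prove both parts by transfinite induction on $\alpha$, using Proposition \ref{loc fnd}'s finite-$n$ statement as the engine and the nuclear dimension bound for order-zero decompositions as the combinatorial heart. For part \eqref{finite alpha}, I expect the exponent $2^n-1$ to arise because each decomposition step (using the positive contraction $h$ and its complement $1-h$) roughly doubles the number of order-zero summands: if $C$ and $D$ each carry nuclear-dimension data with $m$ colors, then cutting by $h$ and $1-h$ produces $2m$ colors, but one can merge to get $2m+1$, and the recursion $a_{n+1}=2a_n+1$ with $a_0=0$ yields $a_n=2^n-1$. The main obstacle will be making the cut-and-combine step produce genuinely \emph{order-zero} maps with \emph{contractive} restrictions, since multiplying an order-zero map by the almost-central contraction $h$ does not literally preserve order zero or contractivity — this is where Pedersen's estimate (Lemma \ref{pci}) and careful functional-calculus arguments enter.

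\textbf{Part (i): the finite case.}
First I would set up the base case $n=0$: a locally finite-dimensional unital $C^*$-algebra has nuclear dimension $0=2^0-1$, which is immediate since one takes $F$ to be the approximating finite-dimensional subalgebra and $\psi,\phi$ the conditional-expectation/inclusion pair (a single order-zero, indeed multiplicative, color). For the inductive step, suppose every algebra in $\mathcal{WD}_n$ has nuclear dimension at most $2^n-1$, and let $A\in\mathcal{WD}_{n+1}$. Given a finite set $X$ and $\epsilon>0$, weak decomposability produces $C,D\in\mathcal{WD}_n$ and a positive contraction $h$ with $\|[h,x]\|<\epsilon$, $hx\in_\epsilon C$, $(1-h)x\in_\epsilon D$. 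By the inductive hypothesis applied to $C$ and $D$, I get nuclear-dimension systems $(\psi_C,\phi_C,F_C)$ and $(\psi_D,\phi_D,F_D)$ each with at most $2^n$ order-zero summands. The idea is to build an approximating system for $A$ by using $h^{1/2}$ to funnel elements into $C$ and $(1-h)^{1/2}$ to funnel them into $D$: define $\psi = \psi_C(h^{1/2}\,\cdot\,h^{1/2}) \oplus \psi_D((1-h)^{1/2}\,\cdot\,(1-h)^{1/2})$ into $F_C\oplus F_D$, and $\phi=\phi_C\oplus\phi_D$ compressed back by the corresponding square roots. The reconstruction $\phi(\psi(x))\approx h\phi_C\psi_C(x)h + (1-h)\phi_D\psi_D(x)(1-h) \approx hxh+(1-h)x(1-h)\approx x$ uses almost-centrality of $h$ together with $h+(1-h)=1$; Lemma \ref{pci} controls the errors from commuting $h^{1/2}$ past $x$. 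The colors combine as $2\cdot 2^n = 2^{n+1}$ order-zero summands, and one must check $2^{n+1}\le 2^{n+1}-1+1$, i.e.\ verify the count lands at $2^{n+1}-1$ by merging one pair of summands (or absorbing a color), which is the delicate bookkeeping step.

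\textbf{Part (ii): the general ordinal case.}
For arbitrary $\alpha$ I would again induct transfinitely, but now the conclusion is the \emph{local} statement: every $A\in\mathcal{WD}_\alpha$ is locally in the class of algebras that are simultaneously in $\mathcal{WD}_\alpha$ and of finite nuclear dimension. The base case $\alpha=0$ is immediate since finite-dimensional algebras have nuclear dimension $0$ and $A$ is their local limit. For the inductive step, given $X\subseteq A$ and $\epsilon>0$, weak decomposability furnishes $C,D\in\bigcup_{\beta<\alpha}\mathcal{WD}_\beta$ and $h$ as above. By the inductive hypothesis, $C$ and $D$ are locally in (finite nuclear dimension $\cap\ \mathcal{WD}_{\beta}$)-algebras, so I can find finite-dimensional-nuclear-dimension subalgebras $C'\subseteq C$, $D'\subseteq D$ approximately containing the relevant elements. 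The point is that the $C^*$-subalgebra $B$ of $A$ generated by $C'$, $D'$, and $h$ (or a suitable finite-dimensional-over-it object) approximately contains $X$, lies in $\mathcal{WD}_\alpha$ by a one-step weak decomposition over $\{C',D'\}\subseteq\bigcup_{\beta<\alpha}\mathcal{WD}_\beta$, and has finite nuclear dimension by the argument of part (i) applied relative to the finite nuclear dimensions of $C'$ and $D'$. The main subtlety here is ensuring $B$ genuinely lies in $\mathcal{WD}_\alpha$ while remaining finite-nuclear-dimensional: one must verify that the same $h$ witnesses weak decomposability of $B$ over the \emph{smaller} algebras $C',D'$, and invoke that a single weak-decomposition step over finite-nuclear-dimension algebras (via the part (i) mechanism, now with the ordinal-indexed finite complexity preserved) keeps nuclear dimension finite. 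I expect this compatibility — keeping membership in $\mathcal{WD}_\alpha$ and finiteness of nuclear dimension simultaneously under the same cut — to be the crux, resolved by choosing $B$ to contain $h$ and closing up appropriately.
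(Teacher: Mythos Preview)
Your overall inductive strategy matches the paper's, and part \eqref{infinite alpha} is essentially what the paper does (it just says ``essentially the same argument works''). But your execution of the inductive step in part \eqref{finite alpha} contains a genuine error that would make the argument fail.

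The problem is your treatment of $\phi$. You propose to take $\phi=\phi_C\oplus\phi_D$ ``compressed back by the corresponding square roots,'' i.e.\ $\phi'(f_C,f_D)=h^{1/2}\phi_C(f_C)h^{1/2}+(1-h)^{1/2}\phi_D(f_D)(1-h)^{1/2}$. This destroys the order-zero property: if $a,b\in F_C$ are orthogonal positives then $\phi_C(a)\phi_C(b)=0$, but $h^{1/2}\phi_C(a)h\phi_C(b)h^{1/2}$ has no reason to vanish since $h$ need not commute with the image of $\phi_C$. Your worry that ``multiplying an order-zero map by $h$ does not preserve order zero'' is exactly right, and compressing $\phi$ is precisely the wrong fix. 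Worse, your claimed reconstruction $hxh+(1-h)x(1-h)\approx x$ is simply false: for almost-central $h$ this is $\approx (h^2+(1-h)^2)x=(1-2h(1-h))x$, which can miss $x$ by up to $\|x\|/2$.

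The paper's fix is to put the square-root compression \emph{only} on $\psi$, and leave $\phi(f_C,f_D)=\phi_C(f_C)+\phi_D(f_D)$ uncompressed. Then each restriction $\phi|_{F_C^{(i)}}$ is still contractive order zero (inherited verbatim from $\phi_C$), and the reconstruction reads
\[
\phi\psi(x)=\phi_C\psi_C(h^{1/2}xh^{1/2})+\phi_D\psi_D((1-h)^{1/2}x(1-h)^{1/2})\approx \phi_C\psi_C(hx)+\phi_D\psi_D((1-h)x)
\]
via Lemma \ref{pci}, and then $\approx hx+(1-h)x=x$ after passing through elements $x_C\in C$, $x_D\in D$ close to $hx$, $(1-h)x$. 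Two further points you omit: (a) $\psi_C$ is defined only on $C$, so you must extend it to $A$ by Arveson before forming $\psi_C(h^{1/2}xh^{1/2})$; (b) since $\phi$ is not contractive (only its $2^{n+1}$ restrictions are), the error estimate needs the bound $\|\phi\|\le 2^{n+1}$, which forces the tolerance in the weak decomposition to be chosen on the order of $\epsilon/2^{n+1}$. Finally, your bookkeeping anxiety is unfounded: $2^n+2^n=2^{n+1}$ colors \emph{is} nuclear dimension $2^{n+1}-1$; there is no merging step.
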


\begin{proof}
We first establish part \eqref{finite alpha} by induction on $n$.  If $A$ belongs to $\mathcal{D}_0$, then it is locally finite dimensional, and this implies nuclear dimension zero: this is essentially contained in \cite[Remark 2.2 (iii)]{Winter:2010eb}, but we give an argument for the reader's convenience.  Let $X\subseteq A$ be a finite subset and let $\epsilon>0$.  Choose a finite-dimensional $C^*$-subalgebra $F$ of $A$ such that $x\in_{\epsilon} F$ for all $x\in X$.  Let $\psi:A\to F$ be any choice of conditional expectation (such exists by the finite-dimensional case of Arveson's extension theorem - see for example \cite[Theorem 1.6.1]{Brown:2008qy}), and let $\phi:F\to A$ be the inclusion $*$-homomorphism; it is straightforward to see that these maps have the right properties. 

Assume then that $N\geq1 $, and the result has been established for all $n<N$.  Let a finite subset $X$ of $A$ and $\epsilon>0$ be given; we may assume $X$ consists of contractions.  Let $C$ and $D$ be $C^*$-subalgebras of $A$ in some class $\mathcal{WD}_n$ for some $n< N$, and let $h\in A$ be a positive contraction as in the definition of weak decomposability with respect to the finite subset $X$ and parameter $\epsilon^2/(25\cdot 2^{2N})$.  The inductive hypothesis implies that $C$ and $D$ have nuclear dimension at most $2^{N-1}-1$.   Choose a set $X_C\subseteq C$ such that for each $x\in X$, there is $x_C\in X_C$ such that $\|hx-x_C\|<\epsilon/(4\cdot 2^N)$. Using finite nuclear dimension, choose completely positive maps $\psi_C:C\to F_C$ and $\phi_C:F_C\to C$ such that $\psi_C$ is contractive, such that $\phi_C(\psi_C(x))\approx_{\epsilon/8} x$ for all $x\in X_C$, and such that $F_C$ decomposes into $2^{N-1}$ direct summands such that the restriction of $\phi_C$ to each summand is contractive and order zero.  Let $X_D$, $\psi_D$, $\phi_D$, and $F_D$ have analogous properties with respect to $D$ and with $h$ replaced by $1-h$.

Now, using Arveson's extension theorem, we may extend each of $\psi_C$ and $\psi_D$ to contractive completely positive (ccp) maps defined on all of $A$ (we keep the same notation for the extensions).  Define $F:=F_C\oplus F_D$, and 
$$
\psi:A\to F,\quad a\mapsto \psi_C(h^{1/2}ah^{1/2})+\psi_D((1-h)^{1/2}a(1-h)^{1/2}),
$$
which is easily seen to be ccp.  Define moreover 
$$
\phi:F\to A,\quad (f_C,f_D)\mapsto \phi_C(f_C)+\phi_D(f_D).
$$
To show that $A$ has nuclear dimension at most $2^N-1$, it suffices to show that $\phi(\psi(x))\approx_\epsilon x$ for any $x\in X$; the remaining properties are easily verified.  First note that as $\|[h,x]\|<\epsilon^2/(25\cdot 2^{2N})$, we have that $\|[h^{1/2},x]\|<\epsilon/(4\cdot 2^N)$ and $\|[(1-h)^{1/2},x]\|<\epsilon/(4\cdot 2^N)$ by Lemma \ref{pci}.  Hence
\begin{align*}
\psi(x) & =\psi_C(h^{1/2}xh^{1/2})+\psi_D((1-h)^{1/2}x(1-h)^{1/2}) \\ & \approx_{\epsilon/(4\cdot 2^N)} \psi_C(hx)+\psi_D((1-h)x).
\end{align*}
Choose $x_C\in X_C$ and $x_D\in X_D$ such that 
\begin{equation}\label{hx c d}
\|hx-x_C\|<\epsilon/(4\cdot 2^N) \quad \text{and} \quad \|(1-h)x-x_D\|<\epsilon/(4\cdot 2^N)
\end{equation}  
so we get
$$
\psi(x)\approx_{\epsilon / (2\cdot 2^N)} \psi_C(x_C)+\psi_D(x_D).
$$
As $\|\phi\|\leq 2^N$, this implies that 
$$
\phi(\psi(x))\approx_{\epsilon/2} \phi(\psi_C(x_C)+\psi_D(x_D))=\phi_C(\psi_C(x_C))+\phi_D(\psi_C(x_D)).
$$
By choice of $\phi_C$ and $\psi_C$, we have that $\phi_C(\psi_C(x_C))\approx_{\epsilon/8} x_C$, and similarly for $x_D$, whence 
$$
\phi(\psi(x))\approx_{3\epsilon/4} x_C+x_D.
$$
Finally, using line \eqref{hx c d} and that $N\geq 1$, we see that $x_C+x_D\approx_{\epsilon/4} hx+(1-h)x=x$, and so $\phi(\psi(x))\approx_{\epsilon} x$ as required.

Part \eqref{infinite alpha} can be proved using transfinite induction: essentially the same argument as used above for case  \eqref{finite alpha} works.  
\end{proof}

\subsection{Tensor products}

In this subsection we establish a permanence result for the complexity rank of tensor products: see Proposition \ref{perm rem} below.  For readability, we just state the result for complexity rank; the analogous fact holds for weak complexity rank as well, with a (simpler) version of the same proof.

The key ingredient we need is a result of Christensen on inclusions of tensor products of nuclear $C^*$-algebras: it follows by combining \cite[Proposition 2.6 and Theorem 3.1]{Christensen:1980kb}.

\begin{proposition}[Christensen]\label{tens in}
Let $E$ and $C$ be $C^*$-subalgebras of a $C^*$-algebra $A$ such that $E\ses C$ for some $\epsilon>0$, and let $B$ be a $C^*$-algebra.  Assume moreover that $E$ and $B$ are nuclear.   Then $E\otimes B \subseteq_{6\epsilon} C\otimes B$. \qed
\end{proposition}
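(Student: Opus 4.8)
The plan is to deduce the statement from the two ingredients indicated in the text: first, to use nuclearity of $E$ to replace the merely ``one-dimensional'' near containment $E\ses C$ by a \emph{completely bounded} completely positive realization (this is the content of Christensen's Proposition 2.6), and then to tensor that map with $\mathrm{id}_B$, using nuclearity of $B$ to pass back from a weak closure into the norm-closed $C\otimes B$ (this is the role of Theorem 3.1). Throughout I would represent $A$ faithfully and nondegenerately on a Hilbert space $H$ and write $\overline{E}$ and $\overline{C}$ for the weak-$*$ closures of $E$ and $C$ in $B(H)$; the point of passing to weak closures is that nuclearity of $E$ makes $\overline{E}\cong E^{**}$ injective and semidiscrete, and this is exactly what lets one restore complete positivity after a perturbation.

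The heart of the matter is the construction of a completely positive contraction $\phi\colon E\to\overline{C}$ with $\|\phi-\iota\|_{\mathrm{cb}}\le 2\epsilon$, where $\iota\colon E\hookrightarrow\overline{E}$ is the inclusion. I would obtain $\phi$ as follows. By semidiscreteness of $\overline{E}$ there are normal contractive completely positive maps $\alpha_\lambda\colon\overline{E}\to M_{n_\lambda}$ and $\beta_\lambda\colon M_{n_\lambda}\to\overline{E}$ with $\beta_\lambda\alpha_\lambda\to\mathrm{id}_{\overline{E}}$ in the point-weak-$*$ topology. Each $\beta_\lambda$ is determined by the finitely many images $\beta_\lambda(e_{ij})$ of matrix units, and these lie in the unit ball of $\overline{E}$; applying the near containment $E\ses C$ to (a slight cut-down of) these elements, together with Lemma \ref{close lem} to keep things positive and contractive, perturbs $\beta_\lambda$ to a completely positive map into $\overline{C}$, the repair of complete positivity being carried out using injectivity of $\overline{E}$. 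Taking a point-weak-$*$ cluster point of the perturbed compositions $\beta_\lambda\alpha_\lambda$ and restricting to $E$ yields $\phi$. The essential gain over a naive entrywise approximation (which would cost a factor growing with the matrix size) is that $\phi$ is genuinely completely positive, so the estimate $\|\phi(e)-e\|\le 2\epsilon\|e\|$ automatically \emph{amplifies}: $\|\phi^{(k)}(u)-u\|\le 2\epsilon\|u\|$ for every $u\in M_k(E)$, uniformly in $k$. This dimension-free control is the whole reason nuclearity of $E$ is indispensable.

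Granting $\phi$, the tensoring step is comparatively formal. For $x$ in the unit ball of $E\otimes B$ the element $\xi:=(\phi\otimes\mathrm{id}_B)(x)$ lies in $\overline{C}\otimes B$ and satisfies $\|\xi-x\|\le\|\phi-\iota\|_{\mathrm{cb}}\le 2\epsilon$. It remains to replace $\xi$ by a contraction \emph{inside} the norm-closed algebra $C\otimes B$. Here nuclearity of $B$ enters: using the completely positive approximation property of $B$, fix contractive completely positive $s\colon B\to M_k$ and $t\colon M_k\to B$ with $(\mathrm{id}\otimes ts)$ close to the identity on the relevant finite data; then $(\mathrm{id}_E\otimes s)(x)\in E\otimes M_k=M_k(E)$ is a contraction that, by the amplified estimate of the previous paragraph, is approximated within $2\epsilon$ by the contraction $\phi^{(k)}\big((\mathrm{id}_E\otimes s)(x)\big)\in M_k(\overline C)$, which a Kaplansky-density argument inside $M_k(\overline{C})=\overline C\otimes M_k$ moves into $M_k(C)=C\otimes M_k$ at a further bounded cost. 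Applying the contraction $\mathrm{id}_C\otimes t$ then lands the result in $C\otimes B$, and collecting the three contributions gives a contraction $y\in C\otimes B$ with $\|x-y\|<6\epsilon$; the constant $6$ is exactly the bookkeeping of these losses.

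I expect the main obstacle to be the second paragraph, namely the construction of $\phi$ with control that is simultaneously (i) completely bounded and hence stable under amplification, and (ii) linear in $\epsilon$ with a constant independent of the approximating matrix sizes $n_\lambda$. Perturbing the maps $\beta_\lambda$ into $\overline{C}$ \emph{while preserving complete positivity} is delicate, and it is precisely here that injectivity/semidiscreteness of $\overline{E}$ (equivalently, nuclearity of $E$) does the work that breaks the apparent circularity with the matrix amplification one is trying to prove. Everything after $\phi$ is produced is a routine, if slightly fiddly, sequence of contractive estimates and density arguments.
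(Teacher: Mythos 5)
The paper does not actually prove this proposition: it is imported verbatim from Christensen, and your two-step skeleton --- nuclearity of $E$ yielding a completely positive contraction $\phi$ with $\|\phi-\iota\|_{\mathrm{cb}}\le 2\epsilon$, followed by a CPAP-of-$B$ tensor argument with total loss $6\epsilon$ --- is a faithful reconstruction of how the two cited results divide the labor. The genuine flaw is in your passage from the weak closure back to the norm-closed algebra. Kaplansky density gives strong-operator approximation of the unit ball, \emph{not} norm approximation, and in an arbitrary faithful representation the norm distance to $M_k(C)$ is not controlled by the norm distance to $M_k(\overline{C})$: take $C=\K$ acting on $\ell^2(\N)$, so that $\overline{C}=B(\ell^2(\N))$; the unilateral shift is then at distance $0$ from $\overline{C}$ but at distance $1$ from $C$. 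So the step ``a Kaplansky-density argument inside $M_k(\overline{C})$ moves into $M_k(C)$ at a further bounded cost'' fails as stated, and since you allowed $H$ to be any faithful representation, your final estimate does not go through. The repair is to fix the representation: take $H$ to be the universal representation of $C^*(E\cup C)$ (equivalently, let $\phi$ take values in $C^{**}$). There every bounded functional extends to a weak-$*$ continuous functional on the weak closure, so any norm-one functional annihilating $M_k(C)$ annihilates $M_k(\overline{C})$ by Kaplansky density and weak-$*$ continuity, and Hahn--Banach gives $d\big((\mathrm{id}_E\otimes s)(x),\,M_k(C)\big)\le \|(\mathrm{id}_E\otimes s)(x)-\phi^{(k)}((\mathrm{id}_E\otimes s)(x))\|\le 2\epsilon$ with no extra loss at all. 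A smaller instance of the same confusion occurs earlier: the elements $\beta_\lambda(e_{ij})$ lie in $\overline{E}$, not in $E$, so before perturbing you must record that $E\ses C$ passes to weak-$*$ closures; that step \emph{is} a routine Kaplansky/lower-semicontinuity argument, because there you only need a nearby element of $\overline{C}$ rather than of $C$.

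Separately, the crux of the whole proposition --- producing $\phi$ completely positive with the \emph{dimension-free} bound $\|\phi-\iota\|_{\mathrm{cb}}\le 2\epsilon$ --- is asserted (``the repair of complete positivity being carried out using injectivity of $\overline{E}$'') rather than carried out. You correctly identify why the naive Choi-matrix perturbation fails (its cost grows with $n_\lambda$), but your sketch offers no mechanism by which injectivity of $\overline{E}$ converts an entrywise perturbation of $\beta_\lambda$ into a completely positive map at cb-distance $O(\epsilon)$ uniformly in $n_\lambda$; this is exactly the content of the harder of Christensen's two results and cannot be waved through. Since the paper itself treats the proposition as a black box, deferring to Christensen at this point is legitimate --- but then the honest form of your argument is: cite Christensen for $\phi$, and run the tensor bookkeeping in the universal representation as corrected above.
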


\begin{lemma}\label{fin dim tp}
Let $B$ be a nuclear and unital $C^*$-algebra, and assume that $A$ is a unital $C^*$-algebra that decomposes over some class $\mathcal{C}$ of nuclear and unital $C^*$-algebras.  Then $A\otimes B$ decomposes over the class of $C^*$-algebras $C\otimes B$ with $C$ in $\mathcal{C}$.
\end{lemma}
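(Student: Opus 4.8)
The plan is to take a decomposition of $A$ over $\mathcal{C}$ and tensor everything with $B$. Given a finite subset $Y$ of $A \otimes B$ and $\epsilon > 0$, I would first reduce to the case where $Y$ consists of elementary tensors $a \otimes b$: since the algebraic tensor product is dense in $A \otimes B$, every element of $Y$ is approximated within $\epsilon/3$ (say) by a finite sum of such elementary tensors, so it suffices to arrange the conclusion for the finite set $X \subseteq A$ of all the left-hand factors $a$ appearing (together with $1_A$), with the $b$-factors controlled by their norms. Let $M$ be a bound on the norms of the relevant $b$-factors. Applying the hypothesis (in the strong form of Corollary \ref{good h 2}, so that $C, D, E$ contain $1_A$ and $h \in E$), I obtain $C, D, E \in \mathcal{C}$ and a positive contraction $h \in E$ satisfying conditions (i)--(iii) of Definition \ref{ais} with respect to $X$ and a parameter $\delta > 0$ to be chosen depending only on $\epsilon$ and $M$.

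Next I would propose the candidate data for $A \otimes B$: take the subalgebras $C \otimes B$, $D \otimes B$, $E \otimes B$ (all in the target class by hypothesis), and the positive contraction $h \otimes 1_B \in E \otimes B$, which is again a positive contraction. The commutator condition is immediate: $[h \otimes 1_B, a \otimes b] = [h,a] \otimes b$, so $\|[h \otimes 1_B, a \otimes b]\| = \|[h,a]\|\,\|b\| \le M\delta$, and these combine additively over the finitely many elementary-tensor summands. Likewise $(h \otimes 1_B)(a \otimes b) = (ha) \otimes b$, and since $ha \in_\delta C$ gives some $c \in C$ with $\|ha - c\| < \delta$, we get $\|(ha)\otimes b - c \otimes b\| < M\delta$, so $(h\otimes 1_B)(a\otimes b) \in_{M\delta} C \otimes B$; the analogous estimates hold for $(1-h)\otimes 1_B$ into $D \otimes B$ and for $(h(1-h))\otimes 1_B$ into $E \otimes B$. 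Summing the contributions from the elementary tensors comprising each original element of $Y$ keeps everything controlled.

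The step requiring genuine care is condition \eqref{ais e in} of Definition \ref{ais}, namely $E \otimes B \ses C \otimes B$ and $E \otimes B \ses D \otimes B$. This is exactly where I would invoke Proposition \ref{tens in}: from $E \ses C$ (with $E$ and $B$ nuclear, which holds since $\mathcal{C}$ consists of nuclear algebras) it yields $E \otimes B \subseteq_{6\delta} C \otimes B$, and similarly for $D$. The multiplication condition \eqref{ais mult} is handled by noting that for $e \otimes b \in (E \otimes B)_1$ we have $(h \otimes 1_B)(e \otimes b) = (he) \otimes b$ with $he \in_\delta E$, hence $(h\otimes 1_B)(e\otimes b) \in_\delta E \otimes B$ by the same tensoring estimate; one then extends to arbitrary unit-ball elements of $E \otimes B$ by density and linearity. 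I expect the main obstacle to be purely bookkeeping: tracking how the single parameter $\delta$ must be chosen (as a function of $\epsilon$, $M$, and the number of elementary-tensor summands, together with the factor $6$ from Proposition \ref{tens in}) so that all four conditions come out below $\epsilon$ simultaneously. Since all the constants are explicit and linear in $\delta$, choosing $\delta$ small enough at the end closes the argument.
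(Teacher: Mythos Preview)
Your approach is essentially the paper's: tensor the decomposition data by $B$ and invoke Proposition~\ref{tens in} for condition~\eqref{ais e in}. Two points need fixing, though.

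First, Corollary~\ref{good h 2} carries extra hypotheses on $\mathcal{C}$ (containing $\C$, closed under finite direct sums and isomorphism) that are not assumed in Lemma~\ref{fin dim tp}. The paper instead uses Lemma~\ref{good h}, which applies to an arbitrary class $\mathcal{C}$; you should do the same.

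Second, your argument for condition~\eqref{ais mult} does not work as written. You verify $(h\otimes 1_B)(e\otimes b)\in_\delta E\otimes B$ for elementary tensors in the unit ball and then ``extend by density and linearity''. But a general element of $(E\otimes B)_1$ is only approximable by \emph{sums} of elementary tensors whose individual norms are not controlled, so the $\delta$-errors need not add up to something small. The right observation (and what the paper uses) is that from Lemma~\ref{good h} one has $h=h_E+p$ with $h_E\in E$ and $p\,1_E=0$, hence $he=h_Ee\in E$ \emph{exactly} for all $e\in E$; consequently $(h\otimes 1_B)(E\otimes B)\subseteq E\otimes B$ on the nose, and condition~\eqref{ais mult} holds with no error at all. (Had your appeal to Corollary~\ref{good h 2} been legitimate, the same conclusion would follow even more directly from $h\in E$.)
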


\begin{proof}
Let $X$ be a finite subset of $A\otimes B$, and let $\epsilon>0$.  Up to an approximation, we may assume every element of $X$ is a finite sum of elementary tensors.  Fix such a finite sum $x=\sum_{i=1}^n a_i\otimes b_i$ for each $x\in X$, and let $X_A$ be the finite subset of $A$ consisting of all the elements $a_i$ appearing in such a sum for some $x\in X$.  Let $M$ be the maximum of the sums $\sum_{i=1}^n \|b_i\|$ as $x$ ranges over $X$.  We claim that if $\delta=\min\{\epsilon/M,\epsilon/6\}$ and if $E$, $C$, and $D$ are $C^*$-subalgebras of $A$ in the class $\mathcal{C}$ and $h\in A$ is a positive contraction that satisfy the conditions in Lemma \ref{good h} with respect to $X_A$ and $\delta$, then $E\otimes B$, $C\otimes B$, $D\otimes B$, and $h\otimes 1_B$ satisfy the conditions in Definition \ref{ais} with respect to $X$ and $\epsilon$; this will suffice to complete the proof.

Let us check the conditions from Definition \ref{ais}.  For condition \eqref{ais com}, if $x=\sum_{i=1}^n a_i\otimes b_i$ is one of our fixed representations of an element of $X$, then 
$$
\|[h\otimes 1_B,x]\|\leq \sum_{i=1}^n \|[a_i,h]\|\|b_i\|<\delta\sum_{i=1}^n \|b_i\|<\epsilon
$$
by assumption on $\delta$.  For condition \eqref{ais in}, note that for $x=\sum_{i=1}^n a_i\otimes b_i \in X$ and each $i$, there is $c_i\in C$ with $ha_i\approx_\delta c_i$.  Hence 
$$
\Bigg\|(h\otimes 1_B)x-\sum_{i=1}^n c_i\otimes b_i\Bigg\|=\sum_{i=1}^n \|ha_i-c\| \|b_i\|<\epsilon
$$
by choice of $\delta$, and so $(h\otimes 1_B)x\in_{\epsilon} C\otimes B$.  Similarly, $(1_{A\otimes B}-h\otimes 1_B)x\in _{\epsilon} D\otimes B$ and $h\otimes 1_B(1_{A\otimes B}-h\otimes 1_B)x\in _{\epsilon} E\otimes B$ for all $x\in X$.  For condition \eqref{ais e in}, we have that $E\otimes B\ses C\otimes B$ and $E\otimes B\ses D\otimes B$ by choice of $\delta$, Proposition \ref{tens in}, and the assumption that $B$ and everything in $\mathcal{C}$ is nuclear.  Condition \eqref{ais mult} from Definition \ref{ais} follows as if $h$ and $E$ satisfy condition \eqref{long h} from Lemma \ref{good h}, then $he\in E$ for all $e\in E$, whence also $(h\otimes 1_B)e\in E\otimes B$ for all $e\in E\otimes B$.
\end{proof}

\begin{proposition}\label{perm rem}
If $A$ is in $\mathcal{D}_\alpha$ and $B$ is in $\mathcal{D}_\beta$, then $A\otimes B$ is in $\mathcal{D}_{\alpha+\beta}$.
\end{proposition}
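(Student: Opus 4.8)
The plan is to argue by transfinite induction, but crucially on $\beta$ (the right-hand summand of the ordinal sum $\alpha+\beta$) rather than on $\alpha$, decomposing the \emph{second} tensor factor $B$ at each stage. The point is an asymmetry in ordinal arithmetic: right-addition is strictly monotone, $\beta'<\beta\Rightarrow\alpha+\beta'<\alpha+\beta$, whereas left-addition is only weakly monotone (e.g.\ $1+\omega=0+\omega=\omega$). So if one were to decompose $A$ and drop its rank from $\alpha$ to some $\alpha'<\alpha$, one would only control the pieces up to $\mathcal{D}_{\alpha'+\beta}$ with $\alpha'+\beta$ possibly equal to $\alpha+\beta$, which is useless for the induction. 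Decomposing $B$ instead keeps $\alpha$ fixed and drops $\beta$ to $\beta'<\beta$, and then $\alpha+\beta'<\alpha+\beta$ lands the pieces safely inside $\bigcup_{\delta<\alpha+\beta}\mathcal{D}_\delta$. Since the minimal tensor product is commutative and each class $\mathcal{D}_\delta$ is invariant under $*$-isomorphism, I may freely pass between $A\otimes B$ and $B\otimes A$ and apply Lemma \ref{fin dim tp} in whichever orientation decomposes $B$.

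Precisely, I would prove by transfinite induction on $\beta$ the statement $Q(\beta)$: for every ordinal $\alpha$, every $A\in\mathcal{D}_\alpha$, and every $B\in\mathcal{D}_\beta$, one has $A\otimes B\in\mathcal{D}_{\alpha+\beta}$. For the base case $\beta=0$ (so $B$ is locally finite-dimensional and $\alpha+\beta=\alpha$) I would run a secondary induction on $\alpha$. When $\alpha=0$ both factors are locally finite-dimensional, and an elementary-tensor approximation (each factor approximately inside a finite-dimensional $F\subseteq A$ and $G\subseteq B$, with $F\otimes G$ finite-dimensional) shows $A\otimes B$ is locally finite-dimensional, hence in $\mathcal{D}_0$ by definition. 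When $\alpha>0$, $A$ decomposes over $\bigcup_{\alpha'<\alpha}\mathcal{D}_{\alpha'}$, a class of nuclear unital $C^*$-algebras (finite complexity implies nuclearity), so Lemma \ref{fin dim tp} shows $A\otimes B$ decomposes over the algebras $C\otimes B$ with $C\in\mathcal{D}_{\alpha'}$, $\alpha'<\alpha$; each of these lies in $\mathcal{D}_{\alpha'}$ by the secondary inductive hypothesis, whence $A\otimes B$ decomposes over $\bigcup_{\alpha'<\alpha}\mathcal{D}_{\alpha'}$, i.e.\ $A\otimes B\in\mathcal{D}_\alpha$.

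For the inductive step $\beta>0$, I would write $B$ as decomposing over $\bigcup_{\beta'<\beta}\mathcal{D}_{\beta'}$, again a class of nuclear unital $C^*$-algebras. Applying Lemma \ref{fin dim tp} to $B\otimes A$ (with $A$ nuclear and unital) shows that $B\otimes A$ decomposes over the class of algebras $D\otimes A$ with $D\in\mathcal{D}_{\beta'}$ for some $\beta'<\beta$. By $Q(\beta')$ applied with the same $\alpha$, each such $A\otimes D\cong D\otimes A$ lies in $\mathcal{D}_{\alpha+\beta'}$. Since $\beta'<\beta$ gives $\alpha+\beta'<\alpha+\beta$, each of these algebras lies in $\bigcup_{\delta<\alpha+\beta}\mathcal{D}_\delta$, so $B\otimes A$ (hence $A\otimes B$) decomposes over $\bigcup_{\delta<\alpha+\beta}\mathcal{D}_\delta$ and therefore lies in $\mathcal{D}_{\alpha+\beta}$, completing the induction.

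The routine ingredients are the elementary-tensor approximation for the locally finite-dimensional base case, the isomorphism-invariance and commutativity bookkeeping for $\otimes$, and checking that all classes in sight consist of nuclear unital algebras so that Lemma \ref{fin dim tp} (and the Christensen-type inclusion behind it) applies. The one genuinely delicate point, and the main thing to get right, is exactly the decision to induct on $\beta$ and decompose the right factor: this is what converts the weak monotonicity of ordinal left-addition into the strict inequality $\alpha+\beta'<\alpha+\beta$ that the transfinite induction requires, and it is the reason the conclusion is stated with the summands in the order $\alpha+\beta$.
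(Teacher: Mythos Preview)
Your proof is correct and uses the same ingredients as the paper's: transfinite induction together with Lemma \ref{fin dim tp}. The organization differs slightly. The paper first treats the case $\alpha=0$ by induction on $\beta$ (decomposing $B$), and then ``fixes $\beta$ and proceeds by transfinite induction on $\alpha$'', decomposing $A$ in the inductive step. You instead induct on $\beta$ throughout, with a secondary induction on $\alpha$ only in the base case $\beta=0$.

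Your care about ordinal arithmetic is the one substantive difference, and it is a genuine improvement. Read literally, the paper's second induction needs $\alpha'<\alpha \Rightarrow \alpha'+\beta<\alpha+\beta$ to place $C\otimes B\in\mathcal{D}_{\alpha'+\beta}$ inside $\bigcup_{\gamma<\alpha+\beta}\mathcal{D}_\gamma$, and as you point out this fails for infinite $\beta$ (e.g.\ $0+\omega=1+\omega$). Your choice to induct on $\beta$ and decompose $B$ uses instead the strict monotonicity $\beta'<\beta\Rightarrow\alpha+\beta'<\alpha+\beta$, which always holds, so your argument is the correct one for arbitrary ordinals. For finite ordinals---the only case the paper actually uses (Theorem \ref{kirch cr2} needs $1+1=2$)---left-addition is also strictly monotone and there is no difference.
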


\begin{proof}
We first assume $\alpha=0$ and proceed by transfinite induction on $\beta$.  The base case $\beta=0$ says that a tensor product of unital locally finite-dimensional $C^*$-algebras is unital and locally finite-dimensional, which is straightforward.  Assume $\beta>0$, and let $B$ be a $C^*$-algebra in $\mathcal{D}_\beta$.  Using Lemma \ref{loc fnd}, $B$ is nuclear.  Hence by Lemma \ref{fin dim tp} $A\otimes B$ decomposes over the class of $C^*$-algebras of the form $A\otimes C$, with $C\in \bigcup_{\gamma<\beta} \mathcal{D}_\gamma$.  The inductive hypothesis therefore implies that $A\otimes B$ decomposes over the class $\bigcup_{\gamma<\beta} \mathcal{D}_{\gamma}$, so $A\otimes B$ is in $\mathcal{D}_\beta$ by definition.

Now fix $\beta$, and proceed by transfinite induction on $\alpha$.  The base case $\alpha=0$ follows from the discussion above.  For $\alpha>0$, the inductive step follows directly from Lemma \ref{fin dim tp} just as in the case above, so we are done.
\end{proof}

\section{Weak complexity rank one}\label{wc1 sec}

In this section, we study $C^*$-algebras of weak complexity rank (at most) one.  Let us first recall a definition from \cite{Brown:1991gf}.

\begin{definition}\label{rr0 def}
A $C^*$-algebra $A$ has \emph{real rank zero} if any self-adjoint element of $A$ can be approximated arbitrarily well by  a self-adjoint element with finite spectrum
\end{definition}

The following theorem is our main goal in this section.

\begin{theorem}\label{wcr1}
Let $A$ be a separable, unital $C^*$-algebra with real rank zero and nuclear dimension at most one.  Then $A$ has weak complexity rank at most one.

Conversely, let $A$ be a separable, unital $C^*$-algebra with weak complexity rank at most one.  Then $A$ has nuclear dimension at most one.  If in addition $A$ is simple, then it has real rank zero.
\end{theorem}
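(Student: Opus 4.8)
I would handle the three implications hidden in the statement separately. The easiest is that weak complexity rank at most one forces nuclear dimension at most one: this is just the $n=1$ case of Proposition \ref{loc fnd}, which says that any $C^*$-algebra in $\mathcal{WD}_n$ has nuclear dimension at most $2^n-1$, so it is a one-line deduction. I would then spend all the real effort on the forward implication and on the real rank zero conclusion.

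For the forward implication -- real rank zero together with nuclear dimension at most one gives weak complexity rank at most one -- the plan is to convert a two-coloured completely positive approximation into honest finite-dimensional subalgebras using the abundance of projections. Given a finite $X\subseteq A_1$ and $\epsilon>0$, nuclear dimension at most one provides a finite-dimensional $F=F_0\oplus F_1$, a contractive completely positive $\psi\colon A\to F$ (which, as $A$ is unital, I would arrange to be unital), and $\phi=\phi_0+\phi_1\colon F\to A$ with each $\phi_i$ contractive and order zero and $\phi\psi(a)\approx_\delta a$ for $a\in X\cup\{1_A\}$, for a small $\delta$ fixed at the end. Writing $h_i:=\phi_i(1_{F_i})$ and using the order zero structure theorem of \cite{Winter:2010eb} to factor $\phi_i(y)=h_i\pi_i(y)$ with $h_i$ in the commutant of $\pi_i(F_i)$, unitality of $\psi$ gives $h_0+h_1\approx_\delta 1_A$. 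I would take the partition element to be $h:=h_0$. Its approximate centrality is the clean point: for $x\in X$ one has $[h_0,x]\approx[h_0,\phi_1\psi_1(x)]\approx[1_A-h_1,\phi_1\psi_1(x)]=-[h_1,\phi_1\psi_1(x)]=0$, using that $[h_0,\phi_0\psi_0(x)]=0$, that $h_0+h_1\approx 1_A$, and that $h_i$ commutes with the range of $\phi_i$. To produce $C$ and $D$ I would feed each $h_i$ into real rank zero, replacing it by a positive element of finite spectrum; because $h_i$ commutes with $\pi_i(F_i)$, its spectral projections do too, and compressing the order zero map by them displays the range of $\phi_i$ inside a genuine finite-dimensional subalgebra, to which I adjoin $1_A$. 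The obstacle here is not centrality but finite-dimensionality: the order zero calculus writes $h_0x$ as a colour-zero term plus a colour-one ``overlap'' term, and I must arrange that the whole of $h_0x$ (and symmetrically $(1-h_0)x$) lies within $\epsilon$ of a \emph{single} finite-dimensional subalgebra. Making this work forces me to discretise the two colours compatibly rather than independently -- effectively pushing the overlap entirely into one colour -- and this coordination is the technical content of the forward direction; Lemma \ref{pci} is the standard device for passing the commutator bound through the square roots that appear.

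For the real rank zero conclusion I would rely on the deep structure theory the paper flags, and I expect this to be the genuine main obstacle. First, weak complexity rank at most one gives finite nuclear dimension, hence nuclearity, so for separable, unital, simple, infinite-dimensional $A$ one gets $\mathcal{Z}$-stability from Winter \cite{Winter:2012qf}. I would then split along the dichotomy for simple $\mathcal{Z}$-stable algebras: if $A$ is purely infinite, real rank zero is automatic by Zhang \cite{Zhang:1990aa}; if $A$ is stably finite, I must work. Here weak complexity rank at most one already supplies a weak form of the conclusion -- density of the span of the projections, by Lemma \ref{wd implies lp} -- and the task is to upgrade this to real rank zero by controlling the tracial state space, using the classification-era structure results of \cite{Rordam:2004lw,Elliott:2011vc,Castillejos:2019aa}. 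The hardest step is precisely this upgrade in the stably finite case: one must show that the projections produced by the weak decomposition are rich enough to meet every tracial state, which is the content of Proposition \ref{fmt rr0} (and the supporting Lemma \ref{ref lem}) -- exactly the place where, as the authors record, the referee's argument removed an earlier restriction to finitely many extreme traces.
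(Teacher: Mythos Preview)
Your overall architecture matches the paper's: the nuclear-dimension bound is Proposition~\ref{loc fnd} with $n=1$, and the real-rank-zero conclusion is exactly the route through $\mathcal{Z}$-stability, Zhang's theorem in the traceless case, and R{\o}rdam's criterion \cite[Theorem 7.2]{Rordam:2004lw} in the tracial case, with Lemma~\ref{ref lem} and Proposition~\ref{fmt rr0} carrying the weight. No complaints there.

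The forward direction is where your sketch and the paper diverge. Two points:

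\emph{The overlap term.} You correctly isolate the difficulty --- $h_0x$ splits into a colour-zero piece and a cross term $h_0\phi_1\psi_1(x)$ --- but ``discretise the two colours compatibly\ldots pushing the overlap entirely into one colour'' is not a mechanism, and I do not see how to make it one at the level of a single approximation. The paper avoids the issue altogether by working in $A_\infty$ and using the refined form of nuclear dimension (\cite[Theorem 3.2]{Winter:2010eb}) in which the downward map $\overline{\psi}$ is itself order zero. With that extra structure one gets the exact identity $ha=\overline{\phi^{(0)}}\kappa^{(0)}\overline{\psi}(a)$ in $A_\infty$ (this is \cite[Proposition A.1]{Willett:2019aa}), so there is no overlap to push anywhere: $ha$ already lies in the $C^*$-algebra generated by the colour-zero image. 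Your finitary centrality computation is essentially the same argument run pointwise, but for the containment $hx\in_\epsilon C$ the asymptotic picture is doing real work you have not replaced.

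\emph{The discretisation step.} You write that after approximating $h_i$ by something of finite spectrum, ``its spectral projections [commute with $\pi_i(F_i)$] too''. That is true of the spectral projections of $h_i$ itself, but those live in $A^{**}$; a real-rank-zero approximant of $h_i$ with finite spectrum has no reason to stay in the commutant of $\pi_i(F_i)$. Showing that one \emph{can} choose such an approximant in the commutant is precisely Lemma~\ref{rr0 oz}, and its proof is not a one-liner: one works inside each diagonal corner $m_{11}^{(l)}Am_{11}^{(l)}$ (which inherits real rank zero as a hereditary subalgebra), approximates there, and then transports the result across the matrix units to rebuild a finite-spectrum element that commutes with all of $\pi_i(F_i)$ by construction.

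So: keep your plan for the two converse implications, but for the forward one either pass to $A_\infty$ and invoke the order-zero $\overline{\psi}$ plus \cite[Proposition A.1]{Willett:2019aa} as the paper does, or supply an honest argument for the overlap and for the commutant-preserving discretisation.
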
 

It is conceivable that weak complexity rank at most one implies real rank zero in general: this seems quite an interesting question for the reasons discussed  in Remark \ref{gen rr0 rem}.

\begin{remark}\label{wcr0 rem}
Weak complexity rank zero is the same as being locally finite dimensional by definition, and this is in turn equivalent to having nuclear dimension zero by a slight elaboration on \cite[Remark 2.2 (iii)]{Winter:2010eb}; moreover, locally finite-dimensional $C^*$-algebras are easily seen to have real rank zero.  Hence if one replaces ``at most one'' by ``one'' everywhere it appears in Theorem \ref{wcr1}, the theorem is still correct.
\end{remark}

\subsection{From nuclear dimension and real rank to weak complexity rank}

In this subsection, we establish the sufficient condition for a $C^*$-algebra to have weak complexity rank at most one from Theorem \ref{wcr1}.

Let us start by recalling the basic structure theorem for order zero maps from \cite[Theorem 2.3]{Winter:1009aa} (see also \cite[4.1.1]{Winter:2003kx} for the case of finite-dimensional domain, which is all we will actually use).

\begin{theorem}[Winter-Zacharias]\label{oz the}
Let $\phi:A\to B$ be an order zero ccp map between $C^*$-algebras with $A$ unital, and define $h:=\phi(1_A)$.  Let $M(C^*(\phi(A)))$ be the multiplier algebra of the $C^*$-subalgebra of $B$ generated by the image of $F$, and let $\{h\}'$ be the commutant of $h$.  Then there is a $*$-homomorphism $\pi:A\to M(C^*(\phi(A)))\cap \{h\}'$ such that $\phi(a)=h\pi(a)$ for all $a\in A$.
\end{theorem}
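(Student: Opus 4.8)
The plan is to realize $\phi$ as the compression of a genuine $*$-homomorphism by the positive element $h$, using the standard ``cone dilation'' picture of order zero maps. Throughout write $C:=C^*(\phi(A))$ for the $C^*$-subalgebra of $B$ generated by the image of $\phi$; since $\phi$ takes values in $C$ and the conclusion only concerns $M(C)$, I may as well replace $B$ by $C$, so that $\phi:A\to C$ is an order zero ccp map onto a generating subalgebra. Let $\mathrm{C}A:=C_0((0,1])\otimes A$ denote the cone over $A$, and let $\iota\in C_0((0,1])$ be the function $\iota(t)=t$, a positive contraction generating $C_0((0,1])$.

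The key step is to produce a $*$-homomorphism $\rho:\mathrm{C}A\to C$ with $\rho(\iota\otimes a)=\phi(a)$ for all $a\in A$. The content here is the identity
\[
\phi(a)\phi(b)=h\,\phi(ab)\qquad(a,b\in A),
\]
together with $\phi(a^*)=\phi(a)^*$ and the fact that $h=\phi(1_A)$ commutes with $\phi(A)$; these are exactly what is needed for the assignment $\iota^n\otimes a\mapsto h^{\,n-1}\phi(a)$ to respect products and adjoints, and hence (using that $C_0((0,1])$ is the universal $C^*$-algebra on a single positive contraction, so that $\mathrm{C}A$ is densely spanned by the elements $\iota^n\otimes a$) to extend by continuity to a $*$-homomorphism $\rho$ on all of $\mathrm{C}A$. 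To prove the displayed identity I would first restrict $\phi$ to the commutative unital $C^*$-subalgebra $C^*(a,1_A)$ generated by a single self-adjoint $a$: there functions with disjoint supports are orthogonal, so the order zero hypothesis gives the identity directly on such functions, and a partition-of-unity together with a density and polarization argument promotes it first to commuting self-adjoint pairs and then, by bilinearity, to all of $A$.

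With $\rho$ in hand the theorem is essentially bookkeeping. Since $\rho$ maps onto $C=C^*(\phi(A))$, it is nondegenerate and so extends to a unital $*$-homomorphism $\overline{\rho}:M(\mathrm{C}A)\to M(C)$. Inside $M(\mathrm{C}A)$ the map $\sigma:A\to M(\mathrm{C}A)$, $a\mapsto 1\otimes a$, is a $*$-homomorphism, and the positive contraction $\iota\otimes 1_A\in\mathrm{C}A$ commutes with every $1\otimes a$. Define $\pi:=\overline{\rho}\circ\sigma:A\to M(C)$. Then $\pi$ is a $*$-homomorphism; the element $h=\phi(1_A)=\rho(\iota\otimes 1_A)=\overline{\rho}(\iota\otimes 1_A)$ commutes with $\pi(A)$ because $\iota\otimes 1_A$ commutes with $1\otimes A$ and $\overline{\rho}$ is multiplicative, so that $\pi(A)\subseteq M(C)\cap\{h\}'$; and finally, using $(\iota\otimes 1_A)(1\otimes a)=\iota\otimes a$ and multiplicativity of $\overline{\rho}$,
\[
h\,\pi(a)=\overline{\rho}(\iota\otimes 1_A)\,\overline{\rho}(1\otimes a)=\overline{\rho}(\iota\otimes a)=\rho(\iota\otimes a)=\phi(a),
\]
which is the required factorisation.

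The main obstacle is the middle step: verifying that an order zero ccp map genuinely dilates to a $*$-homomorphism of the cone, that is, establishing the multiplicative identity $\phi(a)\phi(b)=h\phi(ab)$ and the attendant well-definedness of $\rho$ and $\overline{\rho}$. The commutative reduction and the polarization bookkeeping are where the care lies. I note that for the application in this paper only the case of finite-dimensional $A$ is needed; there one can argue entirely with matrix units, where orthogonality of $\{e_{ii}\}$ makes the identity $\phi(e_{ij})\phi(e_{kl})=\delta_{jk}\,h\,\phi(e_{il})$ elementary and the commutative subtleties disappear, so that the supporting $*$-homomorphism $\pi$ can be built directly from the polar decompositions of the elements $\phi(e_{ij})$.
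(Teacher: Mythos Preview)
The paper does not give its own proof of this statement: Theorem~\ref{oz the} is quoted verbatim as the structure theorem of Winter and Zacharias \cite[Theorem 2.3]{Winter:1009aa} (with the finite-dimensional case attributed to \cite[4.1.1]{Winter:2003kx}), and is used as a black box. So there is nothing in the paper to compare your argument against.

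That said, your proposal is exactly the standard cone-dilation proof from the cited reference. The architecture---produce a $*$-homomorphism $\rho:C_0((0,1])\otimes A\to C^*(\phi(A))$ sending $\iota\otimes a$ to $\phi(a)$, extend nondegenerately to multipliers, and then set $\pi(a)=\overline\rho(1\otimes a)$---is precisely how Winter and Zacharias organize it, and your final bookkeeping is clean and correct. The one place where your sketch is genuinely only a sketch is the identity $\phi(a)\phi(b)=h\,\phi(ab)$: your plan of reducing to commutative subalgebras and using partitions of unity is workable, but in the literature this is usually done slightly differently (via a Stinespring-type argument, or via the functional calculus for order-zero maps), and the ``polarization bookkeeping'' you allude to does need to be written out carefully to pass from commuting self-adjoints to arbitrary pairs. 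Your closing remark is well taken: for the applications in this paper only finite-dimensional $A$ is needed, and there the matrix-unit argument you indicate is both easier and self-contained.
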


The following lemma gives a version of Theorem \ref{oz the} in the case of finite-dimensional domain and real rank zero codomain that allows us to assume $h$ has finite spectrum, at the price of introducing an approximation; see \cite[Lemma 2.4]{Winter:2005tb} for a similar result.

\begin{lemma}\label{rr0 oz}
Let $A$ be a $C^*$-algebra of real rank zero, and let $\phi:F\to A$ be an order zero ccp map from a finite-dimensional $C^*$-algebra $F$ into $A$.  Let $h_0:=\phi(1)$ and $\pi:A\to M(C^*(\phi(A)))\cap \{h_0\}'$ be as in Theorem \ref{oz the}.  Then  for any $\epsilon>0$ there exists a positive contraction $h\in A$ with finite spectrum, that commutes with the image of $\pi$, and that satisfies
$$
\|\phi(f)-h\pi(f)\|\leq \epsilon\|f\|
$$
for all $f\in F$. 
\end{lemma}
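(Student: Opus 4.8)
The plan is to start from the Winter--Zacharias structure theorem (Theorem \ref{oz the}), which writes $\phi(f)=h_0\pi(f)$ for all $f\in F$, where $h_0=\phi(1)$ is a positive contraction and $\pi$ is a $*$-homomorphism whose image commutes with $h_0$. Since $\pi$ is contractive, one has $\|\phi(f)-h\pi(f)\|=\|(h_0-h)\pi(f)\|\le\|h_0-h\|\,\|f\|$, so the whole lemma reduces to producing a positive contraction $h\in A$ with finite spectrum, commuting with $\pi(F)$, and satisfying $\|h_0-h\|\le\epsilon$. The difficulty is that a crude finite-spectrum approximation of $h_0$ coming straight from real rank zero will in general fail to commute with $\pi(F)$; the construction has to build commutation in from the start.

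To handle this I would exploit the block structure of $F$. Write $F=\bigoplus_j M_{n_j}$, fix matrix units $\{e^{(j)}_{kl}\}$, and set $p_j:=\pi(1_{M_{n_j}})$ and $f_j:=\pi(e^{(j)}_{11})$. Because $h_0$ commutes with $\pi(F)$, a short computation using the matrix-unit relations shows that $h_0$ is completely determined on the block $p_j$ by its corner value $a_j:=f_jh_0f_j=h_0f_j$, via
$$
h_0p_j=\sum_k \pi(e^{(j)}_{k1})\,a_j\,\pi(e^{(j)}_{1k}).
$$
This reduces the problem to approximating each positive contraction $a_j$, which lives in the corner $f_jAf_j$, by a finite-spectrum positive contraction in the same corner, and then spreading the approximant back out by the same formula.

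Now I would bring in real rank zero. The corner $f_jAf_j$ is a hereditary subalgebra of $A$ (note $f_j\in M(A)$ and $a_j=f_jh_0f_j\in A$), hence has real rank zero by the permanence properties from \cite{Brown:1991gf}. So I can choose a finite-spectrum positive contraction $a_j'\in f_jAf_j$, a finite positive combination of orthogonal projections dominated by $f_j$, with $\|a_j-a_j'\|\le\epsilon$. Defining $h_{(j)}:=\sum_k \pi(e^{(j)}_{k1})\,a_j'\,\pi(e^{(j)}_{1k})$ and $h:=\sum_j h_{(j)}$, I would verify the required properties: the map $x\mapsto\sum_k \pi(e^{(j)}_{k1})\,x\,\pi(e^{(j)}_{1k})$ is a $*$-homomorphism on $f_jAf_j$, so it carries $a_j'$ to a finite-spectrum positive contraction supported under $p_j$; the blocks $h_{(j)}$ are mutually orthogonal, so $h$ is a positive contraction with finite spectrum; the same matrix-unit computation that produced the displayed identity shows $h_{(j)}$ commutes with $\pi(M_{n_j})$ and annihilates the other blocks, so $h$ commutes with all of $\pi(F)$; and finally $\|h-h_0\|=\max_j\|h_{(j)}-h_0p_j\|\le\max_j\|a_j'-a_j\|\le\epsilon$.

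The main obstacle is precisely the simultaneous control of commutation and finite spectrum: real rank zero only supplies finite-spectrum approximants, with no commutation built in, so the crux is the observation that $h_0$ is governed by the single corner value $a_j$ on each block and that the matrix-unit spreading operation is a $*$-homomorphism. This is what lets me perturb only inside the corner $f_jAf_j$ (where real rank zero applies cleanly, as a hereditary subalgebra) and then transport the perturbation back in a way that automatically commutes with $\pi(F)$ and keeps finite spectrum. The one technical point to watch is that the approximating projections lie under $f_j$, which is guaranteed by working inside $f_jAf_j$ rather than perturbing $h_0$ globally.
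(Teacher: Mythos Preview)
Your strategy is the same as the paper's: reduce to $\|h-h_0\|\le\epsilon$, decompose $F$ into matrix blocks, approximate the corner value $a_j=f_jh_0f_j$ by a finite-spectrum element using real rank zero in a hereditary subalgebra, and spread back via the matrix units $\pi(e^{(j)}_{k1})$. The verifications you outline (commutation, finite spectrum, norm estimate) are exactly those in the paper.

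There is, however, a genuine technical gap. You assert that $f_j\in M(A)$ and that $f_jAf_j$ is a hereditary subalgebra of $A$. But $\pi$ takes values only in $M(C^*(\phi(F)))$, so $f_j=\pi(e^{(j)}_{11})$ is a multiplier of $C^*(\phi(F))$, not of $A$; for general $a\in A$ the product $f_ja$ need not lie in $A$ at all, and there is no reason for $f_jAf_j$ to be a $C^*$-subalgebra of $A$. This is precisely the obstacle the paper's proof is designed to get around: it approximates $f_j=m^{(l)}_{11}$ in the strict topology by positive contractions $b_\lambda^{(l)}\in C^*(\phi(F))\subseteq A$ with $b_\lambda^{(l)}\le m^{(l)}_{11}$, and then applies real rank zero inside the honest hereditary subalgebra $\overline{b_\lambda^{(l)}Ab_\lambda^{(l)}}$ of $A$ to produce $h^{(l)}_{11}$. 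Your argument can be repaired along the same lines, or alternatively by working in the hereditary subalgebra $\overline{a_jAa_j}$ of $A$ (which makes sense since $a_j=f_jh_0f_j\in C^*(\phi(F))\subseteq A$); in either case one must still check that the spread $\pi(e^{(j)}_{k1})\,a_j'\,\pi(e^{(j)}_{1k})$ lands back in $A$, which follows because $\pi(e^{(j)}_{k1})a_j\in C^*(\phi(F))\subseteq A$ and $a_j'\in\overline{a_jAa_j}$.
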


\begin{proof}[Proof of Lemma \ref{rr0 oz}]
Let $h_0:=\phi(1)$ and let $\pi:F\to M(C^*(\phi(F)))\cap \{h_0\}'$ be the homomorphism given by Theorem \ref{oz the} such that 
$$
\phi(f)=h_0\pi(f)
$$
for all $f\in F$.  Write $F=M_{n_1}(\C)\oplus \cdots \oplus M_{n_k}(\C)$, and let 
$$
\{e_{ij}^{(l)} \mid l\in \{1,...,k\},i,j\in \{1,...,n_l\}\}
$$ 
be a set of matrix units for $F$.  Define  $m_{ij}^{(l)}:=\pi(e_{ij}^{(l)})\in M(C^*(\phi(F)))$.  For each $l$, let $(b_\lambda^{(l)})$ be a net of positive contractions in $C^*(\phi(F))$ that converges to $m_{11}^{(l)}$ in the strict topology; for simplicity, we assume that the index set for all these nets is the same as $l$ varies.  Replacing each $b_\lambda^{(l)}$ with $m_{11}^{(l)}b_\lambda^{(l)}m_{11}^{(l)}$, we may assume that $b_\lambda^{(l)}\leq m_{11}^{(l)}$ for all $\lambda$ and all $l$.  Let $\lambda$ be large enough that $\|b_\lambda^{(l)} h_0b_\lambda^{(l)} - m_{11}^{(l)}h_0m_{11}^{(l)}\|<\epsilon/2$, which exists by strict convergence.  Note that $b_\lambda^{(l)} h_0b_\lambda^{(l)}$ is an element of the hereditary $C^*$-subalgebra $\overline{b_\lambda^{(l)} Ab_\lambda^{(l)}}$ of $A$.  Hence using that real rank zero passes to hereditary subalgebras (see \cite[Theorem 2.6, (iii)]{Brown:1991gf}), we may find a positive contraction $h_{11}^{(l)}\in \overline{b_\lambda^{(l)} Ab_\lambda^{(l)}}$ with finite spectrum such that $\|h_{11}^{(l)}-m_{11}^{(l)}h_0m_{11}^{(l)}\|<\epsilon$.  Define now
\begin{align}
	h:=\sum_{l=1}^k\sum_{j=1}^{n_l} m_{j1}^{(l)} h_{11}^{(l)} m_{1j}^{(l)}.
\end{align}
We claim that this $h$ has the right properties.

We have to show that:
\begin{enumerate}[(i)]
\item \label{roz1}the image of $\pi$ commutes with $h$; 
\item \label{roz2}$h$ has finite spectrum; 
\item \label{roz3}$\|\phi(f)-h\pi(f)\|\leq \epsilon\|f\|$ for all $f\in F$.
\end{enumerate}

Indeed, for \eqref{roz1}, note that for any $m_{ij}^{(l)}$,
\begin{align*}
m_{ij}^{(l)}h & = \sum_{k=1}^{n_l} m_{ij}^{(l)}m_{k1}^{(l)} h_{11}^{(l)} m_{1k}^{(l)}=m_{i1}^{(l)}h_{11}^{(l)}m_{1j}^{(l)}= \sum_{k=1}^{n_l} m_{k1}^{(l)} h_{11}^{(l)} m_{1k}^{(l)}m_{ij}^{(l)}\\ & =hm_{ij}^{(l)}.
\end{align*}
As the $m_{ij}^{(l)}$ span $\pi(F)$, this implies that $h$ commutes with $\pi(F)$, and thus that $\pi$ takes image in $\{h\}'$ as we needed.  

For \eqref{roz2}, note that as $h_{11}^{(l)}\in b_{\lambda}^{(l)}Ab_{\lambda}^{(l)}$, and as $b_\lambda^{(l)} \leq m_{11}^{(l)}$, we have that $h_{11}^{(l)}\leq m_{11}^{(l)}$.  Hence the elements $m_{j1}^{(l)} h_{11}^{(l)} m_{1j}^{(l)}$ are mutually orthogonal as $j$ and $l$ vary.  As $j$ varies, each $m_{j1}^{(l)} h_{11}^{(l)} m_{1j}^{(l)}$ is moreover unitarily equivalent to $h_{11}^{(l)}$, so has the same (finite) spectrum as this element. It follows that the spectrum of $h$ is the union of the spectra of the $h_{11}^{(l)}$ as $l$ varies, so finite.

For \eqref{roz3}, note that 
$$
\|\phi(f)-h\pi(f)\|=\|h_0\pi(f)-h\pi(f)\|\leq \|h_0-h\|\|\pi(f)\|\leq \|h-h_0\|\|f\|.
$$
Hence it suffices to prove that $\|h-h_0\|<\epsilon$.  For this, note that as $h$ commutes with $\pi(F)$ and as $h\leq \sum_{l=1}^k\sum_{j=1}^{n_l} m_{jj}^{(l)}$, we have that 
\begin{align*}
h & =\Big(\sum_{l=1}^k\sum_{j=1}^{n_l} m_{jj}^{(l)}\Big)h=\sum_{l=1}^k\sum_{j=1}^{n_l} m_{j1}^{(l)}m_{1j}^{(l)}h=\sum_{l=1}^k\sum_{j=1}^{n_l} m_{j1}^{(l)}hm_{1j}^{(l)} \\ & =\sum_{l=1}^k\sum_{j=1}^{n_l} m_{j1}^{(l)}m_{11}^{(l)}hm_{11}^{(l)}m_{1j}^{(l)}
\end{align*}
Hence 
$$
h-h_0=\sum_{l=1}^k\sum_{j=1}^{n_l} m_{j1}^{(l)} h_{11}^{(l)} m_{1j}^{(l)}-\sum_{l=1}^k\sum_{j=1}^{n_l} m_{j1}^{(l)} m_{11}^{(l)}h_0 m_{11}^{(l)}m_{1j}^{(l)}
$$
and so
$$
\|h-h_0\|= \Big\|\sum_{l=1}^k\sum_{j=1}^{n_l} m_{j1}^{(l)}(h_{11}^{(l)}-m_{11}^{(l)}h_0m_{11}^{(l)})m_{1j}^{(l)}\Big\|
$$
As the terms $m_{j1}^{(l)}(h_{11}^{(l)}-m_{11}^{(l)}hm_{11}^{(l)})m_{1j}^{(l)}$ are mutually orthogonal as $j$ and $l$ vary, this equals 
$$
\sup_{l,j}\|m_{j1}^{(l)}(h_{11}^{(l)}-m_{11}^{(l)}h_0m_{11}^{(l)})m_{1j}^{(l)}\|\leq \|h_{11}^{(l)}-m_{11}^{(l)}h_0m_{11}^{(l)}\|<\epsilon,
$$
and we are done.
\end{proof}

For the next result, let $A_\infty:= \prod_\N A / \oplus_\N A$ denote the quotient of the product of countably many copies of a $C^*$-algebra $A$ by the direct sum.  We identify $A$ with its image in $A_\infty$ under the natural diagonal embedding, and write $A_\infty\cap A'$ for the relative commutant.  More generally, if $(B_n)$ is a sequence of $C^*$-algebras, we also write $B_\infty:= \prod_\N B_n / \oplus_\N B_n$ for the associated quotient.  Given a bounded sequence of linear maps $\phi_n:A\to B_n$, we write $\overline{\phi}:A\to B_\infty$ for the map induced by the `diagonal map' $a\mapsto (\phi_1(a),\phi_2(a),\cdots)$.  Similarly, given a bounded sequence of linear maps $\phi_n:A_n\to B$, we write $\overline{\phi}:A_\infty\to B_\infty$ for the map induced on quotients by the map $\prod_\N A_n\to \prod_\N B$ defined by $(a_n)\mapsto (\phi_n(a_n))$.

\begin{proposition}\label{main nd1 rr0}
Let $A$ be a separable, unital $C^{*}$-algebra with real rank zero and nuclear dimension at most one. Then there exists a positive contraction $h \in A_{\infty} \cap A'$ and sequences $(C_{n})$ and $(D_{n})$ of finite-dimensional $C^{*}$-subalgebras of $A$ such that $ha \in C_{\infty}$ and $(1-h)a \in D_{\infty}$ for all $a \in A$.
\end{proposition}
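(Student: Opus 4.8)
The plan is to pass to the sequence algebra $A_\infty$ and to build $h$ together with the subalgebras $C_n,D_n$ from a single nuclear dimension approximation in each coordinate. Since $A$ is separable, I would fix a dense sequence $(a_m)$ and apply Definition \ref{fnd def} to the finite sets $\{a_1,\dots,a_m\}$ with tolerances $1/m$; this yields, for each $m$, a finite-dimensional $F^{(m)}=F_0^{(m)}\oplus F_1^{(m)}$ and completely positive maps $\psi^{(m)}:A\to F^{(m)}$, $\phi^{(m)}:F^{(m)}\to A$ with $\phi^{(m)}\psi^{(m)}(a_i)\approx_{1/m}a_i$, $\psi^{(m)}$ contractive, and each $\phi^{(m)}|_{F_i^{(m)}}$ contractive and order zero. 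After the standard reduction to $\psi^{(m)}$ unital (available since $A$ is unital), the diagonal maps $\overline\psi:A\to F_\infty$ and $\overline\phi:F_\infty\to A_\infty$ satisfy $\overline\phi\,\overline\psi=\mathrm{id}$ on $A\subseteq A_\infty$, and $\overline\phi$ restricts to order zero maps $\overline\phi_0,\overline\phi_1$ on $F_{0,\infty},F_{1,\infty}$.

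I would then analyse the two colours via the order zero structure theorem and real rank zero. For each colour, Theorem \ref{oz the} produces the weight $h_i:=\overline\phi_i(1)$ and a supporting homomorphism $\pi_i$ with $\overline\phi_i(f)=h_i\pi_i(f)$ and $[h_i,\pi_i(\cdot)]=0$; unitality of $\overline\psi$ forces the partition of unity $h_0+h_1=1$ in $A_\infty$. Applying Lemma \ref{rr0 oz} coordinatewise (here real rank zero enters) I can arrange each weight to have \emph{finite spectrum} while still commuting with $\pi_i$ and satisfying $\overline\phi_i(f)\approx h_i\pi_i(f)$. Finite spectrum is the decisive gain: writing $h_1=\sum_k\lambda_k q_k$ over its nonzero spectral projections, each $q_k$ commutes with $\pi_1$ and $q_k\pi_1(f)=\lambda_k^{-1}q_k\overline\phi_1(f)\in A_\infty$, so the elements $\{q_k\pi_1(f)\}$ span a finite-dimensional subalgebra realised as $C_\infty$ for a sequence $(C_n)$ of finite-dimensional subalgebras of $A$; colour $0$ gives $(D_n)$ likewise. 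By construction the colour-$1$ contribution $h_1\pi_1(\overline\psi_1 a)$ lands in $C_\infty$ and the colour-$0$ contribution in $D_\infty$.

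I would set $h:=h_1$. Approximate centrality is then immediate from the partition of unity: writing $a=\overline\phi_0\overline\psi_0(a)+\overline\phi_1\overline\psi_1(a)$, the weight $h_1$ commutes with the colour-$1$ term, while $[h_1,\,\cdot\,]=-[h_0,\,\cdot\,]$ and $h_0$ commutes with the colour-$0$ term; hence $h\in A_\infty\cap A'$. The content that remains is to verify $ha\in C_\infty$ and $(1-h)a\in D_\infty$ for every $a\in A$.

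The main obstacle is precisely this last verification. Expanding $h_1 a\approx h_0h_1\,\pi_0(\overline\psi_0 a)+h_1^2\,\pi_1(\overline\psi_1 a)$, the second summand is a genuine colour-$1$ element of $C_\infty$, but the first is a colour-$0$ flavoured \emph{overlap} term whose size is governed by $\|h_0h_1\|=\|h_1-h_1^2\|$, and symmetrically for $(1-h)a$. Because the two order zero homomorphisms $\pi_0,\pi_1$ do not commute, such overlap terms cannot be absorbed into a single finite-dimensional algebra, so they must be forced to vanish. This is where I expect the real difficulty to lie: the finite-spectrum weights have to be chosen so that the fractional part $h_1-h_1^2$ tends to zero along the sequence, i.e.\ so that the two colours become \emph{approximately orthogonal} and $h$ becomes an honest projection in $A_\infty$. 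Engineering the approximations to enjoy this extra property, while retaining the order zero and finite-dimensionality structure assembled above (and using real rank zero a second, more delicate time to do so), is the technical heart of the argument.
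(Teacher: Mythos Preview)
Your overall strategy matches the paper's proof closely: pass to $A_\infty$, use a nuclear-dimension-one system, apply the order-zero structure theorem, and invoke Lemma~\ref{rr0 oz} coordinatewise to land each colour in a sequence of finite-dimensional subalgebras. Where you go wrong is in the last two paragraphs: the ``main obstacle'' you describe is a phantom, and your proposed resolution (engineering $h_0h_1\to 0$ so that $h$ becomes a projection in $A_\infty$) is both unnecessary and not what the paper does.

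The point you are missing is this. The relevant normalisation of $\overline\psi$ is not unitality but that $\overline\psi:A\to F_\infty$ is \emph{order zero}; this is exactly \cite[Theorem~3.2]{Winter:2010eb}, which the paper invokes. Since compositions of c.p.\ order zero maps are again order zero, each composite $\Phi_i:=\overline{\phi^{(i)}}\circ\kappa^{(i)}\circ\overline\psi:A\to A_\infty$ is itself c.p.c.\ order zero, and $\Phi_0+\Phi_1$ is the canonical embedding $A\hookrightarrow A_\infty$, a $*$-homomorphism. Set $h:=\Phi_0(1)$. Expanding the multiplicativity of $\Phi_0+\Phi_1$ and using the order-zero identity $\Phi_i(a)\Phi_i(b)=\Phi_i(1)\Phi_i(ab)$ gives
\[
\Phi_0(a)\Phi_1(b)+\Phi_1(a)\Phi_0(b)=(1-h)\Phi_0(ab)+h\,\Phi_1(ab);
\]
feeding this back into the expansion of $(\Phi_0(a)-ha)^2$ for self-adjoint $a$ shows it vanishes. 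Hence $ha=\Phi_0(a)$ and $(1-h)a=\Phi_1(a)$ hold \emph{exactly} in $A_\infty$, with no overlap term---the paper imports this from \cite[Proposition~A.1]{Willett:2019aa}. In your notation this says $h_0h_1\big(\pi_0(\overline\psi_0 a)-\pi_1(\overline\psi_1 a)\big)=0$ for all $a$, not because $h_0h_1=0$ but because of the algebraic constraint just described. Once that identity is in hand, the rest of your outline (Lemma~\ref{rr0 oz} to perturb each weight to finite spectrum, hence finite-dimensional $C_n,D_n$) is exactly how the paper finishes.
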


\begin{proof}
Since $A$ is separable and of nuclear dimension at most one, by \cite[Theorem 3.2]{Winter:2010eb} there exists a sequence $(\psi_{n}, \phi_{n}, F_{n})$ where:
\begin{enumerate}[(i)]
\item each $F_{n}$ is a finite-dimensional $C^{*}$-algebra that decomposes as a direct sum $F_{n} = F_{n}^{(0)}\oplus F_{n}^{(1)}$; 
\item each $\psi_{n}$ is a ccp map $A \rightarrow F_{n}$ such that the induced map $\overline\psi: A \rightarrow F_{\infty}$ is order zero;
\item each $\phi_{n}$ is a map $F_{n}\rightarrow A$, such that the restriction $\phi_{n}^{(i)}$ of $\phi_{n}$ to $F_{n}^{(i)}$ is ccp and order zero for $i\in \{0,1\}$;
\item for all $a \in A$, $\phi_{n}\psi_{n}(a) \rightarrow a$ as $n \rightarrow \infty$. 
\end{enumerate}
For $i\in \{0,1\}$, we will also need to consider the (order zero, ccp) maps $\overline{\phi^{(i)}}:(F^{(i)})_\infty\to A_\infty$ induced from $\phi_{n}^{(i)}: F_{n} \rightarrow A$, and the canonical projection $*$-homomorphism $\kappa^{(i)}:F_\infty\to F_\infty^{(i)}$.

As for each $n$ the map $\phi_{n}^{(0)}: F_{n}^{(0)} \rightarrow A$ is ccp and order zero, by Theorem \ref{oz the} there exists a positive contraction $h_{n}^{(0)} \in A$ and a $*$-homomorphism $\pi_{n}^{(0)}:F_{n}^{(0)}\to M\Big(C^*\big(\phi_{n}^{(0)}(F_{n}^{(0)})\big)\Big)\cap \{h_{n}^{(0)}\}'$ such that $$
\phi_{n}^{(0)}(b)=h_{n}^{(0)}\pi_{n}^{(0)}(b)$$ for all $b \in F_{n}^{(0)}$.  As in \cite[Corollary 3.1]{Winter:1009aa}, the formula 
$$
\rho_{n}^{(0)}(f\otimes b):= f(h_{n}^{(0)})\pi_{n}^{(0)}(b)
$$
determines a $*$-homomorphism 
$$
\rho_{n}^{(0)}: C_{0}(0,1]\otimes F_{n}^{(0)} \rightarrow A.
$$
Similarly, we get a $*$-homomorphism $\rho_{n}^{(1)}: C_{0}(0,1]\otimes F_{n}^{(1)} \rightarrow A$.  Define $S_{n} := \rho_{n}^{(0)}(C_{0}(0,1]\otimes F_{n}^{(0)})$ and $R_{n} := \rho_{n}^{(1)}(C_{0}(0,1]\otimes F_{n}^{(1)})$.
	
As in (the proof of) \cite[Proposition A.1]{Willett:2019aa}, the element 
$$
h :=\overline{\phi^{(0)}} \circ \kappa^{(0)} \circ \overline{\psi} (1)
$$ 
is a positive contraction in $A_{\infty} \cap A'$, and has the property that for all $a\in A\subseteq A_\infty$, 
$$
ha=\overline{\phi^{(0)}} \circ \kappa^{(0)} \circ \overline{\psi}(a) \quad \text{and}\quad (1-h)a=\overline{\phi^{(1)}} \circ \kappa^{(1)} \circ \overline{\psi}(a).
$$
For each $n$, if $x\in C_0(0,1]$ is the identity function then
$$
\phi_n^{(0)}(F_n^{(0)})=\rho_{n}^{(0)}(x\otimes F_{n}^{(0)})\subseteq \rho_{n}^{(0)}(C_{0}(0,1]\otimes F_{n}^{(0)})=S_n;
$$
hence $ha \in S_{\infty}$ for all $a\in A$, and similarly  $(1-h)a \in R_{\infty}$ for all $a \in A$.  	
	
From Lemma \ref{rr0 oz}, since $A$ has real-rank zero, for each $n$ there exists a positive contraction $\eta_{n}^{(0)}\in A$ with finite spectrum that commutes with the image of $\pi_{n}^{(0)}$ and that satisfies 
\begin{align}\label{bn zero}
\|\phi_{n}^{(0)}(b)-\eta_{n}^{(0)}\pi_{n}^{(0)}(b)\|\leq \frac{\|b\|}{n}
\end{align} 
for all $b\in F_{n}^{(0)}$.  Let $\sigma_{n}^{(0)}: C_{0}(0,1]\otimes F_{n}^{(0)} \rightarrow A$ be the $*$-homomorphism determined on elementary tensors by $f \otimes b \mapsto f(\eta_{n}^{(0)}) \pi_{n}^{(0)}(b)$.  This factors through a finite-dimensional $C^*$-algebra as in the diagram below
$$
\xymatrix{ C_{0}(0,1]\otimes F_{n}^{(0)} \ar[d] \ar[r]^-{\sigma_{n}^{(0)}} & A \\ C\big(\text{spec}(\eta_{n}^{(0)})\big)\otimes F_{n}^{(0)} \ar[ur] &  } 
$$ 
and so the image of $\sigma_{n}^{(0)}$ is a finite-dimensional $C^{*}$-subalgebra of $A$.  Define $C_{n}$ to be the image of $\sigma_n^{(0)}$, and let $C_\infty:= \prod_\N C_{n} / \oplus_\N C_{n}$ denote the corresponding $C^{*}$-subalgebra of $A_{\infty}$.  Working instead with $i=1$, we choose $\eta_n^{(1)}$ and use it to define $\sigma_n^{(1)}$, $D_n$, and $D_\infty$ precisely analogously.  

Let $a \in A \subseteq A_{\infty}$ and denote by $b:=\kappa^{(0)}\circ \overline{\psi} (a) \in (F^{(0)})_{\infty}$. Choose a sequence $(b_n)$ in $\prod_\N F_n^{(0)}$ that lifts $b$ and that satisfies $\|b_n\|\leq \|a\|$ for all $n$.  For a sequence $(a_n)$ in $\prod_\N A_n$, let us write $[(a_n)]$ for the corresponding element of $A_\infty$.  Then we compute that in  $A_{\infty}$
\begin{align*}
ha - [(\eta_{n}^{(0)}\pi_{n}^{(0)}(b_{n}))] &= [\phi_{n}^{(0)}(b_{n})] - [(\eta_{n}^{(0)}\pi_{n}^{(0)}(b_{n}))] \\
&= [(\phi_{n}^{(0)} - \eta_{n}^{(0)}\pi_{n}^{(0)})(b_{n})].
\end{align*} 
Line \eqref{bn zero} implies that 
\begin{align*}
\| (\phi_{n}^{(0)} - \eta_{n}^{(0)}\pi_{n}^{(0)})(b_{n}) \| \leq \frac{\|b_{n}\|}{n} \leq \frac{\|a\|}{n} \longrightarrow 0 \;\;\;\text{ as } \;n \longrightarrow \infty.
\end{align*}
Hence $ha = [(\eta_{n}^{(0)}\pi_{n}^{(0)}(b_{n}))]=[\sigma_n^{(0)}(x\otimes b_n)] \in C_{\infty}$.  A similar argument shows that $(1-h)a \in D_{\infty}$ and we are done.
\end{proof}

From Proposition \ref{main nd1 rr0} we have the following.

\begin{theorem}\label{weak cr1 the}
If $A$ is a separable, unital $C^{*}$-algebra with real rank zero and nuclear dimension at most one, then $A$ has weak complexity rank at most one.
\end{theorem}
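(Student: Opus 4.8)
The plan is to show that the asymptotic conclusion of Proposition \ref{main nd1 rr0}, phrased inside the sequence algebra $A_\infty$, can be ``reindexed'' into the finitary, local condition that defines weak complexity rank at most one. By Definitions \ref{f c 2} and \ref{d3}, having weak complexity rank at most one means weakly decomposing over the class $\mathcal{WD}_0$ of locally finite-dimensional unital $C^*$-algebras; since finite-dimensional algebras lie in $\mathcal{WD}_0$, it suffices to produce, for each finite $X \subseteq A$ and each $\epsilon > 0$, finite-dimensional subalgebras $C, D \subseteq A$ and a positive contraction $h \in A$ with $\|[h,x]\| < \epsilon$, $hx \in_\epsilon C$, and $(1-h)x \in_\epsilon D$ for all $x \in X$.

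First I would apply Proposition \ref{main nd1 rr0} to obtain a positive contraction $h \in A_\infty \cap A'$ and sequences $(C_n)$, $(D_n)$ of finite-dimensional subalgebras of $A$ with $ha \in C_\infty$ and $(1-h)a \in D_\infty$ for all $a \in A$. I would lift $h$ to a sequence $(h_n)$ of positive contractions in $A$ (a positive contraction in a quotient always lifts to positive contractions, via functional calculus applied to an arbitrary self-adjoint contractive lift). Next I would unwind the three sequence-algebra statements into asymptotic ones: since $h$ lies in the relative commutant $A_\infty \cap A'$, for every $x \in A$ the sequence $([h_n,x])$ lies in $\bigoplus_\N A$, so $\|[h_n,x]\| \to 0$; and since $hx \in C_\infty$ (respectively $(1-h)x \in D_\infty$), choosing representatives exhibits $\operatorname{dist}(h_n x, C_n) \to 0$ (respectively $\operatorname{dist}((1-h_n)x, D_n) \to 0$).

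Finally, because $X$ is finite, these finitely many null sequences can be made simultaneously small: I would choose a single index $n$ with $\|[h_n,x]\| < \epsilon$, $h_n x \in_\epsilon C_n$, and $(1-h_n)x \in_\epsilon D_n$ for every $x \in X$, and set $h := h_n$, $C := C_n$, $D := D_n$. These witness the required weak decomposition over the finite-dimensional (hence locally finite-dimensional) $C^*$-algebras, so $A \in \mathcal{WD}_1$.

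I do not expect any serious obstacle here: all of the analytic content has been absorbed into Proposition \ref{main nd1 rr0}, and what remains is the standard ``$\epsilon$-$n$ reindexing'' that converts a statement about the sequence algebra $A_\infty$ back into a local statement about $A$. The only points deserving a word of care are the positive-contraction lifting of $h$ and the observation that finiteness of $X$ lets one pass from ``each condition holds eventually'' to ``all conditions hold at a common index'' --- both entirely routine.
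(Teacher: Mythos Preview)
Your proposal is correct and follows essentially the same approach as the paper's own proof: apply Proposition~\ref{main nd1 rr0}, lift $h$ to a sequence of positive contractions $(h_n)$, and use finiteness of $X$ to reindex the asymptotic conditions into a single index $n$ at which $C_n$, $D_n$, $h_n$ witness weak decomposability. The paper's proof is terser (it simply says ``one checks directly''), but you have filled in exactly the details that this phrase elides.
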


\begin{proof}
Let $(C_n)$, $(D_n)$ and $h$ be as in the conclusion of Proposition \ref{main nd1 rr0}.  Lift $h$ to a positive contraction $(h_n)$ in $\prod_\N A_n$.  Then one checks that directly that for any finite subset $X$ and $\epsilon>0$ there is $N$ so that for all $n\geq N$, $C_n$, $D_n$, and $h_n$ satisfy the conditions needed for weak complexity rank at most one.
\end{proof}

The following corollary gives an interesting class of $C^*$-algebras with weak complexity rank one that we will use later.  For the statement, recall that a unital $C^*$-algebra $A$ is a \emph{Kirchberg algebra} if it is separable, nuclear, and if for any non-zero $a\in A$, there exist $b,c\in A$ such that $bac=1_A$ (note that this last condition implies simplicity).  See for example \cite[Chapter 4]{Rordam:2002cs} for background on this class of $C^*$-algebras.

\begin{corollary}\label{kirch wcr1}
Any unital Kirchberg algebra has weak complexity rank one.
\end{corollary}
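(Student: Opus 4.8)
The plan is to combine the structural theorem just proved with the standard external facts about Kirchberg algebras. Let $A$ be a unital Kirchberg algebra; by definition $A$ is separable, unital, nuclear, and simple. First I would invoke the two deep results quoted in the introduction: by \cite[Theorem G]{Bosa:2014zr} the algebra $A$ has nuclear dimension at most one, and by \cite{Zhang:1990aa} it has real rank zero. With these two hypotheses recorded, Theorem \ref{weak cr1 the} applies verbatim and yields that $A$ has weak complexity rank at most one. This is where essentially all of the content sits, and it is immediate once the two cited properties are in hand.

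It then remains to rule out weak complexity rank zero, so that the rank is exactly one. By Definition \ref{f c 2}, weak complexity rank zero means $A$ is locally finite dimensional; since $A$ is separable this is the same as $A$ being AF, and by Remark \ref{wcr0 rem} it is also equivalent to $A$ having nuclear dimension zero. So I would argue that a Kirchberg algebra cannot be AF. The cleanest route is via the defining property: the requirement that for every nonzero $a$ there exist $b,c$ with $bac=1_A$ forces $1_A$ to be a properly infinite projection, whence $A$ contains a non-unitary isometry. A unital AF algebra is stably finite, so every isometry in it is a unitary; hence $A$ is not AF. (Alternatively, a unital AF algebra carries a tracial state, whereas a purely infinite simple algebra has none.) Therefore the weak complexity rank of $A$ is not zero, and combined with the upper bound it equals one.

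I do not anticipate any real obstacle. The substantive mathematics is entirely contained in the external inputs (nuclear dimension at most one and real rank zero for Kirchberg algebras) together with the already-established Theorem \ref{weak cr1 the}. The only point requiring a little care is the final, elementary step separating rank one from rank zero, where one must correctly translate ``weak complexity rank zero'' into ``locally finite dimensional,'' equivalently AF in the separable case, and then exhibit the failure of stable finiteness for a purely infinite algebra.
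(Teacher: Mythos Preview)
Your proposal is correct and matches the paper's proof essentially verbatim: invoke \cite[Theorem G]{Bosa:2014zr} for nuclear dimension one and \cite{Zhang:1990aa} for real rank zero, apply Theorem \ref{weak cr1 the}, and then rule out rank zero by noting Kirchberg algebras are not locally finite dimensional. The paper simply asserts this last point without justification, whereas you spell out the properly-infinite/stably-finite incompatibility, which is a welcome elaboration but not a different approach.
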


\begin{proof}
Kirchberg algebras have real rank zero by the main result of \cite{Zhang:1990aa} and nuclear dimension one by \cite[Theorem G]{Bosa:2014zr}, whence weak complexity rank at most one by Theorem \ref{weak cr1 the}.  Kirchberg algebras do not have weak complexity rank zero as they are not locally finite dimensional. 
\end{proof}

\subsection{From weak complexity rank to real rank}\label{rr0 ss}

We now establish a partial converse to Theorem \ref{weak cr1 the}.  First, recall that Proposition \ref{loc fnd} shows that if $A$ has weak complexity rank at most one, then it has nuclear dimension at most one.  To establish a converse to Theorem \ref{weak cr1 the}, we therefore need to show that weak complexity rank at most one implies real rank zero.  We can do this for simple (separable, unital) $C^*$-algebras, but not in general; moreover, the proofs of our main result (see Proposition \ref{fmt rr0} below) are not self-contained, but rely on deep structural results for simple nuclear $C^*$-algebras.   Some key ideas in this section are due to the anonymous referee: in our first version of this paper, we also assumed that $A$ has at most finitely many extreme tracial states in Proposition \ref{fmt rr0} below.

We have generally tried to explain the properties we use as we need them: the most glaring omission is probably any discussion of $\mathcal{Z}$-stability, which we just use as a black box.

\begin{proposition}\label{fmt rr0}
Let $A$ be a simple, separable, unital $C^*$-algebra with weak complexity rank at most one.  Then $A$ has real rank zero.
\end{proposition}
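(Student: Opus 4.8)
The plan is to use the modern classification/structure theory for simple nuclear $C^*$-algebras to reduce real rank zero to a more tractable property, and then to extract that property from weak complexity rank one. First I would record what we already have: by Proposition \ref{loc fnd}, weak complexity rank at most one forces $A$ to have finite nuclear dimension (indeed nuclear dimension at most one), so $A$ is nuclear; since $A$ is also simple, separable, and unital, the deep results of \cite{Winter:2012qf,Castillejos:2019aa} tell us that $A$ is $\mathcal{Z}$-stable. For $\mathcal{Z}$-stable (equivalently finite-nuclear-dimension) simple unital $C^*$-algebras there is a clean characterization of real rank zero in terms of the trace simplex: by the work on the Toms--Winter program, such an $A$ has real rank zero if and only if the projections separate the traces, or — phrasing it the way I would actually use — if and only if the image of $K_0(A)$ under the pairing with traces is dense in the relevant space of affine functions, together with stable rank one handling the traceless (purely infinite) case. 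So my strategy is to treat two cases: if $A$ is purely infinite it is automatically of real rank zero by \cite{Zhang:1990aa}, and if $A$ is stably finite I must produce enough projections.

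For the stably finite case, the key step is to show that the projections in $A$ have dense span in the self-adjoint part in a trace-sensitive way, which by the structure theory above upgrades to genuine real rank zero. Here is where I would use weak complexity rank directly. Given a self-adjoint $a$ and $\epsilon>0$, apply Definition \ref{d3} (weak complexity rank at most one) to $X = \{a\}$: I obtain finite-dimensional subalgebras $C,D$ and an almost-central positive contraction $h$ with $ha \in_\epsilon C$ and $(1-h)a \in_\epsilon D$. Since $C$ and $D$ are finite-dimensional they have real rank zero, so $ha$ and $(1-h)a$ are each approximable by self-adjoint elements of finite spectrum living in $C$ and $D$ respectively; because $h$ is almost central, $a \approx ha + (1-h)a$ is approximated by a sum of two finite-spectrum self-adjoints. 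This is essentially the argument already run in Lemma \ref{wd implies lp}, and it shows the span of projections is dense. The subtlety is that density of the span of projections is weaker than real rank zero: I need the finite-spectrum approximant itself to be a \emph{single} self-adjoint of finite spectrum, not merely a linear combination of projections.

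I expect the main obstacle to be exactly this gap — passing from ``span of projections is dense'' (or ``$a$ is close to a sum of two finite-spectrum self-adjoints coming from $C$ and $D$'') to the genuine real rank zero condition that $a$ itself is close to one finite-spectrum self-adjoint. The two pieces $ha$ and $(1-h)a$ need not commute and their finite-spectrum approximants live in unrelated matrix algebras, so their sum has no reason to have finite spectrum. This is why a soft argument cannot suffice and the deep machinery is needed: the plan is to invoke the characterization of real rank zero for $\mathcal{Z}$-stable simple $C^*$-algebras (via \cite{Rordam:2004lw,Elliott:2011vc} and the Toms--Winter results) so that the correct obstruction is purely about traces/$K_0$, and then to show weak complexity rank one kills that obstruction. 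Concretely, I would argue that the two-colored finite-dimensional approximation provides, for each trace $\tau$, projections whose $\tau$-values approximate the spectral data of $a$ arbitrarily well — so that the projections separate and exhaust the traces in the sense required — and then appeal to the structure theorem to conclude real rank zero. Verifying that the trace values of the approximating projections behave correctly (uniformly over the trace simplex, using that $h$ is approximately central and hence nearly trace-preserving in the relevant sense) is the technical heart, and is presumably where the referee's ideas and the results of \cite{Castillejos:2019aa} on uniform property $\Gamma$ enter.
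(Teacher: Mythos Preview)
Your outer strategy matches the paper exactly: Proposition~\ref{loc fnd} gives finite nuclear dimension, the traceless case is handled by \cite[Theorem 5.4]{Winter:2010eb} and Zhang, and in the tracial case $\mathcal{Z}$-stability (via \cite{Winter:2012qf}) plus R\o{}rdam's criterion \cite[Theorem 7.2]{Rordam:2004lw} reduces real rank zero to density of the image of $K_0(A)\to\text{Aff}(T(A))$. So the architecture is right.

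The gap is in the mechanism for producing projections with prescribed trace behaviour. Your guess that uniform property~$\Gamma$ is the input from \cite{Castillejos:2019aa} is off; what is used is the realizability statement \cite[Proposition 2.6]{Castillejos:2019aa} that every strictly positive $\alpha\in\text{Aff}(T(A))$ equals $\widehat{a}$ for some positive $a\in A\otimes\mathcal{K}$. And the role of weak complexity rank is not to make ``trace values of approximating projections behave correctly'' in any direct sense, but rather to give a Cuntz-subequivalence sandwich (Lemma~\ref{decom trace}): for any $a\in M_\infty(A)_+$ and $\epsilon>0$ there is a projection $p$ with $(a-\epsilon)_+\lesssim p\lesssim a\oplus a$. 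This is proved much as you describe --- approximate $h^{1/2}ah^{1/2}$ and $(1-h)^{1/2}a(1-h)^{1/2}$ by positive $c\in C$, $d\in D$ --- but the punchline is to take $p=p_C\oplus p_D$ the direct sum of the \emph{support projections} of $(c-\delta)_+$ and $(d-\delta)_+$, and to control $p$ only up to Cuntz subequivalence. Combining the sandwich with realizability yields Lemma~\ref{ref lem}: given strictly positive $\alpha$, one realizes $\tfrac{1}{2}\alpha=\widehat{a}$, sandwiches $a$ by a projection $p$, and checks $\alpha-\widehat{p}$ is strictly positive with $\|\alpha-\widehat{p}\|\le\tfrac{3}{4}\|\alpha\|$. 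Iterating gives density. The almost-centrality of $h$ is not used to say anything about traces directly; it enters only to get $a\approx h^{1/2}ah^{1/2}+(1-h)^{1/2}a(1-h)^{1/2}$ in the sandwich argument.
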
  

To establish this, we will need some facts about Cuntz (sub)equivalence and its interaction with tracial states.  We will recall the facts we need; we recommend \cite{Rordam:1992tx} and \cite{Ara:2011ww} for further background on these topics.  

Let $A$ be a $C^*$-algebra, and let $A\otimes \K$ be its stabilization.  Let $A$ be represented faithfully on a Hilbert space $H$, and use the corresponding representation of $A\otimes \K$ on $H\otimes \ell^2(\N)$ to identify elements of $A\otimes \K$ with (certain) $\N$-by-$\N$ indexed matrices with values in $A$.   Let $M_\infty(A)$ be the dense $*$-subalgebra of $A\otimes \K$ consisting of matrices with only finitely many non-zero entries, and identify $M_\infty(A)$ with the $*$-algebraic direct limit of the system 
\begin{equation}\label{mat incl}
M_n(A)\to M_{n+1}(A),\quad a\mapsto \begin{pmatrix} a & 0 \\ 0 & 0 \end{pmatrix}.
\end{equation}
Note that $M_\infty(A)$ is closed under functional calculus in $A\otimes \K$.  

Let now $(A\otimes \K)_+$ denote the positive elements in $A\otimes \K$, and let $M_\infty(A)_+$ denote the positive elements in $M_\infty(A)$.  For $a,b\in (A\otimes \K)_+$, we say $a$ is \emph{Cuntz subequivalent} to $b$, and write $a\lesssim b$, if there is a sequence $(r_n)$ in $A\otimes \K$ such that $r_nbr_n^*$ converges in norm to $a$.  We say $a$ and $b$ are \emph{Cuntz equivalent}, and write $a\sim b$, if $a\lesssim b$ and $b\lesssim a$.  Note that $\lesssim$ is a transitive and reflexive relation, and $\sim$ is an equivalence relation.  

Fix a (spatially induced) isomorphism $\phi:M_2(\K)\to \K$, and for $a,b\in (A\otimes \K)_+$, define
$$
a\oplus b:=(\text{id}_A\otimes \phi)\Bigg(\begin{pmatrix} a & 0 \\ 0 & b \end{pmatrix}\Bigg).
$$
As any two isomorphisms $\phi,\psi:M_2(\K)\to \K$ are conjugate by a unitary multiplier of $\K$, the Cuntz equivalence class of $a\oplus b$ does not depend on the choice of $\phi$.  

We record some basic properties of Cuntz subequivalence in the following lemma.  For a self-adjoint element $a$ in a $C^*$-algebra, let us write $a_+$ for its positive part.  Note that for any positive element $a$ in a $C^*$-algebra and any $\epsilon>0$, $(a-\epsilon)_+$ is in the original $C^*$-algebra, and not just in its unitization.  

\begin{lemma}\label{cse props}
Let $A$ be a $C^*$-algebra, and let $a,b\in (A\otimes \mathcal{K})_+$ and $x\in A\otimes \K$.  The following hold:
\begin{enumerate}[(i)]
\item \label{cuntz leq} If $a\leq b$, then $a\lesssim b$.
\item \label{cuntz trace} For any $\epsilon\geq 0$, $(xx^*-\epsilon)_+\sim (x^*x-\epsilon)_+$.
\item \label{cuntz sum} $a+b\lesssim a\oplus b$, and if $a$ and $b$ are orthogonal, then $a\oplus b\lesssim a+b$.
\item \label{cuntz close} If $\|a-b\|\leq \epsilon$, then $(a-\epsilon)_+\lesssim b$.
\end{enumerate}
\end{lemma}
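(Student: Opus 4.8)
The plan is to establish the four parts in order, using each for the next; throughout I work inside $A\otimes\K$, applying continuous functional calculus and the elementary identity $x\,f(x^*x)=f(xx^*)\,x$, valid for every continuous $f$ with $f(0)=0$ (check it on monomials and pass to norm limits). For \eqref{cuntz leq}, assume $a\le b$ and set $r_n:=a^{1/2}(b+\tfrac1n)^{-1/2}$. Then $r_nbr_n^*=a^{1/2}f_n(b)a^{1/2}$ with $f_n(t)=t/(t+\tfrac1n)$, so it suffices to show $a^{1/2}f_n(b)a^{1/2}\to a$. Writing $g_n:=1-f_n$, we have $\|a^{1/2}g_n(b)a^{1/2}\|=\|g_n(b)^{1/2}a\,g_n(b)^{1/2}\|\le\|g_n(b)^{1/2}b\,g_n(b)^{1/2}\|=\|b\,g_n(b)\|\le\tfrac1n$, where the inequality uses $a\le b$ and the last bound is a scalar estimate. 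Hence $r_nbr_n^*\to a$ and $a\lesssim b$.

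For \eqref{cuntz trace}, put $d(t):=(t-\epsilon)_+/t$, a continuous function vanishing at $0$, and let $r:=x\,d(x^*x)^{1/2}$. Using the identity above, $rr^*=x\,d(x^*x)\,x^*=d(xx^*)\,xx^*=(xx^*-\epsilon)_+$ and $r^*r=d(x^*x)\,x^*x=(x^*x-\epsilon)_+$, since $d(t)t=(t-\epsilon)_+$. Thus it is enough to prove the base case $r^*r\sim rr^*$ for an arbitrary $r$. For one direction take $c_n:=r^*(rr^*+\tfrac1n)^{-1/2}$; then $c_n(rr^*)c_n^*=r^*\,h_n(rr^*)\,r=(r^*r)h_n(r^*r)\to r^*r$ with $h_n(t)=t/(t+\tfrac1n)$, giving $r^*r\lesssim rr^*$, and the reverse is symmetric.

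For \eqref{cuntz sum}, regard $a,b$ in the two diagonal corners of $M_2(A\otimes\K)\cong A\otimes\K$ and set $y:=\begin{pmatrix}a^{1/2}&b^{1/2}\end{pmatrix}$, so that $yy^*=a+b$ and $y^*y=\begin{pmatrix}a&a^{1/2}b^{1/2}\\ b^{1/2}a^{1/2}&b\end{pmatrix}$. The matrix $2\begin{pmatrix}a&0\\0&b\end{pmatrix}-y^*y=\begin{pmatrix}a^{1/2}\\-b^{1/2}\end{pmatrix}\begin{pmatrix}a^{1/2}&-b^{1/2}\end{pmatrix}$ is positive, so $y^*y\le 2\,(a\oplus b)$; combining $a+b=yy^*\sim y^*y$ from \eqref{cuntz trace} with \eqref{cuntz leq} and the scale-invariance $\lambda c\sim c$ gives $a+b\lesssim a\oplus b$. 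If moreover $ab=0$, then $a^{1/2}b^{1/2}=0$, the off-diagonal terms vanish, $y^*y=a\oplus b$ exactly, and \eqref{cuntz trace} yields $a\oplus b=y^*y\sim yy^*=a+b$, so in particular $a\oplus b\lesssim a+b$.

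For \eqref{cuntz close}, reduce using $\|a-b\|\le\epsilon$ to the operator inequality $a-\epsilon\le b$. The subtlety, which I expect to be the main obstacle, is that $t\mapsto(t-\epsilon)_+$ is not operator monotone, so one cannot conclude $(a-\epsilon)_+\le b$ directly. Working in $\tilde A$, where $b+\epsilon$ is invertible, the element $w:=(a-\epsilon)_+^{1/2}(b+\epsilon)^{-1/2}$ satisfies $w(b+\epsilon)w^*=(a-\epsilon)_+$, which isolates the difficulty as absorbing the additive $\epsilon$ in $w(b+\epsilon)w^*=wbw^*+\epsilon\,ww^*$. This is exactly R\o{}rdam's lemma; I would handle it by the standard cut-down argument (cf.\ \cite{Rordam:1992tx}): cutting $a$ down by a continuous approximation $e_\eta(a)$ to its spectral projection over $[\epsilon,\infty)$ shows that $(a-\epsilon)_+$ lies within $\eta$ of an element dominated by $e_\eta(a)^{1/2}b\,e_\eta(a)^{1/2}$, which is Cuntz below $b$ by \eqref{cuntz leq} and \eqref{cuntz trace}, and then letting $\eta\to0$ via the standard fact that $(c-\eta)_+\lesssim d$ for all $\eta>0$ implies $c\lesssim d$.
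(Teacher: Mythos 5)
Your argument is correct in substance and is considerably more self-contained than the paper's, which disposes of all four parts by citation (R\o{}rdam and Ara--Perera--Toms for (i), (iii), (iv), and, for (ii), a two-line reduction via the polar decomposition $x=u|x|$ in the double dual $(A\otimes\K)^{**}$ followed by a citation for $y^*y\sim yy^*$). Your (i) is the standard argument. For (ii) you replace the excursion into the double dual by the element $r=x\,d(x^*x)^{1/2}$ with $d(t)=(t-\epsilon)_+/t$, which stays inside $A\otimes\K$; this is the same element as the paper's $u(x^*x-\epsilon)_+^{1/2}$ but constructed without bidual gymnastics, and your direct proof of the base case $r^*r\sim rr^*$ removes the remaining citation. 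One small point of logical hygiene: $d$ is continuous at $0$ only when $\epsilon>0$, so the reduction via $d$ does not cover $\epsilon=0$; that case is handled by applying the base case directly to $r=x$, which your write-up implicitly does but should say. Your (iii) is complete and is essentially the argument behind the cited result of Ara--Perera--Toms.

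The one step that needs care is (iv). As literally phrased, the conclusion is circular: you assert that $(a-\epsilon)_+$ ``lies within $\eta$ of an element dominated by $e_\eta(a)^{1/2}b\,e_\eta(a)^{1/2}$'' and then pass to the limit --- but converting ``within $\eta$ of something Cuntz below $b$'' into ``$((\,\cdot\,)-\eta)_+\lesssim b$'' is exactly the statement of (iv) that you are trying to prove. The non-circular way to close the argument is to use that both elements in question are continuous functions of $a$ and compare them pointwise: taking $e_\eta$ to vanish on $[0,\epsilon]$ and equal $1$ on $[\epsilon+\eta,\infty)$, one has the scalar inequality $(t-\epsilon-\eta)_+\le e_\eta(t)(t-\epsilon)e_\eta(t)$ for all $t\ge 0$, whence
$$
(a-\epsilon-\eta)_+\;\le\; e_\eta(a)(a-\epsilon)e_\eta(a)\;\le\; e_\eta(a)\,b\,e_\eta(a)\;\lesssim\; b
$$
by part (i) (for the first $\lesssim$) and parts (ii) and (i) (for the last, via $e_\eta(a)be_\eta(a)\sim b^{1/2}e_\eta(a)^2b^{1/2}\le b$). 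Since $(a-\epsilon-\eta)_+=((a-\epsilon)_+-\eta)_+$, the genuinely elementary limiting fact you quote then yields $(a-\epsilon)_+\lesssim b$. With that one step made precise, the proof is sound; the preliminary remarks about $w=(a-\epsilon)_+^{1/2}(b+\epsilon)^{-1/2}$ can simply be deleted, as they play no role in the argument.
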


\begin{proof}
Part \eqref{cuntz leq} follows from \cite[Lemma 2.3]{Rordam:1992tx} or \cite[Lemma 2.8]{Ara:2011ww}.  For part \eqref{cuntz trace}, let $x=u|x|$ be the polar decomposition of $x$ in the double dual $(A\otimes \K)^{**}$ (compare for example \cite[III.5.2.16]{Blackadar:2006eq}).  Then for any $\epsilon>0$, $y:=u(x^*x-\epsilon)_+^{1/2}$ is in $A\otimes \K$, and we have $y^*y=(x^*x-\epsilon)_+$ and $yy^*=(xx^*-\epsilon)_+$.  We have $y^*y\sim yy^*$ by \cite[Corollary 2.6]{Ara:2011ww}, completing the argument for \eqref{cuntz trace}.  Part \eqref{cuntz sum} follows from \cite[Lemma 2.10]{Ara:2011ww}, and part \eqref{cuntz close} follows  from \cite[Proposition 2.2]{Rordam:1992tx} or \cite[Theorem 2.13]{Ara:2011ww}.
\end{proof}

The next lemma is the only place in this subsection where the assumption of weak complexity rank at most one is used.

\begin{lemma}\label{decom trace}
Let $A$ be a unital $C^*$-algebra with weak complexity rank at most one.  Then for any $a\in M_\infty(A)_+$ and $\epsilon>0$ there is a projection $p\in M_\infty(A)$ such that 
$$
(a-\epsilon)_+\lesssim p\lesssim a\oplus a .
$$
\end{lemma}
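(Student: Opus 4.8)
The plan is to use the partition of unity $\{h,1-h\}$ coming from weak complexity rank to split $a$ into two pieces, each dominated by $a$ in the Cuntz order and each approximately supported in a finite-dimensional subalgebra, and then to exploit the fact that a positive element of a finite-dimensional $C^*$-algebra is Cuntz equivalent to its support projection. The factor of two in $a\oplus a$ is precisely what lets us trade the (non-orthogonal) sum of the two pieces for their orthogonal sum. First I would reduce to the level of $A$ itself: fix $n$ with $a\in M_n(A)_+$ and apply the definition of weak complexity rank at most one to $A$, with $X$ the set of matrix entries of $a$ and a small tolerance $\delta$ to be fixed last. This yields finite-dimensional $C,D\subseteq A$ (after the routine refinement of locally finite-dimensional subalgebras to finite-dimensional ones) and a positive contraction $h\in A$. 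Setting $H:=h\otimes 1_n\in M_n(A)$, $C^\sharp:=M_n(C)$ and $D^\sharp:=M_n(D)$ (both finite-dimensional), the commutator $\|[H,a]\|$ is small, and $Ha\in_{\delta'}C^\sharp$, $(1-H)a\in_{\delta'}D^\sharp$, with $\delta'\to 0$ as $\delta\to 0$.

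Next I would record the exact decomposition $a=u+v$, where $u:=a^{1/2}Ha^{1/2}$ and $v:=a^{1/2}(1-H)a^{1/2}$. Since $0\le H\le 1$ we get $u\le a$ and $v\le a$, hence $u,v\lesssim a$ by Lemma \ref{cse props}\eqref{cuntz leq}. Pedersen's estimate (Lemma \ref{pci}) controls $\|[H,a^{1/2}]\|$ in terms of $\|[H,a]\|$, which shows $u\approx Ha$ and $v\approx (1-H)a$; thus $u$ lies $\delta_u$-close to $C^\sharp$ and $v$ lies $\delta_v$-close to $D^\sharp$, with $\delta_u,\delta_v\to 0$ as $\delta\to0$. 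The engine of the argument is a ``bisection with cut-down'' inequality: from $a-\epsilon=(u-\epsilon/2)+(v-\epsilon/2)\le (u-\epsilon/2)_+ + (v-\epsilon/2)_+$ one obtains
$$
(a-\epsilon)_+\;\lesssim\;(u-\epsilon/2)_+ + (v-\epsilon/2)_+\;\lesssim\;(u-\epsilon/2)_+\oplus (v-\epsilon/2)_+,
$$
where the first step uses that a self-adjoint $x\le t$ with $t\ge 0$ satisfies $x_+\lesssim t$ (a standard Cuntz-comparison fact, provable by compressing $x\le t$ by the spectral projections $\chi_{(1/m,\infty)}(x_+)$ and applying Lemma \ref{cse props}\eqref{cuntz leq} and \eqref{cuntz trace}), and the second step is Lemma \ref{cse props}\eqref{cuntz sum}.

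Finally I would convert the cut-downs into projections. Approximating $u$ by a positive $u'\in C^\sharp$ via Lemma \ref{close lem}\eqref{pos part}, and taking $\delta$ small enough that the approximation error is much smaller than $\epsilon$, I get $(u-\epsilon/2)_+\lesssim (u'-s)_+$ for some $s>0$; as $(u'-s)_+$ is a positive element of the finite-dimensional algebra $C^\sharp$, it is Cuntz equivalent to its support projection $q_u\in C^\sharp$, and likewise one produces $q_v\in D^\sharp$. Set $p:=q_u\oplus q_v$, a projection in $M_\infty(A)$. The lower bound $(a-\epsilon)_+\lesssim p$ then follows by combining the displayed inequality with $(u-\epsilon/2)_+\lesssim q_u$ and $(v-\epsilon/2)_+\lesssim q_v$. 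For the upper bound, the same comparison gives $q_u\sim (u'-s)_+\lesssim u\le a$ and $q_v\lesssim v\le a$, so $p=q_u\oplus q_v\lesssim a\oplus a$ by monotonicity of $\oplus$.

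I expect the main obstacle to be the bookkeeping around the ``support-projection jump'': a positive element close to a finite-dimensional algebra need not have its support projection dominated by nearby elements, so one cannot simply take the support projections of $u$ and $v$ themselves. The resolution, which is the technical heart of the proof, is to cut down at the level $\epsilon/2$, chosen large compared to every approximation error (all of which are governed by $\delta$); this is exactly why the cut-down form of the bisection inequality, rather than the plain $a\lesssim u\oplus v$, is needed, and why $\delta$ must be selected only after $\epsilon$ and $n$ are fixed.
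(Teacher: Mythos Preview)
Your proof is correct and follows essentially the same architecture as the paper's: split $a$ via the partition of unity into two positive pieces each Cuntz-below $a$ and each close to a finite-dimensional subalgebra, then take support projections of cut-downs and sum them as $p=q_u\oplus q_v$. The only cosmetic differences are that the paper sandwiches as $h^{1/2}ah^{1/2}$ rather than your $a^{1/2}Ha^{1/2}$ (so its decomposition is approximate rather than exact) and obtains the lower bound directly from Lemma~\ref{cse props}\eqref{cuntz close} instead of invoking the auxiliary fact ``$x\le t,\ t\ge 0\Rightarrow x_+\lesssim t$''.
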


\begin{proof}
Fix $n$ so that $a$ is in $M_n(A)$.  Note that $M_n(A)$ also has weak complexity rank at most one by (an easy variant of) Proposition \ref{perm rem}.  Let $\delta=\epsilon/10$.  The definition of weak complexity rank at most one and Lemma \ref{pci} give a positive contraction $h\in M_n(A)$ and finite-dimensional $C^*$-subalgebras $C,D$ of $M_n(A)$ such that $h^{1/2}ah^{1/2}\in_\delta C$ and $(1-h)^{1/2}a(1-h)^{1/2}\in_\delta D$, and such that $\|[a,h^{1/2}]\|<\delta$ and $\|[a,(1-h)^{1/2}]\|<\delta$.  Lemma \ref{close lem} then gives positive contractions $c\in C$ and $d\in D$ such that $\|h^{1/2}ah^{1/2}-c\|<2\delta$ and $\|(1-h)^{1/2}a(1-h)^{1/2}-d\|<2\delta$.  Note that 
$$
a\approx_{2\delta} h^{1/2}ah^{1/2}+(1-h)^{1/2}a(1-h)^{1/2}\approx_{4\delta} c+d\approx_{4\delta} (c-2\delta)_++(d-2\delta)_+
$$
whence by Lemma \ref{cse props} part \eqref{cuntz close}, $(a-10\delta)_+\lesssim (c-2\delta)_++(d-2\delta)_+$.  Hence  by part \eqref{cuntz sum} of Lemma \ref{cse props} 
\begin{equation}\label{lh inq}
(a-10\delta)_+\lesssim (c-2\delta)_+\oplus (d-2\delta)_+.
\end{equation}
On the other hand, as $\|c-h^{1/2}ah^{1/2}\|<2\delta$, whence part \eqref{cuntz close} of Lemma \ref{cse props} again gives $(c-2\delta)_+\lesssim h^{1/2}ah^{1/2} \lesssim a$ (where the second Cuntz subequivalence is clear from the definition).  Similarly, $(d-2\delta)_+\lesssim a$.  Combining these observations with line \eqref{lh inq} gives   
$$
(a-10\delta)_+\lesssim (c-2\delta)_+\oplus (d-2\delta)_+\lesssim a\oplus a.
$$
Now, $(c-2\delta)_+$ is contained in a finite-dimensional $C^*$-algebra, so has finite spectrum, whence it is Cuntz equivalent to its support projection $p_C\in C$; similarly $(d-2\delta)_+$ is Cuntz equivalent to its support projection $p_D$.  Setting $p:=p_C\oplus p_D$, we are done.
\end{proof}

We need some more terminology.  Let $A$ be a unital $C^*$-algebra, and let $T(A)$ be its tracial state space.  We equip $T(A)$ with its weak-$*$ topology, so it is a compact convex (possibly empty) subset of the unit ball of the dual space $A^*$ of $A$.  For any $\tau\in T(A)$, we abuse notation and also write $\tau$ for the map 
$$
\tau:(A\otimes \mathcal{K})_+\to [0,\infty],\quad (a_{ij})\mapsto \sum_{i\in \N}\tau(a_{ii})
$$
(here we use our fixed identification of elements of $A\otimes \K$ with $\N$-by-$\N$ matrices over $A$); the definition of $\tau:(A\otimes \mathcal{K})_+\to [0,\infty]$ depends only on the original element of $T(A)$ and not on the choice of identification.

For $\epsilon>0$, let $f_\epsilon:[0,\infty)\to [0,1]$ be the continuous function which is zero on $[0,\epsilon/2]$, $1$ on $[\epsilon,\infty)$ and linear on $[\epsilon/2,\epsilon]$.  For $a\in (A\otimes \mathcal{K})_+$, we define a function 
\begin{equation}\label{pair map}
\widehat{a}:T(A)\to [0,\infty],\quad \tau\mapsto \lim_{\epsilon\to 0} \tau(f_\epsilon(a))
\end{equation}
(the limit exists as the net $(\tau(f_\epsilon(a))_{\epsilon>0}$ is increasing as $\epsilon$ tends to zero).  Note that if $a\in M_\infty(A)_+$ then $\widehat{a}$ is finite-valued, and affine (as it is a pointwise limit of affine functions).  It need not be continuous in general, but if $p\in M_\infty(A)_+$ is a projection then $\widehat{p}$ is continuous, as then $f_{\epsilon}(p)=p$ for all $\epsilon\leq 1$.  
 
Lemma \ref{iota props} below records the properties of the maps $\widehat{a}$ that we will need.

\begin{lemma}\label{iota props}
Let $A$ be a unital $C^*$-algebra, let $a,b\in (A\otimes \K)_+$, and let $\widehat{a},\widehat{b}:T(A)\to [0,\infty]$ be as in line \eqref{pair map} above.  
\begin{enumerate}[(i)]
\item \label{iota le} If $a\lesssim b$, then $\widehat{a}\leq \widehat{b}$.
\item \label{iota sum} $\widehat{a\oplus b}=\widehat{a}+\widehat{b}$.
\end{enumerate}
\end{lemma}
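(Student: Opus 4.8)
The plan is to reduce both parts to elementary manipulations with the function $f_\epsilon$ and the extended trace $\tau$ on $(A\otimes\K)_+$, using throughout that $\widehat{a}(\tau)=\sup_{\epsilon>0}\tau(f_\epsilon(a))$ (the net $\tau(f_\epsilon(a))$ increases as $\epsilon\searrow 0$, as already noted after line \eqref{pair map}). I will use two structural features of $\tau$: that it is lower semicontinuous, being the increasing supremum over $N$ of the norm-continuous functionals $x\mapsto\sum_{i\le N}\tau(x_{ii})$; and that it is a trace in the strong sense that $\tau(ww^*)=\tau(w^*w)$ in $[0,\infty]$ for every $w\in A\otimes\K$.

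Part \eqref{iota sum} is the routine one. For the block-diagonal element underlying $a\oplus b$ the functional calculus acts blockwise, and since $f_\epsilon(0)=0$ we get $f_\epsilon(a\oplus b)=f_\epsilon(a)\oplus f_\epsilon(b)$. As $\tau$ is additive on orthogonal sums — directly from its definition as a sum over diagonal entries, together with its independence of the identification $M_2(\K)\cong\K$ used to define $\oplus$ — this yields $\tau(f_\epsilon(a\oplus b))=\tau(f_\epsilon(a))+\tau(f_\epsilon(b))$. Letting $\epsilon\to 0$ and taking suprema gives $\widehat{a\oplus b}=\widehat{a}+\widehat{b}$.

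For part \eqref{iota le} it suffices to show $\tau(f_\epsilon(a))\le\widehat{b}(\tau)$ for each fixed $\epsilon>0$ and $\tau\in T(A)$. Choose $(r_n)$ with $r_nbr_n^*\to a$ in norm; then $f_\epsilon(r_nbr_n^*)\to f_\epsilon(a)$ in norm by continuity of the functional calculus, so lower semicontinuity of $\tau$ gives $\tau(f_\epsilon(a))\le\liminf_n\tau(f_\epsilon(r_nbr_n^*))$. It remains to bound $\tau(f_\epsilon(rbr^*))$ by $\widehat{b}(\tau)$ for an arbitrary $r\in A\otimes\K$. Writing $y:=rb^{1/2}$, so that $rbr^*=yy^*$, the key point is that $f_\epsilon(yy^*)$ and $f_\epsilon(y^*y)$ have the form $ww^*$ and $w^*w$ for a common $w$ (take $w=y\,h(y^*y)^{1/2}$ with $h(t)=f_\epsilon(t)/t$, which is continuous since $f_\epsilon$ vanishes near $0$); hence $\tau(f_\epsilon(yy^*))=\tau(f_\epsilon(y^*y))$ by the trace property. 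Since $y^*y=b^{1/2}r^*rb^{1/2}\le\|r\|^2b$, and since $\widehat{\cdot}(\tau)$ is invariant under positive scaling and order-preserving for $\le$, I conclude $\tau(f_\epsilon(rbr^*))=\tau(f_\epsilon(y^*y))\le\widehat{y^*y}(\tau)\le\widehat{\|r\|^2b}(\tau)=\widehat{b}(\tau)$, as required. Note this route sidesteps any appeal to Cuntz subequivalence of the cut-down elements, which would be circular here.

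The one genuinely nontrivial input is the order-preservation of $\widehat{\cdot}(\tau)$ under $\le$: although $f_\epsilon$ is \emph{not} operator monotone, one has $\widehat{u}(\tau)=\lim_n\tau(u^{1/n})$, and $t\mapsto t^{1/n}$ \emph{is} operator monotone, so $u\le v$ forces $u^{1/n}\le v^{1/n}$ and hence $\tau(u^{1/n})\le\tau(v^{1/n})$ for all $n$; passing to the limit gives $\widehat{u}(\tau)\le\widehat{v}(\tau)$. (Alternatively one may simply invoke that each trace induces a lower semicontinuous dimension function, i.e.\ an order-preserving functional on the Cuntz semigroup, as in \cite{Ara:2011ww}.) This monotonicity, together with the justification of the identity $\widehat{u}(\tau)=\lim_n\tau(u^{1/n})$ for $[0,\infty]$-valued traces, is where I expect the main subtlety to lie; once it is in hand, the replacement of the hard-to-compare element $rbr^*$ by $y^*y\le\|r\|^2b$ makes the rest immediate.
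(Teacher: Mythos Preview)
Your argument for part \eqref{iota sum} matches the paper's exactly. Your argument for part \eqref{iota le} is correct but takes a genuinely different route, and one of your side remarks is circular.

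The paper avoids your detour through $\le$-monotonicity of $\widehat{\,\cdot\,}$ altogether. Instead of working from the raw definition $r_nbr_n^*\to a$, it invokes R{\o}rdam's characterisation of Cuntz subequivalence (\cite[Proposition~2.4(iv)]{Rordam:1992tx}): for every $\epsilon>0$ there exist $\delta>0$ and $r$ with $f_\epsilon(a)=rf_\delta(b)r^*$. Setting $x=f_\delta(b)^{1/2}r^*$ gives $x^*x=f_\epsilon(a)$ and $xx^*\in\overline{b(A\otimes\K)b}$, so $\tau(f_\epsilon(a))=\tau(xx^*)\le\|\tau|_{\overline{b(A\otimes\K)b}}\|$. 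The paper then shows directly that $\|\tau|_{\overline{b(A\otimes\K)b}}\|\le\widehat{b}(\tau)$, using that $f_\delta(b)$ is an approximate unit for the hereditary subalgebra. This bypasses lower semicontinuity of $\tau$, the identity $\widehat{u}(\tau)=\lim_n\tau(u^{1/n})$, and operator monotone roots entirely: once you know $\widehat{b}(\tau)$ equals the norm of $\tau$ restricted to the hereditary subalgebra of $b$, the bound is immediate. Your approach trades that one clean identification for several smaller technical facts (lower semicontinuity, the $w^*w$/$ww^*$ trick, scaling invariance, $\le$-monotonicity), each of which is fine but which together make the proof longer.

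One genuine issue: your parenthetical alternative, ``one may simply invoke that each trace induces a lower semicontinuous dimension function, i.e.\ an order-preserving functional on the Cuntz semigroup,'' is circular. A dimension function being order-preserving on the Cuntz semigroup means precisely that $a\lesssim b$ implies $d_\tau(a)\le d_\tau(b)$, which is exactly the statement of part \eqref{iota le}. Your primary argument via $t\mapsto t^{1/n}$ avoids this circularity (it uses only the ordinary order $\le$, not $\lesssim$), so the proof stands once you drop the parenthetical; but you should not present the dimension-function route as an alternative.
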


\begin{proof}
Part \eqref{iota le} is well-known, but we could not find an exact statement in the literature\footnote{Compare \cite[Theorem II.2.2]{Blackadar:1982tg} or \cite[Proposition 2.1]{Cuntz:1978vy} for closely related results.}, so give an argument here for the reader's convenience; we thank the referee for providing the current much shorter version.  Assume that $a\lesssim b$.  According to the condition in \cite[Proposition 2.4 (iv)]{Rordam:1992tx} for any $\epsilon>0$ there exists $\delta>0$ and $r\in M_\infty(A)$ such that $f_\epsilon(a)=rf_\delta(b)r^*$.  Define $x:=f_\delta(b)^{1/2}r^*$, and note that $x^*x=f_{\epsilon}(a)$ and $xx^*$ is in the hereditary subalgebra $\overline{bAb}$ generated by $b$.  Hence for any $\tau\in T(A)$, using that $\|xx^*\|=\|f_\epsilon(a)\|\leq 1$, we have
\begin{equation}\label{fepsfdelt}
\tau(f_\epsilon(a))=\tau(x^*x)=\tau(xx^*)\leq \|\tau|_{\overline{bAb}}\|.
\end{equation}
On the other hand for any $c\in \overline{bAb}$ we have 
$$
c=\lim_{\delta\to 0}f_\delta(b)^{1/2}cf_{\delta}(b)^{1/2},
$$
whence for any positive $c\in \overline{bAb}$,
$$
\tau(c)=\lim_{\delta\to 0}\tau(f_\delta(b)^{1/2}cf_{\delta}(b)^{1/2})\leq \|c\|\lim_{\delta\to 0}\tau(f_\delta(b))=\|c\|\widehat{b}(\tau).
$$ 
Hence $\|\tau|_{\overline{bAb}}\|\leq \widehat{b}(\tau)$\footnote{The inequality ``$\widehat{b}(\tau)\leq \|\tau|_{\overline{bAb}}\|$'' is easily seen to hold so in fact one has equality here.} and combining this with line \eqref{fepsfdelt} implies $\tau(f_\epsilon(a))\leq \widehat{b}(\tau)$.  Taking the limit as $\epsilon\to 0$ gives $\widehat{a}(\tau)\leq \widehat{b}(\tau)$, and as $\tau$ was arbitrary, we are done.

Part \eqref{iota sum} is straightforward from the fact that $f_\epsilon(a\oplus b)=f_\epsilon(a)\oplus f_{\epsilon}(b)$ for any $a$, $b$ and $\epsilon$.  
\end{proof}

Variants of the following lemma are probably well-known.  

\begin{lemma}\label{put in minfinity}
Let $A$ be a $C^*$-algebra, and let $a,b\in A$ be positive elements such that $\|a^{1/2}ba^{1/2}-a\|<\epsilon$.  Then there exists  $x\in A$ such that $(a-\epsilon)_+=x^*x$, and $xx^*$ is in $b^{1/2}Ab^{1/2}$.
\end{lemma}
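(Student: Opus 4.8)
The plan is to reduce the statement to the sharp, single-element form of R\o{}rdam's perturbation lemma. Set $c:=a^{1/2}ba^{1/2}$, which is a positive element of $A$, and observe that the hypothesis is exactly that $\|c-a\|<\epsilon$. First I would invoke the perturbation lemma in its strong form: whenever $a,c$ are positive with $\|a-c\|<\epsilon$, there is an element $d\in A$ (a contraction, although the norm bound is not needed here) with $(a-\epsilon)_+=dcd^*$. Note that Lemma \ref{cse props} part \eqref{cuntz close} only records the weaker consequence $(a-\epsilon)_+\lesssim c$, i.e.\ the existence of a \emph{sequence} $(r_n)$ with $r_ncr_n^*\to (a-\epsilon)_+$; obtaining the honest equality with a single $d$ is the crucial extra input, and is the content of \cite[Proposition 2.2]{Rordam:1992tx} (or of the characterization of $\lesssim$ via $f_\epsilon(a)=rf_\delta(b)r^*$ used in the proof of Lemma \ref{iota props}).

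With such a $d$ in hand, I would simply set
$$
x:=b^{1/2}a^{1/2}d^*.
$$
Then on the one hand
$$
x^*x=d\,a^{1/2}ba^{1/2}\,d^*=dcd^*=(a-\epsilon)_+,
$$
which is the first required property. On the other hand, writing $z:=a^{1/2}d^*d\,a^{1/2}\in A$, we have
$$
xx^*=b^{1/2}\big(a^{1/2}d^*d\,a^{1/2}\big)b^{1/2}=b^{1/2}zb^{1/2}\in b^{1/2}Ab^{1/2},
$$
which is the second. Thus the lemma follows at once once the perturbation statement is available; the only structural observation needed is that any element of the form $b^{1/2}w$ automatically has $xx^*\in b^{1/2}Ab^{1/2}$, which is precisely what motivates the choice of $x$ (here with $w=a^{1/2}d^*$).

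The main obstacle is therefore entirely concentrated in producing the exact equality $(a-\epsilon)_+=dcd^*$ rather than a mere approximation. Indeed, the naive candidate $d_0:=(a-\epsilon)_+^{1/2}g(a)$, where $g(t)=t^{-1/2}$ for $t\geq \epsilon$ and $g$ is cut off continuously to $0$ near the origin, satisfies $d_0ad_0^*=(a-\epsilon)_+$ exactly by functional calculus, but only gives $\|d_0cd_0^*-(a-\epsilon)_+\|=\|d_0(c-a)d_0^*\|<\epsilon$ after $a$ is replaced by $c$. Removing this residual error is exactly R\o{}rdam's iteration argument, which is why I would cite the perturbation lemma as a black box rather than reprove it. If one instead wished to stay within the machinery already set up in this subsection, one could run the $f_\epsilon$/$f_\delta$ characterization of $\lesssim$ from \cite[Proposition 2.4]{Rordam:1992tx} to obtain honest equalities of the form $f_{\epsilon'}(a)=r\,f_\delta(c)\,r^*$ and then absorb the functional-calculus factors into $x$; this involves slightly more bookkeeping but avoids any appeal outside the cited results.
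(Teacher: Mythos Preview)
Your proof is correct and essentially identical to the paper's own argument: the paper also cites R\o{}rdam's lemma (in the form of \cite[Lemma 2.2]{Kirchberg:2002ph}) to obtain $d$ with $da^{1/2}ba^{1/2}d^*=(a-\epsilon)_+$, and then takes exactly the same $x:=b^{1/2}a^{1/2}d^*$. Your additional commentary on why only the sharp one-element form of the perturbation lemma suffices is accurate and helpful, though not present in the paper's terse proof.
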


\begin{proof}
Using \cite[Lemma 2.2]{Kirchberg:2002ph} (or see \cite[Theorem 2.13]{Ara:2011ww}) there is $d\in A$ such that $da^{1/2}ba^{1/2}d^*=(a-\epsilon)_+$.  The element $x:=b^{1/2}a^{1/2}d^*$ has the desired property.
\end{proof}

The following lemma was communicated to us by the referee.  For the statement, let $A$ be a unital $C^*$-algebra with tracial state space $T(A)$, and let $\text{Aff}(T(A))$ denotes the space of continuous affine functions from $T(A)$ to $\R$.  We equip $\text{Aff}(T(A))$ with the supremum norm.  As already noted, if $p\in M_\infty(A)_+$ is a projection, then $\widehat{p}$ is an element of $\text{Aff}(T(A))$.  

\begin{lemma}\label{ref lem}
Let $A$ be a separable, unital, simple $C^*$-algebra with $T(A)$ non-empty, and with weak complexity rank at most one.  For any strictly positive element $\alpha\in \text{Aff}(T(A))$, either there exists a projection $q\in M_\infty(A)$ such that $\widehat{a}=q$, or there exists a projection $p\in M_\infty(A)$ such that  $\alpha-\widehat{p}$ is strictly positive and $\|\alpha-\widehat{p}\|\leq \frac{3}{4}\|\alpha\|$.  
\end{lemma}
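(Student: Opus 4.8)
The plan is to use the deep structure theory for simple nuclear $C^*$-algebras to manufacture a positive element of prescribed rank, and then feed it into the halving mechanism of Lemma~\ref{decom trace}. First I would record that, since $A$ has weak complexity rank at most one, Proposition~\ref{loc fnd} gives that $A$ has finite nuclear dimension; as $A$ is moreover simple, separable and unital, the Toms--Winter theory \cite{Winter:2012qf,Rordam:2004lw,Elliott:2011vc,Castillejos:2019aa} shows $A$ is $\mathcal{Z}$-stable, whence it has strict comparison of positive elements by traces and the rank map on $\mathrm{Cu}(A)$ is, up to approximation, surjective onto the strictly positive continuous affine functions on $T(A)$. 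Concretely, the only input I need is: for the given strictly positive $\alpha$ and any small $\eta>0$ there is a positive element $a\in M_\infty(A)_+$ with $t\alpha-\eta\le \widehat a\le t\alpha$ pointwise, where I fix once and for all a fraction $t\in(\tfrac13,\tfrac12)$, say $t=\tfrac{5}{12}$.

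Given such an $a$, I would apply Lemma~\ref{decom trace} with a small parameter $\epsilon>0$ to obtain a projection $p\in M_\infty(A)$ with
$$
(a-\epsilon)_+\lesssim p\lesssim a\oplus a.
$$
Translating this through Lemma~\ref{iota props} (parts \eqref{iota le} and \eqref{iota sum}) gives the sandwich
$$
\widehat{(a-\epsilon)_+}\ \le\ \widehat p\ \le\ 2\widehat a\ \le\ 2t\alpha .
$$
I claim $p$ already witnesses the second alternative. For the upper estimate, $2t<1$ together with strict positivity of $\alpha$ on the compact set $T(A)$ forces $\widehat p\le 2t\alpha<\alpha$ with a uniform gap, so $\alpha-\widehat p$ is strictly positive. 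For the lower estimate I must check $\widehat p\ge \alpha-\tfrac34\|\alpha\|$: this is automatic at traces $\tau$ with $\alpha(\tau)\le \tfrac34\|\alpha\|$ (the right-hand side is then $\le 0\le\widehat p$), while at traces with $\alpha(\tau)>\tfrac34\|\alpha\|$ one has, for $\eta$ and $\epsilon$ small, $\widehat p(\tau)\ge \widehat{(a-\epsilon)_+}(\tau)$ close to $t\alpha(\tau)>\tfrac13\cdot\tfrac34\|\alpha\|=\tfrac14\|\alpha\|\ge \alpha(\tau)-\tfrac34\|\alpha\|$. Hence $\|\alpha-\widehat p\|\le \tfrac34\|\alpha\|$, as required. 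Thus my argument always produces the second alternative; the first alternative (a projection $q$ with $\widehat q=\alpha$ exactly) is a harmless extra disjunct, convenient when this lemma is iterated — shaving off a definite fraction of the norm each time — to approximate $\alpha$ uniformly by rank functions of projections, which is the route to real rank zero in Proposition~\ref{fmt rr0}.

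The main obstacle is the realization step, i.e.\ producing a positive element whose rank function is a prescribed non-constant multiple of $\alpha$: Lemma~\ref{decom trace} on its own only relates the ranks of an element and its halving, and applied to projections it merely moves between constant rank functions, so the non-constant information has to come from the Cuntz-semigroup structure supplied by $\mathcal{Z}$-stability. A secondary, purely technical, point is the uniform control of $\widehat{(a-\epsilon)_+}$ against $\widehat a$ (the difference is only pointwise monotone a priori); this is what forces the choice $t>\tfrac13$, which builds in enough margin that a sufficiently small fixed $\epsilon$ — together with choosing the realization $a$ so that its small cut-downs have continuous rank — makes the lower estimate hold uniformly. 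The degenerate cases ($A$ finite-dimensional, or $T(A)=\emptyset$) are excluded by the standing hypotheses.
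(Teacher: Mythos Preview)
Your strategy is the paper's: invoke $\mathcal Z$-stability to realize a prescribed multiple of $\alpha$ as the rank function of a positive element, then feed that element into Lemma~\ref{decom trace}. The one genuine difference is your choice $t<\tfrac12$ in place of the paper's $t=\tfrac12$. This is a real simplification: because $\widehat p\le 2t\alpha<\alpha$ with a uniform gap, strict positivity of $\alpha-\widehat p$ is immediate, and you never need the paper's case split on whether the spectrum of $a$ meets $(0,\epsilon/2)$, nor the auxiliary function $g(a)$ that the paper uses to squeeze out strict positivity when $t=\tfrac12$. Your remark that the first alternative then becomes a harmless disjunct is correct.

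There are, however, two points you have black-boxed that are where the actual work lies. First, \cite[Proposition 2.6]{Castillejos:2019aa} produces the realizing element in $(A\otimes\mathcal K)_+$, not in $M_\infty(A)_+$, whereas Lemma~\ref{decom trace} needs its input in $M_\infty(A)$; the paper bridges this via Lemma~\ref{put in minfinity} (using an approximate unit of matrix units), and you should too. Second, your ``secondary, purely technical'' point is in fact the crux: the margin $t>\tfrac13$ only helps once you know $\widehat{(a-\epsilon)_+}$ is \emph{uniformly} close to $\widehat a$, and this is not automatic. The paper obtains it by arranging $\widehat a$ to be continuous (equal to $t\alpha$ exactly) and applying Dini's theorem to the monotone family $\tau\mapsto\tau(f_{1/n}(a))$. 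In your two-stage scheme the intermediate $\widehat a$ need not be continuous, so Dini is not directly available for the second cut-down; you can rescue this by tracing the construction through (the cut-down $(a-\epsilon)_+$ is Cuntz equivalent, via Lemma~\ref{cse props}\eqref{cuntz trace}, to a cut-down of the original $a_0\in A\otimes\mathcal K$, where Dini does apply), but then you are essentially reproducing the paper's argument in a slightly more convoluted order.
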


\begin{proof}
Note first that $A$ has finite nuclear dimension by Proposition \ref{loc fnd}, whence in particular it is exact.  If $\mathcal{Z}$ is the Jiang-Su algebra, then as $A$ is simple, unital and has finite nuclear dimension, it is $\mathcal{Z}$-stable by \cite[Corollary 6.3]{Winter:2012qf}.   We may thus apply \cite[Proposition 2.6]{Castillejos:2019aa}\footnote{This is in turn based heavily on several results from \cite{Elliott:2011vc}.  The reader is referred to \cite[Section 1]{Castillejos:2019aa} for explanations of the terminology and notation used in \cite[Proposition 2.6]{Castillejos:2019aa}, which in particular contains enough information to explain why that result is applicable in our setting.} to conclude that there is a positive contraction $a\in A\otimes \mathcal{K}$ such that $\widehat{a}=\frac{1}{2}\alpha$.  For each $n$, the function 
$$
\phi_n:T(A)\to [0,\infty],\quad \tau\mapsto \tau(f_{1/n}(a))
$$
is continuous, and the sequence $(\phi_{n})$ is increasing and converges pointwise to $\widehat{a}$ by definition of the latter function.  Hence by Dini's theorem, $(\phi_n)$ converges uniformly to $\widehat{a}$.  Note that $f_{1/n}(a)\leq f_{\delta}((a-\epsilon)_+)$ for any $\delta\leq 1/2n$ and any $\epsilon\leq 1/2n$.  Hence 
$$
\phi_{n}\leq \widehat{(a-\epsilon)_+}\leq \widehat{a}
$$
whenever $\epsilon\leq 1/2n$, and so the net $(\widehat{(a-\epsilon)_+})_{\epsilon>0}$ in $\text{Aff}(T(A))$ also converges uniformly to $\widehat{a}$ as $\epsilon\to 0$.  Choose $\epsilon>0$ such that 
\begin{equation}\label{a and alpha}
\|\widehat{a}-\widehat{(a-\epsilon)_+}\|\leq \tfrac{1}{4}\|\alpha\|.
\end{equation}

Now, if the spectrum of $a$ is contained in $\{0\}\cup [\epsilon/2,1]$ then $q_0:=f_{\epsilon/4}(a)$ is a projection in $A$ such that $\widehat{q_0}=\widehat{a}$.  Moreover, Lemma \ref{iota props} part \eqref{iota sum} implies that if $q:=q_0\oplus q_0$, then $\widehat{q}=2\widehat{q_0}=2\widehat{a}=\alpha$, and we are done.  

Assume then that the spectrum of $a$ intersects $(0,\epsilon/2)$ non-trivially.  Let $g\in C_0((0,\epsilon/2))$ be a non-negative function such that $g(t)\leq t$ for all $t$, and so that $g$ is non-zero somewhere on the spectrum of $a$.  Note that the function $\widehat{g(a)}:T(A)\to [0,\infty]$ is finite-valued as $g(a)\leq a$, whence $\widehat{g(a)}\leq \widehat{a}$ by Lemma \ref{iota props} part \eqref{iota le}.  As $g(a)$ is non-zero, as $A$ is simple and as the kernel of any trace is an ideal, we have moreover that $\widehat{g(a)}:T(A)\to [0,\infty)$ is strictly positive.  We also have that 
$$
(a-\epsilon/2)_+\oplus g(a)\lesssim (a-\epsilon/2)_++ g(a)\lesssim a,
$$
where the first subequivalence uses that $g(a)$ and $(a-\epsilon/2)_+$ are orthogonal and Lemma \ref{cse props} part \eqref{cuntz sum}, and the second subequivalence uses that $(a-\epsilon/2)_++ g(a)\leq a$ and Lemma \ref{cse props} part \eqref{cuntz leq}.  Hence Lemma \ref{iota props} parts \eqref{iota le} and \eqref{iota sum} imply that $\widehat{a}-\widehat{(a-\epsilon/2)_+}\geq \widehat{g(a)}$, and so in particular 
\begin{equation}\label{sp}
\widehat{a}-\widehat{(a-\epsilon/2)_+}
\end{equation}
is strictly positive.

Now, for each $n$ let $p_n$ be the unit of $M_n(A)\subseteq A\otimes \mathcal{K}$.  As the sequence $(p_n)$ is an approximate unit for $A\otimes \mathcal{K}$, Lemma \ref{put in minfinity} (with $b=p_n$ for some large enough $n$) gives $n$ and $x\in A\otimes \mathcal{K}$ such that $x^*x=(a-\epsilon/2)_+$ and $xx^*\in p_n(A\otimes\mathcal{K})p_n=M_n(A)$.  Lemma \ref{decom trace} implies there is a projection $p\in M_\infty(A)$ such that 
\begin{equation}\label{x and p}
(xx^*-\epsilon/2)_+\lesssim p \lesssim xx^*\oplus xx^*.
\end{equation}
We claim this $p$ has the property in the statement.  

Indeed, as $x^*x=(a-\epsilon/2)_+$, Lemma \ref{cse props} part \eqref{cuntz trace} (in the special case $\epsilon=0$) implies that 
\begin{equation}\label{aex}
(a-\epsilon/2)_+\sim xx^*.
\end{equation}
Lines \eqref{x and p} and \eqref{aex}, and Lemma \ref{iota props} parts \eqref{iota le} and \eqref{iota sum}, imply that 
$$
\widehat{p}\leq 2\widehat{(a-\epsilon/2)_+}.
$$ 
Recalling also that $2\widehat{a}=\alpha$ and rearranging, we get
$$
2(\widehat{a}-\widehat{(a-\epsilon/2)_+})\leq \alpha-\widehat{p},
$$
whence $\alpha-\widehat{p}$ is strictly positive as the element in line \eqref{sp} has that property.  

On the other hand, as $(a-\epsilon)_+=((a-\epsilon/2)_+-\epsilon/2)_+=(x^*x-\epsilon/2)_+$,  Lemma \ref{cse props} part \eqref{cuntz trace} implies that 
\begin{equation}\label{aex2}
(a-\epsilon)_+\sim (xx^*-\epsilon/2)_+.
\end{equation}  
Lines \eqref{x and p} and \eqref{aex2}, and Lemma \ref{iota props} part \eqref{iota le} imply that $\widehat{(a-\epsilon)_+}\leq \widehat{p}$ whence 
$$
\alpha-\widehat{p}\leq \alpha-\widehat{(a-\epsilon)_+}.
$$
Hence using line \eqref{a and alpha} and that $\widehat{a}=\frac{1}{2}\alpha$ 
$$
\|\alpha-\widehat{p}\|\leq \|\alpha-\widehat{(a-\epsilon)_+}\|\leq \tfrac{1}{2}\|\alpha\|+\|\widehat{a}-\widehat{(a-\epsilon)_+}\|\leq \tfrac{3}{4}\|\alpha\|,
$$
and we are done.
\end{proof}

We are now ready for the proof of Proposition \ref{fmt rr0}, which was communicated to us by the referee.

\begin{proof}[Proof of Proposition \ref{fmt rr0}]
Assume first that $T(A)$ is empty.  Then as $A$ has finite nuclear dimension by Proposition \ref{loc fnd}, $A$ is purely infinite by \cite[Theorem 5.4]{Winter:2010eb}, so has real rank zero by the main result of \cite{Zhang:1990aa}.  

Assume next that $T(A)$ is non-empty.  As $A$ has finite nuclear dimension it is in particular exact.  Moreover, if $\mathcal{Z}$ is the Jiang-Su algebra, then as $A$ is simple, unital and has finite nuclear dimension, it is $\mathcal{Z}$-stable by \cite[Corollary 6.3]{Winter:2012qf}.  Using the universal property of the $K_0$-group (see for example \cite[Proposition 3.1.8]{Rordam:2000mz}) it is straightforward to see that the map $p\mapsto \widehat{p}$ from projections in $M_\infty(A)$ to $\text{Aff}(T(A))$ induces a well-defined group homomorphism
$$
\iota_K:K_0(A)\to \text{Aff}(T(A)),\quad [p]-[q]\mapsto \widehat{p}-\widehat{q}.
$$
Using \cite[Theorem 7.2]{Rordam:2004lw}, it suffices to show that $\iota_K$ has uniformly dense image.

Let then $\epsilon>0$, and let $\alpha$ be an element of $\text{Aff}(T(A))$ that we want to approximate uniformly by elements in the image of $\iota_K$.  Replacing $\alpha$ with $\alpha+n\cdot \widehat{1_A}$, we may assume that $\alpha$ is strictly positive.  If there is a projection $q\in M_\infty(A)$ with $\widehat{q}=\alpha$, we are done; assume this does not happen.  

In this case, Lemma \ref{ref lem} gives a projection $p_1\in M_\infty(A)$ such that $\alpha-\widehat{p_1}$ is strictly positive and $\|\alpha-\widehat{p_1}\|\leq \frac{3}{4}\|\alpha\|$.  Set then $\alpha_2:=\alpha-\widehat{p_1}$.  Similarly, if there is a projection $p\in M_\infty(A)$ such that $\alpha_2=\widehat{p}$ then with $q:=p_1\oplus p$, we have $\alpha=\widehat{q}$ by Lemma \ref{iota props} part \eqref{iota sum}, and we have contradicted our assumption that $\alpha$ is not of this form.  Hence $\alpha_2\neq \widehat{p}$ for any $p\in M_\infty(A)$, and so Lemma \ref{ref lem} gives a projection $p_2\in M_\infty(A)$ such that $\alpha_2-\widehat{p_2}$ is strictly positive, and so that $\|\alpha_2-\widehat{p_2}\|\leq \frac{3}{4}\|\alpha_2\|$, which implies that $\|\alpha-\widehat{p_1\oplus p_2}\|\leq (\frac{3}{4})^2\|\alpha\|$.  Continuing in this way, we recursively find a sequence of projections $(p_n)$ in $M_\infty(A)$ such that if $q_n:=p_1\oplus \cdots \oplus p_n$, then $\|\alpha-\widehat{q_n}\|\leq (\frac{3}{4})^n\|\alpha\|$.  Hence $(\widehat{q_n})$ converges uniformly to $\alpha$, and we are done.
\end{proof}

\begin{remark}\label{gen rr0 rem}
We do not know if (weak) complexity rank at most one implies real rank zero without the simplicity and separability assumptions.  This seems an interesting question: for example, the uniform Roe algebra $C^*_u(|\Z|)$ of the integers has complexity rank at most one\footnote{As it contains a proper isometry (for example, the unilateral shift), it is not locally finite dimensional, so has complexity rank exactly one (see \cite[Theorem 2.2]{Li:2017ac} for a more general result along these lines).} by \cite[Example A.9]{Willett:2021te}.  Whether or not $C^*_u(|\Z|)$ has real rank zero is quite an interesting problem: a positive answer would imply the existence of a stably finite $C^*$-algebra with real rank zero but stable rank larger than one (compare the comment at the bottom of page 455 of \cite{Blackadar:2006eq}), while a negative answer would allow one to characterize when uniform Roe algebras have real rank zero.  See the discussion below \cite[Question 3.10]{Li:2017ac} for more details on all this.  

On the other hand, the uniform Roe algebra of $\Z^2$ has complexity rank at most two by \cite[Example A.9]{Willett:2021te} again, and does not have real rank zero by \cite[Theorem 3.1]{Li:2017ac}, so it is certainly not true that (weak) finite complexity implies real rank zero in general.
\end{remark}

\section{Torsion in odd $K$-theory}\label{cr1 sec}

In this section, we show that the $K_1$-group of a $C^*$-algebra with complexity rank at most one is torsion free.  This seems to be of interest in its own right, and is also a key ingredient in our computation of the complexity rank of UCT Kirchberg algebras.

Here is the main theorem of this section.  The result was inspired by a comment of Ian Putnam, who suggested the methods of \cite{Willett:2019aa} could be used to prove something like this.

\begin{theorem}\label{k1tf}
Let $A$ be a unital $C^*$-algebra with complexity rank at most one. Then $K_{1}(A)$ is torsion-free.
\end{theorem}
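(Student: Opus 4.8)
The plan is to run a Mayer--Vietoris argument in controlled $K$-theory, using the approximate two-sided decompositions supplied by Proposition \ref{d equiv}. It is worth first recording the heuristic in honest $K$-theory. Suppose, for a moment, that $A$ were literally assembled from finite-dimensional $C^*$-subalgebras $C$ and $D$ with common subalgebra $E \subseteq C \cap D$, in the sense that there were an exact Mayer--Vietoris sequence
$$
K_1(C) \oplus K_1(D) \longrightarrow K_1(A) \xrightarrow{\ \partial\ } K_0(E).
$$
Since $C$, $D$, $E$ are finite-dimensional, $K_1(C) = K_1(D) = 0$, so $\partial$ would be injective; and $K_0(E)$ is a free abelian group, hence torsion-free. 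Thus $K_1(A)$ would embed in a torsion-free group and the theorem would follow. The whole difficulty is that complexity rank at most one only provides such decompositions \emph{approximately} and \emph{locally} (for each finite $X$ and $\epsilon>0$), so no honest Mayer--Vietoris sequence is available; the role of controlled $K$-theory, as developed in \cite{Willett:2019aa}, is to make this reasoning quantitative.

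Concretely, I would argue as follows. Let $x \in K_1(A)$ satisfy $nx = 0$ for some $n \geq 1$; the goal is to show $x = 0$. After replacing $A$ by a matrix amplification $M_k(A)$ --- which still has complexity rank at most one by Proposition \ref{perm rem} (applied with the rank-zero algebra $M_k(\C)$) --- I may assume $x = [u]$ for a unitary $u \in A$. Fix a small $\epsilon > 0$ and apply Proposition \ref{d equiv} to the finite set $X = \{u, u^*\}$ and parameter $\epsilon$, obtaining finite-dimensional $C^*$-subalgebras $C, D, E$ of $A$ with $E \subseteq C \cap D$, all containing $1_A$, and a positive contraction $h \in E$ with $\|[h,u]\| < \epsilon$, $hu \in_\epsilon C$, $(1-h)u \in_\epsilon D$, and $(1-h)hu \in_\epsilon E$. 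This is exactly the data needed to define a controlled Mayer--Vietoris boundary class: cutting $u$ by $h$ and $1-h$ produces an invertible-up-to-$\epsilon$ element of $C$ and one of $D$ whose mismatch over the overlap is governed by $E$, and the associated clutching/index construction yields a controlled class $\partial x$ in the controlled $K_0$-group of $E$.

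The key quantitative inputs are then: first, that because $E$ is finite-dimensional its controlled $K_0$-group coincides with $K_0(E) \cong \Z^m$ with uniform constants, so it is torsion-free; and second, the controlled exactness of the Mayer--Vietoris sequence of \cite{Willett:2019aa} at the term $K_1(A)$. Since $nx = 0$, the controlled class $\partial x$ is $n$-torsion in a torsion-free group and hence controlled-trivial; controlled exactness then shows that $x$ is represented, up to a controlled error depending only on $\epsilon$, by a class coming from $K_1(C) \oplus K_1(D) = 0$. Letting $\epsilon \to 0$ and using that the decompositions of Proposition \ref{d equiv} exist at every scale, the controlled error is driven to zero and one concludes $x = 0$.

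The main obstacle is this controlled step: setting up the Mayer--Vietoris machinery of \cite{Willett:2019aa} on the purely approximate data produced by Proposition \ref{d equiv}, and carefully tracking the control parameters. One must verify that $(h, C, D, E)$ genuinely satisfy the hypotheses under which the controlled boundary map and the controlled exactness statement hold, ensure that the finite-dimensional pieces contribute a free controlled $K_0$ and vanishing controlled $K_1$ with constants uniform in the decomposition, and organise the passage from controlled triviality (at each $\epsilon$) to honest triviality via a limiting argument. The delicate point is the interaction between the fixed algebraic order $n$ of the torsion element and the analytic control estimates: one needs the triviality of $\partial x$ to be detectable within the allowed control, uniformly enough that shrinking $\epsilon$ genuinely forces $x = 0$.
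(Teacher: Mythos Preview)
Your overall strategy is exactly the paper's: use the approximate decomposition from Proposition \ref{d equiv} to build a Mayer--Vietoris-type boundary class $\partial_v(u)\in K_0(C\cap D)$, exploit that $K_0(C\cap D)$ is torsion-free (since $C\cap D$ is finite-dimensional), and then use exactness to conclude $[u]$ comes from $K_1(C)\oplus K_1(D)=0$. But there is a genuine gap at the step ``since $nx=0$, the controlled class $\partial x$ is $n$-torsion.'' This is not automatic and is the heart of the argument.

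The issue is that the controlled boundary $\partial_v(u)$ depends on a \emph{fixed} decomposition $(h,C,D)$ and is not a priori a homomorphism out of $K_1(A)$. Knowing $[u^{\oplus n}]=0$ in $K_1(A)$ tells you only that $u^{\oplus n}$ is homotopic to $1$ through unitaries; it does not directly force $\partial$ of anything to vanish. The paper fills this gap with two concrete ingredients you are missing. First, one must apply Proposition \ref{d equiv} not just to $\{u,u^*\}$ but to the matrix entries of a \emph{discretized homotopy path} $(u_{t_i})$ from $u^{\oplus n}$ to $1$, so that a single decomposition $(h,C,D)$ works along the whole path; one then shows (by a direct continuity estimate on the idempotents $v_t\begin{psmallmatrix}1&0\\0&0\end{psmallmatrix}v_t^{-1}$) that $\partial_{v_t}(u_t)$ is constant in $t$, whence $\partial_{v_0}(u^{\oplus n})=\partial_{v_1}(1)=0$. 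Second, one needs an explicit multiplicativity statement (the paper's Lemma \ref{mult lem}): for the specific lift $v=v(u,h)$ of Definition \ref{vuh}, one has $s_n(v^{\oplus n})s_n^*=v(u^{\oplus n},h)$ and $\partial_{s_n(v^{\oplus n})s_n^*}(u^{\oplus n})=n\cdot\partial_v(u)$. Combining these gives $n\cdot\partial_v(u)=0$, and only now does torsion-freeness of $K_0(C\cap D)$ bite.

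Your proposed ``letting $\epsilon\to 0$'' limiting step is a red herring: once the finite set $X$ is chosen large enough to contain the discretized homotopy, a \emph{single} sufficiently fine decomposition gives $\partial_v(u)=0$ exactly, and Lemma \ref{exact result} finishes the proof with no limit needed. The delicate interaction you flag between $n$ and the control parameters is resolved not by shrinking $\epsilon$, but by enlarging $X$ to see the whole homotopy witnessing $n[u]=0$.
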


Before we get into the proof of us, let us use it to show that weak complexity rank and complexity rank are genuinely different.

\begin{corollary}\label{wcr1 is not cr1}
There are $C^*$-algebras with weak complexity rank one that do not have complexity rank one.
\end{corollary}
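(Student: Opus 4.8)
The plan is to combine the two main theorems established earlier in the excerpt with the existence of Kirchberg algebras having prescribed $K$-theory. The key observation is that Theorem \ref{k1tf} gives a $K$-theoretic obstruction to complexity rank one, namely that $K_1$ must be torsion free, whereas Corollary \ref{kirch wcr1} shows that \emph{all} unital Kirchberg algebras have weak complexity rank one regardless of their $K$-theory. So the strategy is simply to produce a single Kirchberg algebra whose $K_1$-group has torsion.

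\begin{proof}
By \cite[Section 3]{Rordam:1995aa}, there exists a unital Kirchberg algebra $A$ with $K_1(A)$ having torsion; for instance one may take $A$ with $K_1(A)\cong \Z/2\Z$. By Corollary \ref{kirch wcr1}, $A$ has weak complexity rank one. On the other hand, if $A$ had complexity rank at most one, then Theorem \ref{k1tf} would force $K_1(A)$ to be torsion free, contradicting our choice of $A$. Hence $A$ has weak complexity rank one but does not have complexity rank one.
\end{proof}

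The only genuine input beyond the theorems just proved is the existence of Kirchberg algebras with arbitrary countable $K$-theory groups, which is quoted from \cite[Section 3]{Rordam:1995aa} (and is also referenced in the introduction immediately after Theorem \ref{intro tf}). I expect no real obstacle here: the argument is a direct juxtaposition of Corollary \ref{kirch wcr1} (weak complexity rank one) against Theorem \ref{k1tf} (torsion-free $K_1$ as a necessary condition for complexity rank one). The one point worth being careful about is that having complexity rank one is strictly stronger than having weak complexity rank one—this inequality of ranks is noted just after Definition \ref{f c 2}—so that exhibiting a single example with torsion in $K_1$ cleanly separates the two notions.
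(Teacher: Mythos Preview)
Your proof is correct and follows essentially the same approach as the paper: both use Corollary \ref{kirch wcr1} to get weak complexity rank one for all unital Kirchberg algebras, invoke R\o{}rdam's result that Kirchberg algebras can have any countable abelian group as $K_1$, and then apply Theorem \ref{k1tf} to rule out complexity rank one when $K_1$ has torsion.
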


\begin{proof}
Any unital Kirchberg algebra has weak complexity rank one by Corollary \ref{kirch wcr1}.  A Kirchberg algebra can have any countable abelian group as its $K_1$-group (see \cite[Theorem 3.6]{Rordam:1995aa}, or \cite[Proposition 4.3.3]{Rordam:2002cs}), so by Theorem \ref{k1tf} there are Kirchberg algebras that do not have complexity rank one.
\end{proof}

Throughout this section, if $a\in M_n(A)$, then $a^{\oplus k}$ is the diagonal matrix with all entries $a$ in $M_k(M_n(A))=M_{kn}(A)$.  If $A$ is unital, we write the unit in $M_n(A)$ as $1_n$.  We will rely heavily on ideas from \cite{Willett:2019aa}: we will give precise statements for what we need, but some proofs just refer to that paper.  The methods of proof we use rely on $K$-theory groups based on idempotents and invertibles, not just projections and unitaries as is common in $C^*$-algebra $K$-theory: we recommend \cite[Chapters 5 and 8]{Blackadar:1998yq} as a background reference for this.

The following two lemmas are contained in the proof of \cite[Lemma 2.4]{Willett:2019aa} (see also \cite[Proposition 4.3.2]{Blackadar:1998yq} for the second).

\begin{lemma}\label{approx k el}
For any $c\geq 1$ and $\epsilon>0$ there exists $\delta>0$ with the following property.  Let $A$ be a $C^*$-algebra, $B$ be a $C^*$-subalgebra, and let $e\in M_n(A)$ be an idempotent with $\|e\|\leq c$ and $e\ind M_n(B)$.  Then there is an idempotent $f\in M_n(B)$ with $\|e-f\|<\epsilon$.    \qed
\end{lemma}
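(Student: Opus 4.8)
The plan is to produce the required idempotent $f$ by a single explicit functional-calculus formula applied to a nearby element of $M_n(B)$, arranging all estimates to depend only on $c$ and $\delta$; this uniformity is exactly what makes one $\delta$ work simultaneously for all $A$, $B$, $n$, and $e$. First I would use $e\ind M_n(B)$ to pick $b\in M_n(B)$ with $\|e-b\|<\delta$. Since $e^2=e$, $\|e\|\le c$, and $\|b\|\le c+\delta$, writing $b^2-b=(b^2-e^2)-(b-e)$ and expanding $b^2-e^2=b(b-e)+(b-e)e$ gives the routine bound $\|b^2-b\|\le(2c+1+\delta)\delta=:\eta$, so that $\eta\to0$ as $\delta\to0$. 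Thus $b$ is close to being an idempotent, with a closeness controlled purely by $c$ and $\delta$.

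The correction step avoids contour integrals and resolvent estimates entirely. I would set $z:=2b-1$, so that $z^2=1+4(b^2-b)$ and hence $\|z^2-1\|\le 4\eta$. Provided $\delta$ is small enough that $4\eta<1$, the element $z^2$ is invertible and $(z^2)^{-1/2}$ is defined by the binomial series $\sum_{k\ge0}\binom{-1/2}{k}(z^2-1)^k$; every term of positive degree lies in $M_n(B)$, so $(z^2)^{-1/2}-1\in M_n(B)$, and the standard power-series bound $\|(z^2)^{-1/2}-1\|\le(1-4\eta)^{-1/2}-1$ holds. Putting $w:=z(z^2)^{-1/2}$, everything in sight commutes, so $w^2=z^2(z^2)^{-1}=1$, and therefore $f:=\tfrac12(1+w)$ satisfies $f^2=f$. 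Unwinding $f=\tfrac12\big(1+(2b-1)(z^2)^{-1/2}\big)$ using $(z^2)^{-1/2}=1+s$ with $s\in M_n(B)$ shows $f=b+bs-\tfrac12 s\in M_n(B)$, so the formula lands back in the (possibly non-unital) algebra and not merely its unitization.

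For the final estimate I would exploit that $\tfrac12(1+z)=b$ \emph{exactly}, so that
$$\|f-b\|=\tfrac12\|w-z\|=\tfrac12\big\|z\big((z^2)^{-1/2}-1\big)\big\|\le\tfrac12(2c+1+2\delta)\big((1-4\eta)^{-1/2}-1\big),$$
and hence $\|f-e\|\le\|f-b\|+\delta$. The right-hand side is an explicit function of $c$ and $\delta$ alone and tends to $0$ as $\delta\to0$, so choosing $\delta=\delta(c,\epsilon)$ small enough forces $\|f-e\|<\epsilon$, which completes the argument.

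The main obstacle is not any single inequality but the bookkeeping that guarantees uniformity: one must verify that the bound on $\|f-e\|$ is genuinely independent of $n$, $A$, $B$, and the particular idempotent $e$, and simultaneously that the functional-calculus element $f$ stays inside $M_n(B)$. Both points are resolved cleanly by the involution formula $f=\tfrac12\big(1+z(z^2)^{-1/2}\big)$, which expresses every quantity explicitly in terms of $b$ and of $\eta=\|b^2-b\|$, the latter being bounded solely in terms of $c$ and $\delta$.
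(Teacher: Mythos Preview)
Your proof is correct. The paper does not actually supply a proof of this lemma; it simply cites \cite[Lemma 2.4]{Willett:2019aa} and marks the statement with \qed. So there is nothing in the paper to compare your argument against.

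That said, your approach is one of the standard ones: writing $z=2b-1$ and correcting to a genuine involution via $w=z(z^2)^{-1/2}$ is exactly the ``sign function'' trick for perturbing almost-idempotents, and it has the advantage over the holomorphic-functional-calculus route that the estimates are completely explicit in $c$ and $\delta$, which is precisely the uniformity the lemma demands. Your check that $f=b+bs-\tfrac12 s$ lands in $M_n(B)$ rather than only its unitization is the one place where care is genuinely needed, and you handled it correctly by observing that $z^2-1=4(b^2-b)\in M_n(B)$ forces $s=(z^2)^{-1/2}-1\in M_n(B)$.
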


\begin{lemma}\label{near k el}
Let $d\geq 1$, and let $A$ be a unital $C^*$-algebra.  If $e,f\in M_n(A)$ are idempotents that satisfy $\|e\|\leq d$, $\|f\|\leq d$, and $\|e-f\|\leq (2d+1)^{-1}$ then the classes $[e]$ and $[f]$ in $K_0(A)$ are the same. \qed
\end{lemma}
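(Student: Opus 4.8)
The plan is to reduce the statement to the purely algebraic fact that similar idempotents have the same $K_0$-class, and then to produce an explicit invertible conjugating $e$ to $f$. Recall that in the idempotent picture of $K$-theory (see \cite[Chapter 4]{Blackadar:1998yq}), two idempotents $e,f\in M_n(A)$ satisfy $[e]=[f]$ in $K_0(A)$ whenever they are algebraically equivalent, i.e.\ whenever there are $v,w\in M_n(A)$ with $e=wv$ and $f=vw$. In particular, if $f=z^{-1}ez$ for some invertible $z\in M_n(A)$, then taking $v:=z^{-1}e$ and $w:=ez$ gives $wv=ezz^{-1}e=e^2=e$ and $vw=z^{-1}e\,ez=z^{-1}e^2z=z^{-1}ez=f$, so that $[e]=[f]$. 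Thus it suffices to exhibit an invertible $z$ with $ez=zf$.

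I would take
$$
z:=ef+(1_n-e)(1_n-f)=\tfrac12\big(1_n+(2e-1_n)(2f-1_n)\big),
$$
working throughout in the unital algebra $M_n(A)$. Using only $e^2=e$ and $f^2=f$, a direct computation gives $ez=ef=zf$, so once $z$ is invertible we obtain $e=zfz^{-1}$, equivalently $f=z^{-1}ez$, and the previous paragraph applies. To see that $z$ is invertible I would estimate $\|z-1_n\|$: since $(2e-1_n)^2=1_n$, we may rewrite
$$
z-1_n=\tfrac12\big((2e-1_n)(2f-1_n)-(2e-1_n)^2\big)=(2e-1_n)(f-e),
$$
so that $\|z-1_n\|\le\|2e-1_n\|\,\|e-f\|\le(2\|e\|+1)\|e-f\|\le(2d+1)\|e-f\|\le 1$. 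Whenever this bound is strict, the Neumann series shows that $z$ is invertible, and we are done.

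The main obstacle is exactly this final norm bookkeeping: the intertwining identity and the passage from similarity to equality of $K_0$-classes are formal, but extracting genuine invertibility of $z$ from the sharp threshold $\|e-f\|\le(2d+1)^{-1}$ is delicate, since the estimate above only yields $\|z-1_n\|\le 1$ rather than a strict inequality. The trivial idempotent cases are immediate (if $e\in\{0,1_n\}$ then $\|e-f\|<1$ forces $f=e$, because an idempotent of norm less than one must vanish), so the genuine content is to upgrade the bound to $\|z-1_n\|<1$ for nontrivial idempotents. This quantitative control is precisely what the cited arguments \cite[Lemma 2.4]{Willett:2019aa} and \cite[Proposition 4.3.2]{Blackadar:1998yq} supply, and is the only non-formal part of the proof; everything else is a routine verification.
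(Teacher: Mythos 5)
Your overall route --- conjugating $e$ to $f$ by the invertible $z:=ef+(1_n-e)(1_n-f)$ and invoking the fact that similar idempotents have the same $K_0$-class --- is exactly the standard argument behind the references the paper cites for this lemma, and your algebraic identities $ez=ef=zf$ and $z-1_n=(2e-1_n)(f-e)$ are correct. The problem is that you never actually establish the one point on which the lemma turns, namely the invertibility of $z$, and your own estimate cannot deliver it: bounding $\|2e-1_n\|\leq 2\|e\|+1\leq 2d+1$ against $\|e-f\|\leq (2d+1)^{-1}$ only yields $\|z-1_n\|\leq 1$, which is consistent with $z$ failing to be invertible. Your discussion of the trivial idempotents $e\in\{0,1_n\}$ does not address this, since the problematic case is a nontrivial $e$ together with $\|e-f\|$ exactly equal to $(2d+1)^{-1}$; and deferring to ``the cited arguments'' is not a proof here --- those references assume the strict inequality $\|e-f\|<\|2e-1_n\|^{-1}$, so to apply them one must still show $\|2e-1_n\|<2d+1$, which is precisely the step you are missing.

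The gap is genuine but fixable with one additional standard fact: for an idempotent $e$ with $e\neq 0$ and $e\neq 1_n$ one has $\|e\|=\|1_n-e\|$. (Represent $M_n(A)$ faithfully and unitally on a Hilbert space $H$; with respect to $H=\operatorname{ran}(e)\oplus\operatorname{ran}(e)^{\perp}$ the idempotent takes the form $\bigl(\begin{smallmatrix}1&t\\0&0\end{smallmatrix}\bigr)$, and both $\|e\|^2$ and $\|1_n-e\|^2$ equal $1+\|t\|^2$.) Hence for nontrivial $e$ one gets $\|2e-1_n\|=\|e-(1_n-e)\|\leq\|e\|+\|1_n-e\|=2\|e\|\leq 2d$, while $\|2e-1_n\|=1\leq 2d$ in the trivial cases since $d\geq 1$. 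This sharpened bound gives $\|z-1_n\|\leq 2d\,(2d+1)^{-1}<1$, so $z$ is invertible by the Neumann series and the rest of your argument closes.
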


Now, assume $c\geq 1$, $\epsilon\in (0,(4c+6)^{-1})$, and let $\delta$ have the property in Lemma \ref{approx k el} for this $c$ and $\epsilon$.  Assume $B$ is a unital $C^*$-subalgebra of a $C^*$-algebra $A$, and that $e\in M_n(A)$ is an idempotent with $\|e\|\leq c$ and $e\ind M_n(B)$.  Then Lemma \ref{approx k el} gives an idempotent $f\in M_n(B)$ with $\|f-e\|< \epsilon$, and so in particular $\|f\|\leq d:=c+1$.  Moreover, if $f'\in M_n(B)$ is another idempotent satisfying $\|f'-e\|< \epsilon$ then $\|f-f'\|<2\epsilon<(2c+3)^{-1}=(2d+1)^{-1}$, so Lemma \ref{near k el} implies that $[f]=[f']$ in $K_0(B)$.  In conclusion, we get a well-defined class in $K_0(B)$ associated to $e$. 

The following is \cite[Definition 2.5]{Willett:2019aa}.

\begin{definition}\label{delt in el}
Assume $c\geq 1$, $\epsilon\in (0,(4c+6)^{-1})$, and let $\delta$ have the property in Lemma \ref{approx k el} for this $c$ and $\epsilon$.  Let $B$ be a unital $C^*$-subalgebra of $A$, and let $e\in M_n(A)$ be an idempotent such that $\|e\|\leq c$, and $e\ind M_n(B)$.  We write $\{e\}_B$ for the class in $K_0(B)$ of any idempotent $f$ in $M_n(B)$ that satisfies $\|e-f\|<\epsilon$ as in the above discussion.  
\end{definition}
 
The following is \cite[Definition 2.6]{Willett:2019aa}.  For the statement of this definition, and the rest of this section, if $E$ is a $C^*$-subalgebra of a unital $C^*$-algebra $A$, we write $\widetilde{E}$ for the $C^*$-subalgebra of $A$ spanned by $E$ and $1_A$.

\begin{definition}\label{lift def}
Let $c\geq 1$, let $\epsilon\in (0,(4c+6)^{-1})$, and let $\delta>0$ satisfy the condition in Lemma \ref{approx k el}.  Let $A$ be a unital $C^*$-algebra, let $C$ and $D$ be $C^*$-subalgebras of $A$. Let $u \in M_{n}(A)$ be an invertible element for some $n$. Then an element $v \in M_{2n}(A)$ is a $(\delta, c, C, D)$-\textit{lift} of $u$ if 
\begin{enumerate}[(i)]
\item \label{lift1} $\|v\|\leq c$ and $\|v^{-1}\|\leq c$;
\item \label{lift2} $v\ind M_{2n}(\widetilde{D})$;
\item \label{lift3} $v\begin{pmatrix} u^{-1} & 0 \\ 0 & u \end{pmatrix}\ind M_{2n}(\widetilde{C})$;
\item \label{lift4} $v\begin{pmatrix} 1_{n} & 0 \\ 0 & 0 \end{pmatrix} v^{-1}\ind M_{2n}(\widetilde{C\cap D})$;
\item \label{lift5} with notation as inDefinition \ref{delt in el}, the $K$-theory class 
$$
\partial_{v}(u) := \Bigg\{v\begin{pmatrix} 1_{n} & 0 \\ 0 & 0 \end{pmatrix} v^{-1}\Bigg\}_{\widetilde{C\cap D}} -\Bigg[\begin{pmatrix} 1_{n} & 0 \\ 0 & 0 \end{pmatrix}\Bigg]\in K_0(\widetilde{C\cap D})
$$ 
is actually in the subgroup $K_0(C\cap D)$.
\end{enumerate}
\end{definition}

We need another definition.

\begin{definition}\label{sigma map}
Let $C$ and $D$ be $C^*$-subalgebras of a $C^*$-algebra $A$, with corresponding inclusion maps $\iota^C:C\to A$ and $\iota^D:D\to A$.  Let $\sigma:K_1(C)\oplus K_1(D)\to K_1(A)$ be the map defined by $\sigma:=\iota^C_*+\iota^D_*$.
\end{definition}

The following result is contained in the proof of \cite[Proposition 2.7]{Willett:2019aa}.

\begin{lemma}\label{exact result}
Let $c\geq 1$, and let $\epsilon\in (0,(4c+6)^{-1})$.  Then there is a $\delta>0$ depending only on $\epsilon$ and $c$, and with the following property.  Let $A$ be a unital $C^*$-algebra and let $u\in M_n(A)$ be an invertible element such that $\|u\|\leq c$ and $\|u^{-1}\|\leq c$.  Let $C$ and $D$ be $C^*$-subalgebras of $A$, and let $v\in M_{2n}(A)$ be a $(\delta,c,C,D)$-lift of $u$ as in Definition \ref{lift def}.  If the $K$-theory class
$$
\partial_v(u):=\Bigg\{v\begin{pmatrix} 1_{n} & 0 \\ 0 & 0 \end{pmatrix} v^{-1}\Bigg\}_{\widetilde{C\cap D}} -\Bigg[\begin{pmatrix} 1_{n} & 0 \\ 0 & 0 \end{pmatrix}\Bigg]
$$
of Definition \ref{lift def} is zero, then the class $[u]\in K_1(A)$ is in the image of the map $\sigma$ from Definition \ref{sigma map}. \qed
\end{lemma}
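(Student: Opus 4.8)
The plan is to realise this lemma as the controlled form of the inclusion $\ker\partial\subseteq\operatorname{im}\sigma$ in a Mayer--Vietoris sequence, with $\partial_v(u)$ playing the role of the boundary map. Write $p:=\begin{pmatrix}1_n&0\\0&0\end{pmatrix}$ and set $w:=v\begin{pmatrix}u^{-1}&0\\0&u\end{pmatrix}$, so that $v=w\operatorname{diag}(u,u^{-1})$. The computational heart is the exact identity
\[
vpv^{-1}=wpw^{-1}=:e,
\]
which holds because $\operatorname{diag}(u^{-1},u)\,p\,\operatorname{diag}(u,u^{-1})=p$. By conditions \eqref{lift2}, \eqref{lift3} and \eqref{lift4} of Definition \ref{lift def}, the invertible $v$ is $\delta$-close to $M_{2n}(\widetilde D)$, the invertible $w$ is $\delta$-close to $M_{2n}(\widetilde C)$, and the honest idempotent $e$ (of norm at most $c^2$) is $\delta$-close to $M_{2n}(\widetilde{C\cap D})$. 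The hypothesis $\partial_v(u)=0$ then says precisely that $\{e\}_{\widetilde{C\cap D}}=[p]$ in $K_0(\widetilde{C\cap D})$.

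Next I would run the exact version of the extraction to expose why $[u]\in\operatorname{im}\sigma$. From $\{e\}_{\widetilde{C\cap D}}=[p]$ one obtains, after stabilising by a trivial summand $1_k$ (equality of $K_0$-classes of idempotents only yields similarity after such a stabilisation), an invertible $z$ over $\widetilde{C\cap D}$ with $zPz^{-1}=e\oplus 1_k$, where $P:=p\oplus 1_k$. Since both $z$ and $v\oplus 1_k$ conjugate $P$ to $e\oplus 1_k$, the element $z^{-1}(v\oplus 1_k)$ commutes with $P$, hence is block diagonal; as $z\in\widetilde{C\cap D}\subseteq\widetilde D$ and $v\in\widetilde D$, its corners lie in $\widetilde D$. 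Likewise $z^{-1}(w\oplus 1_k)$ is block diagonal with corners in $\widetilde C$. Writing $a$ and $a'$ for the corners of $z^{-1}(w\oplus 1_k)$ and $z^{-1}(v\oplus 1_k)$ cut out by $P$, the relation $v=w\operatorname{diag}(u,u^{-1})$ forces $a'=a\,(u\oplus 1_k)$, i.e.\ $u\oplus 1_k=a^{-1}a'$ with $a\in GL(\widetilde C)$ and $a'\in GL(\widetilde D)$. Using $K_1(\widetilde C)=K_1(C)$ and $K_1(\widetilde D)=K_1(D)$, this gives $[u]=-\iota^C_*[a]+\iota^D_*[a']=\sigma(-[a],[a'])$, as required.

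Converting this to the stated \emph{controlled} conclusion is where the work lies. First I would replace $v,w$ by genuine invertibles $v'\in M_{2n}(\widetilde D)$, $w'\in M_{2n}(\widetilde C)$ and $e=vpv^{-1}$ by a genuine idempotent $e''\in M_{2n}(\widetilde{C\cap D})$, using the invertible and idempotent perturbation results (the analogues of Lemmas \ref{approx k el} and \ref{near k el}); for $\delta$ small relative to $c$ this preserves all $K$-theory classes and makes $\{e\}_{\widetilde{C\cap D}}=[e'']$, so the hypothesis reads $[e'']=[p]$. The main obstacle is quantitative uniformity. The block extraction above requires $z^{-1}(v'\oplus 1_k)$ to commute with $P$ up to an error small compared with $\|(v'\oplus 1_k)^{-1}z\|$, yet the norm of the conjugator $z$ produced by the bare equality $[e'']=[p]$ is \emph{not} controlled by $c$ and $\epsilon$. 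One therefore cannot use an abstract conjugator: the argument must be organised so that every estimate depends only on $c$ and $\epsilon$, working throughout with controlled representatives and explicit homotopies implementing $[e'']=[p]$, which is exactly the bookkeeping performed in the proof of \cite[Proposition 2.7]{Willett:2019aa}. Once a uniform $\delta$ has been extracted this way, the identity $u\oplus 1_k=a^{-1}a'$ holds up to a perturbation small enough that $[u]=\sigma(-[a],[a'])$ by the near-invertibles estimate, completing the proof.
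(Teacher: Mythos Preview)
Your proposal is correct and matches the paper's treatment, which simply records that the result is contained in the proof of \cite[Proposition 2.7]{Willett:2019aa} without reproducing any argument; your Mayer--Vietoris sketch (with $e=vpv^{-1}=wpw^{-1}$ and the block-diagonal extraction via a conjugator $z$ realising $[e'']=[p]$) is exactly the idea behind that reference and is more detailed than anything the paper itself provides.

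One remark on the obstacle you raise: the uncontrolled norm of $z$ is less of an issue than you suggest. After choosing $v'\in M_{2n}(\widetilde D)$, $w'\in M_{2n}(\widetilde C)$ and the idempotent $e''\in M_{2n}(\widetilde{C\cap D})$, note that $v'p(v')^{-1}$ and $e''$ are nearby idempotents in $M_{2n}(\widetilde D)$ with norms bounded in terms of $c$; hence for $\delta$ small (depending only on $c$) there is an invertible $s_D\in M_{2n}(\widetilde D)$ with $\|s_D-1\|$ small and $s_D\,v'p(v')^{-1}s_D^{-1}=e''$. Replacing $v'$ by $s_Dv'$ (and similarly $w'$ by $s_Cw'$ with $s_C\in\widetilde C$) makes the conjugation of $p$ to $e''$ \emph{exact}, so $z^{-1}(v'\oplus 1)$ and $z^{-1}(w'\oplus 1)$ commute with $P$ exactly. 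The corners $a,a'$ then satisfy $a^{-1}a'=\alpha\oplus 1_k$ with $\alpha$ the (exact) $p$-corner of $(w')^{-1}v'$, and since $(w')^{-1}v'\approx\operatorname{diag}(u,u^{-1})$ with error controlled by $c$ and $\delta$, one gets $[\alpha]=[u]$; the norm of $z$ never enters. So the only place $\delta$ needs to be small is to make the idempotent and invertible perturbations legitimate and to ensure $\alpha$ is close enough to $u$ to share its $K_1$-class---all of which depend only on $c$.
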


We need a little more notation before we recall another result from \cite{Willett:2019aa}.  

\begin{definition}\label{vuh}
Let $A$ be a unital $C^*$-algebra, let $h$ be a positive contraction in $A$, and let $u$ be an invertible element of $M_n(A)$.  Define\footnote{Here we conflate a contraction $h\in A$ with the corresponding diagonal matrix $h \otimes 1_{n} \in M_{n}(A)$} $a:=h+(1-h)u\in M_n(A)$ and $b:=h+(1-h)u^{-1}\in M_n(A)$.  Define 
$$
v(u,h):=\begin{pmatrix} 1 & a \\
0 & 1
\end{pmatrix}
\begin{pmatrix}
1 &  0\\
-b & 1
\end{pmatrix}
\begin{pmatrix}
1 & a\\
0 & 1
\end{pmatrix}
\begin{pmatrix}
0 & -1\\
1 & 0
\end{pmatrix}\in M_{2n}(A).
$$
\end{definition}

The following result is contained in the proof of \cite[Proposition 3.6]{Willett:2019aa}.

\begin{lemma}\label{lifts exist}
For any $\delta>0$ and $n\in \N$ there exists $\gamma>0$ with the following property.  Let $A$ be a unital $C^*$-algebra and $u\in M_n(A)$ be a unitary.   Let $X\subseteq A$ be a (possibly infinite) subset of $A$ containing the matrix entries of $u$.  

Then if $(h,C,D)$ is a triple satisfying the conditions in Lemma \ref{d equiv} with respect to $X$ and any $\epsilon\in (0,\gamma]$, then $v(u,h)$ is a $(\delta,8,C,D)$ lift of $u$. \qed
\end{lemma}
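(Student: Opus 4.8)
The plan is to work with $a:=h+(1-h)u$ and $b:=h+(1-h)u^{-1}$ from Definition \ref{vuh} and to multiply out the four factors defining $v(u,h)$ to obtain the block forms
\[
v(u,h)=\begin{pmatrix} 2a-aba & ab-1 \\ 1-ba & b \end{pmatrix}, \qquad v(u,h)^{-1}=\begin{pmatrix} b & 1-ba \\ ab-1 & 2a-aba \end{pmatrix}.
\]
One then verifies conditions \eqref{lift1}--\eqref{lift5} of Definition \ref{lift def} with $c=8$ one at a time, shrinking $\gamma$ at the end so that all error terms fall below $\delta$. Throughout, the triple from Lemma \ref{d equiv} gives $h\in E\subseteq C\cap D$ honestly, while the entries of $u$ satisfy $\|[h,u]\|<\gamma$ (up to a factor depending on $n$), $hu\ine M_n(C)$, $(1-h)u\ine M_n(D)$, and $(1-h)hu\ine M_n(E)$; the same approximate containments for $u^{-1}=u^*$ follow by taking adjoints and absorbing one commutator $[h,u]$.

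First I would treat the norm bound \eqref{lift1}. Expanding $a^*a=h^2+(1-h)^2+h(1-h)(u+u^{-1})$ modulo commutator errors controlled by $\|[h,u]\|$, and using $h(1-h)\ge 0$ with $-2\le u+u^{-1}\le 2$, yields $a^*a\le 1+O(\gamma)$, so $\|a\|,\|b\|\le 1+O(\gamma)$. Since $ab-1=h(1-h)(u+u^{-1}-2)+O(\gamma)$ and $\|h(1-h)\|\le\tfrac14$, the four blocks have norm at most $2,1,1,1$ up to $O(\gamma)$, giving $\|v(u,h)\|,\|v(u,h)^{-1}\|\le 8$ comfortably. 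Condition \eqref{lift2} is then immediate: $h\in D$ and $(1-h)u^{\pm1}\ine M_n(D)$ give $a,b\ine M_n(\widetilde D)$, and each block is a fixed polynomial in $a,b$, so the norm bounds propagate the approximation to $v(u,h)\ind M_{2n}(\widetilde D)$.

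The real content is conditions \eqref{lift3} and \eqref{lift4}. For \eqref{lift3}, multiplying $v(u,h)$ by $\mathrm{diag}(u^{-1},u)$ turns the diagonal blocks into $au^{-1}=hu^{-1}+(1-h)$ and $bu=hu+(1-h)$, which lie in $M_n(\widetilde C)$ up to $O(\gamma)$ because $hu^{\pm1}\ine M_n(C)$. The off-diagonal blocks are $(ab-1)u$ and $(1-ba)u^{-1}$, and here the key is the exact identity $(ab-1)u=(1-h)(u-1)h(u-1)$ together with $ab-1=h(1-h)(u+u^{-1}-2)+O(\gamma)\ine M_n(E)$. Expanding $(ab-1)u$ produces the terms $h(1-h)$, $h(1-h)u^{\pm1}$, and the second-order term $h(1-h)u^2$; the first two are placed in $M_n(E)\subseteq M_n(C)$ directly by the hypotheses on the entries of $u$, but $h(1-h)u^2$ is \emph{not} controlled by the entries of $u$ alone. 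This is the main obstacle, and I would resolve it by exploiting that $X$ may be taken infinite: enlarging $X$ to also contain the entries of $u^2$ makes $(1-h)hu^2\ine M_n(E)$, after which every off-diagonal term lands in $M_n(E)\subseteq M_n(C)$ and \eqref{lift3} follows. Condition \eqref{lift4} is parallel: the blocks of $v(u,h)\begin{pmatrix}1_n&0\\0&0\end{pmatrix}v(u,h)^{-1}$ are $1-(1-ab)^2$, $(2a-aba)(1-ba)$, $(1-ba)b$, and $(1-ba)^2$, and since $ab-1,ba-1\ine M_n(E)\subseteq M_n(C\cap D)$ (again using the $u^2$-enlargement for the second-order pieces) every block is within $O(\gamma)$ of $M_n(\widetilde{C\cap D})$.

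Finally, condition \eqref{lift5} is automatic in this setting: because $C$, $D$, $E$ all contain $1_A$, the intersection $C\cap D$ is unital, so $\widetilde{C\cap D}=C\cap D$ and the class $\partial_v(u)$ furnished by \eqref{lift4} already lies in $K_0(C\cap D)$. Thus the whole argument is a direct—if lengthy—computation, and the single genuinely delicate point is the appearance of the second-order terms $h(1-h)u^{\pm2}$ in \eqref{lift3}--\eqref{lift4}: the identities $(ab-1)u=(1-h)(u-1)h(u-1)$ and $ab-1\approx h(1-h)(u+u^{-1}-2)$ reduce everything to expressions $h(1-h)u^{k}$, and it is precisely the condition $(1-h)hx\ine E$ (applied to products of entries of $u$, hence to a possibly infinite $X$) that forces these into $E\subseteq C\cap D$.
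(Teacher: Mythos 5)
Your block computations of $v(u,h)$ and $v(u,h)^{-1}$, the exact identity $ab-1=-(1-h)(u-1)h(u^{-1}-1)$ (equivalently $(ab-1)u=(1-h)(u-1)h(u-1)$), and the reduction of every block to terms of the form $f(h)u^{k}$ are all correct, and you have correctly located the crux in the second-order terms $h(1-h)u^{\pm2}$. But your resolution --- enlarging $X$ so that it also contains the matrix entries of $u^{2}$ --- is not available: in Lemma \ref{lifts exist} the triple $(h,C,D)$ is \emph{given}, satisfying the conditions of Proposition \ref{d equiv} with respect to an $X$ that is only assumed to contain the entries of $u$, and the conclusion must hold for every such triple. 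What you prove is a strictly weaker lemma with a stronger hypothesis, and that weaker lemma would not plug into the proof of Theorem \ref{k1tf} as written: there $X=\bigcup_t X_t$ consists of the matrix entries of the unitaries $u_t$ along the homotopy, not of their squares --- the ``possibly infinite'' clause exists to accommodate that union over $t$, not to let you throw in products of entries. (One could separately patch Theorem \ref{k1tf} by adding entries of the $u_{t_i}^2$ to the finite set, but that is a change to the paper, not a proof of the lemma as stated.)

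The gap is repairable without touching the statement, and this is the route of the cited source \cite[Proposition 3.6]{Willett:2019aa}: use functional calculus in $h$, exploiting that $h\in E$ and that $C$, $D$, $E$ are unital algebras. If $f\in C[0,1]$ vanishes at $0$ and $1$, approximate it uniformly (e.g.\ by Bernstein polynomials) by $p(t)=t(1-t)q(t)$; then $f(h)u\approx p(h)u=q(h)\cdot h(1-h)u$, which is close to $M_n(E)$ since $q(h)\in E$ and $h(1-h)x\in_\epsilon E$ for the entries $x$ of $u$, with total error $\eta+K(f,\eta)\,n\epsilon$ for any prescribed $\eta$. Taking $f(t)=\sqrt{t(1-t)}$ and controlling $[f(h),u]$ via Lemma \ref{pci} or the same polynomial approximation, one gets $h(1-h)u^{2}\approx\big(f(h)u\big)\big(f(h)u\big)$, a product of two elements each close to $M_n(E)$, hence close to $M_n(E)\subseteq M_n(C\cap D)$ because $E$ is an algebra; factorizations $f=f_1\cdots f_k$ with each $f_i$ vanishing at the relevant endpoints ($f_i(0)=0$ for membership near $M_n(C)$, $f_i(1)=0$ for $M_n(D)$, both for $M_n(E)$) dispose of all the higher words $f(h)u^{\pm k}$ appearing in your expansions of conditions \eqref{lift3} and \eqref{lift4}, with every error tending to $0$ as $\epsilon\to0$ at a rate depending only on $\delta$ and $n$ --- exactly the quantifier order the lemma requires. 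A secondary slip: the diagonal blocks of $v\,\mathrm{diag}(u^{-1},u)$ are $(2a-aba)u^{-1}$ and $bu$, not $au^{-1}$ and $bu$; the correction term $a(1-ba)u^{-1}$ contains further pieces such as $h(1-h)^{2}u^{-2}$, which the same factorization handles. Your remaining points (the norm bound with $c=8$, condition \eqref{lift2}, the adjoint trick for $u^{-1}$, and the observation that \eqref{lift5} trivializes because $1_A\in C\cap D$) are all fine.
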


For $k\in \N$, let $s_k\in M_{2k}(\C)$ be the (unitary) permutation matrix determined by 
$$
(z_1,z_2,...,z_k,z_{k+1},...,z_{2k})\mapsto (z_1,z_3,....,z_{2k-1},z_2,z_4,...,z_{2k}).\
$$
For any unital $C^*$-algebra $A$ and any $n\in \N$, we abuse notation by identifying $s_k$ with the element $s_k\otimes 1_{M_n(A)}$ of $M_{kn}(A)=M_k(\C)\otimes M_n(A)$.  

The following fact is closely related to \cite[Lemma 4.2]{Willett:2019aa}\footnote{The statement of \cite[Lemma 4.2]{Willett:2019aa} claims that the element we have called $s_k$ is self-inverse, which is clearly wrong.  However, this does not significantly affect that lemma, having replaced $s_k(v^{\oplus k})s_k$ with $s_k(v^{\oplus k})s_k^*$ as appropriate.}.  The proof consists in direct checks that we leave to the reader.

\begin{lemma}\label{mult lem}
Let $c\geq 1$, and $\epsilon\in (0,(4c+6)^{-1})$.   Let $A$ be a unital $C^*$-algebra, let $u\in M_n(A)$ be unitary, and let $v \in M_{2n}(A)$ be a $(\delta, c, C, D)$-lift of $u$, where $\delta$, $C$, and $D$ satisfy the conditions in Definition \ref{lift def}.  Then the following hold:
\begin{enumerate}[(i)]
\item \label{welldef part} For any $k\in \N$, $s_k(v^{\oplus k} )s_k^{*}$ is a $(\delta, c, C, D)$-lift of $u^{\oplus k}$. 
\item \label{times k part} The $K$-theory classes $\partial_{s_k(v^{\oplus k})s_k^*}(u^{\oplus k})$ and $k\cdot \partial_v(u)$ are equal in $K_0(C\cap D)$. 
\item \label{vuh part} If $v=v(u,h)$ is given by the formula in Definition \ref{vuh}, then $s_k(v^{\oplus k})s_k^*=v(u^{\oplus k},h)$. \qed
\end{enumerate}
\end{lemma}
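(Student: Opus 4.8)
The plan is to isolate the one nontrivial ingredient—the effect of conjugating by $s_k$—and then to make all three parts follow formally from the fact that this conjugation is an isometric $*$-homomorphism. First I would record that, for $x\in M_{2n}(A)$ written in $2\times 2$ block form $x=\left(\begin{smallmatrix} x_{11} & x_{12} \\ x_{21} & x_{22}\end{smallmatrix}\right)$ with $x_{ij}\in M_n(A)$, one has
\[
s_k(x^{\oplus k})s_k^{*} = \begin{pmatrix} x_{11}^{\oplus k} & x_{12}^{\oplus k} \\ x_{21}^{\oplus k} & x_{22}^{\oplus k}\end{pmatrix}\in M_2(M_{kn}(A))=M_{2kn}(A).
\]
This is precisely the assertion that $s_k$ implements the tensor flip $M_k\otimes M_2\to M_2\otimes M_k$ on the outer two matrix legs of $M_k(M_2(M_n(A)))$ while fixing $M_n(A)$: the defining permutation, which sends the odd/even coordinates to the two $M_2$-positions, is exactly this flip. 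I would verify the displayed identity by a direct index computation. Writing $\Phi\colon M_{2n}(A)\to M_{2kn}(A)$, $x\mapsto s_k(x^{\oplus k})s_k^{*}$, the formula shows $\Phi$ is the blockwise amplification $x_{ij}\mapsto x_{ij}^{\oplus k}=1_k\otimes x_{ij}$, hence a unital, injective (so isometric) $*$-homomorphism that carries $M_{2n}(B)$ into $M_{2kn}(B)$ for every subalgebra $B\subseteq A$. Setting up this identity cleanly, with the tensor-leg bookkeeping correct, is the only real content of the proof; everything else is formal manipulation of $\Phi$.

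With $\Phi$ in hand, parts (i) and (ii) follow by pushing the defining data of $v$ through $\Phi$. The compatibilities I would use are $\Phi\big(\left(\begin{smallmatrix} 1_n & 0 \\ 0 & 0\end{smallmatrix}\right)\big)=\left(\begin{smallmatrix} 1_{kn} & 0 \\ 0 & 0\end{smallmatrix}\right)$, $\Phi\big(\left(\begin{smallmatrix} u^{-1} & 0 \\ 0 & u\end{smallmatrix}\right)\big)=\left(\begin{smallmatrix} (u^{\oplus k})^{-1} & 0 \\ 0 & u^{\oplus k}\end{smallmatrix}\right)$, $\Phi(v)=s_k(v^{\oplus k})s_k^{*}=:\tilde v$, and $\Phi(v^{-1})=\tilde v^{-1}$ (as $\Phi$ is a unital homomorphism). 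For part (i), I would check conditions (i)–(iv) of Definition \ref{lift def} for $\tilde v$ as a lift of $u^{\oplus k}$ in turn: the norm bounds (i) hold because $\Phi$ is isometric; for (ii)–(iv), I would rewrite each element whose approximate membership is required (e.g. $\tilde v\left(\begin{smallmatrix} (u^{\oplus k})^{-1} & 0 \\ 0 & u^{\oplus k}\end{smallmatrix}\right)=\Phi\big(v\left(\begin{smallmatrix} u^{-1} & 0 \\ 0 & u\end{smallmatrix}\right)\big)$ and $\tilde v\left(\begin{smallmatrix} 1_{kn} & 0 \\ 0 & 0\end{smallmatrix}\right)\tilde v^{-1}=\Phi\big(v\left(\begin{smallmatrix} 1_n & 0 \\ 0 & 0\end{smallmatrix}\right)v^{-1}\big)$) as $\Phi$ applied to the corresponding element for $v$, and then apply $\Phi$ to the witnessing element of $M_{2n}(\widetilde D)$, $M_{2n}(\widetilde C)$, or $M_{2n}(\widetilde{C\cap D})$ coming from the lift conditions for $v$; since $\Phi$ is contractive and maps these subalgebras into their $kn$-analogues, the same $\delta$ works. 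Condition (v) is deferred to part (ii).

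For part (ii), the idempotent and projection entering $\partial_{\tilde v}(u^{\oplus k})$ are $\Phi(e)$ and $\Phi(p)$, where $e=v\left(\begin{smallmatrix} 1_n & 0 \\ 0 & 0\end{smallmatrix}\right)v^{-1}$ and $p=\left(\begin{smallmatrix} 1_n & 0 \\ 0 & 0\end{smallmatrix}\right)$. Because $\Phi$ is contractive and maps $M_{2n}(\widetilde{C\cap D})$ into $M_{2kn}(\widetilde{C\cap D})$, applying $\Phi$ to an idempotent $f\in M_{2n}(\widetilde{C\cap D})$ with $\|e-f\|<\epsilon$ yields an idempotent $\Phi(f)$ with $\|\Phi(e)-\Phi(f)\|<\epsilon$, so (with the constants of Definition \ref{delt in el}) $\{\Phi(e)\}_{\widetilde{C\cap D}}=[\Phi(f)]$. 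Since $\Phi(f)=s_k(f^{\oplus k})s_k^{*}$ is a unitary conjugate of $f^{\oplus k}$, its class in $K_0$ is $k[f]$; hence $\{\Phi(e)\}_{\widetilde{C\cap D}}=k\{e\}_{\widetilde{C\cap D}}$ and $[\Phi(p)]=k[p]$, giving $\partial_{\tilde v}(u^{\oplus k})=k\,\partial_v(u)$ in $K_0(\widetilde{C\cap D})$. As $\partial_v(u)\in K_0(C\cap D)$ by hypothesis, so is $k\,\partial_v(u)$, which supplies the outstanding condition (v) of part (i) and simultaneously proves part (ii).

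For part (iii), I would use only that $\Phi$ is a unital $*$-homomorphism, together with the identities $a^{\oplus k}=h_{(kn)}+(1_{kn}-h_{(kn)})u^{\oplus k}$ and $b^{\oplus k}=h_{(kn)}+(1_{kn}-h_{(kn)})(u^{\oplus k})^{-1}$, where $h_{(m)}$ denotes $h$ times the identity of $M_m(A)$; these hold because amplification $x\mapsto x^{\oplus k}$ is multiplicative and $h_{(kn)}=(h_{(n)})^{\oplus k}$, so $a^{\oplus k}$ and $b^{\oplus k}$ are exactly the elements $a$ and $b$ for the pair $(u^{\oplus k},h)$ in Definition \ref{vuh}. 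Applying $\Phi$ to each of the four triangular/flip factors defining $v(u,h)$ sends $1_n\mapsto 1_{kn}$, $a\mapsto a^{\oplus k}$, and $b\mapsto b^{\oplus k}$, so it sends the whole product to the product defining $v(u^{\oplus k},h)$. Thus $s_k(v(u,h)^{\oplus k})s_k^{*}=\Phi(v(u,h))=v(u^{\oplus k},h)$, as claimed.
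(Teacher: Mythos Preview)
Your proposal is correct and matches the paper's approach: the paper states that ``the proof consists in direct checks that we leave to the reader,'' and what you have written is a clean organization of exactly those checks, built around the observation that $\Phi(x)=s_k(x^{\oplus k})s_k^{*}$ is the blockwise amplification $x_{ij}\mapsto x_{ij}^{\oplus k}$ (the tensor flip on the outer legs). All the required verifications---norm preservation, preservation of the relevant matrix subalgebras, the $K_0$ computation via $[\Phi(f)]=[s_k(f^{\oplus k})s_k^{*}]=k[f]$, and the factor-by-factor computation for part~(iii)---go through as you describe.
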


\begin{proof}[Proof of Theorem \ref{k1tf}]
Let $\kappa \in K_{1}(A)$ be such that $n\cdot\kappa = 0$ for some $n\in \N$; our goal is to show that $n=0$ or $\kappa=0$.  Let $w \in M_{m}(A)$ be a unitary element such that $[w] = \kappa$.  As $[w^{\oplus n}] = n \cdot \kappa = 0$ we have that $w^{\oplus n} \oplus 1_{r}$ is homotopic through unitaries to $1_{nm+r}$ for some $r$.  Letting $s=nm+nr$, we have that $(w\oplus 1_r)^{\oplus n}$ is homotopic via a rotation homotopy to $w^{\oplus n}\oplus 1_r\oplus 1_{(n-1)r}$, and therefore is homotopic to $1_s$.   Define $u:= w \oplus 1_{r}$, so $[u^{\oplus n}] = n \cdot \kappa = 0$ and $u^{\oplus n}$ is homotopic to $1_{s}$.  Let $(u_{t})_{t \in [0,1]}$ denote a path of unitary elements in $M_s(A)$ with $u_0=u^{\oplus n}$ and $u_1=1_s$.  

Let $c=8$, and let $\epsilon=(12c+18)^{-1}$.  Let $\delta>0$ have the property in Lemma \ref{exact result} with respect to this $\epsilon$ and $c$.  Let $\gamma_{m+r}$ (respectively, $\gamma_{n(m+r)}$) be as in Lemma \ref{lifts exist} with respect to $\delta$ and the integer $m+r$ (respectively, $m(n+r)$), and let $\gamma:=\min \{\gamma_{m+r},\gamma_{n(m+r)}\}$.  Choose a finite partition $0 = t_{0} < ... < t_{k} = 1$ of $[0,1]$ such that for any $t \in [t_i,t_{i+1}]$ we have $\| u_{t} - u_{t_{i}} \| < \gamma/2$.  For each $i\in \{0,...,k\}$, let $X_t\subseteq A$ be the finite subset consisting of all matrix entries of $u_{t}$.  Let $X:=\bigcup_{i=1}^k X_{t_i}$.  Let $(h,C,D)$ be a triple satisfying the conditions in Proposition \ref{d equiv} with respect to the finite set $X$ and error parameter $\gamma/2$.  Note that for any $t$ and any $x\in X_t$ there are $i$ and $x_{t_i}$ such that $\|x-x_{t_i}\|<\gamma/2$.  It follows that $(h,C,D)$ satisfies the conditions in Lemma \ref{d equiv} with respect to the (possibly infinite) set $\bigcup_{t} X_t$ and the error parameter $\gamma$.

At this point Lemma \ref{lifts exist} gives us that (with notation as in Definition \ref{vuh}) $v_t:=v(u_t,h)$ is a $(\delta,8,C,D)$-lift for $u_t$ for all $t$.  The choice of $\delta$ then gives us elements 
$$
\partial_{v_t}(u_t):= \Bigg\{v_t\begin{pmatrix} 1_{s} & 0 \\ 0 & 0 \end{pmatrix} v_t^{-1}\Bigg\}_{\widetilde{C\cap D}} -\bigg[\begin{pmatrix} 1_{s} & 0 \\ 0 & 0 \end{pmatrix}\bigg] \in K_0(C\cap D)
$$ 
for all $t$.  We claim that 
\begin{equation}\label{hom inv}
\partial_{v_1}(u_1)=\partial_{v_0}(u_0).
\end{equation}
The path 
$$
[0,1]\to M_{2s}(A),\quad t\mapsto v_t\begin{pmatrix} 1_{s} & 0 \\ 0 & 0 \end{pmatrix} v_t^{-1}
$$ 
is continuous, whence there exists a finite partition $0=t_0<\cdots <t_l=1$ such that 
$$
\Bigg\|v_{t_{j+1}}\begin{pmatrix} 1_{s} & 0 \\ 0 & 0 \end{pmatrix} v_{t_{j+1}}^{-1}-v_{t_j}\begin{pmatrix} 1_{s} & 0 \\ 0 & 0 \end{pmatrix} v_{t_j}^{-1}\Bigg\|<\epsilon
$$
for all $j\in \{0,...,l-1\}$.  Hence if $f_j$ and $f_{j+1}$ are idempotents in $M_{2s}(\widetilde{C\cap D})$ satisfying 
$$
\Bigg\|f_j-v_{t_j}\begin{pmatrix} 1_{s} & 0 \\ 0 & 0 \end{pmatrix} v_{t_j}^{-1}\Bigg\|<\epsilon
$$
then $\|f_j-f_{j+1}\|<3\epsilon= (4c+6)^{-1}$.  Hence $[f_j]=[f_{j+1}]$ in $K_0(\widetilde{C\cap D})$ for all $j$ by Lemma \ref{near k el}, whence the claim.

As $u_1=1_s$ and the formula from Definition \ref{vuh} implies that $v_1=\begin{pmatrix} 1_s & 0 \\ 0 & 1_s\end{pmatrix}$, whence $\partial_{v_1}(u_1)=0$ by the formula from Definition \ref{lift def}, part \eqref{lift5}.  Hence by the claim from line \eqref{hom inv} that we just established, 
\begin{equation}\label{puv0}
\partial_{v_0}(u_0)=0.
\end{equation}
Let $v=v(u,h)$, which is a $(\delta,8,C,D)$ lift of $u$ by our choice of $\gamma$, and Lemma \ref{mult lem} part \eqref{welldef part} implies $s_n(v^{\oplus n})s_n^*$ is a $(\delta,8,C,D)$ lift of $u^{\oplus n}$, whence the element $\partial_{s_n(v^{\oplus n})s_n^*}(u^{\oplus n})$ of Definition \ref{lift def} makes sense.  On the other hand, part \eqref{vuh part} of Lemma \ref{mult lem} implies that $s_n(v^{\oplus n})s_n^*=v(u^{\oplus n},h)=v_0$, and so the classes $\partial_{s_n(v^{\oplus n})s_n^*}(u^{\oplus n})$ and $\partial_{v_0}(u_0)$ are equal.  Hence $\partial_{s_n(v^{\oplus n})s_n^*}(u^{\oplus n})=0$ by line \eqref{puv0}.  Part \eqref{times k part} of Lemma \ref{mult lem} implies that $n\cdot \partial_v(u)=\partial_{s_n(v^{\oplus n})s_n^*}(u^{\oplus n})$ so we get
\begin{equation}\label{torsion}
n\cdot \partial_v(u)=0.
\end{equation}

Now, as $C\cap D$ is finite-dimensional, $K_0(C\cap D)$ is torsion-free, so line \eqref{torsion} forces $n=0$ or $\partial_v(u)=0$.  If $n=0$ we are done, so assume $\partial_v(u)=0$.  From Lemma \ref{exact result}, we thus have that $[u]$ is in the range of $\sigma$.  However, the domain of $\sigma$ is $K_1(C)\oplus K_1(D)$, which is zero as $C$ and $D$ are finite-dimensional.  Hence $[u]=0$; as $u=w\oplus 1_r$, this implies that $[w]=0$ too, and we are done.
\end{proof}

\section{Kirchberg algebras}\label{kirch sec}

Our goal in this section is to study the complexity rank of Kirchberg algebras.  Recall that a $C^*$-algebra is a \emph{Kirchberg algebra} if it is simple, separable, nuclear and purely infinite.  Our theorems will only apply to unital Kirchberg algebras, but non-unital Kirchberg algebras will play an important role in the proof.

The following theorem is our goal in this section.

\begin{theorem}\label{kirch the}
Let $A$ be a unital UCT Kirchberg algebra.  Then $A$ has complexity rank one or two.  Moreover, it has complexity rank two if and only if $K_1(A)$ contains non-trivial torsion elements.
\end{theorem}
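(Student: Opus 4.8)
The overall logic splits the statement into lower and upper bounds that are then matched. For the lower bound, note that $A$, being a unital Kirchberg algebra, contains a proper isometry and so is not locally finite-dimensional; hence its complexity rank is at least one. If moreover $K_1(A)$ contains a non-trivial torsion element, then Theorem \ref{k1tf} forbids complexity rank one, so the rank is at least two; and read contrapositively, Theorem \ref{k1tf} says that complexity rank one can occur only when $K_1(A)$ is torsion-free. Thus, granting the two upper bounds below, the whole theorem follows: when $K_1(A)$ is torsion-free the rank is exactly one, and when $K_1(A)$ has torsion it is exactly two.

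The real content is the two upper bounds: (a) every unital UCT Kirchberg algebra has complexity rank at most two, and (b) if in addition $K_1(A)$ is torsion-free, it has complexity rank at most one. My plan for both is to replace $A$ by a concrete isomorphic model. By the Kirchberg--Phillips theorem \cite{Kirchberg-ICM,Phillips-documenta} it suffices to produce, for the given invariant, \emph{some} model realizing it and to estimate the complexity rank of the model; here I intend to use R\o{}rdam's crossed-product realizations \cite{Rordam:1995aa}, in which a UCT Kirchberg algebra is built as an iterated crossed product of a unital AF algebra by $\Z$ --- a single crossed product $C\rtimes_\alpha \Z$ when $K_1$ is torsion-free, and a double crossed product $(C\rtimes_\alpha \Z)\rtimes_\beta \Z$ in general, the torsion in $K_1$ being produced by the Pimsner--Voiculescu boundary map of the second $\Z$. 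This mirrors the coarse-geometric fact (Remark \ref{gen rr0 rem}) that $C^*_u(|\Z|)$ has complexity rank one while $C^*_u(|\Z^2|)$ has complexity rank at most two.

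The key step is then a single structural principle: if $B$ is a (separable, unital, nuclear) $C^*$-algebra of complexity rank at most $r$ and $\gamma\in\mathrm{Aut}(B)$ is an automorphism of the kind arising in these models, then $B\rtimes_\gamma \Z$ has complexity rank at most $r+1$. Applying this once to the AF algebra $C$ (which has rank zero) gives rank at most one in the torsion-free case, and applying it twice gives rank at most two in general, which is exactly (b) and (a). To prove the principle I would adapt Enders' technique \cite{Enders:2015aa} and cut the $\Z$-direction into two colours. Concretely, using a Rokhlin-type tower for $\gamma$ one produces, for large $N$, almost central projections $e_0,\dots,e_{N-1}$ with $u e_i u^*\approx e_{i+1}$ and $\sum_i e_i=1$; a slowly varying cut-off $h=\sum_i g(i/N)e_i$, where $g$ is $1$ on one arc of the circle $\Z/N$ and $0$ on a complementary arc with two slow transitions, then almost commutes with both $B$ and $u$, and hence with all of $A$. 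The two arcs give subalgebras $C'$ and $D'$, each of which, over an arc of length $L$, is isomorphic to a matrix amplification $M_L(B)$ and so of complexity rank at most $r$ by Proposition \ref{perm rem} (together with Lemma \ref{sums}); the overlap algebra $E'$ sits in both and is of the same form. Verifying that $C'$, $D'$, $E'$ and $h$ satisfy the conditions of Definition \ref{ais} (equivalently Proposition \ref{d equiv}) completes the inductive step.

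The main obstacle is exactly this structural principle, and within it two points require care. First, the colours must be produced as honest $C^*$-subalgebras of $A$ of the right complexity rank with the correct approximate-intersection behaviour ($E'\ses C'$, $E'\ses D'$, and the multiplicativity condition), rather than merely as ranges of completely positive maps as in the nuclear-dimension setting; this is where I expect to lean on the uniqueness (approximate unitary equivalence) half of Kirchberg--Phillips to identify and align the pieces inside $A$, not only on the fact that UCT Kirchberg algebras are classified by $K$-theory. Second, one must ensure that R\o{}rdam's model automorphisms carry a usable Rokhlin-type property, or else extract the towers from the explicit structure of the construction; this is the step most likely to need genuine work or a substitute argument, and is where the pure infiniteness of the crossed products should be exploited.
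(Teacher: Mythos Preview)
Your overall logical structure --- lower bounds from non-local-finite-dimensionality and Theorem~\ref{k1tf}, upper bounds handled separately --- matches the paper exactly. For the torsion-free upper bound, your appeal to R\o{}rdam's crossed-product models together with Enders' technique and Kirchberg--Phillips is also the paper's route (Theorem~\ref{kirch the 1}). One correction, though: Enders' method as used here does \emph{not} rely on Rokhlin towers inside $A\rtimes\Z$. Instead it uses the explicit embedding $\iota_n:A\rtimes\Z\to M_n(A\rtimes\Z)$ of Definition~\ref{iotan} to create an artificial tower in matrices, yielding a decomposition with colours lying in $M_n(A)$ (which is AF); Kirchberg--Phillips uniqueness is then invoked to transport this decomposition back from $M_n(A\rtimes\Z)$ to $A\rtimes\Z$, after combining $\iota_n$ with $\omega\circ\iota_{n+1}$ so as to hit a $KK$-equivalence (Corollaries~\ref{end cor} and~\ref{enders and kp}). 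This completely sidesteps the Rokhlin obstacle you flag.

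For the general rank-two upper bound the paper takes a quite different and simpler route than your proposed iterated crossed product. Rather than realize $A$ as $(C\rtimes_\alpha\Z)\rtimes_\beta\Z$ and prove that crossing by $\Z$ raises complexity rank by at most one, the paper writes $A$ as an inductive limit of algebras of the form $B_0\oplus(B_1\otimes\mathcal{O}_{1,\infty})$, where $B_0,B_1$ are finite direct sums of matrix algebras over Cuntz algebras (Proposition~\ref{uct kirch}). Each of $B_0$, $B_1$, and $\mathcal{O}_{1,\infty}$ is a unital UCT Kirchberg algebra with torsion-free $K_1$, hence has complexity rank one by the already-established Theorem~\ref{kirch the 1}; tensor-product subadditivity (Proposition~\ref{perm rem}) then gives rank at most two for $B_1\otimes\mathcal{O}_{1,\infty}$, and Lemmas~\ref{sums} and~\ref{loc in} finish. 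This bypasses both of the obstacles you identify --- no second automorphism with Rokhlin behaviour is needed, and there is no need to run the decomposition argument over a non-AF base. Your structural principle is an attractive statement, but I am not aware that \cite{Rordam:1995aa} supplies the double-crossed-product models with arbitrary $K_1$ that it would require, and in any case it is not needed here.
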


There is quite a striking contrast here with the theory of nuclear dimension (and with weak complexity rank).  Indeed, all Kirchberg algebras have nuclear dimension one\footnote{This holds regardless of the UCT: it was established earlier under a UCT assumption in \cite{Ruiz:2014aa}.} by \cite[Theorem G]{Bosa:2014zr}; as a consequence of this and real rank zero, all Kirchberg algebras have \emph{weak} complexity rank one as recorded in Corollary \ref{kirch wcr1} above.  As already noted in the introduction, proving Theorem \ref{kirch the} (or even an a priori much weaker statement, such as that a Kirchberg algebra with zero $K$-theory has finite complexity) \emph{without the UCT assumption} would imply the UCT for all nuclear $C^*$-algebras.

The proof of Theorem \ref{kirch the} will make repeated use of (part of) the Kirchberg-Phillips classification theorem  \cite{Phillips-documenta}; see also the exposition in \cite[Chapter 8]{Rordam:2002cs} and the recent approach in \cite{Gabe:2019ws}.

\subsection{The rank one case (after Enders)}

In this subsection, we adapt ideas of Enders \cite{Enders:2015aa} to establish the following theorem.

\begin{theorem}\label{kirch the 1}
Let $B$ be a unital UCT Kirchberg algebra with torsion free $K_1$ group.  Then $B$ has complexity rank one.
\end{theorem}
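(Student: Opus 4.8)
The plan is to combine three ingredients: the Kirchberg--Phillips classification theorem, R\o rdam's crossed product models, and a two-colored partition-of-unity argument in the spirit of Enders. Since complexity rank is an isomorphism invariant and $B$ is a unital UCT Kirchberg algebra, the existence part of Kirchberg--Phillips \cite{Phillips-documenta,Kirchberg-ICM} shows $B$ is determined by the triple $(K_0(B),[1_B],K_1(B))$; it therefore suffices to produce \emph{some} Kirchberg algebra with this invariant that has complexity rank at most one. First I would invoke R\o rdam's construction \cite{Rordam:1995aa} to realize the isomorphism class of $B$ through a crossed product $A\rtimes_\alpha\Z$ of an AF algebra $A$ (after the standard corner/stabilization bookkeeping). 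Because $A$ is AF we have $K_1(A)=0$, so the Pimsner--Voiculescu sequence collapses to
\[
K_1(A\rtimes_\alpha\Z)\cong \ker\big(1-\alpha_*:K_0(A)\to K_0(A)\big),
\]
a subgroup of the dimension group $K_0(A)$ and hence automatically torsion free. The hypothesis that $K_1(B)$ is torsion free is precisely what makes such an AF model available for every admissible $K_0$, which is the content of R\o rdam's realization results.

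Next I would verify the criterion of Proposition \ref{d equiv} in the crossed-product picture. Given a finite subset $X$ of the unit ball and $\epsilon>0$, I would approximate each element of $X$ by a finite Fourier sum $\sum_{|n|\le N}a_n u^n$, where $u$ is the implementing unitary and the $a_n$ lie in a finite-dimensional subalgebra $F\subseteq A$ chosen large enough that $F$ is approximately invariant under the finitely many automorphisms $\alpha^j$, $|j|\le N$, that occur. The heart of the argument, adapting Enders \cite{Enders:2015aa}, is to cut the $\Z$-direction of the crossed product into long blocks and to two-color these blocks: using a Rokhlin-type tower of projections coming from the AF structure of $A$ and cyclically permuted (up to small error) by $u$, I would build an almost central positive contraction $h$ together with finite-dimensional subalgebras $C$ and $D$ --- one for each color --- sharing a finite-dimensional corner $E\subseteq C\cap D$. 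Slow variation of the coefficient profile across each tower makes $h$ commute with $X$ to within $\epsilon$, while the coloring is arranged so that $hx\ine C$, $(1-h)x\ine D$, and $(1-h)hx\ine E$ for all $x\in X$.

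The step I expect to be the main obstacle is the construction of the almost central element $h$ and of the honest finite-dimensional subalgebras inside the \emph{simple} crossed product: unlike the uniform Roe algebra of $\Z$, here there are no ready-made ``position functions'' lying in the algebra, so the tower must be produced abstractly. This is exactly where the uniqueness part of Kirchberg--Phillips enters --- two embeddings of the relevant tower/building-block algebra that agree on $K$-theory are approximately unitarily equivalent, which lets me intertwine the two colors by $u$ and conjugate the tower into near-commutation with $X$. It is also where torsion freeness of $K_1$ is indispensable in the positive direction: the shared corner $E$ is genuinely finite dimensional, so $K_0(E)$ is free and $K_1(E)=0$, and the gluing/index obstruction analyzed in Section \ref{cr1 sec} (which would live in $K_0(E)$) vanishes, permitting the honest inclusion $E\subseteq C\cap D$ demanded by Proposition \ref{d equiv}. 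Finally, since any Kirchberg algebra contains a proper isometry it is not locally finite dimensional, so $B\notin\mathcal{D}_0$ and the complexity rank is exactly one.
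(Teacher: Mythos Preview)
Your high-level outline---R\o rdam's AF crossed-product model plus Kirchberg--Phillips plus an Enders-style two-color argument---matches the paper's strategy. But the technical core is not what you describe, and the key idea is missing. The paper does not build $h$, $C$, $D$ from a Rokhlin tower inside $A\rtimes\Z$. Following Enders, one instead uses an explicit $*$-homomorphism $\iota_n:A\rtimes\Z\to M_n(A\rtimes\Z)$ (Definition~\ref{iotan}: $u$ goes to a cyclic shift matrix with a single $u^n$ in the corner) and builds the decomposition \emph{in the target}: $h$ is a scalar diagonal matrix with a tent-shaped profile, and $C$, $D$ are images of $M_n(A)$ under two explicit embeddings differing by a conjugation (Lemma~\ref{enders lem}). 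The genuinely nontrivial step---absent from your proposal---is transporting this back to $p(A\rtimes\Z)p$. Since $(\iota_n)_*=n\cdot(i_n)_*$ on $K$-theory (Lemma~\ref{enders k}), $\iota_n$ is not $KK$-invertible on its own; one forms $\kappa_n=(\omega\circ\iota_n)\oplus\iota_{n+1}$ with $\omega$ an automorphism inducing $-1$ on $K_*$ (supplied by Theorem~\ref{kp stab the}\eqref{kp stab exist}), so that $(\kappa_n)_*=(i_{2n+1})_*$ and $\kappa_n$ becomes $KK$-invertible by the UCT. Then Theorem~\ref{kp stab the}\eqref{kp stab exist kk} and \eqref{kp stab unique} produce an inverse $*$-isomorphism and conjugate $\kappa_n$ back to approximately the identity (Corollary~\ref{enders and kp}). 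Your vague appeal to approximate unitary equivalence of ``two embeddings that agree on $K$-theory'' does not locate this $K$-theoretic correction trick, which is the heart of the argument.

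Your account of where torsion-freeness enters is also off. It is used \emph{only} to guarantee the existence of R\o rdam's model (your Pimsner--Voiculescu remark is exactly right): once one has $A$ AF with $A\rtimes\Z$ Kirchberg and $p\in A$ a projection, Corollary~\ref{enders kp 2} gives complexity rank at most one for $p(A\rtimes\Z)p$ with no further hypothesis on $K_1$. The obstruction of Section~\ref{cr1 sec} is the converse direction---it explains why no such decomposition can exist when $K_1$ has torsion---but it is not an obstruction one has to ``neutralize'' inside the positive construction; the finite-dimensionality of $E$ and hence freeness of $K_0(E)$ hold automatically in Lemma~\ref{enders lem} regardless of $K_1(B)$.
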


Throughout this subsection we will be dealing with large matrices, so adopt some shorthand notation for convenience.  Let $e_{ij}$ denote the matrix units in $M_n(\C)$, and for $j\in \{-(n-1),...,0,1,...,n-1\}$, write $d_j$ for the matrix which has ones on the $j^\text{th}$ subdiagonal and is zero elsewhere, i.e.\ $d_j:=\sum_{i=1}^{n-j}e_{(i+j)i}$.  Note for example that $d_0$ is the identity, and that $d_{-1}=e_{12}+e_{23}+\cdots +e_{(n-1)n}$ is the matrix with ones on the first \emph{super}diagonal and zeros elsewhere.  For a $C^*$-algebra $B$ with multiplier algebra $M(B)$, we will identify $M_n(M(B))$ with $M_n(\C)\otimes M(B)$; for example, if $u\in M(B)$ is unitary, then we will write things like 
$$
d_{1} \otimes 1 + d_{-(n-1)} \otimes u^{n} \in M_{n}(\mathbb{C}) \otimes M(B).
$$
for the matrix 
\begin{equation}\label{inu matrix}
\begin{pmatrix} 0 & 0 & \cdots & 0 & 0 & u^n \\ 1 & 0 & \cdots & 0 & 0 & 0 \\  0 & 1 & \cdots & 0 & 0 & 0  \\ \vdots & \vdots & \ddots & \vdots & \vdots & \vdots \\  0 & 0 & \cdots & 1 & 0 & 0 \\  0 & 0 & \cdots & 0 & 1 &  0\end{pmatrix} \in M_n(M(B)).
\end{equation}
For a $C^*$-algebra $B$ and $b_1,...,b_n\in B$, we will  write $\text{diag}(b_1,...,b_n)$ for the diagonal matrix in $M_n(B)$ with entries $b_1,...,b_n$, i.e.\ for 
$$
\begin{pmatrix} b_1 & 0 & \cdots  & 0 \\ 0 & b_2 & \cdots  & 0 \\ \vdots & \vdots & \ddots & \vdots \\ 0 & 0 & \cdots &  b_n\end{pmatrix}\in M_n(B)
$$
We need a definition: the following is \cite[Definition 1.1]{Enders:2015aa}.  

\begin{definition}\label{iotan}
Let $A$ be a $C^*$-algebra equipped with an action $\alpha$ of $\Z$, and let $n\in \N$.  Let $\iota_n$ be the $*$-homomorphism
$$
\iota_n:A\rtimes \Z\to M_n(A\rtimes \Z)
$$
determined by the formulas 
$$
\iota_n(a):=\text{diag}(\alpha^{-1}(a), \alpha^{-2}(a),...,\alpha^{-n}(a))
$$
for $a\in A$ and
$$
\iota_n(u):=d_{1} \otimes 1 + d_{-(n-1)} \otimes u^{n}
$$ 
for $u\in M(A\rtimes \Z)$ the canonical unitary implementing the $\Z$ action\footnote{Line \eqref{inu matrix} above with $B=A\rtimes \Z$ writes out $\iota_n(u)$ as a matrix.}.
\end{definition}

The key technical result is as follows: although somewhat different from the conclusions of Ender's arguments, it follows the same basic strategy.

\begin{lemma}\label{enders lem}
Let $A$ be an AF $C^*$-algebra equipped with a $\Z$-action.  Let $X$ be a finite subset of $A\rtimes \Z$, and assume there exists a projection $p\in A$ such that $px=xp=x$ for all $x\in X$.  Let $\epsilon>0$.

Then there exists $N\in \N$ such that for all $n\geq N$, if $\iota_n:A\rtimes \Z\to M_n(A\rtimes\Z)$ is as in Definition \ref{iotan}, and $q:=\iota_n(p)$, then there exists a positive contraction $h\in q(M_n(A\rtimes \Z))q$ and AF $C^*$-subalgebras  $C$ and $D$ of $q(M_n(A\rtimes \Z))q$ with the following properties:
\begin{enumerate}[(i)]
\item \label{ki com} $\|[h,\iota_n(x)]\|<\epsilon$ for all $x\in X$;
\item \label{ki in} $h\iota_n(x)\in_{\epsilon} C$, $(1-h)\iota_n(x)\in_{\epsilon} D$, and $(1-h)h\iota_n(x)\in_{\epsilon} C\cap D$ for all $x\in X$;
\item \label{ki af} $E:=C\cap D$ is an AF algebra;
\item \label{ki mult} $h$ multiplies $E$ into itself.  
\end{enumerate}
\end{lemma}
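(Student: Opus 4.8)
The plan is to read $\iota_n(x)$ as a banded matrix over $A$ carrying a single ``twisted'' edge, to build $h$ from a slowly-varying position function on the cycle $\Z/n\Z$, and to take $C$ and $D$ to be corners of $M_n(A)$ along two overlapping arcs; the only real subtlety is untwisting the wraparound on the arc that meets the seam. First I would normalize the data: since $px=xp=x$, each $x\in X$ is approximated to within $\epsilon$ by a finite sum $\sum_{|k|\le R}a_ku^k$ with $a_k=pa_k\alpha^k(p)\in A$, where $R$ depends only on $X$ and $\epsilon$. Using $\iota_n(u)^k=d_k\otimes 1+d_{-(n-k)}\otimes u^n$ (for $0<k<n$, and adjointly for negative $k$), one sees that $\iota_n(x)$ is, up to $\epsilon$, a matrix whose entries within the band $|r-s|\le R$ of the diagonal lie in $A$ (the ``bulk''), together with finitely many ``seam'' entries in $Au^{\pm n}$ supported in the corner, i.e.\ on the single cyclic edge joining position $n$ to position $1$. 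Throughout, $\iota_n(x)=q\iota_n(x)q$ because $\iota_n$ is a homomorphism and $x=pxp$.

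Next I would define $h:=\sum_{i}f(i)\,e_{ii}\otimes\alpha^{-i}(p)$, a positive contraction in $q M_n(A\rtimes\Z)q$, where $f:\Z/n\Z\to[0,1]$ is a ``trapezoid'': approximately $0$ on an arc $I_0$ around the seam (of width $>R$), approximately $1$ on a central plateau $I_1$, with two transition arcs $T_{\mathrm{up}},T_{\mathrm{down}}$ of width $L$ between them. Because $\iota_n(x)=q\iota_n(x)q$, the $(r,s)$ entry of $[h,\iota_n(x)]$ equals $(f(r)-f(s))\,\iota_n(x)_{rs}$, which is nonzero only on the band or the seam; choosing $f$ with $|f(i+1)-f(i)|\lesssim 1/L$ makes $|f(r)-f(s)|\le R/L$ on the band, while on the seam both indices lie in $I_0$ where $f\approx 0$. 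A band-width estimate then gives $\|[h,\iota_n(x)]\|\lesssim (2R+1)(R/L)\|x\|$, which is $<\epsilon$ once $L$ (hence $n$) is large; this fixes the threshold $N$.

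For $C$ I would take the corner $C:=P_{J_C}M_n(A)P_{J_C}$, where $P_{J_C}=\sum_{i\in J_C}e_{ii}\otimes\alpha^{-i}(p)$ and $J_C$ is the arc $\mathrm{supp}(f)$ thickened by $R$, which avoids the seam. This is a hereditary subalgebra of the AF algebra $M_n(A)$, hence AF, and $h\iota_n(x)\in_\epsilon C$ since $h\iota_n(x)$ is supported on $J_C$-rows, banded, and its seam contribution is killed by $f\approx 0$ there. The genuinely hard part is $D$, for the arc $J_D$ equal to the complement of the plateau (thickened), which \emph{contains} the seam: a naive corner would involve $u^{\pm n}$ and fail to be AF. I would fix this by conjugating by a partial isometry $W=\sum_{i\in J_D}e_{ii}\otimes w_i\alpha^{-i}(p)$ with $w_i=u^{-n}$ on the half of $J_D$ lying on one side of the seam and $w_i=1$ on the other half, so that $W$ switches value \emph{exactly once}, at the seam. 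This untwists the seam edge and its adjoint into $A$-entries, while introducing no new twist: the two halves meet only at the seam, and the ends of the arc $J_D$ lie in the plateau where $(1-h)\iota_n(x)\approx 0$, so the edges cut off at those ends are negligible. Then $D:=W^*\big(P'_{J_D}M_n(A)P'_{J_D}\big)W$ (with $P'_{J_D}=WW^*$) is $*$-isomorphic to a corner of $M_n(A)$, hence AF, and $(1-h)\iota_n(x)\in_\epsilon D$.

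Finally I would verify $E:=C\cap D$. The point is that conjugation by $W$ absorbs $u^{-n}$ as $\alpha^{-n}$ and therefore preserves the ``$A$-corner'' content on any region not crossing the seam; in particular, on the overlap $J_C\cap J_D$, which consists of the transition arcs $T_{\mathrm{up}}\cup T_{\mathrm{down}}$ and avoids the seam, both $C$ and $D$ restrict to the \emph{same} corner $P_{T}M_n(A)P_{T}$ with $T=T_{\mathrm{up}}\cup T_{\mathrm{down}}$. Hence $E=P_TM_n(A)P_T$ is AF; it contains $(1-h)h\iota_n(x)$, which is supported on $\{f(1-f)\neq 0\}=T$ with $A$-entries; and $h$ multiplies $E$ into itself because $h|_T$ is a diagonal element of $E$. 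The main obstacle, which this argument is organized around, is precisely the wraparound $u^{\pm n}$ of $\iota_n(u)$: it forces the twist into one of the two colors, and the crux is that an arc is a tree with no holonomy obstruction, so the single twisted edge can be conjugated away by $W$ without creating a new twisted edge in the region where $(1-h)\iota_n(x)$ actually lives.
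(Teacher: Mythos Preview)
Your argument is essentially correct and follows the same architecture as the paper's proof: the same trapezoidal diagonal $h$, the same identification of $\iota_n(x)$ as a banded matrix over $A$ plus a few $u^{\pm n}$-twisted entries at the seam, and the same idea that one color avoids the seam while the other must absorb it. The difference is in how the second color is built.

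The paper takes $C=qM_n(A)q$ (the full corner, not restricted to an arc) and sets $D=q\,\mathrm{Ad}_{v^m}(M_n(A))\,q$, where $v=d_1\otimes u^{-1}+d_{-(n-1)}\otimes u^{n-1}$ is a cyclic shift with $u$-weights; the unitary $v^m$ globally permutes the diagonal positions by $m\approx n/2$ and carries exactly the right powers of $u$ so that the image of $M_n(A)$ picks up block-off-diagonal $u^{\pm n}$ terms matching the seam of $(1-h)\iota_n(x)$. This makes $E=C\cap D$ equal to the $2$-block-diagonal subalgebra of $qM_n(A)q$ (blocks of sizes $m$ and $n-m$). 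Your approach instead restricts to arcs and untwists by the diagonal gauge $W=\mathrm{diag}(w_i\alpha^{-i}(p))$ with $w_i\in\{1,u^{-n}\}$; this is more elementary and makes the ``tree, hence no holonomy'' intuition explicit, whereas the paper's $\mathrm{Ad}_{v^m}$ is slicker but less transparent.

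One small correction: your claim that $E=C\cap D$ equals the full corner $P_TM_n(A)P_T$ on $T=J_C\cap J_D$ is not quite right. Because $T$ splits as $T_{\mathrm{up}}'\sqcup T_{\mathrm{down}}'$ with the two pieces lying in different halves of $J_D$ (where $w_i=u^{-n}$ versus $w_i=1$), an element $c\in C$ with a nonzero $(r,s)$-entry crossing between the two pieces would need $c_{rs}\in A$ (from $C$) and $W_rc_{rs}W_s^*\in A$ (from $D$), forcing $c_{rs}\in A\cap u^{\pm n}A=\{0\}$. Thus $E$ is the \emph{block-diagonal} part $P_{T_{\mathrm{up}}'}M_n(A)P_{T_{\mathrm{up}}'}\oplus P_{T_{\mathrm{down}}'}M_n(A)P_{T_{\mathrm{down}}'}$. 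This does not damage your proof: $E$ is still AF, $h$ still multiplies it into itself, and $(1-h)h\iota_n(x)$ has no cross terms between $T_{\mathrm{up}}'$ and $T_{\mathrm{down}}'$ because it is $R$-banded and the plateau separating them has width $\gg R$.
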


\begin{proof}
Define a unitary $v \in M_n(\C)\otimes M(A\rtimes \Z)=M(M_n(A\rtimes \Z))$ by 
\begin{align*}
v:= d_{1} \otimes u^{-1} + d_{-(n-1)} \otimes u^{n-1}=\begin{pmatrix}
0 & \dots & \dots &  0 & u^{n-1} \\
u^{-1} & \ddots & & & 0 \\
0 & \ddots & \ddots &  & \vdots \\
\vdots & \ddots & \ddots & \ddots & \vdots \\   
0 & \dots & 0 & u^{-1} &  0\\ 
\end{pmatrix}.
\end{align*} 
Write $n = 2m$ if $n$ is even and $n = 2m + 1$ if $n$ is odd, and note that 
\begin{equation}\label{vm form}
v^{m} = d_{m} \otimes u^{-m} + d_{-(n-m)} \otimes u^{n-m}.
\end{equation}
Let $j_n: M_{n}(A)  \hookrightarrow M_{n}(A \rtimes_{\alpha} \mathbb{Z})$ denote the canonical inclusion and define two $*$-homomorphisms 
$$
\Lambda_{n}^{0}, \Lambda_{n}^{1}: M_{n}(A) \rightarrow M_{n}(A \rtimes_{\alpha}\mathbb{Z}),\quad \Lambda_{n}^{0}:= j_{n},\quad \Lambda_{n}^{1} := Ad_{v^{m}} \circ j_{n}.
$$
Let us compute the image of $\Lambda_{n}^{1}$ more concretely.  Write elements in $M_n(A)$ in the form
\begin{equation}\label{mat split}
\begin{pmatrix}
a & b \\
c & d \\       
\end{pmatrix} \in M_{(n-m)+m}(A)
\end{equation}
where writing $n$ as the sum of $n-m$ and $m$ in the subscript on the right records the sizes of the blocks.  Using line \eqref{vm form}, one  then computes that $\Lambda_n^1$ acts via sending the matrix in line \eqref{mat split} to the element  
\begin{equation}\label{ln1 act}
 \begin{pmatrix}
\alpha^{n-m}(d) & 0 \\
0 & \alpha^{-m}(a) \\       
\end{pmatrix} + \begin{pmatrix}
0 & \alpha^{n-m}(c) \\
0 & 0 \\       
\end{pmatrix}\cdot u^{n} + \begin{pmatrix}
0 & 0 \\
\alpha^{-m}(b) & 0 \\       
\end{pmatrix}\cdot u^{-n}
\end{equation}
in $M_{m+(n-m)}(A\rtimes \Z)$ (note the switch from ``$(n-m)+m$'' to ``$m+(n-m)$'').   Define also 
$$
q:=\iota_n(p)=\text{diag}(\alpha^{-1}(p),...,\alpha^{-n}(p)).
$$
One checks directly that $q$ multiplies $\Lambda_n^0(M_n(A))$ into itself, while the fact that $q$ multiplies $\Lambda_n^1(M_n(A))$ into itself follows from the formula in line \eqref{ln1 act} above.  Hence $C:=q(\Lambda_n^0(M_n(A)))q$ and $D:=q(\Lambda_n^1(M_n(A)))q$ are well-defined AF subalgebras of $M_n(A\rtimes \Z)$.  Note moreover that with respect to the decomposition in line \eqref{mat split}, the intersection of $C$ and $D$ can be concretely described as the set 
$$
E:=\Bigg\{ q\begin{pmatrix} a & 0 \\ 0 & d \end{pmatrix}q ~\Bigg|~a\in M_m(A),~d\in M_{n-m}(A)\Bigg\}
$$  
and is in particular also an AF algebra.  

For $1\leq i \leq n$, define scalars $h_{i}\in [0,1]$ by  
\begin{align}\label{zh 1}
	 h_{i} := \left\{
\begin{array}{ll}
\;\;\;\;\;\;\;0 & \;\;\;1 \leq i \leq \floor*{\frac{n}{6}}  \\
\dfrac{i - \floor*{\frac{n}{6}}}{\floor*{\frac{2n}{6}} - \floor*{\frac{n}{6}}} & \floor*{\frac{n}{6}}\leq i \leq \floor*{\frac{2n}{6}} \\
\;\;\;\;\;\;1 & \floor*{\frac{2n}{6}}\leq i \leq \floor*{\frac{4n}{6}} \\
\dfrac{\floor*{\frac{5n}{6}} - i}{\floor*{\frac{5n}{6}} - \floor*{\frac{4n}{6}}} & \floor*{\frac{4n}{6}}\leq i \leq \floor*{\frac{5n}{6}} \\
\;\;\;\;\;\;\;0 & \floor*{\frac{5n}{6}}\leq i \leq n \\
\end{array} 
\right. 
\end{align} and let $h\in M_n(A)$ be defined by 
\begin{align*}
	h :=\text{diag}(h_1,...,h_n)q=\text{diag}(h_1\alpha^{-1}(p),...,h_n\alpha^{-n}(p)).
\end{align*} 
Note that $h$ multiplies $C$ and $D$ into themselves, whence it also multiplies $E$ into itself.

We claim now that for $n$ suitably large, $C$, $D$, $E$, and $h$ have the properties claimed in the statement of the lemma.  We have already observed properties \eqref{ki af} and \eqref{ki mult}, so it remains to check properties \eqref{ki com} and \eqref{ki in}.

Let us look first at property \eqref{ki com}.  As $q\iota_n(x)=\iota_n(px)=\iota_n(x)=\iota_n(xp)=\iota_n(x)q$ for all $x\in X$, and as $q$ commutes with 
\begin{equation}\label{h0 def}
h^{(0)} := \text{diag}(h_1,...,h_n)
\end{equation}
it suffices to show that 
\begin{equation}\label{h0 com}
[h^{(0)},\iota_n(x)]\to 0 \quad \text{as}\quad n\to\infty.
\end{equation}
To show this, it suffices to show that for any contraction $a\in A$ and any $k\in \N\cup\{0\}$ we have that $[h^{(0)},\iota_n(a\cdot u^k)]\to 0$ as $n\to\infty$.

Fix then $y= a\cdot u^{k}$ and compute $h^{(0)}\cdot \iota_{n}(y):$
\begin{align}\label{h0inx}
h^{(0)}\cdot\iota_{n}(y) &= h^{(0)}\cdot\iota_{n}(a \cdot u^{k}) \nonumber \\ &= h^{(0)}\cdot\iota_{n}(a)\iota_{n}(u^{k}) \nonumber \\ 
&= h^{(0)}\text{diag}(\alpha^{-1}(a),...,\alpha^{-n}(a)) \big(d_{k}\otimes 1_{n} + d_{-n+k}\otimes u^{n} \big) \nonumber  \\
&= \begin{pmatrix}
0 & 0 \\
g_{1} & 0 \\       
\end{pmatrix} + \begin{pmatrix}
0 & g_{2} \\
0 & 0 \\       
\end{pmatrix} \cdot u^{n}
\end{align}
where $g_{1}$ is the $(n-k) \times (n-k)$ matrix 
\begin{equation}\label{g1}
g_{1} := \text{diag} (h_{k+1}\alpha^{-(k+1)}(a), h_{k+2}\alpha^{-(k+2)}(a),..., h_{n}\alpha^{-n}(a))
\end{equation}
and $g_{2}$ is the $k\times k$ matrix  
\begin{equation}\label{g2}
g_{2} = \text{diag} (h_{1}\alpha^{-1}(a),..., h_{k}\alpha^{-k}(a)).
\end{equation}
If we choose $n$ large enough so that $k << \floor*{\frac{n}{6}}$, then $\begin{psmallmatrix}0 & g_{2}\\0 & 0\end{psmallmatrix}\cdot u^{n} = 0$, since $h_{i} = 0$ for $1 \leq i \leq \floor*{\frac{n}{6}}$.  Hence for large enough $n$ \begin{align} \label{lam 0}
h^{(0)}\cdot\iota_{n}(y) = \begin{pmatrix}
0 & 0 \\
g_{1} & 0 \\       
\end{pmatrix}.
\end{align}
Computing $\iota_{n}(y)\cdot h^{(0)}$ is similar:  if again, $n$ is large enough so that $ k << \floor*{\frac{n}{6}}$, we have
$$
\iota_{n}(y)\cdot h^{(0)} = \begin{pmatrix}
0 & 0 \\
e & 0 \\       
\end{pmatrix},$$
where $e$ is the $(n-k) \times (n-k)$ matrix defined by 
\begin{align*}
e: = \text{diag}( h_{1}\alpha^{-(k+1)}(a), h_{2}\alpha^{-(k+2)}(a),..., h_{n-k}\alpha^{-n}(a)).
\end{align*} 
At this point, if we let $M_{n} := \max\{ \floor*{\frac{2n}{6}} - \floor*{\frac{n}{6}}, \floor*{\frac{5n}{6}} - \floor*{\frac{4n}{6}}  \}$, then we compute that for all large enough $n$, 
\begin{align*}
\| h^{(0)}\cdot \iota_{n}(y) - \iota_{n}(y)\cdot h^{(0)} \| &= \bigg\| \begin{pmatrix}
0 & 0 \\
g_{1} & 0 \\       
\end{pmatrix} - \begin{pmatrix}
0 & 0 \\
e & 0 \\       
\end{pmatrix} \bigg\| \\
& = \| g_{1} - e \| \\
& = \max\limits_{k+1 \leq i \leq n} \big\| h_{i-k}\alpha^{-i}(a) - h_{i}\alpha^{-i}(a)\big\| \\
& \leq \max\limits_{k+1 \leq i \leq n} | h_{i-k} - h_{i}| \big\|\alpha^{-i}(a)\big\| \\
& \leq \dfrac{k}{ M_n } \rightarrow 0 \text{ as } n \rightarrow \infty,
\end{align*}
which completes the proof of condition \eqref{ki com}.  

We now look at condition \eqref{ki in}.  Define $C^{(0)}:=\Lambda_n^0(M_n(A))$, $D^{(0)}:=\Lambda_n^1(M_n(A))$ and $E^{(0)}:=C^{(0)}\cap D^{(0)}$.  Then with $h^{(0)}$ as in \eqref{h0 def}, it suffices to show that for $n$ suitably large and any $x\in X$, $h^{(0)}\iota_n(x)\in_{\epsilon} C^{(0)}$, $(1-h^{(0)})\iota_n(x)\in_{\epsilon} D^{(0)}$, and that $h^{(0)}(1-h^{(0)})\iota_n(x)\in_{\epsilon} E^{(0)}$.  Similarly to the above, it suffices to show that if $a\in A$ is a contraction and if $k\in \N\cup\{0\}$, then for $y:=a\cdot u^k$ we have $h^{(0)}\iota_n(y)\in C^{(0)}$, $(1-h^{(0)})\iota_n(y)\in D^{(0)}$, and that $h^{(0)}(1-h^{(0)})\iota_n(y)\in E^{(0)}$.

First, note that it follows from the computation of $h^{(0)}\cdot\iota_{n}(y)$ in line \eqref{lam 0} that $h^{(0)}\cdot \iota_{n}(y) \in C^{(0)}$ for large enough $n$.  To see that $(1-h^{(0)})\cdot\iota_{n}(y) \in D^{(0)}$, analogously to lines \eqref{h0inx} and \eqref{g1} and \eqref{g2} above, we compute that 
\begin{equation}\label{iot 2}
(1-h^{(0)})\cdot\iota_{n}(y)=\begin{pmatrix}
0 & 0 \\
f_{1} & 0 \\       
\end{pmatrix} + \begin{pmatrix}
0 & f_{2} \\
0 & 0 \\       
\end{pmatrix} \cdot u^{n}
\end{equation}
where $f_{1}$ is the $(n-k) \times (n-k)$ matrix given by 
$$
f_{1} = \text{diag}((1-h_{k+1})\alpha^{-(k+1)}(a), (1-h_{k+2})\alpha^{-(k+2)}(a) ,..., (1-h_{n})\alpha^{-n}(a)) 
$$
and as long as $n$ is chosen large enough so that $k << \floor*{\frac{n}{6}}$, $f_{2}$ is the $k\times k$ matrix given by 
$$
f_2:=\text{diag}(\alpha^{-(k+1)}(a),...,\alpha^{-n}(a)).
$$
Note that $(1-h_{i})=0$ for $\floor*{\frac{2n}{6}}+1 \leq i \leq \floor*{\frac{4n}{6}}$.  Thus the matrix $\begin{psmallmatrix}0 & 0\\f_{1} & 0\end{psmallmatrix}$ can be written as the following sum 
\begin{align*}
& \Big(d_{k}\cdot \text{diag}\big((1-h_{k+1})\alpha^{-(k+1)}(a), ... , (1-h_{\floor*{\frac{n}{3}}})\alpha^{-\floor*{\frac{n}{3}}}(a), 0, ..., 0 \big)   \Big) \\
	&+ \Big(d_{k}\cdot \text{diag}\big(0, ..., 0, (1-h_{\floor*{\frac{2n}{3}}})\alpha^{-\floor*{\frac{2n}{3}}}(a), ... , (1-h_{n})\alpha^{-n}(a) \big)   \Big) \\
	&=\begin{pmatrix}f_{3} & 0 \\ 0 & f_{4}\end{pmatrix},
\end{align*} 
where $f_{3}$ is an $m \times m$ matrix built from the entries of the first summand in the middle line above, and $f_{4}$ is an $(m+1)\times(m+1)$ matrix built from the entries in the second summand in the middle above.  Comparing this with line \eqref{iot 2} above, we see that $(1-h^{(0)})\cdot\iota_{n}(y) \in D^{(0)}$.

Finally, we consider $E$.  We already have that \begin{align*}
	(1-h^{(0)})\cdot\iota_{n}(y) = \begin{pmatrix}
	f_{3} & 0 \\
	0 & f_{4} \\       
	\end{pmatrix} +  \begin{pmatrix}
	0 & f_{2} \\
	0 & 0 \\       
	\end{pmatrix} \cdot u^{n}.
\end{align*}
Multiplying by $h^{(0)}$ on the left will make the second term zero as $h_{i} = 0$ for $1 \leq i \leq \floor*{\frac{n}{6}}$ and $n$ has been chosen so that $k << \floor*{\frac{n}{6}}$.  Thus $h^{(0)}(1-h^{(0)})\cdot\iota_{n}(y) \in E$ and we are done.
\end{proof}

\begin{corollary}\label{end cor}
Let $A$ be an AF $C^*$-algebra equipped with a $\Z$-action.  Let $X$ be a finite subset of $A\rtimes \Z$, and assume there exists a projection $p\in A$ such that $px=xp=x$ for all $x\in X$.  Let $\epsilon>0$.  For each $n\in \N$, define
$$
\phi_n:M_{n}(A\rtimes \Z)\oplus M_{n+1}(A\rtimes \Z)\to M_{2n+1}(A\rtimes \Z),\quad (a,b)\mapsto \begin{pmatrix} a & 0 \\ 0 & b \end{pmatrix},
$$
let $\omega:M_{n}(A\rtimes \Z)\to M_{n}(A\rtimes \Z)$ be any $*$-isomorphism, let $\iota_n$ be as in Definition \ref{iotan}, and define
$$
\kappa_n:=\phi_n\circ ((\omega\circ \iota_{n})\oplus \iota_{n+1}) : A\rtimes \Z\to M_{2n+1}(A\rtimes \Z).
$$
Then there exists $N\in\N$ such that for all $n\geq N$, if $q:=\kappa_n(p)$ there is a positive contraction $h\in q(M_{2n+1}(A\rtimes\Z))q$ and AF $C^*$-subalgebras $C$ and $D$ of $q(M_{2n+1}(A\rtimes \Z))q$ with the following properties:
\begin{enumerate}[(i)]
\item $\|[h,\kappa_n(x)]\|<\epsilon$ for all $x\in X$;
\item $h\kappa_n(x)\in_{\epsilon} C$, $(1-h)\kappa_n(x)\in_{\epsilon} D$, and $(1-h)h\kappa_n(x)\in_{\epsilon} C\cap D$ for all $x\in X$;
\item $E:=C\cap D$ is an AF algebra;
\item $h$ multiplies $E$ into itself.
\end{enumerate}
\end{corollary}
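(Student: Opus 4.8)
The plan is to deduce the corollary directly from Lemma \ref{enders lem} by applying that lemma twice — once at index $n$ and once at index $n+1$ — transporting the first output through the $*$-isomorphism $\omega$, and then assembling everything block-diagonally via $\phi_n$. The point is that $*$-isomorphisms and direct sums preserve every structural feature appearing in the conclusion (positivity, contractivity, AF-ness, the estimates, and $\in_\epsilon$-containment).

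First I would apply Lemma \ref{enders lem} to the data $(A,X,p,\epsilon)$ to obtain a threshold $N_0$. Since the lemma produces one $N_0$ valid for every index $m\geq N_0$, setting $N:=N_0$ guarantees that for all $n\geq N$ the lemma is available simultaneously at index $n$ (for $\iota_n$) and at index $n+1$ (as $n+1\geq N_0$). Fixing such an $n$, the lemma yields a positive contraction $h_1\in q_1(M_n(A\rtimes\Z))q_1$ with $q_1=\iota_n(p)$, AF subalgebras $C_1,D_1$ and $E_1:=C_1\cap D_1$ satisfying properties (i)--(iv) for $\iota_n$; and likewise $h_2\in q_2(M_{n+1}(A\rtimes\Z))q_2$ with $q_2=\iota_{n+1}(p)$, together with $C_2,D_2$ and $E_2:=C_2\cap D_2$ for $\iota_{n+1}$.

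Next I would transport the first triple through $\omega$ and assemble block-diagonally by setting
$$
h := \begin{pmatrix} \omega(h_1) & 0 \\ 0 & h_2 \end{pmatrix},\quad C := \phi_n(\omega(C_1)\oplus C_2),\quad D := \phi_n(\omega(D_1)\oplus D_2).
$$
Because $\omega$ is an isometric $*$-isomorphism, $\omega(h_1)$ is again a positive contraction and $\omega(C_1),\omega(D_1)$ are again AF; moreover $q:=\kappa_n(p)=\phi_n(\omega(\iota_n(p))\oplus \iota_{n+1}(p))=\phi_n(\omega(q_1)\oplus q_2)$, so $h$ is a positive contraction in $q(M_{2n+1}(A\rtimes\Z))q$ and $C,D$ are AF subalgebras of that corner. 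The three structural facts I would invoke are: a block-diagonal element has norm equal to the maximum of its block norms; $\in_\epsilon$-containment in a block-diagonal sum holds exactly when it holds in each block; and $(\omega(C_1)\oplus C_2)\cap(\omega(D_1)\oplus D_2)=\omega(E_1)\oplus E_2$. The last fact gives $E:=C\cap D=\phi_n(\omega(E_1)\oplus E_2)$, which is AF, establishing (iii).

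Finally I would verify (i), (ii) and (iv) blockwise, using $\kappa_n(x)=\phi_n(\omega(\iota_n(x))\oplus\iota_{n+1}(x))$ and the block-diagonality of $h,C,D,E$ to split each condition into its two blocks. On the first block I apply the identities $\omega([h_1,\iota_n(x)])=[\omega(h_1),\omega(\iota_n(x))]$ and $\omega(h_1\iota_n(x))=\omega(h_1)\omega(\iota_n(x))$, together with the isometry of $\omega$, to transfer the estimates for $\iota_n$; on the second block the estimates for $\iota_{n+1}$ are used verbatim. Taking maxima over the two blocks yields (i) and (ii), and property (iv) follows from $\omega(h_1)\omega(E_1)=\omega(h_1E_1)\subseteq\omega(E_1)$ combined with the corresponding property of $h_2$. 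There is no genuine obstacle here; the whole argument is a routine ``apply-twice-and-direct-sum'' combination, and the only care needed is to read $1-h$ as $q-h$ inside the corner $q(M_{2n+1}(A\rtimes\Z))q$ and to record that $n\geq N_0$ makes the lemma available at both indices $n$ and $n+1$. Note also that the argument is insensitive to the choice of $\omega$, since any $*$-isomorphism transports the first triple identically.
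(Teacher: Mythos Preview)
Your proposal is correct and follows essentially the same approach as the paper: apply Lemma~\ref{enders lem} at both indices $n$ and $n+1$, push the first output through $\omega$, and assemble block-diagonally via $\phi_n$. The paper's proof is terser (it simply writes down $C$, $D$, $h$ and says ``direct checks show these elements have the right properties''), whereas you have spelled out those direct checks carefully.
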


\begin{proof}
Let $N$ be large enough so that the conclusion of Lemma \ref{enders lem} holds for all $n\geq N$ with respect to the given $X$, $\epsilon$ and $p$.   Fix $n\geq N$.  Let $C_n$, $D_n$ be subalgebras of $\iota_n(p)(M_n(A\rtimes \Z))\iota_n(p)$ and $h_n$ a positive contraction in $\iota_n(p)(M_n(A\rtimes \Z))\iota_n(p)$ with the properties in Lemma \ref{enders lem} and similarly for $C_{n+1}$, $D_{n+1}$ and $h_{n+1}$ with respect to $\iota_{n+1}(p)(M_{n+1}(A\rtimes \Z))\iota_{n+1}(p)$.

Define  
$$
C:=\phi_n (\omega(C_n)\oplus C_{n+1}),\quad D:=\phi_n (\omega(D_n)\oplus D_{n+1})$$
and 
$$
h:=\phi_n (\omega(h_n)\oplus h_{n+1}).
$$
Direct checks show these elements have the right properties.  
\end{proof}

Enders computes the effect of $\iota_n$ on $K$-theory: the following result is a special case of \cite[Proposition 3.2]{Enders:2015aa}.

\begin{lemma}\label{enders k}
Let $A$ be a $C^*$-algebra with $K_1(A)=0$, and equipped with an action of $\Z$.  Let $\iota_n:A\rtimes \Z\to M_n(A\rtimes \Z)$ be as in Definition \ref{iotan} and let $i_n:A\rtimes \Z\to M_n(A\rtimes \Z)$ be the standard top-left-corner inclusion.  Then as maps on $K$-theory, $(\iota_n)_*=n\cdot (i_n)_*$. \qed
\end{lemma}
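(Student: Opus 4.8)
The plan is to verify the identity $(\iota_n)_* = n\cdot (i_n)_*$ separately on $K_0$ and on $K_1$, after using the Pimsner--Voiculescu exact sequence to pin down convenient generating sets. Since $K_1(A)=0$, that sequence shows that $j_*:K_0(A)\to K_0(A\rtimes\Z)$ is surjective (here $j:A\to A\rtimes\Z$ is the inclusion), and that the boundary map identifies $K_1(A\rtimes\Z)$ with $\ker(1-\alpha_*)\subseteq K_0(A)$, where $\alpha_*$ is the automorphism of $K_*(A)$ induced by the action. So it suffices to check the formula on classes $j_*[e]$ in $K_0$ and on the standard ``exponential'' generators of $K_1$. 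Throughout I would identify $K_*(M_n(A\rtimes\Z))$ with $K_*(A\rtimes\Z)$ so that $(i_n)_*$ becomes the canonical isomorphism, reducing the goal to showing that $(\iota_n)_*$ is multiplication by $n$.

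For $K_0$, given a projection $e\in M_k(A)$, unwinding the definition of $\iota_n$ on matrices shows that $\iota_n(e)$ is a permutation-unitary conjugate of the orthogonal direct sum $\bigoplus_{i=1}^n \alpha^{-i}(e)$. Each $\alpha^{-i}(e)=u^{-i}eu^i$ is conjugate to $e$ by the (multiplier) unitary $u^{-i}$, so $[\alpha^{-i}(e)]=[e]$ in $K_0(A\rtimes\Z)$. Hence $(\iota_n)_*j_*[e]=\sum_{i=1}^n[\alpha^{-i}(e)]=n[e]$, as required.

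For $K_1$, I would first treat the transparent case of a class represented using a genuinely $\alpha$-fixed projection $p\in A$, $\alpha(p)=p$, so that $v:=pu+(1-p)$ is a unitary representing the corresponding generator. Since $\alpha^{-i}(p)=p$ we get $q:=\iota_n(p)=p^{\oplus n}=1_n\otimes p$ and $\iota_n(v)=qU+(1-q)$ with $U:=\iota_n(u)$; because $up=pu$ one checks $q$ commutes with $U$. Factoring $U=\text{diag}(u^n,1,\dots,1)\cdot P$, where $P$ is the scalar cyclic-shift permutation, and using that $q=1_n\otimes p$ commutes with everything in $M_n(\C)\otimes 1$, I can split $\iota_n(v)=\big(q\,\text{diag}(u^n,1,\dots,1)+(1-q)\big)\big(qP+(1-q)\big)$. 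The second factor is homotopic to $1$ through $qP_t+(1-q)$, as $P_t$ runs along a path from $P$ to $1_n$ in the connected unitary group of $M_n(\C)$, so it is trivial in $K_1$; the first factor equals $\text{diag}(pu^n+(1-p),1,\dots,1)$. Since $pu^n+(1-p)=v^n$, this yields $[\iota_n(v)]=[v^n]=n[v]$.

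The main obstacle is the general $K_1$ class, for which no $\alpha$-fixed representative need exist. Here one passes to a matrix amplification and represents the generator, for $x\in\ker(1-\alpha_*)$, by a unitary $v=pz^*+(1-p)$, where $wpw^*=\alpha(p)$ for a unitary $w$ over $\widetilde{A}$ and $z:=u^*w$ commutes with $p$. Now $q=\iota_n(p)=\text{diag}(\alpha^{-1}(p),\dots,\alpha^{-n}(p))$ is no longer scalar in the $M_n(\C)$-factor, so the clean factorization above is unavailable; instead $\iota_n(v)$ becomes a weighted cyclic shift, and its $K_1$-class must be extracted by telescoping the product of the weights around the cycle. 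As a guide, the analogous product for $\iota_n(z^*)$ telescopes to $u^{-1}(z^*)^n u$, which is conjugate to $(z^*)^n$ and so carries class $n[z^*]$; the corresponding computation for $\iota_n(v)$, after compressing by $q$, should likewise give $[\iota_n(v)]=n[v]$. Making this bookkeeping precise — in particular handling the non-fixed projections $\alpha^{-i}(p)$ and the compression by $q$ — is the delicate part, and is exactly the content of \cite[Proposition 3.2]{Enders:2015aa}, to which I would ultimately appeal.
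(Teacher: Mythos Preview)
Your proposal is correct, and in fact goes well beyond what the paper does: the paper gives no argument whatsoever for this lemma, simply stating that it ``is a special case of \cite[Proposition 3.2]{Enders:2015aa}'' and ending with a \qed. Since you too ultimately appeal to that same reference for the delicate general $K_1$ case, the two are aligned in the end.

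That said, the extra work you do is not wasted. Your $K_0$ argument via Pimsner--Voiculescu (using $K_1(A)=0$ to get surjectivity of $j_*$, then observing that $\iota_n(e)$ is a direct sum of unitarily-conjugate copies of $e$) is complete and correct, and gives a self-contained treatment of half the statement that the paper does not supply. Your special $K_1$ case with a genuinely $\alpha$-fixed projection is also correct and illuminating: the factorization $\iota_n(v)=\big(q\,\mathrm{diag}(u^n,1,\dots,1)+(1-q)\big)\big(qP+(1-q)\big)$ and the homotopy through scalar permutations is a nice way to see where the factor $n$ comes from. Your honest identification of where the argument becomes delicate in the general case (non-scalar $q$, telescoping the weighted cyclic shift) is accurate, and deferring to Enders there is exactly what the paper does anyway.
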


For the next step, we need to use part of the Kirchberg-Phillips classification theorem.  For the reader's convenience, we state the versions of the Kirchberg-Phillips theorem we will use, and how to deduce them from the literature.  

\begin{theorem}[Kirchberg-Phillips]\label{kp stab the}
\begin{enumerate}[(i)]
\item \label{kp stab exist kk} Let $A$ and $B$ be stable Kirchberg algebras.  Then for any invertible element $x$ of $KK(A,B)$, there exists a $*$-isomorphism $\phi:A\to B$ such that $[\phi]=x$. 
\item \label{kp stab exist} Let $A$ and $B$ be stable UCT Kirchberg algebras.  Then for any (graded) isomorphism $\alpha:K_*(A)\to K_*(B)$, there exists a $*$-isomorphism $\phi:A\to B$ that induces $\alpha$.  
\item \label{kp stab unique} Let $A$ and $B$ be stable UCT Kirchberg algebras, and let $\phi,\psi:A\to B$ be $*$-isomorphisms that induce the same class in $KK(A,B)$.  Then there is a sequence of unitaries $(u_n)$ in the multiplier algebra of $B$ such that $u_n\phi(a)u_n^*\to \psi(a)$ as $n\to\infty$ for all $a\in A$. 
\end{enumerate}
\end{theorem}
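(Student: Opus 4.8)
This statement is the precise packaging of the Kirchberg--Phillips classification theorem that we need, so the plan is not to reprove the deep classification results but to cite them for parts \eqref{kp stab exist kk} and \eqref{kp stab unique}, and to supply only the short $K$-theoretic deduction for part \eqref{kp stab exist}. Part \eqref{kp stab exist kk}---that every invertible class in $KK(A,B)$ is realized by a $*$-isomorphism of stable Kirchberg algebras---is exactly the existence half of the $KK$-theoretic form of Kirchberg--Phillips, for which I would cite \cite{Kirchberg-ICM,Phillips-documenta} (and the exposition in \cite[Chapter 8]{Rordam:2002cs}); crucially, no UCT hypothesis enters here, since the classification takes place at the level of $KK$. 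Likewise, part \eqref{kp stab unique}---that two $*$-isomorphisms inducing the same $KK$-class are approximately unitarily equivalent via multiplier unitaries---is the uniqueness half of the same theorem, to be cited from the same references. Neither should be reproved here.

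The only part requiring an argument is \eqref{kp stab exist}, and here the plan is to reduce to \eqref{kp stab exist kk} using the UCT. The Rosenberg--Schochet universal coefficient theorem \cite{Rosenberg:1987bh} furnishes, for UCT algebras $A$ and $B$, a natural short exact sequence
\[
0 \to \mathrm{Ext}^1_{\Z}(K_*(A), K_*(B)) \to KK(A,B) \xrightarrow{\ \Gamma\ } \mathrm{Hom}(K_*(A), K_*(B)) \to 0,
\]
in which $\Gamma$ sends a $KK$-class to the graded homomorphism it induces on $K$-theory. In particular $\Gamma$ is surjective, so the given graded isomorphism $\alpha\colon K_*(A)\to K_*(B)$ lifts to a class $x\in KK(A,B)$ with $\Gamma(x)=\alpha$.

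The key step is then to observe that such an $x$ is automatically invertible in $KK$: this is the standard consequence of the UCT that, for separable $C^*$-algebras lying in the bootstrap class, a $KK$-class inducing an isomorphism on $K_*$ is a $KK$-equivalence (provable by a five-lemma argument applied to the naturality of the displayed sequence in both variables; see \cite{Rosenberg:1987bh}). Granting this, $x$ is an invertible element of $KK(A,B)$, and part \eqref{kp stab exist kk} produces a $*$-isomorphism $\phi\colon A\to B$ with $[\phi]=x$; by construction $\phi$ induces $\Gamma(x)=\alpha$ on $K$-theory, as required. The main (and essentially the only) obstacle is invoking the invertibility criterion with its hypotheses correctly in place: one must check that both $A$ and $B$ lie in the bootstrap class so that the UCT sequence applies in each variable, and that ``invertible in $KK$'' is exactly the notion consumed by part \eqref{kp stab exist kk}.
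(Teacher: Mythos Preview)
Your proposal is correct. The paper's own ``proof'' is even terser than yours: it simply cites \cite[Theorem 8.4.1, (i) and (ii)]{Rordam:2002cs} for parts \eqref{kp stab exist kk} and \eqref{kp stab exist} and \cite[Theorem 8.2.1 (ii)]{Rordam:2002cs} for part \eqref{kp stab unique}, together with a footnote pointing to \cite{Phillips-documenta} and \cite{Gabe:2019ws} for full proofs. In particular, the paper does not explicitly deduce \eqref{kp stab exist} from \eqref{kp stab exist kk} via the UCT as you do; it just invokes the $K$-theoretic version directly from the literature. Your deduction is of course the standard one (and indeed the paper uses exactly the same ``$K_*$-isomorphism implies $KK$-invertibility'' step, citing \cite[Proposition 7.5]{Rosenberg:1987bh}, later in the proof of Corollary~\ref{enders and kp}), so there is no substantive difference in content---only in how much of the reduction is spelled out.
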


\begin{proof}
Parts \eqref{kp stab exist kk} and \eqref{kp stab exist} are exactly \cite[Theorem 8.4.1, (i) and (ii)]{Rordam:2002cs}.   Part \eqref{kp stab unique} can be deduced from \cite[Theorem 8.2.1 (ii)]{Rordam:2002cs}\footnote{
The references we give here are to a readable textbook exposition that explains the ideas, but does not quite contain complete proofs.  For references with proofs that one can deduce the results from, see \cite[Theorem 4.2.1]{Phillips-documenta} or \cite[Theorem C]{Gabe:2019ws} for part \eqref{kp stab exist kk},  \cite[Theorem 4.2.4]{Phillips-documenta} or \cite[Theorem D]{Gabe:2019ws} for part \eqref{kp stab exist}, and \cite[Theorem 4.1.3]{Phillips-documenta} or \cite[Theorem C]{Gabe:2019ws} for part \eqref{kp stab unique}.}.
\end{proof}

The next result again follows Enders' work: the proof proceeds along similar lines to \cite[Proof of Theorem 4.1]{Enders:2015aa}

\begin{corollary}\label{enders and kp}
Let $A$ be an AF algebra equipped with an action of $\Z$ so that the associated crossed product $A\rtimes \Z$ is a Kirchberg algebra\footnote{This assumption forces $A$ to be non-unital, whence $A\rtimes \Z$ is stable by Zhang's dichotomy: see \cite[Proposition 4.1.3]{Rordam:2002cs}, or \cite[Theorem 1.2]{Zhang:1992vc}  for the original reference.}.  Let $X$ be a finite subset of $A\rtimes \Z$, and assume there exists a projection $p\in A$ such that $px=xp=x$ for all $x\in X$.  Then for any $\epsilon>0$ there exist AF $C^*$-subalgebras $C$ and $D$ of $p(A\rtimes \Z)p$ and a positive contraction $h\in p(A\rtimes \Z)p$ such that the following hold:
\begin{enumerate}[(i)]
\item for all $x\in X$, $\|[h,x]\|<\epsilon$;
\item for all $x\in X$, $hx\epsin C$, $(1-h)x\epsin D$, $h(1-h)x\epsin C\cap D$;
\item $E:=C\cap D$ is an AF algebra;
\item $h$ multiplies $E$ into itself.
\end{enumerate}
\end{corollary}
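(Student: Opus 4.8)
The plan is to reduce everything to Corollary \ref{end cor}, which already produces the desired two-colored decomposition inside the amplified corner $q M_{2n+1}(A\rtimes\Z)q$ (with $q=\kappa_n(p)$), and then to transport that decomposition down to $p(A\rtimes\Z)p$ along a suitable $*$-isomorphism of corners. First I would record that both $p(A\rtimes\Z)p$ and $q M_{2n+1}(A\rtimes\Z)q$ are unital UCT Kirchberg algebras: they are unital corners, by full projections, of the simple purely infinite nuclear separable algebras $A\rtimes\Z$ and $M_{2n+1}(A\rtimes\Z)$, and the UCT passes to corners by Morita invariance (using that $A\rtimes\Z$ is UCT, being an AF algebra crossed by $\Z$). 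The map $\kappa_n$ restricts to a \emph{unital} $*$-homomorphism $\kappa_n\colon p(A\rtimes\Z)p\to q M_{2n+1}(A\rtimes\Z)q$ because $\kappa_n(p)=q$; crucially it is \emph{not} surjective, so to pull back the decomposition I will need a genuine isomorphism that approximately agrees with it.

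The key step is to arrange that $\kappa_n$ induces an invertible class in $KK$. Since $A$ is AF we have $K_1(A)=0$, so Lemma \ref{enders k} gives $(\iota_m)_*=m\cdot\mathrm{id}$ on $K_*(A\rtimes\Z)$ under the corner identification $K_*(M_m(A\rtimes\Z))\cong K_*(A\rtimes\Z)$. I would choose $\omega$ using the existence part of Kirchberg--Phillips (Theorem \ref{kp stab the}\eqref{kp stab exist}, applied to the stable UCT Kirchberg algebra $M_n(A\rtimes\Z)$) so that $\omega_*=-\mathrm{id}$ on $K_*$; this is legitimate since $-\mathrm{id}$ is a graded group automorphism. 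Then additivity of $(\phi_n)_*$ yields
$$
(\kappa_n)_*=n\,\omega_*+(n+1)\,\mathrm{id}=-n\,\mathrm{id}+(n+1)\,\mathrm{id}=\mathrm{id}
$$
on $K_*(A\rtimes\Z)$, and in particular $[q]=(\kappa_n)_*[p]=[p]$. Hence $\kappa_n\colon p(A\rtimes\Z)p\to q M_{2n+1}(A\rtimes\Z)q$ is a unital $*$-homomorphism inducing a $K$-theory isomorphism with matching unit classes, so by the UCT its class in $KK$ is invertible. (The asymmetry $n$ versus $n+1$ built into $\kappa_n$ is precisely what produces net coefficient $1$ rather than $0$ after forcing $\omega_*=-\mathrm{id}$.)

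With $[\kappa_n]$ invertible, Kirchberg--Phillips existence and uniqueness (Theorem \ref{kp stab the}, together with its unital form, Theorem \ref{class k}) provide a $*$-isomorphism $\Psi\colon p(A\rtimes\Z)p\to q M_{2n+1}(A\rtimes\Z)q$ with $[\Psi]=[\kappa_n]$ that is approximately unitarily equivalent to $\kappa_n$. Given $X$ and $\epsilon$, I would fix $n\geq N$ (from Corollary \ref{end cor}) and an auxiliary tolerance $\epsilon'\ll\epsilon$, and choose a unitary $u\in q M_{2n+1}(A\rtimes\Z)q$ with $\|u\kappa_n(x)u^*-\Psi(x)\|<\epsilon'$ for all $x\in X$. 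Applying Corollary \ref{end cor} yields $h',C',D'$ adapted to $\kappa_n(X)$; conjugating by $u$ gives $uh'u^*,uC'u^*,uD'u^*$ adapted to $u\kappa_n(X)u^*$, hence up to $O(\epsilon')$ to $\Psi(X)$; finally setting $h:=\Psi^{-1}(uh'u^*)$, $C:=\Psi^{-1}(uC'u^*)$ and $D:=\Psi^{-1}(uD'u^*)$ transports everything into $p(A\rtimes\Z)p$. As $\Psi^{-1}$ is an isometric $*$-isomorphism it preserves commutator norms, the $\epsin$ relations, AF-ness of $C$, $D$ and of $E=C\cap D=\Psi^{-1}(uE'u^*)$, and the property that $h$ multiplies $E$ into itself; taking $\epsilon'$ small forces all tolerances below $\epsilon$.

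The main obstacle is the middle step: recognizing that the factor $n$ from Lemma \ref{enders k} must be cancelled, that the only way to do so inside this construction is to force $\omega_*=-\mathrm{id}$ while exploiting the dimension mismatch $n$ versus $n+1$, and then correctly invoking the \emph{unital} Kirchberg--Phillips uniqueness to replace the non-surjective $\kappa_n$ by an honest isomorphism $\Psi$ whose inverse can carry the decomposition back. Everything else is routine $\epsilon$-bookkeeping, the only subtlety being to absorb the approximate unitary equivalence between $\kappa_n$ and $\Psi$ into the error budget.
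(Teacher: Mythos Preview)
Your proposal is correct and follows essentially the same strategy as the paper: pick $\omega$ with $\omega_*=-\mathrm{id}$ so that $(\kappa_n)_*=\mathrm{id}$ and hence $[\kappa_n]$ is $KK$-invertible by the UCT, then invoke Kirchberg--Phillips to realize an isomorphism approximately unitarily equivalent to $\kappa_n$ and transport the decomposition of Corollary~\ref{end cor} along it. The only difference is packaging: you work directly with the unital corners $p(A\rtimes\Z)p$ and $qM_{2n+1}(A\rtimes\Z)q$ and the unital Kirchberg--Phillips theorem, whereas the paper stays with the ambient stable algebras $A\rtimes\Z$ and $M_{2n+1}(A\rtimes\Z)$, applies Theorem~\ref{kp stab the} to get an isomorphism $\psi_n$ inverse to $\kappa_n$ in $KK$, uses part~\eqref{kp stab unique} to get unitaries $u_m$ in $M(A\rtimes\Z)$ with $u_m\psi_n\kappa_n(a)u_m^*\to a$, and then needs an extra correction via Lemma~\ref{proj under} to move $u_m\psi_n(q)u_m^*$ exactly onto $p$. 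Your route avoids that last step and is slightly cleaner. One small caveat: Theorem~\ref{class k} as stated gives only existence of full unital \emph{homomorphisms}, not isomorphisms or uniqueness, so what you actually need is the full unital Kirchberg--Phillips package (existence of an isomorphism in a prescribed invertible $KK$-class, and approximate unitary equivalence of unital maps with equal $KK$-class between unital Kirchberg algebras), which the paper does not record but which is equally standard.
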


\begin{proof}
We first follow the argument of \cite[Theorem 4.1]{Enders:2015aa}.   Let $N$ be large enough so that the conclusion of Corollary \ref{end cor} holds for the given $X$ and $p$, and parameter $\epsilon/2$, and fix any $n\geq N$.  

Note first that as $M_n(A\rtimes \Z)$ is a stable UCT Kirchberg algebra, Theorem \ref{kp stab the}, part \eqref{kp stab exist}  implies there is a $*$-isomorphism $\omega:M_n(A\rtimes \Z)\to M_n(A\rtimes \Z)$ such that the map $\omega_*:K_*(M_n(A\rtimes \Z))\to K_*(M_n(A\rtimes \Z))$ induced by $\omega$ is multiplication by $-1$ in both even and odd degrees.  

Let now $\kappa_n$ be as in Corollary \ref{end cor}, built using this $\omega$.  From Lemma \ref{enders k}, the map induced by $\kappa_n$ on $K$-theory is the same as the canonical top-left corner inclusion $i_{2n+1}:A\rtimes \Z\to M_{2n+1}(A\rtimes \Z)$, and in particular is an isomorphism on $K$-theory.  Hence by the UCT (see \cite[Proposition 7.5]{Rosenberg:1987bh} for the precise consequence of the UCT being used here), $\kappa_n$ is invertible in $KK(A\rtimes \Z,M_{2n+1}(A\rtimes \Z))$.  Theorem \ref{kp stab the}, part \eqref{kp stab exist kk} thus gives a $*$-isomorphism $\psi_n:A\rtimes \Z\to M_{2n+1}(A\rtimes \Z)$ whose class in $KK(M_{2n+1}(A\rtimes \Z),A\rtimes \Z)$ is the inverse of the class of $\kappa_n$.  

The fact that $\psi_n\circ \kappa_{n}$ equals the class of the identity in $KK(A\rtimes \Z,A\rtimes \Z)$ and Theorem \ref{kp stab the} part \eqref{kp stab unique} implies that there is a sequence $(u_m)_{m=1}^\infty$ of unitaries in the multiplier algebra of $A\rtimes \Z$ such that $u_m(\psi_n\kappa_n(a))u_m^*\to a$ as $n\to\infty$ for all $a\in A\rtimes \Z$.  

Now, let $q:=\kappa_n(p)$, and let $h_n\in q(M_{2n+1}(A\rtimes \Z))q$, and $qC_nq,qD_nq\subseteq M_{2n+1}(A\rtimes \Z)$ be as Corollary \ref{end cor}.  Define $p_m:=u_m\psi_n(q)u_m^*$ which is a projection in $A\rtimes \Z$ such that $p_m\to p$ as $m\to \infty$.  Hence by Lemma \ref{proj under} (applied to the multiplier algebra $M(A\rtimes \Z)$ of $A\rtimes \Z$) for all suitably large  $m$ there is a unitary $v_m\in M(A\rtimes \Z)$ such that $v_mp_mv_m^*=p$, and such that $v_m\to 1_{M(A\rtimes \Z)}$ as $m\to\infty$.  

Direct checks now show that for sufficiently large $m$, the element $h:=v_mu_m \psi_n(h_n)u_m^*v_m^*$ and $C^*$-subalgebras $C:=v_mu_m\psi_n(C_n)u_m^*v_m^*$, and $D:=v_mu_m\psi_n(D_n)u_m^*v_m^*$ have the properties in the statement.
\end{proof}

The next corollary follows directly from Corollary \ref{enders and kp} and the definition of complexity rank (see Definition \ref{f c} above).

\begin{corollary}\label{enders kp 2}
Let $A$ be an AF algebra equipped with an action of $\Z$ so that the associated crossed product $A\rtimes \Z$ is a Kirchberg algebra, and let $p\in A\subseteq A\rtimes \Z$ be a projection.  Then $p(A\rtimes \Z)p$ has complexity rank at most one. \qed
\end{corollary}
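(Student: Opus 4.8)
The plan is to check directly that $p(A\rtimes\Z)p$ satisfies the official definition of decomposing over the class $\mathcal{D}_0$ of unital locally finite-dimensional $C^*$-algebras, in the form of Definition \ref{ais}; by Definition \ref{f c} this is precisely what having complexity rank at most one means. First I would record that $p(A\rtimes\Z)p$ is unital, with unit $p$, so that the definition applies to it, and that any AF $C^*$-algebra that is unital lies in $\mathcal{D}_0$, since separable AF algebras are exactly the unital locally finite-dimensional ones.

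Next, fix a finite subset $X$ of $p(A\rtimes\Z)p$ and $\epsilon>0$. The single observation that unlocks Corollary \ref{enders and kp} is that every $x\in X$ automatically satisfies $px=xp=x$: indeed $x=pyp$ for some $y\in A\rtimes\Z$, so $px=p(pyp)=pyp=x$ and likewise $xp=x$. With $p\in A$ the given projection, Corollary \ref{enders and kp} then yields AF $C^*$-subalgebras $C$ and $D$ of $p(A\rtimes\Z)p$ and a positive contraction $h\in p(A\rtimes\Z)p$ with its four listed properties, where $E:=C\cap D$ is again AF. It remains only to match these data to Definition \ref{ais} with $\mathcal{C}=\mathcal{D}_0$. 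Conditions (i) and (ii) of Definition \ref{ais} are literally conditions (i) and (ii) of Corollary \ref{enders and kp} once one reads $E$ as $C\cap D$. Condition (iii), that $E\ses C$ and $E\ses D$, is immediate from the genuine inclusions $E=C\cap D\subseteq C$ and $E\subseteq D$ (actual containment gives $\epsilon$-containment). Condition (iv), that $he\ine E$ for all $e\in E_1$, follows from the fact that $h$ multiplies $E$ into itself, so in fact $he\in E$. Finally $C$, $D$, and $E$ are unital AF algebras, hence members of $\mathcal{D}_0$, which completes the verification.

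Because all of the analytic content is already packaged into Corollary \ref{enders and kp}, I do not expect a genuine obstacle; the statement is a pure bookkeeping corollary, and indeed the quoted proof is a ``\qed''. The only points that warrant a moment's care are the ones flagged above: confirming that elements of the corner $p(A\rtimes\Z)p$ are fixed under left and right multiplication by $p$ (this is what lets us invoke Corollary \ref{enders and kp}), checking that the constructed $C$, $D$, $E$ are unital -- which follows since they arise as corners $q(\cdot)q$ by a single projection $q$, so share the unit $q$, a property preserved under the $*$-isomorphisms and unitary conjugations used in Corollary \ref{enders and kp} -- so that they honestly lie in the unital class $\mathcal{D}_0$, and keeping track of which unit the symbol ``$1$'' in ``$(1-h)x$'' refers to; here it is harmless since $px=x$ makes $(1-h)x$ the same whether $1$ is read as $p$ or as the unit of $M(A\rtimes\Z)$ (recall $A\rtimes\Z$ itself is stable, hence non-unital).
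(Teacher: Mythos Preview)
Your proposal is correct and matches the paper's own approach: the paper simply states that the corollary ``follows directly from Corollary \ref{enders and kp} and the definition of complexity rank'' and gives a \qed, and you have just unpacked that sentence carefully. The extra care you take in verifying that $C$, $D$, and $E$ are unital (with unit $p$) so that they lie in $\mathcal{D}_0$ is a legitimate point to check, and your reasoning for it --- tracing the unit $q=\kappa_n(p)$ through the isomorphism $\psi_n$ and the unitary conjugations in Corollary \ref{enders and kp} --- is sound.
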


We are finally ready to complete the proof of Theorem \ref{kirch the 1}.  We will use corner endomorphisms and the associated crossed products by $\N$: see \cite[Section 2]{Rordam:1995aa} for background on this.

\begin{proof}[Proof of Theorem \ref{kirch the 1}]
Let $B$ be a unital Kirchberg algebra that satisfies the UCT.  Using \cite[Theorem 3.6]{Rordam:1995aa} there is a simple, unital AF algebra $A_0$ with unique trace and a proper corner endomorphism $\rho$ of $A_0$ such that the associated crossed product $A_0\rtimes \N$ is a UCT Kirchberg algebra with the same $K$-theory invariant as $B$.  Hence by the Kirchberg-Phillips classification theorem (see for example \cite[Theorem 8.4.1]{Rordam:2002cs} for an appropriate version) $B$ is isomorphic to $A_0\rtimes \N$.  Hence it suffices to prove that $A_0\rtimes \N$ has complexity rank at most one.

Define now $A$ to be the direct limit of the sequence 
$$
\xymatrix{ A_0\ar[r]^-\rho & A_0\ar[r]^-\rho & A_0\ar[r]^-\rho & \cdots}.
$$
Then $A$ is a direct limit of AF algebras so itself an AF algebra, and as discussed in \cite[pages 75-76, and also pages 72-73]{Rordam:2002cs}, $A$ is equipped with a $\Z$-action and a projection $p\in A$ such that $p(A\rtimes \Z)p\cong A_0\rtimes \N$.  Thanks to Corollary \ref{enders kp 2}, we are done.
\end{proof}

\subsection{The general case}

In this subsection, we finish the proof of Theorem \ref{kirch the} by computing the complexity rank of general unital UCT Kirchberg algebras.  We will need existence of a good class of ``models'', i.e.\ a collection of $C^*$-algebras with well-understood structure so that every UCT Kirchberg algebra is isomorphic to one in the collection.  Our models will be built from Cuntz algebras, and one other Kirchberg algebra with special $K$-theory.  We need some notation.  For $n\in \{2,3,4,...\}\cup \{\infty\}$, we let $\mathcal{O}_n$ denote the Cuntz algebra.  We also let $\mathcal{O}_{1,\infty}$ be a unital UCT Kirchberg algebra with $K_0(\mathcal{O}_{1,\infty})=0$ and $K_1(\mathcal{O}_{1,\infty})=\Z$; such exists by \cite[Proposition 4.3.3]{Rordam:2002cs} (and is unique up to isomorphism by the Kirchberg-Phillips classification theorem).

The next proposition gives the models we will use.  Variants of this are very well-known: see for example \cite[Proposition 8.4.11]{Rordam:2002cs} (and the erratum on the author's webpage).

\begin{proposition}\label{uct kirch}
Any unital Kirchberg algebra in the UCT class can be written as an inductive limit of $C^*$-algebras of the form 
\begin{equation}\label{special}
B_0\oplus (B_1\otimes \mathcal{O}_{1,\infty}),
\end{equation}
where $B_0$ and $B_1$ are both of the form 
$$
\bigoplus_{j=1}^N M_{n_j}(\mathcal{O}_{m_j}) 
$$
with $N\in \N$, each $n_j\in \N$, and each $m_j\in \{2,3,...\}\cup \{\infty\}$.
\end{proposition}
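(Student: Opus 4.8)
The plan is to reduce the statement to a range-of-invariant computation and then invoke the Kirchberg--Phillips classification theorem. Concretely, given a unital UCT Kirchberg algebra $A$, I would construct an inductive system of $C^*$-algebras of the form \eqref{special}, with unital connecting $*$-homomorphisms, whose limit $B$ is again a unital UCT Kirchberg algebra and satisfies $(K_0(B),[1_B],K_1(B))\cong (K_0(A),[1_A],K_1(A))$ as pointed graded abelian groups. Since both $A$ and $B$ are then unital UCT Kirchberg algebras with isomorphic invariants, the unital form of the Kirchberg--Phillips theorem (a unital companion to Theorem \ref{kp stab the}; see \cite[Theorem 8.4.1]{Rordam:2002cs}) gives $A\cong B$, which is exactly the assertion.

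First I would compute the $K$-theory of a single block. Recall Cuntz's computation $K_0(\mathcal{O}_m)=\Z/(m-1)\Z$ and $K_1(\mathcal{O}_m)=0$ (with the convention $\Z/(\infty-1)\Z=\Z$, so that $\mathcal{O}_\infty$ contributes a free summand), together with Morita invariance, which gives $K_*(M_n(\mathcal{O}_m))\cong K_*(\mathcal{O}_m)$ with $[1_{M_n(\mathcal{O}_m)}]=n\cdot[1_{\mathcal{O}_m}]$. Hence for $B_0=\bigoplus_i M_{n_i}(\mathcal{O}_{m_i})$ one has $K_0(B_0)=\bigoplus_i \Z/(m_i-1)\Z$ and $K_1(B_0)=0$, and every finitely generated abelian group arises this way. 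For the second summand, since $K_*(\mathcal{O}_{1,\infty})=(0,\Z)$ is free, the Künneth theorem (so that the Tor terms vanish) yields
$$
K_0(B_1\otimes \mathcal{O}_{1,\infty})\cong K_1(B_1)=0,\qquad K_1(B_1\otimes\mathcal{O}_{1,\infty})\cong K_0(B_1),
$$
i.e.\ tensoring with $\mathcal{O}_{1,\infty}$ moves $K_0(B_1)$ into odd degree. Combining the two summands, the block \eqref{special} has $K_0\cong K_0(B_0)$ and $K_1\cong K_0(B_1)$, so by choosing the cyclic groups $\Z/(m_j-1)\Z$ appropriately any pair $(G_0,G_1)$ of finitely generated abelian groups is realized, with the matrix sizes $n_i$ prescribing the unit class $[1]=[1_{B_0}]\in K_0(B_0)$.

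Next I would assemble these blocks into an inductive system. Writing the graded group $(K_0(A),K_1(A))$ as the inductive limit of a sequence of finitely generated pointed graded groups, I realize each stage by a block $B^{(k)}$ as above and realize the connecting homomorphisms on $K$-theory by unital $*$-homomorphisms $B^{(k)}\to B^{(k+1)}$; this uses the existence half of Kirchberg--Phillips (Theorem \ref{kp stab the}\eqref{kp stab exist}, in its unital form) applied summand by summand, each direct summand $M_{n_j}(\mathcal{O}_{m_j})$ and $M_{n_j}(\mathcal{O}_{m_j})\otimes\mathcal{O}_{1,\infty}$ being itself a Kirchberg algebra in the UCT class. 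Taking $B:=\varinjlim_k B^{(k)}$, continuity of $K$-theory gives $K_*(B)\cong K_*(A)$ with matching unit class, while $B$ is separable, nuclear, unital, and satisfies the UCT since each $B^{(k)}$ does and all these properties pass to inductive limits (the unitality using that the connecting maps are unital).

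The main obstacle is to guarantee that the limit $B$ is actually a Kirchberg algebra, that is simple and purely infinite, while simultaneously tracking the unit class correctly: the blocks \eqref{special} are only \emph{direct sums} of Kirchberg algebras, so neither simplicity nor the correct unit is automatic. Simplicity must be forced by choosing the connecting maps so that each simple summand of $B^{(k)}$ maps fully into every summand of $B^{(k+1)}$ (so that no proper ideal survives to the limit), after which pure infiniteness passes to the limit because every building block is purely infinite. The genuinely delicate bookkeeping is to represent a prescribed $[1_A]$ (which for a Kirchberg algebra may be an \emph{arbitrary} element of $K_0(A)$, including one with vanishing free part) at every stage of a system with unital connecting maps; this is precisely the kind of subtlety addressed in the erratum to \cite[Proposition 8.4.11]{Rordam:2002cs}, and I would handle it by choosing the finitely generated approximations of the pointed invariant with enough care (inserting auxiliary $\mathcal{O}_2$- and $\mathcal{O}_\infty$-summands as needed) that the unit is carried compatibly through the system. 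I expect verifying simplicity of the limit together with this unit bookkeeping to be the crux, while the $K$-theory computation and the final appeal to classification are routine given the tools already in place.
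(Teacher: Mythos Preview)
Your overall architecture matches the paper's proof exactly: realize an exhaustion of $(K_0(A),[1_A],K_1(A))$ by finitely generated pointed graded groups, build a block $C_n$ of the form \eqref{special} realizing each stage, lift the inclusions on $K$-theory to unital $*$-homomorphisms $C_n\to C_{n+1}$, and then apply Kirchberg--Phillips to the limit. The $K$-theory computation for the blocks is done just as you outline.

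The gap is in the existence step for the connecting maps. Theorem~\ref{kp stab the}\eqref{kp stab exist}, even in a unital form, produces $*$-\emph{isomorphisms} between (simple) Kirchberg algebras; it does not apply to maps $C_n\to C_{n+1}$ because neither algebra is simple, and it does not produce $*$-homomorphisms realizing an arbitrary (non-iso) $K$-theory map. Your ``summand by summand'' workaround does not assemble into a $*$-homomorphism: a unital map $S_i\to \bigoplus_j T_j$ cannot be built from unital maps $S_i\to T_j$, and combining the resulting $\phi_i:S_i\to C_{n+1}$ over $i$ fails to be multiplicative unless the images are orthogonal. The paper avoids this entirely by invoking Theorem~\ref{class k} (Kirchberg's existence theorem): the domain $C_n$ need only be separable, nuclear, unital and UCT, the target $C_{n+1}$ need only be unital and properly infinite (which a finite direct sum of Kirchberg algebras is), and any unit-preserving graded homomorphism on $K$-theory lifts to a \emph{full} unital $*$-homomorphism $C_n\to C_{n+1}$.

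This single change also dissolves what you flagged as the crux. Fullness of the connecting maps gives simplicity of the limit immediately, and pure infiniteness follows since each $C_n$ is a finite direct sum of purely infinite simple $C^*$-algebras. The unit bookkeeping is then just the verification (which the paper leaves as a routine check with the K\"{u}nneth formula and the matrix sizes $n_j$) that every finitely generated pointed graded abelian group $(G_0,g,G_1)$ is realized by some block \eqref{special}; no further manipulation of the inductive system is needed.
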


To establish this, we will need another variant of the Kirchberg-Phillips classification theorem, due to Kirchberg\footnote{As far as we are aware, Kirchberg's proof has not been published: the reader can consult \cite[Theorem A]{Gabe:2019ws} for a proof (which is independent of Kirchberg's).}.  For the statement, recall that a $*$-homomorphism $\phi:A\to B$ is \emph{full} if for any non-zero $a\in A$, $\phi(a)$ generates $B$ as a two-sided ideal.

\begin{theorem}[Kirchberg]\label{class k}
Let $A$ be a separable, nuclear, unital $C^*$-algebra that satisfies the UCT, and let $B$ be a unital, properly infinite $C^*$-algebra.  Let $[1_A]\in K_0(A)\subseteq K_*(A)$ be the class of the unit, and similarly for $[1_B]$. Then for any (graded) homomorphism $\alpha:K_*(A)\to K_*(B)$ such that $\alpha[1_A]=[1_B]$ there exists a full, unital $*$-homomorphism $\phi:A\to B$ inducing $\alpha$.
\end{theorem}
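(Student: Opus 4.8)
The plan is to follow the standard two-step recipe behind existence results of Kirchberg--Phillips type: first manufacture a $KK$-class inducing the prescribed map on $K$-theory, and then realize that $KK$-class by an honest unital full $*$-homomorphism, with the proper infiniteness of $B$ doing the essential work in the realization step. I should stress at the outset that this statement is one of Kirchberg's theorems and, as the footnote records, a complete proof appears in \cite{Gabe:2019ws}; what follows is a sketch of the architecture rather than a self-contained argument, together with a note on where the genuine difficulty sits.

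For the first step I would invoke the UCT for $A$. Since $A$ satisfies the UCT, the natural transformation $\Gamma\colon KK(A,B)\to \text{Hom}(K_*(A),K_*(B))$ is surjective, with kernel the relevant $\text{Ext}$-term, so there exists a class $x\in KK(A,B)$ with $x_*=\alpha$. Any such lift works, since the $\text{Ext}$-ambiguity does not change the induced map $\alpha$. The hypothesis $\alpha[1_A]=[1_B]$ says precisely that $x$ is \emph{unital} on the level of $K_0$, i.e.\ $x_*[1_A]=[1_B]$; this is the only compatibility we need in order to upgrade $x$ to a \emph{unital} homomorphism at the end.

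The second step is the geometric realization, passing from the abstract class $x$ to an actual $*$-homomorphism, and here both properties of $B$ enter. Proper infiniteness gives a unital embedding $\mathcal{O}_\infty\hookrightarrow B$, and Kirchberg's absorption and exactness results allow every class in $KK(A,B)$ — with $A$ separable and nuclear, so that $KK=KK_{\text{nuc}}$ — to be induced by a $*$-homomorphism into a matrix amplification, equivalently into a corner $pBp$ with $[p]=x_*[1_A]=[1_B]$. Proper infiniteness of $B$ then lets one transport $p$ onto $1_B$: a full, properly infinite projection whose $K_0$-class equals that of the full, properly infinite projection $1_B$ is Murray--von Neumann equivalent to $1_B$, and conjugating by the implementing partial isometry converts the homomorphism into a unital $\phi\colon A\to B$ with $[\phi]=x$, hence inducing $\alpha$. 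Fullness can be built into the construction (or read off from it), since a unital homomorphism realizing a unital class into a properly infinite algebra has image generating $B$ as a two-sided ideal.

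The main obstacle is squarely the second step: the theorem turning an abstract $KK$-class into a concrete $*$-homomorphism when $B$ is assumed only unital and properly infinite — in particular possibly non-simple and non-nuclear. This is exactly the deep part of Kirchberg's theory, his $\mathcal{O}_\infty$-absorption and ideal-related $KK$ machinery, and it is this input — not the UCT bookkeeping of the first step nor the Murray--von Neumann adjustment of the third — that I would simply cite from \cite{Gabe:2019ws}.
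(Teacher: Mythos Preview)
Your proposal is correct and follows essentially the same two-step architecture as the paper: use the UCT to lift $\alpha$ to a class $x\in KK(A,B)$ with $x_*[1_A]=[1_B]$, then cite \cite[Theorem A]{Gabe:2019ws} (together with $KK_{\mathrm{nuc}}=KK$ for nuclear $A$) to realize $x$ by a full unital $*$-homomorphism. One small caution: your aside that fullness can be ``read off'' because a unital $*$-homomorphism into a properly infinite $B$ automatically generates $B$ as an ideal is false when $B$ is not simple --- fullness genuinely has to be \emph{built in}, which is exactly what Gabe's theorem provides.
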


\begin{proof} 
Let $A$ be a separable, nuclear, unital $C^*$-algebra, and let $B$ be a unital, properly infinite $C^*$-algebra.  Then \cite[Theorem A]{Gabe:2019ws} implies that for any $x\in KK(A,B)$ such that the map $x_*:K_*(A)\to K_*(B)$ on $K$-theory induced by $x$ takes $[1_A]$ to $[1_B]$, there exists a full unital $*$-homomorphism $\phi:A\to B$ such that the class $[\phi]$ in $KK(A,B)$ equals $x$: precisely, the given reference has strictly weaker assumptions on $A$ and $B$ (in particular, only that $A$ is exact), and works with classes in $KK_{nuc}(A,B)$ rather than $KK(A,B)$.  However, we assume above that $A$ is nuclear, which implies that $KK_{nuc}(A,B)=KK(A,B)$.

On the other hand, as $A$ satisfies the UCT, the canonical map 
$$
KK(A,B)\to \text{Hom}(K_*(A),K_*(B))
$$ 
is surjective.  The result follows from this and the comments above on lifting $\alpha$ to some $x\in KK(A,B)$. 
\end{proof}

\begin{proof}[Proof of Proposition \ref{uct kirch}]
We note first that any $C^*$-algebra as in line \eqref{special} satisfies the UCT.  Indeed, Cuntz algebras are in the UCT class as discussed on \cite[page 73]{Rordam:2002cs}, and the UCT class is preserved under direct sums by \cite[Proposition 2.3 (a)]{Rosenberg:1987bh}, under taking matrix algebras by \cite[Proposition 2.3 (a)]{Rosenberg:1987bh}, and under taking tensor products by \cite[Theorem 7.7]{Rosenberg:1987bh}.  

Let $(K_0(A),[1_A],K_1(A))$ be the $K$-theory invariant of $A$.  Choose a sequence $(G_{n,0},G_{n,1})$ such that $G_{n,0}$ is a finitely generated subgroup of $K_0(A)$ containing $[1_A]$, $G_{n,1}$ is a finitely generated subgroup of $K_1(A)$, and such that $K_i(A)=\bigcup_{n\in \N}G_{n,i}$ for $i\in \{0,1\}$.  As $C^*$-algebras of the form in line \eqref{special} and the various building blocks involved satisfy the UCT, the $K$-theory K\"{u}nneth formula applies (see \cite[page 443]{Schochet:1982aa} or \cite[Theorem 23.1.3]{Blackadar:1998yq}).  Using this and the well-known $K$-theory of the Cuntz algebras (see for example \cite[page 74]{Rordam:2002cs}) it is straightforward to see that for each $n$, there is a $C^*$-algebra $C_n$ of the form in line \eqref{special} such that $(K_0(C_n),[1_{C_n}],K_1(C_n))\cong (G_{0,n},[1_A],G_{1,n})$.   Identifying these groups via a fixed isomorphism, Corollary \ref{class k} implies that for each $n$ the inclusion map 
$$
(G_{0,n},[1_A],G_{1,n})\to (G_{0,n+1},[1_A],G_{1,n+1})
$$
is induced by a full unital $*$-homomorphism $\phi_n:C_n\to C_{n+1}$.  We claim that $A$ is isomorphic to the inductive limit $C$ of the system $(C_n,\phi_n)$.  Indeed, as each $\phi_n$ is unital and full, $C$ is unital and simple. Using continuity of $K$-theory, $(K_0(C),[1_C],K_1(C))\cong (K_0(A),[1_A],K_1(A))$.    As each $C_n$ is nuclear, $C$ is nuclear (see for example \cite[Theorem 10.1.5]{Brown:2008qy}).  As each $C_n$ is a finite direct sum of purely infinite $C^*$-algebras, $C$ is purely infinite (one can check this using the condition in \cite[Proposition 4.1.8 (iv)]{Rordam:1995aa}, for example).  As each $C_n$ satisfies the UCT, $C$ also satisfies the UCT by \cite[Proposition 2.3 (b)]{Rosenberg:1987bh}.  Hence by the Kirchberg-Phillips classification theorem (for example, in the form of \cite[Theorem 4.3.4]{Phillips-documenta}), $A$ is isomorphic to $C$ as claimed. 
\end{proof}

\begin{theorem}\label{kirch cr2}
Any unital UCT Kirchberg algebra $A$ has complexity rank at most two.  
\end{theorem}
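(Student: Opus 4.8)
The plan is to assemble the theorem from the structural description of UCT Kirchberg algebras in Proposition \ref{uct kirch}, the rank-one result Theorem \ref{kirch the 1}, the direct-sum and inductive-limit permanence of the classes $\mathcal{D}_\alpha$ (Lemmas \ref{sums} and \ref{loc in}), and the tensor-product bound of Proposition \ref{perm rem}. The whole argument is bookkeeping: there is no new estimate to make, only a careful matching of hypotheses.

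First I would apply Proposition \ref{uct kirch} to write $A=\varinjlim C_n$, where each $C_n$ has the form $B_0\oplus(B_1\otimes \mathcal{O}_{1,\infty})$ with $B_0,B_1$ finite direct sums of algebras $M_{n_j}(\mathcal{O}_{m_j})$, and where (as recorded in the proof of that proposition) the connecting maps $\phi_n\colon C_n\to C_{n+1}$ are unital. Since Lemma \ref{loc in} shows that $\mathcal{D}_2$ is closed under inductive limits with unital connecting maps, it then suffices to prove that each $C_n$ lies in $\mathcal{D}_2$. By Lemma \ref{sums} this in turn reduces to showing $B_0\in\mathcal{D}_2$ and $B_1\otimes\mathcal{O}_{1,\infty}\in\mathcal{D}_2$ separately.

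Next I would treat the building blocks. Each $M_{n_j}(\mathcal{O}_{m_j})$ is a unital UCT Kirchberg algebra, and since $K_1(\mathcal{O}_m)=0$ for every $m\in\{2,3,\dots\}\cup\{\infty\}$ and matrix stabilization does not change $K$-theory, each has $K_1=0$, which is torsion free. Thus Theorem \ref{kirch the 1} gives that every $M_{n_j}(\mathcal{O}_{m_j})$ has complexity rank at most one, and Lemma \ref{sums} then places both $B_0$ and $B_1$ in $\mathcal{D}_1$. To conclude $B_0\in\mathcal{D}_2$ I would use the elementary monotonicity $\mathcal{D}_1\subseteq\mathcal{D}_2$, which follows straight from Definition \ref{f c}: any witnesses for decomposability over $\mathcal{D}_0$ are also witnesses for decomposability over $\bigcup_{\beta<2}\mathcal{D}_\beta$. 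For the tensor factor, $\mathcal{O}_{1,\infty}$ has $K_0=0$ and $K_1=\Z$, again torsion free, so Theorem \ref{kirch the 1} puts it in $\mathcal{D}_1$; Proposition \ref{perm rem} then yields $B_1\otimes\mathcal{O}_{1,\infty}\in\mathcal{D}_{1+1}=\mathcal{D}_2$. Combining via Lemma \ref{sums} gives $C_n\in\mathcal{D}_2$, and hence $A\in\mathcal{D}_2$.

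I do not expect a genuine obstacle, as the statement is a packaging of earlier results; the only points requiring care are (a) confirming the connecting maps in Proposition \ref{uct kirch} are unital so Lemma \ref{loc in} applies, (b) the trivial monotonicity $\mathcal{D}_1\subseteq\mathcal{D}_2$ needed before invoking Lemma \ref{sums} on the rank-one summand $B_0$, and (c) verifying that every building block involved has torsion-free $K_1$ so that Theorem \ref{kirch the 1} is available. The most substantive step is the tensor-product estimate raising the rank from one to two, but that is exactly the content of Proposition \ref{perm rem}.
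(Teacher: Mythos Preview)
Your proposal is correct and follows essentially the same route as the paper's own proof: reduce to the building blocks via Proposition \ref{uct kirch} and Lemmas \ref{loc in} and \ref{sums}, apply Theorem \ref{kirch the 1} to each factor (all of which have torsion-free $K_1$), and then use Proposition \ref{perm rem} for the tensor product. Your explicit attention to the unitality of the connecting maps and to the monotonicity $\mathcal{D}_1\subseteq\mathcal{D}_2$ are details the paper leaves implicit, but otherwise the arguments are the same.
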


\begin{proof}
Proposition \ref{uct kirch} writes $A$ as an inductive limit of $C^*$-algebras of the form $B_0\oplus (B_1\otimes \mathcal{O}_{1,\infty})$ with $B_0$ and $B_1$ a finite direct sum of matrix algebras over Cuntz algebras.  Using Theorem \ref{kirch the 1}, any unital UCT Kirchberg algebra with torsion free $K_1$-group has complexity rank one.  Using this and Lemma \ref{sums}, each of $B_0$, $B_1$ and $\mathcal{O}_{1,\infty}$ has complexity rank at most one.  Hence Proposition \ref{perm rem} implies that $B_1\otimes \mathcal{O}_{1,\infty}$ has complexity rank at most two, and thus so does $B_0\oplus B_1\otimes \mathcal{O}_{1,\infty}$ using Lemma \ref{sums} again.  As complexity rank is non-increasing under taking inductive limits (Lemma \ref{loc in}), the complexity rank of $A$ is at most two.
\end{proof}

We finish this section by recording a proof of Theorem \ref{kirch the}.

\begin{proof}[Proof of Theorem \ref{kirch the}]
Let $A$ be a unital UCT Kirchberg algebra.  Then $A$ has complexity rank at most two by Theorem \ref{kirch cr2}.  As $A$ is not locally finite-dimensional, it does not have complexity rank zero.  

If $A$ has complexity rank one, then it has torsion-free $K_1$-group by Theorem \ref{k1tf}.  Conversely, if $A$ has torsion-free $K_1$ group, then it has complexity rank one by Theorem \ref{kirch the 1}.
\end{proof}

\section{Questions}\label{question sec}

We conclude the paper with some open questions that seem interesting to us.

The first question is important (and probably difficult) as it is equivalent to the UCT for all nuclear $C^*$-algebras.

\begin{question}
Do all (unital) Kirchberg algebras have finite complexity?
\end{question}

Even knowing finite complexity for Kirchberg algebras with trivial $K$-theory would imply the UCT for all nuclear $C^*$-algebras.

The next question is about the most interesting example that we do not currently know the complexity rank of.  

\begin{question}
What is the complexity rank of an irrational rotation algebra?  
\end{question}

We conjecture the answer is always one; more generally, we conjecture that the complexity rank of a separable $A\mathbb{T}$-algebra of real rank zero (and which is not AF) is always one.

\begin{question}
What is the complexity rank of (classifiable) AH (or even ASH) algebras of real rank zero? 
\end{question}

It would also be interesting to give non-trivial upper bounds, maybe in terms of the dimensions of the spectra of  (sub)homogeneous algebras appearing in a directed system for the given A(S)H algebra.

The following question is very natural.  We know too little to hazard a reasonable guess at the moment.

\begin{question}
Which ordinal numbers can be the complexity rank of a $C^*$-algebra?
\end{question}

We did not seriously attempt to address this question, but at the moment, the only values we know can be taken are $0$, $1$, and $2$.   It is conceivable that for the uniform Roe algebras $C^*_u(X)$ associated to a space $X$, the complexity rank of $C^*_u(X)$ and the complexity rank of $X$ in the sense of \cite[Definition 2.9]{Guentner:2013aa} coincide (at present we know only that the complexity rank of the $C^*$-algebra is bounded above by that of the space).  If these ranks were equal, it would follow for example from \cite[Sections 4 and 5]{Guentner:2013aa} and \cite{Chen:2015uc} that many complexity ranks are possible for $C^*$-algebras.

\begin{question}
Does (weak) complexity rank at most one imply real rank zero in general?  
\end{question}

There are some interesting connections of this question to other problems: compare Remark \ref{gen rr0 rem} above. 

The question below seems interesting from the point of view of the structure of $C^*$-algebras.  Recall from the discussion below Definition \ref{intro dec} that we think of having weak complexity rank at most one as being `two-colored locally finite-dimensional'. 

\begin{question}
Can one make a reasonable version of being `two-colored AF' that is also equivalent to having weak complexity rank at most one?
\end{question} 

This would mean somehow arranging the different $C^*$-subalgebras $C$ and $D$ that arise into systems ordered by inclusion in some sense (to be made precise by the answer to the question!).  By analogy with the classical (non-)equivalence between being AF and being locally finite-dimensional, one probably wants to assume separability in the above.

The following question seems basic (we tried to find an answer and were not able to).

\begin{question}
Does having complexity rank at most $\alpha$ pass to corners?   
\end{question}

This would be interesting to know even for $\alpha=1$.  The answer is `yes' for \emph{weak} complexity rank at most one: one can see this by adapting the proof of \cite[Proposition 3.8]{Kirchberg:2004uq}, for example.  

Our last question is a little vague, but would be useful to have, particularly with regards permanence properties.

\begin{question}
Is there a `good' definition of decomposability in the non-unital case?
\end{question}

Many of the results in this paper have reasonably natural variants in the non-unital case, but we were not able to come up with a really clean and natural definition, so in the end opted to write the paper entirely in the unital setting for the sake of simplicity.  Certainly having a notion that applied equally in the unital case would be very interesting, however.

\bibliography{Generalbib}

\end{document}